\newtheorem{thm}{Theorem}[section]
\newtheorem*{thm*}{Theorem}
\newtheorem*{lemma*}{Lemma}
\newtheorem*{coro*}{Corollary}
\newtheorem{lemma}[thm]{Lemma}
\newtheorem{coro}[thm]{Corollary}
\newtheorem{prop}[thm]{Proposition}
\theoremstyle{definition}
\newtheorem{defn}[thm]{Definition}
\theoremstyle{remark}
\newtheorem{rem}[thm]{Remark}
\numberwithin{equation}{section}
\def\k{\mathsf{k}}
\def\A{\mathcal{A}}
\def\U{\mathcal{U}}
\def\S{\Sigma}
\def\R{\mathbb{R}}
\def\Z{\mathbb{Z}}
\def\Q{\mathsf{Q}}
\def\M{\mathcal{M}}
\def\N{\mathbb{N}}
\newcommand{\curve}[1][x]{\mathsf{#1}}
\newcommand{\multi}[1][X]{\mathsf{#1}}
\newcommand{\link}[1][X]{\mathbf{#1}}
\def\rq{q^{\frac{1}{2}}}
\def\Zq{\mathbb{Z}_{q}}
\def\ex{\mathbf{ex}}
\def\B{\mathbf{B}}
\def\E{\mathsf{E}}
\def\T{\mathbb{T}}
\def\Links{\mathsf{Links}}
\def\Sk{\mathsf{Sk}}
\def\Multi{{\mathsf{SMulti}}}
\title{Skein and cluster algebras of marked surfaces}
\author{Greg Muller}
\address{Department of Mathematics,
University of Michigan, Ann Arbor, MI 48108, USA}
\email{morilac@umich.edu}
\thanks{$2010$ \emph{Mathematics Subject Classification.} Primary 13F60, Secondary 57M50, 16S99}
\thanks{\emph{Keywords:} cluster algebra, quantum cluster algebra, skein algebra, quantum torus, triangulation of surfaces}
\thanks{This work was supported by the VIGRE program at LSU, National Science Foundation grant DMS-0739382.}
\tikzstyle{mutable}=[inner sep=0.5mm,circle,draw,minimum size=2mm]
\tikzstyle{frozen}=[inner sep=0.5mm,rectangle,draw]
\tikzstyle{marked}=[inner sep=0.5mm,circle,draw,fill=black!50]
\tikzstyle{outline}=[thick,line width=1.5mm,draw=black!10]
\begin{document}

\begin{abstract}
This paper considers several algebras associated to an oriented surface $\S$ with a finite set of marked points on its boundary.  The first is  the \emph{skein algebra} $\Sk_q(\S)$, which is spanned by links in the surface which are allowed to have endpoints at the marked points, modulo several locally defined relations.  The product is given by superposition of links.  A basis of this algebra is given, as well as several algebraic results.

When $\S$ is triangulable, a \emph{quantum cluster algebra} $\A_q(\S)$ and \emph{quantum upper cluster algebra} $\U_q(\S)$ can be defined.  These are algebras coming from the triangulations of $\S$ and the elementary moves between them.  Cluster algebras have been a subject of significant recent interest, due in part to their extraordinary positivity and Laurent properties.


Natural inclusions $\A_q(\S)\subseteq \Sk_q^o(\S)\subseteq \U_q(\S)$ are shown, where $\Sk_q^o(\S)$ is a certain Ore localization of $\Sk_q(\S)$.  When $\S$ has at least two marked points in each component, these inclusions are strengthened to equality, exhibiting a quantum cluster structure on $\Sk_q^o(\S)$.

The method for proving these equalities has the potential to show $\A_q=\U_q$ for other classes of cluster algebras.  As a demonstration of this fact, a new proof is given that $\A_q=\U_q$ for acyclic cluster algebras.
\end{abstract}

\maketitle

\section{Introduction}

In this paper, we consider \textbf{marked surfaces}: compact, oriented surfaces, possibly with boundary, together with a finite set of marked points in the boundary.\footnote{This contrasts with some references, where `marked surfaces' may have interior marked points.}


\subsection{The skein module}

Motivated by computing the Jones polynomial of a knot, Kauffman \cite{Kau87} introduced the \emph{Kauffman bracket}, a (framed)\footnote{We suppress the details of framing a knot; all drawn knots will be given the blackboard framing.} knot invariant defined by the two local relations in Figure \ref{fig: skeinintro1}.
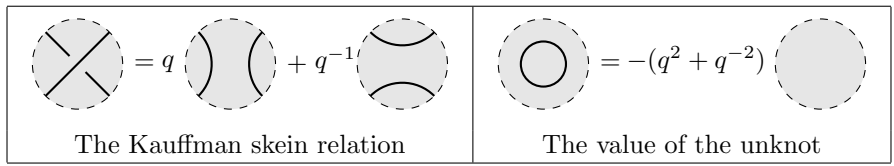
\begin{figure}[h!]
\centering
\begin{tabular}{|c|c|}
\hline
\begin{tikzpicture}
\path[use as bounding box] (-1.15in,-.3in) rectangle (1.15in,.3in);
\begin{scope}[xshift=-.85in,scale=.15]
    \draw[fill=black!10,dashed] (0,0) circle (4);
    \draw[thick] (-2.83,-2.83) to (2.83,2.83);
    \draw[thick] (-2.83,2.83) to (-.71,.71);
    \draw[thick] (.71,-.71) to (2.83,-2.83);
\end{scope}
\node (=) at (-.5in,0) {$=$};
\node (q) at (-.375in,0) {$q$};
\begin{scope}[xshift=-.05in,scale=.15]
    \draw[fill=black!10,dashed] (0,0) circle (4);
    \draw[thick] (-2.83,-2.83) to [out=45,in=-45] (-2.83,2.83);
    \draw[thick] (2.83,-2.83) to [out=135,in=-135] (2.83,2.83);
\end{scope}
\node (+) at (.3in,0) {$+$};
\node (q') at (.5in,.02in) {$q^{-1}$};
\begin{scope}[xshift=.85in,scale=.15]
    \draw[fill=black!10,dashed] (0,0) circle (4);
    \draw[thick] (-2.83,-2.83) to [out=45,in=135] (2.83,-2.83);
    \draw[thick] (-2.83,2.83) to [out=-45,in=-135] (2.83,2.83);
\end{scope}
\end{tikzpicture}
&
\begin{tikzpicture}
\path[use as bounding box] (-1.15in,-.3in) rectangle (.9in,.3in);
\begin{scope}[xshift=-.85in,scale=.15]
    \draw[fill=black!10,dashed] (0,0) circle (4);
    \draw[thick] (0,0) circle (2);
\end{scope}
\node (=) at (-.5in,0) {$=$};
\node (2) at (-.05in,.02in) {$-(q^2+q^{-2})$};
\begin{scope}[xshift=.6in,scale=.15]
    \draw[fill=black!10,dashed] (0,0) circle (4);
\end{scope}
\end{tikzpicture}\\
The Kauffman skein relation
&
The value of the unknot
\\
\hline
\end{tabular}
\caption{The skein relations (without marked points).}
\label{fig: skeinintro1}
\end{figure}

These relations are defined as manipulations of a knot (or link) in an oriented 3-manifold, where the dashed circle represents a small sphere, and the links are understood to be kept identical outside this sphere.  Using these relations, any link in $\mathbb{R}^3$ can be reduced to a Laurent polynomial in $q$ (the Kauffman bracket of the link) times the empty link.

For a general oriented $3$-manifold, these relations can be encoded in the `skein module', introduced independently by Turaev \cite{Tur88},\cite{Tur89} and Przytycki \cite{Prz91}.  Let $\Zq:=\Z[q^{\pm\frac{1}{2}}]$ denote the Laurent ring in the indeterminant\footnote{The justification for including the half-power of $q$ will come later.} $\rq$, and let $\Zq^\Links$ be the module of $\Zq$-linear combinations of ambient isotopy classes of framed links.  Imposing the skein relations defines a quotient $\Zq$-module of $\Zq^\Links$, called the \emph{skein module} of the 3-manifold.  The skein module of $\mathbb{R}^3$ is the free $\Zq$-module spanned by the empty link.

\subsection{The skein algebra $Sk_q(\S)$ (without marked points)}

When the 3-manifold in question is $\S\times [0,1]$ for an unmarked surface $\S$, two extra structures appear.  First, two links in $\S\times [0,1]$ can be `stacked' vertically to give a new link in $\S\times [0,1]$ which contains the first link in $\S\times [0,\frac{1}{2}]$ and the second in $\S\times [\frac{1}{2},1]$.  This gives a well-defined \emph{superposition product} on the skein module of $\S\times[0,1]$ and makes it into an associative $\Zq$-algebra called the \emph{skein algebra} of $\S$.

Second, any link in $\S\times[0,1]$ can be projected into $\S$, with overcrossings and undercrossings used to keep track of the original link.\footnote{Ambient homotopy may be required to ensure the projection has simple transverse intersections.}  As an abuse of terminology, such a diagram will be called a \emph{link} in $\S$.  The skein algebra of $\S$ can be computed directly from the set of links in $\S$, as the quotient of $\Zq^\Links$ by a submodule generated by the skein relations.  In this way, the skein algebra can be associated directly to the surface $\S$. 

\subsection{The skein algebra $Sk_q(\S)$ (with marked points)}

Motivated by examples coming from the theory of cluster algebras and Teichm\"uller theory, we define a generalization of skein algebras to marked surfaces.

Let $\S$ be a surface with a finite set of `marked points' $\M$.  A `\textbf{link}' in $\S$ will be a collection of immersed curves in $\S$, with transverse intersections and boundary contained in $\M$, together with a `crossing data'. This is a choice, for each intersection, of the order in which the curves pass over each other (see Section \ref{section: link}).  Links are considered up to homotopy through the set of links.
\begin{rem}
Actually, we will extend this definition of link to allow simultaneous crossings at marked endpoints.  Two curves can then arrive transversely at a marked point in three ways: over, under and simultaneous.  This generalization does not affect the subsequent skein algebra (see Remark \ref{rem: nosimul}).
%
\end{rem}

Let $\Zq^{\Links}$ denote the free $\Zq$-module spanned by (homotopy classes of) links in $\S$, and define a quotient $\Zq$-module $\Sk_q(\S)$ by imposing the relations in Figure \ref{fig: skeinintro}.  
The element in $\Sk_q(\S)$ corresponding to a link $\link$ will be denoted $[\link]$.

\begin{figure}[h]
\centering
\begin{tabular}{|c|c|}
\hline
\begin{tikzpicture}
\path[use as bounding box] (-1.15in,-.3in) rectangle (1.15in,.3in);
\begin{scope}[xshift=-.85in,scale=.15]
    \draw[fill=black!10,dashed] (0,0) circle (4);
    \draw[thick] (-2.83,-2.83) to (2.83,2.83);
    \draw[thick] (-2.83,2.83) to (-.71,.71);
    \draw[thick] (.71,-.71) to (2.83,-2.83);
\end{scope}
\node (=) at (-.5in,0) {$=$};
\node (q) at (-.375in,0) {$q$};
\begin{scope}[xshift=-.05in,scale=.15]
    \draw[fill=black!10,dashed] (0,0) circle (4);
    \draw[thick] (-2.83,-2.83) to [out=45,in=-45] (-2.83,2.83);
    \draw[thick] (2.83,-2.83) to [out=135,in=-135] (2.83,2.83);
\end{scope}
\node (+) at (.3in,0) {$+$};
\node (q') at (.5in,.02in) {$q^{-1}$};
\begin{scope}[xshift=.85in,scale=.15]
    \draw[fill=black!10,dashed] (0,0) circle (4);
    \draw[thick] (-2.83,-2.83) to [out=45,in=135] (2.83,-2.83);
    \draw[thick] (-2.83,2.83) to [out=-45,in=-135] (2.83,2.83);
\end{scope}
\end{tikzpicture}
&
\begin{tikzpicture}
\path[use as bounding box] (-.8in,-.3in) rectangle (1.6in,.3in);
\node (q) at (-.625in,0.04in) {$q^{-\frac{1}{2}}$};
\begin{scope}[xshift=-.25in,scale=.15]
	\begin{scope}
	\clip (0,0) circle (4);
	\draw[fill=black!10,thick] (-5,-3) to [in=180,out=30] (0,-2) to [in=150,out=0] (5,-3) to [line to] (5,5) to (0,5) to (-5,5);
	\node (1) at (0,-2) [marked] {};
	\draw[thick] (1) to (4,3);
	\draw[thick] (-.8,-1) to (-4,3);
	\end{scope}
	\draw[dashed] (0,0) circle (4);
\end{scope}
\node (=) at (.1in,0) {$=$};
\begin{scope}[xshift=.45in,scale=.15]
	\begin{scope}
	\clip (0,0) circle (4);
	\draw[fill=black!10,thick] (-5,-3) to [in=180,out=30] (0,-2) to [in=150,out=0] (5,-3) to [line to] (5,5) to (0,5) to (-5,5);
	\node (1) at (0,-2) [marked] {};
	\draw[thick] (1) to (4,3);
	\draw[thick] (1) to (-4,3);
	\end{scope}
	\draw[dashed] (0,0) circle (4);
\end{scope}
\node (=) at (.8in,0) {$=$};
\node (q) at (.975in,0.04in) {$q^{\frac{1}{2}}$};
\begin{scope}[xshift=1.3in,scale=.15]
	\begin{scope}
	\clip (0,0) circle (4);
	\draw[fill=black!10,thick] (-5,-3) to [in=180,out=30] (0,-2) to [in=150,out=0] (5,-3) to [line to] (5,5) to (0,5) to (-5,5);
	\node (1) at (0,-2) [marked] {};
	\draw[thick] (1) to (-4,3);
	\draw[thick] (.8,-1) to (4,3);
	\end{scope}
	\draw[dashed] (0,0) circle (4);
\end{scope}
\end{tikzpicture}
\\
The Kauffman skein relation
&
The boundary skein relation
\\

\hline

\begin{tikzpicture}
\path[use as bounding box] (-1.15in,-.3in) rectangle (.9in,.3in);
\begin{scope}[xshift=-.85in,scale=.15]
    \draw[fill=black!10,dashed] (0,0) circle (4);
    \draw[thick] (0,0) circle (2);
\end{scope}
\node (=) at (-.5in,0) {$=$};
\node (2) at (-.05in,.02in) {$-(q^2+q^{-2})$};
\begin{scope}[xshift=.6in,scale=.15]
    \draw[fill=black!10,dashed] (0,0) circle (4);
\end{scope}
\end{tikzpicture}
&
\begin{tikzpicture}
\path[use as bounding box] (-1.4in,-.3in) rectangle (.875in,.3in);
\begin{scope}[xshift=-1.1in,scale=.15]
	\begin{scope}
	\clip (0,0) circle (4);
	\draw[fill=black!10,thick] (-5,-3) to [in=180,out=30] (0,-2) to [in=150,out=0] (5,-3) to [line to] (5,5) to (0,5) to (-5,5);
	\node (1) at (0,-2) [marked] {};
	\draw[thick] (1) to [out=45,in=0] (0,2) to [out=180,in=135] (-.8,-1);
	\end{scope}
	\draw[dashed] (0,0) circle (4);
\end{scope}
\node (='') at (-.75in,0) {$=$};
\begin{scope}[xshift=-.4in,scale=.15]
	\begin{scope}
	\clip (0,0) circle (4);
	\draw[fill=black!10,thick] (-5,-3) to [in=180,out=30] (0,-2) to [in=150,out=0] (5,-3) to [line to] (5,5) to (0,5) to (-5,5);
	\node (1) at (0,-2) [marked] {};
	\draw[thick] (1) to [out=45,in=0] (0,2) to [out=180,in=135] (1);
	\end{scope}
	\draw[dashed] (0,0) circle (4);
\end{scope}
\node (=) at (-.05in,0) {$=$};
\begin{scope}[xshift=.3in,scale=.15]
	\begin{scope}
	\clip (0,0) circle (4);
	\draw[fill=black!10,thick] (-5,-3) to [in=180,out=30] (0,-2) to [in=150,out=0] (5,-3) to [line to] (5,5) to (0,5) to (-5,5);
	\node (1) at (0,-2) [marked] {};
	\draw[thick] (1) to [out=135,in=180] (0,2) to [out=0,in=45] (.8,-1);
	\end{scope}
	\draw[dashed] (0,0) circle (4);
\end{scope}
\node (=') at (.65in,0) {$=$};
\node (0) at (.775in,.01in) {$0$};
\end{tikzpicture}
\\
The value of the unknot
&
The value of a contractible arc
\\
\hline
\end{tabular}
\caption{The skein relations with marked points.
}
\label{fig: skeinintro}
\end{figure}

Notes on the figure.
\begin{itemize}
    \item A dashed circle denotes a small disc in $\S$, and the links in each term of the equality are understood to be identical outside the circle.
    \item A solid curve between grey and white denotes the boundary of $\S$, and a dark dot denotes a marked point.
    \item We also allow additional undrawn curves at the marked points which have the same order with respect to the drawn curves.
\end{itemize}

These relations imply several other relations (Proposition \ref{prop: Reide}): the (framed) Reidemeister moves from knot theory, as well as an additional marked variation of the second Reidemeister move  (Figure \ref{fig: Reide}).

Given two transverse links $\link[X]$ and $\link[Y]$, their `\textbf{superposition}' $\link[X]\cdot\link[Y]$ is the union of the two links, with every curve in $\link[X]$ passing over every curve in $\link[Y]$.  This extends to a well-defined product on $\Sk_q(\S)$ and makes $\Sk_q(\S)$ into an associative $\Zq$-algebra (Proposition \ref{prop: skeinalgebra}), which we call the `\textbf{skein algebra}' of $\S$.

Many properties of $\Sk_q(\S)$ are shown, which generalize known unmarked results.
\begin{itemize}
	\item (Corollary \ref{coro: domain}) $\Sk_q(\S)$ is a domain.
	\item (Theorem \ref{thm: fingen}) $\Sk_q(\S)$ is finitely generated.
	\item (Lemma \ref{lemma: basis}) $\Sk_q(\S)$ has a $\Zq$-basis parametrized by `\emph{simple multicurves}'.
\end{itemize}

\subsection{Triangulations}

Curves in $\S$ come in two types,
\begin{itemize}
\item `\emph{loops}': immersed images of $S^1$, and
\item `\emph{arcs}': immersed images of $[0,1]$, with endpoints mapping to marked points.
\end{itemize}
A \textbf{triangulation} $\Delta$ of $\S$ is a simple multicurve consisting of arcs, such that the complement of the arcs is a disjoint union of discs with three marked points.  As elements in $\Sk_q(\S)$, the arcs in $\Delta$ \emph{quasi-commute}; that is, for $\curve_i,\curve_j\in \Delta$, there is a $\Lambda^\Delta_{i,j}\in\Z$ such that
\[ [\curve_i][\curve_j] = q^{\Lambda^\Delta_{i,j}}[\curve_i][\curve_j]\]
The numbers $\Lambda^\Delta_{i,j}$ correspond to entries in a `\emph{orientation matrix}' (Section \ref{section: skewmatrix}).

Triangulations of $\S$ give embeddings of the skein algebra into well-behaved algebras.
Let the \emph{quantum torus} $\T_\Delta$ associated to $\Delta$ be the $\Zq$-algebra with a $\Zq$-basis of elements of the form $M^\alpha$, $\forall \alpha\in \Z^\Delta$, and multiplication defined by\footnote{Here, $\langle - ,-\rangle$ is the natural dot product on $\Z^\Delta$.}
\[ M^\alpha M^\beta = q^{\frac{1}{2}\langle\alpha,\Lambda^\Delta \beta\rangle} M^{\alpha+\beta} = q^{\langle\alpha,\Lambda^\Delta \beta\rangle} M^\beta M^{\alpha}\]
\begin{thm*} [\textbf{\ref{thm: Laurent}}]
For each triangulation $\Delta$ of $\S$, there is an injective Ore localization
\[ \Sk_q(\S)\hookrightarrow \Sk_q(\S)[\Delta^{-1}]\simeq\T_\Delta\]
which sends $[\curve_i]$ to $M^{e_i}$.
\end{thm*}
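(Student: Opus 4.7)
The theorem bundles three claims---the Ore condition for $\Delta\subset\Sk_q(\S)$, injectivity of the resulting localization map, and the identification of that localization with $\T_\Delta$ via $[\curve_i]\leftrightarrow M^{e_i}$. Injectivity of the localization map is automatic once the Ore condition holds, since Corollary~\ref{coro: domain} tells us $\Sk_q(\S)$ is a domain and a domain always embeds faithfully into its Ore localization at a set of non-zero-divisors. So the real task is to construct an algebra isomorphism $\phi:\T_\Delta\xrightarrow{\sim}\Sk_q(\S)[\Delta^{-1}]$ with $\phi(M^{e_i})=[\curve_i]$.

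The defining relations of $\T_\Delta$ are precisely the quasi-commutation identities $[\curve_i][\curve_j]=q^{\Lambda^\Delta_{i,j}}[\curve_j][\curve_i]$ that already hold in $\Sk_q(\S)$, so after fixing a normal ordering the assignment $M^{e_i}\mapsto[\curve_i]$ extends to a $\Zq$-algebra homomorphism from the positive quantum affine space $\T_\Delta^{\geq 0}:=\Zq\langle M^\alpha : \alpha\in\N^\Delta\rangle$ into $\Sk_q(\S)$. Its image on $M^\alpha$ is (a $q$-power times) the simple multicurve with $\alpha_i$ parallel copies of each $\curve_i$; since these are distinct basis elements by Lemma~\ref{lemma: basis}, the positive-part map is already injective. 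Once the Ore condition has been verified, every element of $\T_\Delta$ becomes positive after multiplying by a sufficiently large $M^\alpha$, so injectivity of the full map $\phi$ is inherited from the positive part.

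Both the Ore condition and the surjectivity of $\phi$ reduce to one geometric claim, proved by induction on the minimal geometric intersection number $n(\multi):=\sum_i|\multi\cap\curve_i|$: for each simple multicurve $[\multi]$ there exist $\alpha\in\N^\Delta$ and $y$ in the image of the positive-part map with $[\Delta^\alpha]\cdot[\multi]=y$. When $n(\multi)=0$, every component of $\multi$ lies in a triangle of $\Delta$ and so is either contractible (hence $0$ by the unknot and contractible-arc relations) or isotopic to an arc of $\Delta$, making $[\multi]$ a superposition monomial up to a $q$-scalar. For the inductive step, pick $\curve_i$ with $|\multi\cap\curve_i|>0$, form the superposition $[\multi]\cdot[\curve_i]$, and apply the Kauffman skein relation (or, at a marked endpoint, the boundary skein relation) at an innermost intersection; the two resulting terms are simple multicurves with strictly smaller $n$. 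The main obstacle lies here: one must verify that \emph{both} smoothings of the Kauffman relation strictly reduce $n$. Choosing the crossing to be innermost along $\curve_i$ arranges this, since one smoothing erases the crossing directly while the other produces an arc bounding a contractible bigon (or half-bigon at a marked point) that is killed by the contractible-arc relation or absorbed via the boundary relation before it can contribute to the intersection count. Once this claim is in hand, the Ore condition and surjectivity of $\phi$ follow at once.
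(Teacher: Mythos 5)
Your overall architecture is the same as the paper's: injectivity of the localization from the fact that the inverted elements are non-zero-divisors, identification with $\T_\Delta$ via the simple-multicurve basis (Lemma \ref{lemma: basis}), and a reduction of both the Ore condition and surjectivity to the claim that multiplying a simple multicurve by a suitable monomial $[\Delta^\alpha]$ yields a polynomial in $\Delta$ (this is Corollary \ref{coro: Laurent} in the paper). The gap is in your inductive step, which is exactly where the paper has to work hardest (Lemma \ref{lemma: reducing}). First, after applying the Kauffman relation at a \emph{single} innermost crossing of $\multi\cdot\curve_i$, the two resulting terms still contain every other crossing of $\multi$ with the superposed copy of $\curve_i$, so they are links with crossings, not simple multicurves, and your induction on $n(\multi)$ over simple multicurves does not apply to them. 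Second, the mechanism you give for why both smoothings reduce $n$ --- one smoothing producing a contractible bigon or half-bigon that the contractible-arc or boundary relation kills --- cannot occur: $\multi$ is in minimal position with respect to $\curve_i$, so there are no bigons between them, and already in the one-crossing case of a flip, $[\curve_j][\curve_j']$ resolves into two essential simple multicurves with no contractible components. The true statement is weaker and subtler: after resolving \emph{all} crossings of $\curve_i\cdot\multi$, every simple multicurve in the support has at least one fewer crossing with $\curve_i$ (it may still cross $\curve_i$), and the paper proves this by a homotopy analysis of the possible reconnections between consecutive crossings along $\curve_i$ and between the extreme crossings and the endpoints of $\curve_i$; the endpoints are essential, since the statement fails for simple loops. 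Because the drop is only by one per factor, one must multiply by the power $[\curve_i]^{\mu([\curve_i],y)}$ (Corollary \ref{coro: reducing}) to reach crossing number zero --- your single-resolution step with full reduction in both branches elides precisely this point, so as written the key claim is unproved.

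A smaller issue: you cite Corollary \ref{coro: domain} to get injectivity of the localization, but in the paper that corollary is itself deduced from Theorem \ref{thm: Laurent}, so the appeal is circular in this logical order. Fortunately you do not need the domain property: it suffices that the monomials $[\Delta^\alpha]$ are non-zero-divisors, which follows from Lemma \ref{lemma: nonzero} together with Proposition \ref{prop: monomialmulti}, both proved independently of the theorem. With that substitution, and with the reduction lemma established in the paper's form, the rest of your outline (injectivity of the positive part via the basis, then clearing denominators; Ore condition and surjectivity from the clearing claim plus quasi-commutation of monomials) does go through and matches the paper's proof.
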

The theorem says that $\Sk_q(\S)$ embeds into its skew-field of fractions $\mathcal{F}$, and inside that skew-field, every element of $\Sk_q(\S)$ can be written as a skew-Laurent polynomial in the arcs in $\Delta$.

\subsection{Three algebras}

When $\S$ is triangulable, Theorem \ref{thm: Laurent} leads to the definition of three related $\Zq$-algebras.


\begin{itemize}
\item \textsc{The localized skein algebra $\Sk_q^o(\S)$ (Section \ref{section: localskein}).}

A `\textbf{boundary arc}' is a simple arc in $\S$ which is homotopic to an arc contained in the boundary.  A triangulation $\Delta$ of $\S$ contains the set of boundary arcs, and so the localization $\Sk_q(\S)[\Delta^{-1}]$ contains the inverse to each boundary arc.  The `\textbf{localized skein algebra}' $\Sk_q^o(\S)$ is the Ore localization of $\Sk_q(\S)$ at the boundary arcs in $\S$.

\item \textsc{The (quantum) cluster algebra $\A_q(\S)$ (Section \ref{section: cluster2}).}

The skein algebra $\Sk_q(\S)$ is generated by simple curves (Corollary \ref{coro: gens}), and so $\Sk_q^o(\S)$ is generated by simple curves and the inverses to boundary curves.  The `\textbf{(quantum) cluster algebra}' $\A_q(\S)$ of $\S$ is the $\Zq$-subalgebra of $\Sk_q^o(\S)$ generated by simple arcs and the inverses to boundary arcs.

\item \textsc{The (quantum) upper cluster algebra $\U_q(\S)$ (Section \ref{section: cluster2}).}

Since $\S$ may have many triangulations, Theorem \ref{thm: Laurent} provides many distinct skew-Laurent expressions for an element in $\Sk_q(\S)$.  This property may be turned into a criterion for defining another algebra.  The `\textbf{(quantum) upper cluster algebra}' $\U_q(\S)$ of $\S$ is the $\Zq$-algebra consisting of elements in the skew-field $\mathcal{F}$ which can be written as a skew-Laurent polynomial in each triangulation.
\end{itemize}

These algebras satisfy the following containments.
\begin{thm*} [\textbf{\ref{thm: main1}}]
For any triangulable marked surface $\S$,
\[ \A_q(\S)\subseteq \Sk_q^o(\S)\subseteq \U_q(\S)\]
\end{thm*}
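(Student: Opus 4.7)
The plan is to handle the two containments separately, since they have quite different character. The first containment $\A_q(\S) \subseteq \Sk_q^o(\S)$ is essentially built into the definition: by construction $\A_q(\S)$ is the $\Zq$-subalgebra of $\Sk_q^o(\S)$ generated by simple arcs together with the inverses of boundary arcs, and each such generator manifestly belongs to $\Sk_q^o(\S)$ (simple arcs lie in $\Sk_q(\S) \subseteq \Sk_q^o(\S)$, and the boundary arcs have been explicitly inverted). So this direction requires no further argument.

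The substantive work is the second containment $\Sk_q^o(\S) \subseteq \U_q(\S)$. My approach is to apply Theorem \ref{thm: Laurent} together with the universal property of Ore localization, one triangulation at a time. Fix an arbitrary triangulation $\Delta$ of $\S$. Theorem \ref{thm: Laurent} supplies an injective Ore localization $\Sk_q(\S) \hookrightarrow \Sk_q(\S)[\Delta^{-1}] \simeq \T_\Delta$ realized inside the common skew-field of fractions $\mathcal{F}$. The key observation, already noted in the description of $\Sk_q^o(\S)$, is that every triangulation contains the boundary arcs of $\S$ as a subset; consequently every boundary arc is already invertible in $\Sk_q(\S)[\Delta^{-1}]$. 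By the universal property of Ore localization at the boundary arcs, the map $\Sk_q(\S) \hookrightarrow \Sk_q(\S)[\Delta^{-1}]$ therefore factors uniquely through $\Sk_q^o(\S)$, so every element of $\Sk_q^o(\S)$ is expressible as a skew-Laurent polynomial in the arcs of $\Delta$. Since $\Delta$ was arbitrary, the defining criterion of $\U_q(\S)$ is met and the inclusion follows.

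The one point requiring care, and the main potential obstacle, is compatibility: as $\Delta$ ranges over all triangulations, each factored map $\Sk_q^o(\S) \to \T_\Delta$ must come from a single embedding of $\Sk_q^o(\S)$ into the \emph{same} copy of $\mathcal{F}$, so that a fixed element of $\Sk_q^o(\S)$ corresponds to one element of $\mathcal{F}$ that admits the required skew-Laurent expression in every triangulation. Fortunately this compatibility is automatic: every Ore localization in sight is constructed concretely as a subalgebra of $\mathcal{F}$, and each universal factorization is realized as a literal inclusion. Once this compatibility is recorded, the theorem follows cleanly from Theorem \ref{thm: Laurent} and the formal properties of Ore localization.
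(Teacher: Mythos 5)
Your argument for the second inclusion is essentially the paper's: fix a triangulation $\Delta$, use Theorem \ref{thm: Laurent} to get $\Sk_q(\S)\subseteq \Sk_q(\S)[\Delta^{-1}]\simeq\T_\Delta$ inside $\mathcal{F}$, observe that the boundary arcs all lie in $\Delta$ (Proposition \ref{prop: triang}(3)), so the localization at boundary arcs factors through and $\Sk_q^o(\S)\subseteq\T_\Delta$ for every $\Delta$; your compatibility remark (everything lives in one copy of $\mathcal{F}$) is correct and is indeed automatic.

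The gap is in the first inclusion, and in the final step of the second. You dispose of $\A_q(\S)\subseteq\Sk_q^o(\S)$ by saying it is ``built into the definition,'' but that is the introduction's description of $\A_q(\S)$, and the paper is explicit (Remark \ref{rem: main1rem}) that the agreement of that description with the actual definition is a \emph{consequence} of this theorem, not an input. In the body, $\A_q(\S)$ is Definition \ref{defn: CAmarked}: the subalgebra of $\mathcal{F}$ generated by the elements $M'(\alpha)$ (with $\alpha_i\geq 0$ on exchangeable indices) over \emph{all} quantum seeds $(\B',\Lambda',M')$ mutation equivalent to $(\B^\Delta,\Lambda^\Delta,M^\Delta)$. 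A priori these are just elements of the skew-field produced by iterated mutation, and the substance of the first inclusion is showing they lie in $\Sk_q^o(\S)$. This requires the flip--mutation theorem together with Corollary \ref{coro: independent} (every seed mutation equivalent to a triangulation seed is itself of the form $(\B^{\Delta'},\Lambda^{\Delta'},M^{\Delta'})$ for a triangulation $\Delta'$), after which one writes $M^{\Delta'}(\alpha)=q^{-\frac{1}{2}\Lambda^{\Delta'}(\beta,\beta')}[\Delta'^{\beta}][\Delta'^{\beta'}]^{-1}$ with $\beta,\beta'\in\N^N$ and $\beta'$ supported on the frozen (boundary) indices, so that the inverted factor is a boundary-arc monomial and the whole expression lies in $\Sk_q^o(\S)$. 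The same identification of seeds with triangulations is also silently needed at the end of your second inclusion: $\U_q(\S)$ is defined as the intersection of the quantum tori of \emph{all} mutation-equivalent seeds, so containment in the $\T_\Delta$ for all triangulations $\Delta$ only yields $\Sk_q^o(\S)\subseteq\U_q(\S)$ once you know (again by Corollary \ref{coro: independent}) that every such seed torus is some $\T_{\Delta'}$. With those citations and the displayed factorization added, your argument becomes the paper's proof; without them, the first containment is circular relative to the definitions actually in force.
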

Our main result is that these are equalities for most marked surfaces.
\begin{thm*} [\textbf{\ref{thm: main2}}]
For a triangulable marked surface $\S$ with at least two marked points in each connected component,
\[ \A_q(\S)= \Sk_q^o(\S)= \U_q(\S)\]
\end{thm*}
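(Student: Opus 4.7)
The plan is to use Theorem~\ref{thm: main1} to reduce the statement to proving the reverse containment $\U_q(\S) \subseteq \A_q(\S)$, and then to split this into the two intermediate inclusions $\Sk_q^o(\S) \subseteq \A_q(\S)$ and $\U_q(\S) \subseteq \Sk_q^o(\S)$.

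For $\Sk_q^o(\S) \subseteq \A_q(\S)$, I would argue as follows. By Corollary~\ref{coro: gens}, $\Sk_q(\S)$ is generated by simple curves, and $\A_q(\S)$ already contains all simple arcs and all inverses of boundary arcs by definition, so the remaining task is to show that each simple loop $\ell$ lies in $\A_q(\S)$. Fix a triangulation $\Delta$ of $\S$, which exists because of the 2-marked-point hypothesis. Resolving each transverse intersection of $\ell$ with the arcs of $\Delta$ via the Kauffman skein relation produces a sum of curves with strictly fewer crossings with $\Delta$; iterating until all crossings are gone, then applying the boundary skein relation at marked points, expresses $[\ell]$ as a skew-Laurent polynomial in the arcs of $\Delta$ whose denominators involve only the boundary arcs. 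All resulting summands lie in $\A_q(\S)$ by construction.

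For $\U_q(\S) \subseteq \Sk_q^o(\S)$, take $x \in \U_q(\S)$ and fix a triangulation $\Delta$, so by Theorem~\ref{thm: Laurent} we have a unique skew-Laurent expression $x = \sum_\alpha c_\alpha M^\alpha$ in $\T_\Delta$. The goal is to show that whenever $c_\alpha \neq 0$, the exponent of every interior (non-boundary) arc of $\Delta$ in $M^\alpha$ is nonnegative. For an interior arc $\curve_i \in \Delta$, the 2-marked-point hypothesis lets us flip $\curve_i$ to obtain a triangulation $\Delta' = (\Delta \setminus \{\curve_i\})\cup \{\curve_i'\}$. A local skein calculation in the quadrilateral formed by the triangles adjacent to $\curve_i$ yields a two-term quantum exchange relation of the form $[\curve_i][\curve_i'] = q^{a_1} m_1 + q^{a_2} m_2$, where $m_1, m_2$ are monomials in the arcs shared by $\Delta$ and $\Delta'$. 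Comparing the expressions of $x$ in $\T_\Delta$ and $\T_{\Delta'}$ using this relation, together with a divisibility argument in the quantum torus, then forces the exponent of $\curve_i$ in each $M^\alpha$ with $c_\alpha \neq 0$ to be nonnegative. Ranging over all interior arcs of $\Delta$ places $x$ in $\Sk_q^o(\S)$.

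The hard part will be making the second step rigorous: the skew-commutation between $\T_\Delta$ and $\T_{\Delta'}$, combined with the $q^{\pm 1/2}$ factors in the boundary skein relation, must be tracked carefully to show that examining a single flip is enough to constrain a single exponent, rather than requiring a global argument over the entire exchange graph of triangulations. A related technical challenge is ensuring correctness of the skein expansion in the first step when $\ell$ passes near marked points, where the boundary skein relation and the possibility of simultaneous crossings interact. Both difficulties ultimately hinge on the two-marked-point hypothesis, which geometrically prevents self-folded triangles and ensures every interior arc admits a flip, and algebraically produces the two-term form of the quantum exchange relation that drives the divisibility argument.
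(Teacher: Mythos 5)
Your overall decomposition is reasonable, but both of your key steps contain genuine gaps, and in each case the gap is precisely the hard content of the theorem. For the step $\Sk_q^o(\S)\subseteq\A_q(\S)$: resolving the crossings of a simple loop $\ell$ against the arcs of a triangulation $\Delta$ is exactly the procedure of Theorem \ref{thm: Laurent} and Corollary \ref{coro: Laurent}, and it expresses $[\ell]$ as $[\Delta^{\mu_\Delta([\ell])}]^{-1}$ times a polynomial in $\Delta$. Since a noncontractible loop must cross \emph{interior} arcs of $\Delta$ (the arcs of $\Delta$ cut $\S$ into discs), the denominator $[\Delta^{\mu_\Delta([\ell])}]$ involves mutable cluster variables, which are not inverted in $\A_q(\S)$; the boundary skein relation never enters because $\ell$ has no endpoints. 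So your computation only re-proves $[\ell]\in\T_\Delta$, not $[\ell]\in\A_q(\S)$. Indeed, the paper's Section on loop elements (Proposition \ref{prop: loop}) points out that an expression of $[\ell]$ with only boundary-arc denominators is equivalent to the equality $\A_q(\S)=\Sk_q^o(\S)$, is hard to produce directly, and is obtained there as a \emph{consequence} of Theorem \ref{thm: main2}, not as an ingredient.

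For the step $\U_q(\S)\subseteq\Sk_q^o(\S)$: the conclusion you aim for — that every $x\in\U_q(\S)$ has a $\T_\Delta$-expansion with nonnegative exponents on all interior arcs — is false. The flipped arc $\curve_j'$ is a cluster variable, hence lies in $\A_q(\S)\subseteq\U_q(\S)$ (and in $\Sk_q(\S)$), yet its expansion in $\T_\Delta$ is $[\Delta^{e_{k_2}+e_{k_4}-e_j}]+[\Delta^{e_{k_1}+e_{k_3}-e_j}]$, with exponent $-1$ on the interior arc $\curve_j$; loops give further examples inside $\Sk_q^o(\S)$ itself. So membership in $\Sk_q^o(\S)$ cannot be detected by positivity of interior exponents in a single chart, and no divisibility argument from one flip can establish it. The paper's actual route is different in kind: it proves $\A_q=\U_q$ for the entire class of exchange types of such surfaces by induction (Lemma \ref{lemma: A=Uclass}), using the facts that freezing an arc corresponds to cutting the surface (Proposition \ref{prop: topfreeze}), that the class is closed under cutting, that a suitable sink configuration of arcs exists when each component has at least two marked points (Lemma \ref{lemma: topprime}, which is where the hypothesis is really used — flips exist regardless), and that isolated exchange types satisfy $\A_q=\U_q$ (Proposition \ref{prop: isolated}); Theorem \ref{thm: main1} then squeezes $\Sk_q^o(\S)$ between the two. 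If you want to salvage your outline, the intersection argument $\A_q[M(e_i)^{-1}]\cap\A_q[M(e_j)^{-1}]=\A_q$ over a family of localizations, rather than a single-chart positivity claim, is the mechanism that actually works.
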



\begin{rem}
The definitions given above for $\A_q(\S)$ and $\U_q(\S)$ make Theorem \ref{thm: main1} immediate, but it is not clear they are quantum cluster algebras in the sense of \cite{BZ05}.  The body of the paper takes the opposite approach.  In Section \ref{section: cluster2}, $\A_q(\S)$ and $\U_q(\S)$ are defined as quantum cluster algebras, and equivalence to the previous definitions will be a consequence of Theorem \ref{thm: main1}.
\end{rem}

\subsection{Quantum cluster algebras}

Any two triangulations of $\S$ can be related by a sequence of `\emph{flips}', where a single arc is replaced by a distinct arc. 
The flip of an arc in $\Delta$ has a simple expression as a skew-Laurent polynomial in $\Delta$, and by iterating these expressions, any arc in any triangulation can be obtained.

This process is a specific case of a more general framework: the theory of quantum cluster algebras (introduced in \cite{FZ02}, quantized in \cite{BZ05}).  We sketch this theory now, precise definitions are in Section \ref{section: cluster1}.
One starts with a `\textbf{quantum seed}':
\begin{itemize}
\item a finite set of quasi-commuting `\textbf{cluster variables}' in a skew-field, which are designated either `\emph{exchangeable}' or `\emph{frozen}', and
\item a rule (called `\textbf{mutation}') for replacing any exchangeable cluster variable by a new exchangeable cluster variable, resulting in a new quantum seed.  
\end{itemize}
The `\textbf{quantum cluster algebra}' $\A_q$ associated to a quantum seed is the $\Zq$-algebra generated by all the cluster variables obtained by iterated mutations, and the inverses to the frozen cluster variables.  A quantum seed also determines a `\textbf{quantum upper cluster algebra}' $\U_q$, which is an algebra containing $\A_q$ defined as an intersection of quantum tori.\footnote{In this paper, `cluster algebras' are quantum cluster algebras unless otherwise specified.} 

In case of marked surfaces,  a triangulation $\Delta$ of $\S$ determines a quantum seed,
\begin{itemize}
\item The cluster variables are the arcs in $\Delta$, as elements in $\mathcal{F}$, the skew-field of $\Sk_q(\S)$. An arc is frozen if it is a boundary arc and exchangeable otherwise.
\item The mutation rule is determined from the relative orientations of the arcs in $\Delta$ at the endpoints.\footnote{Specifically, the exchange matrix is a restriction of the `\emph{skew-adjacency matrix}' in Section \ref{section: skewmatrix}.}
\end{itemize}
The resulting algebras $\A_q(\S)$ and $\U_q(\S)$ do not depend on the choice of triangulation (Definition \ref{defn: CAmarked}), and coincide with the definitions of $\A_q(\S)$ and $\U_q(\S)$ in the previous section (Theorem \ref{thm: main1} and Remark \ref{rem: main1rem}).

The specialization $\rq=1$ of $\A_q(\S)$ becomes a commutative cluster algebra $\A_1(\S)$.  Commutative cluster algebras associated to marked surfaces have been introduced (in \cite{GSV05}, \cite{FG06}) and extensively studied (in \cite{FST08},\cite{FT12} and \cite{Sch08},\cite{ST09},\cite{Sch10},\cite{MSW11}).  The relation of $\A_1(\S)$ to skein algebras was noticed in \cite[Section 12.3]{FG06}.  The equality $\A_1(\S)=\Sk_1^o(\S)$ for triangulable surfaces with at least two marked points (the commutative specialization of Theorem \ref{thm: main2}) has been independently proven by Musiker, Schiffler and Williams (in \cite{MW13},\cite{MSW12}) using more explicit methods than this paper.  

\begin{rem}
The commutative cluster algebra of a marked surface (as defined in \cite{FST08}) depends on a choice of coefficents.  The commutative specialization $\A_1(\S)$ has coefficients in the Laurent ring generated by the boundary arcs.
\end{rem}


\subsection{The structure of the paper}

The first part of the paper focuses on skein algebras of general marked surfaces.

\begin{enumerate}
\setcounter{enumi}{1}
\item \textbf{Curves and links in marked surfaces.}  This section gives our definitions of `curve', `multicurve' and `link' for marked surfaces.
\item \textbf{The skein algebra $\Sk_q(\S)$.} The skein algebra is defined, first as a $\Z_q$-module, and then as a $\Z_q$-algebra under the superposition product.  An anti-involution and a grading of $\Sk_q(\S)$ are given.
\item \textbf{Simple multicurves.}  Lemma \ref{lemma: basis} proves that the simple multicurves define a $\Z_q$-basis of $\Sk_q(\S)$.  This is used to prove that simple curves are not zero-divisors (Lemma \ref{lemma: nonzero}), and multiplication by a simple arc $\curve$ reduces the `crossing number' with $\curve$ (Lemma \ref{lemma: reducing}).
\item \textbf{The localized skein algebra $\Sk_q^o(\S)$.} The localized skein algebra is defined, shown to be an Ore localization, and a $\Z_q$-basis by certain weighted simple multicurves is given.
\end{enumerate}

\noindent The second part of the paper focuses on the case when $\S$ is triangulable, and the connection to cluster algebras.

\begin{enumerate}
\setcounter{enumi}{5}
\item \textbf{Triangulations.} Triangulations and some of their basic properties are reviewed.  A method is given for expressing an element of $\Sk_q(\S)$ as a skew-Laurent polynomial in a given triangulation (Corollary \ref{coro: Laurent}).  This is used to prove that the localization of $\Sk_q(\S)$ at $\Delta$ is a quantum torus.
\item \textbf{Quantum cluster algebras of marked surfaces.} Section \ref{section: cluster1} reviews the generalities of quantum cluster algebras.  Section \ref{section: cluster2} defines the quantum seed associated to a triangulation of a marked surface (Proposition \ref{prop: quantumseed}) and checks that the corresponding cluster algebras $\A_q(\S)$ and $\U_q(\S)$ only depend on $\S$ (Corollary \ref{coro: independent}).  These are related to the localized skein algebra by $\A_q(\S)\subseteq \Sk_q^o(\S)\subseteq \U_q(\S)$ (Theorem \ref{thm: main1}).
\item \textbf{A general technique for $\A_q=\U_q$.}  This section develops an approach for showing $\A_q=\U_q$ for large classes of quantum cluster algebras.  The final criterion is given in Lemma \ref{lemma: A=Uclass}.  This criterion is used to provide a new proof that $\A_q=\U_q$ for `acyclic' cluster algebras (Proposition \ref{prop: acyclic}).
\item \textbf{$\A_q(\S)=\Sk_q^o(\S)=\U_q(\S)$ for (most) marked surfaces.} Theorem \ref{thm: main2} is proven using the techniques of the preceding section.
\end{enumerate}

\noindent The last part of the paper explores some cases and consequences of Theorem \ref{thm: main2}.

\begin{enumerate}
\setcounter{enumi}{9}
\item \textbf{Loop elements.}  The simple loops in $\Sk_q^o(\S)$ define extra elements of $\A_q(\S)$ which are not cluster variables.  Considering these elements simplifies computations and provides a free $\Zq$-basis of $\A_q(\S)$.
\item \textbf{The commutative specialization $\rq=1$.} This section discusses the commutative specialization $\A_1(\S)=\Sk_1^o(\S)=\U_1(\S)$.
The commutative cluster algebra $\A_1(\S)$ is `\emph{locally acyclic}', which implies additional results.
\item \textbf{Examples and non-examples.} This section explores specific cases of $\S$, such as discs and an annulus.  
\end{enumerate}
The paper concludes with an appendix showing that $\Sk_q(\S)$ is finitely generated, by directly generalizing the original proof of Bullock in the unmarked case \cite{Bul99}.

\subsection{Acknowledgements}

The author would like to thank a great many people for useful discussions and support; including A. Berenstein, M. Gekhtman, A. Knutson, G. Musiker, P. Plamondon, D. Thurston, M. Yakimov, and A. Zelevinsky.  Helpful comments and edits were provided by J. Geiger, J. Matherne, G. Musiker, and H. Thomas.

\section{Curves and links in marked surfaces}

This section gives our definitions of `curve', `multicurve' and `link' for marked surfaces.

%
%

\subsection{Curves}

A \textbf{(framed) curve} $\curve$ in $\S$ is an immersion $\curve:C\rightarrow \S$ of a compact, connected, 1-dimensional manifold into $\S$, such that any boundary of $C$ maps to $\M$ and the interior of $C$ does not map to $\M$.  There are two kinds of curves.
\begin{itemize}
\item \textbf{Arcs}: curves with endpoints in $\M$.
\item \textbf{Loops}: closed loops without endpoints.
\end{itemize}
Homotopies between curves are always through the class of curves; that is, we only allow homotopies during which...
\begin{itemize}
\item $C$ remains immersed (\emph{regular homotopy}),
\item the endpoints remain in $\M$ (\emph{endpoint-fixed}), and
\item the interior remains disjoint from $\M$.
\end{itemize}
As an abuse of terminology, two curves will be called \textbf{homotopic} if they may be related by homotopy and orientation-reversal (so a homotopy class has no intrinsic orientation).

\subsection{Multicurves}

A \textbf{multicurve} $\multi$ in $(\S,\M)$ will mean an unordered, finite set of curves in $\S$ which may contain duplicates (i.e., homotopic curves).  Two multicurves are \textbf{homotopic} if there is a bijection between their constituent curves which takes a curve to a homotopic one.  A curve can always be thought of as a single element multicurve.

We will often focus on multicurves locally, by restricting to arbitrarily small discs around a point in $\S$.  A \emph{strand} in a multicurve $\multi$ near a point $p\in\S$ will be a component of the restriction of $\multi$ to an arbitrarily small disc around $p$.

A multicurve $\multi$ is \textbf{transverse} if...
\begin{itemize}
\item at each intersection in $\multi$, each strand has a different tangent direction, and
\item each interior intersection (called a \emph{crossing}) is between only two strands.
\end{itemize}
Every multicurve is homotopic to a transverse multicurve.

A transverse multicurve is \textbf{simple} if it has no interior intersections, and no curves which are contractible.  Contractible curves are either topologically trivial loops (called \emph{unknots}) or arcs which cut out a disc (called \emph{contractible arcs}).

\begin{rem} A transverse multicurve will be drawn as the union of its curves.  By the transverse condition, it is unambigious what the constituent curves are.
\end{rem}

%

\subsection{Links}\label{section: link}

%
We now define links, by equipping a transverse multicurve with crossing data, about which strands are `passing over' other strands.  It will be convenient to allow strands at a marked point to either pass over each other, or to arrive simultaneously.  This generalization is a convenience, not a necessity; see Remark \ref{rem: nosimul}.


A \textbf{(framed) link} $\link$ is a transverse multicurve $\multi$, together with...
\begin{itemize}
\item at each crossing, an ordering of the two strands,
\item at each marked point, an equivalence relation on the strands and an ordering on the equivalence classes of the strands.
\end{itemize}
Intuitively, a strand at a crossing must pass over or under the other strand, and two strands at a marked point must pass over, under, or be simultaneous (the equivalence relation).  This is drawn in the natural way (Figure \ref{fig: crossings}).

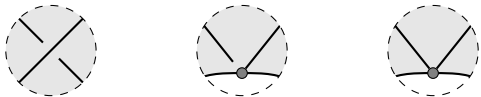
\begin{figure}[h!]
\begin{tikzpicture}
\begin{scope}[xshift=-.5in,scale=.15]
    \draw[fill=black!10,dashed] (0,0) circle (4);
    \draw[thick] (-2.83,-2.83) to (2.83,2.83);
    \draw[thick] (-2.83,2.83) to (-.71,.71);
    \draw[thick] (.71,-.71) to (2.83,-2.83);
\end{scope}
\begin{scope}[xshift=.5in,scale=.15]
	\begin{scope}
	\clip (0,0) circle (4);
	\draw[fill=black!10,thick] (-5,-3) to [in=180,out=30] (0,-2) to [in=150,out=0] (5,-3) to [line to] (5,5) to (0,5) to (-5,5);
	\node (1) at (0,-2) [marked] {};
	\draw[thick] (1) to (4,3);
	\draw[thick] (-.8,-1) to (-4,3);
		\end{scope}
	\draw[dashed] (0,0) circle (4);
\end{scope}
\begin{scope}[xshift=1.5in,scale=.15]
	\begin{scope}
	\clip (0,0) circle (4);
	\draw[fill=black!10,thick] (-5,-3) to [in=180,out=30] (0,-2) to [in=150,out=0] (5,-3) to [line to] (5,5) to (0,5) to (-5,5);
	\node (1) at (0,-2) [marked] {};
	\draw[thick] (1) to (4,3);
	\draw[thick] (1) to (-4,3);
		\end{scope}
	\draw[dashed] (0,0) circle (4);
\end{scope}
\end{tikzpicture}
\caption{Crossings, ordered strands, and simultaneous strands}
\label{fig: crossings}
\end{figure}

A simple multicurve $\multi$ can be regarded as a link with the simultaneous ordering at each endpoint; this will also be denoted by $\multi$.

\begin{rem}
Knot theory often considers `links', which would be links without arcs by the above definition, and `virtual links', which would be links without arcs, but where simultaneous crossings are allowed \cite{Kau99}.  Thus, the above definition can be thought of as `links with endpoints in $\M$, which can be virtual links at their endpoints'.
\end{rem}

Links without arcs arise in knot theory, as projections of knots in 3-dimensional space onto 2-dimensional space.  Similarly, our notion of links can be thought of as describing a multicurve in $\S\times [0,1]$, where $[0,1]$ is the dimension coming `out of the paper'.

\emph{Homotopies} between links are through the class of transverse multicurves, where crossing data are not changed.  This means the intersections are required to stay transverse, and so intersections can neither be created nor removed (in contrast with our definition of homotopy of multicurves).  We will say two links are \emph{homotopic} if they may be related by homotopy and orientation-reversal.

\begin{rem}
This notion of equivalence is weaker than the usual definition of equivalent links in knot theory, which uses Reidemeister moves and captures the notion of when two links describe ambient isotopic links in $3$-dimensional space.  This difference will become irrelevant later, as the skein relations will imply the Reidemeister moves (Proposition \ref{prop: Reide}).
\end{rem}





\section{The skein algebra $\Sk_q(\S)$}

Inspired by knot theory, we now define an algebra associated to a marked surface, which consists of linear combinations of links modulo certain local relations, and whose product corresponds to superimposing links.

\subsection{The skein relations} Let $\Zq$ denote the ring $\mathbb{Z}[q^{\pm\frac{1}{2}}]$ of Laurent polynomials in the indeterminant $q^{\frac{1}{2}}$. 
For any marked surface $\S$, let $\Zq^{\Links}$ denote the free $\Zq$-module with basis given by equivalence classes of links in $\S$.  



We will define a quotient $\Zq$-module of $\Zq^{\Links}$ by imposing several classes of relations (Figure \ref{fig: skein}), which are all defined in terms of local manipulations of a link.  These relations are expressed in terms of small discs, where it is understood that they describe links identical to each other outside the disc.
We also allow additional, undrawn curves at marked points, provided their order with respect to the drawn curves and each other does not change.

\begin{figure}[h]
\centering
\begin{tabular}{|c|c|}
\hline
\begin{tikzpicture}
\path[use as bounding box] (-1.15in,-.3in) rectangle (1.15in,.3in);
\begin{scope}[xshift=-.85in,scale=.15]
    \draw[fill=black!10,dashed] (0,0) circle (4);
    \draw[thick] (-2.83,-2.83) to (2.83,2.83);
    \draw[thick] (-2.83,2.83) to (-.71,.71);
    \draw[thick] (.71,-.71) to (2.83,-2.83);
\end{scope}
\node (=) at (-.5in,0) {$=$};
\node (q) at (-.375in,0) {$q$};
\begin{scope}[xshift=-.05in,scale=.15]
    \draw[fill=black!10,dashed] (0,0) circle (4);
    \draw[thick] (-2.83,-2.83) to [out=45,in=-45] (-2.83,2.83);
    \draw[thick] (2.83,-2.83) to [out=135,in=-135] (2.83,2.83);
\end{scope}
\node (+) at (.3in,0) {$+$};
\node (q') at (.5in,.02in) {$q^{-1}$};
\begin{scope}[xshift=.85in,scale=.15]
    \draw[fill=black!10,dashed] (0,0) circle (4);
    \draw[thick] (-2.83,-2.83) to [out=45,in=135] (2.83,-2.83);
    \draw[thick] (-2.83,2.83) to [out=-45,in=-135] (2.83,2.83);
\end{scope}
\end{tikzpicture}
&
\begin{tikzpicture}
\path[use as bounding box] (-.8in,-.3in) rectangle (1.6in,.3in);
\node (q) at (-.625in,0.04in) {$q^{-\frac{1}{2}}$};
\begin{scope}[xshift=-.25in,scale=.15]
	\begin{scope}
	\clip (0,0) circle (4);
	\draw[fill=black!10,thick] (-5,-3) to [in=180,out=30] (0,-2) to [in=150,out=0] (5,-3) to [line to] (5,5) to (0,5) to (-5,5);
	\node (1) at (0,-2) [marked] {};
	\draw[thick] (1) to (4,3);
	\draw[thick] (-.8,-1) to (-4,3);
	\end{scope}
	\draw[dashed] (0,0) circle (4);
\end{scope}
\node (=) at (.1in,0) {$=$};
\begin{scope}[xshift=.45in,scale=.15]
	\begin{scope}
	\clip (0,0) circle (4);
	\draw[fill=black!10,thick] (-5,-3) to [in=180,out=30] (0,-2) to [in=150,out=0] (5,-3) to [line to] (5,5) to (0,5) to (-5,5);
	\node (1) at (0,-2) [marked] {};
	\draw[thick] (1) to (4,3);
	\draw[thick] (1) to (-4,3);
	\end{scope}
	\draw[dashed] (0,0) circle (4);
\end{scope}
\node (=) at (.8in,0) {$=$};
\node (q) at (.975in,0.04in) {$q^{\frac{1}{2}}$};
\begin{scope}[xshift=1.3in,scale=.15]
	\begin{scope}
	\clip (0,0) circle (4);
	\draw[fill=black!10,thick] (-5,-3) to [in=180,out=30] (0,-2) to [in=150,out=0] (5,-3) to [line to] (5,5) to (0,5) to (-5,5);
	\node (1) at (0,-2) [marked] {};
	\draw[thick] (1) to (-4,3);
	\draw[thick] (.8,-1) to (4,3);
	\end{scope}
	\draw[dashed] (0,0) circle (4);
\end{scope}
\end{tikzpicture}
\\
The Kauffman skein relation
&
The boundary skein relation
\\

\hline

\begin{tikzpicture}
\path[use as bounding box] (-1.15in,-.3in) rectangle (.9in,.3in);
\begin{scope}[xshift=-.85in,scale=.15]
    \draw[fill=black!10,dashed] (0,0) circle (4);
    \draw[thick] (0,0) circle (2);
\end{scope}
\node (=) at (-.5in,0) {$=$};
\node (2) at (-.05in,.02in) {$-(q^2+q^{-2})$};
\begin{scope}[xshift=.6in,scale=.15]
    \draw[fill=black!10,dashed] (0,0) circle (4);
\end{scope}
\end{tikzpicture}
&
\begin{tikzpicture}
\path[use as bounding box] (-1.4in,-.3in) rectangle (.875in,.3in);
\begin{scope}[xshift=-1.1in,scale=.15]
	\begin{scope}
	\clip (0,0) circle (4);
	\draw[fill=black!10,thick] (-5,-3) to [in=180,out=30] (0,-2) to [in=150,out=0] (5,-3) to [line to] (5,5) to (0,5) to (-5,5);
	\node (1) at (0,-2) [marked] {};
	\draw[thick] (1) to [out=45,in=0] (0,2) to [out=180,in=135] (-.8,-1);
	\end{scope}
	\draw[dashed] (0,0) circle (4);
\end{scope}
\node (='') at (-.75in,0) {$=$};
\begin{scope}[xshift=-.4in,scale=.15]
	\begin{scope}
	\clip (0,0) circle (4);
	\draw[fill=black!10,thick] (-5,-3) to [in=180,out=30] (0,-2) to [in=150,out=0] (5,-3) to [line to] (5,5) to (0,5) to (-5,5);
	\node (1) at (0,-2) [marked] {};
	\draw[thick] (1) to [out=45,in=0] (0,2) to [out=180,in=135] (1);
	\end{scope}
	\draw[dashed] (0,0) circle (4);
\end{scope}
\node (=) at (-.05in,0) {$=$};
\begin{scope}[xshift=.3in,scale=.15]
	\begin{scope}
	\clip (0,0) circle (4);
	\draw[fill=black!10,thick] (-5,-3) to [in=180,out=30] (0,-2) to [in=150,out=0] (5,-3) to [line to] (5,5) to (0,5) to (-5,5);
	\node (1) at (0,-2) [marked] {};
	\draw[thick] (1) to [out=135,in=180] (0,2) to [out=0,in=45] (.8,-1);
	\end{scope}
	\draw[dashed] (0,0) circle (4);
\end{scope}
\node (=') at (.65in,0) {$=$};
\node (0) at (.775in,.01in) {$0$};
\end{tikzpicture}
\\
The value of the unknot
&
The value of a contractible arc
\\
\hline
\end{tabular}
\caption{The defining relations of $\Sk_q(\S)$.}
\label{fig: skein}
\end{figure}
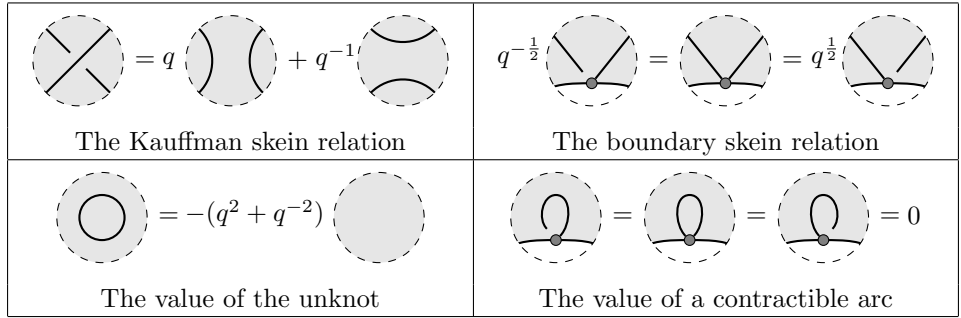

Define the quotient $\Zq$-module
\[ \Sk_q(\S):= \Zq^{\Links}/I\]
where $I$ is the submodule generated by the set $\{l-r\}$, running over relations of the form $l=r$ in Figure \ref{fig: skein}.  For a link $\link$, the class of $\link$ in $\Sk_q(\S)$ will be denoted $[\link]$.

\begin{rem}\label{rem: nosimul}
By the boundary skein relation, $\Sk_q(\S)$ is spanned over $\Zq$ by classes of links with no simultaneous endpoints.  It would have been possible to define $\Sk_q(\S)$ only in terms of those links; this would also eliminate the need for choosing a square root of $q$.  However, allowing simultaneous endpoints gives topological realizations of the multicurve elements defined in Section \ref{section: multicurve}.
%
\end{rem}

\subsection{The Reidemeister moves}

The relations imposed in $\Sk_q(\S)$ imply additional local relations which will be important (Figure \ref{fig: Reide}).  These are the (modified) Reidemeister moves from knot theory, together with an additional relation coming from the addition of marked endpoints.

\begin{figure}[h!]
\centering
\begin{tabular}{|c|c|}
\hline
\begin{tikzpicture}
\path[use as bounding box] (-.8in, -.3in) rectangle (.8in,.3in);
\begin{scope}[xshift=-.4in,scale=.12]
    \draw[fill=black!10,dashed] (0,0) circle (5);
    \draw[thick] (2,2) to [out=90,in=45] (-1,2) to [out=-135, in=135] (-1,-2) to [out=-45,in=-90] (2,-2);
    \draw[thick,line width=1.5mm,draw=black!10] (-3,-4) to [out=53,in=-135] (-1,-2) to [out=45,in=90] (2,-2);
    \draw[thick,line width=1.5mm,draw=black!10] (-3,4) to [out=-53,in=135] (-1,2) to [out=-45,in=-90] (2,2);
    \draw[thick] (-3,-4) to [out=53,in=-135] (-1,-2) to [out=45,in=90] (2,-2);
    \draw[thick] (-3,4) to [out=-53,in=135] (-1,2) to [out=-45,in=-90] (2,2);
\end{scope}
\node (=) at (0,0) {$=$};
\begin{scope}[xshift=.4in,scale=.12]
    \draw[fill=black!10,dashed] (0,0) circle (5);
    \draw[thick] (-3,4) to [out=-53,in=90] (-1,0) to [out=270,in=53] (-3,-4);
\end{scope}
\end{tikzpicture}
&
\begin{tikzpicture}
\path[use as bounding box] (-.8in, -.3in) rectangle (.8in,.3in);
\begin{scope}[xshift=-.4in,scale=.12]
    \draw[fill=black!10,dashed] (0,0) circle (5);
    \draw[thick] (3,4) to [out=-127,in=90] (-2,0) to [out=270,in=127] (3,-4);
    \draw[thick,line width=1.5mm,draw=black!10] (-3,4) to [out=-53,in=90] (2,0) to [out=270,in=53] (-3,-4);
    \draw[thick] (-3,4) to [out=-53,in=90] (2,0) to [out=270,in=53] (-3,-4);
\end{scope}
\node (=) at (0,0) {$=$};
\begin{scope}[xshift=.4in,scale=.12]
    \draw[fill=black!10,dashed] (0,0) circle (5);
    \draw[thick] (3,4) to [out=-127,in=90] (1,0) to [out=270,in=127] (3,-4);
    \draw[thick,line width=1.5mm,draw=black!10] (-3,4) to [out=-53,in=90] (-1,0) to [out=270,in=53] (-3,-4);
    \draw[thick] (-3,4) to [out=-53,in=90] (-1,0) to [out=270,in=53] (-3,-4);
\end{scope}
\end{tikzpicture}
\\
Modified Reidemeister 1.
&
Reidemeister 2.
\\
\hline
\begin{tikzpicture}
\path[use as bounding box] (-.8in, -.3in) rectangle (.8in,.3in);
\begin{scope}[xshift=-.4in,scale=.12]
	\begin{scope}
	\clip (0,0) circle (5);
	\draw[fill=black!10,thick] (-5,-4) to [in=180,out=30] (0,-3) to [in=150,out=0] (5,-4) to [line to] (5,6) to (0,6) to (-5,6);
	\node (1) at (0,-3) [marked] {};
	\draw[thick] (3,4) to [out=-127,in=90] (-2,0) to [out=270,in=135] (-1,-2);
	\draw[thick,line width=1.5mm,draw=black!10] (-3,4) to [out=-53,in=90] (2,0) to [out=270,in=45] (1);
	\draw[thick] (-3,4) to [out=-53,in=90] (2,0) to [out=270,in=45] (1);
	\end{scope}
	\draw[dashed] (0,0) circle (5);
\end{scope}
\node (=) at (0,0) {$=$};
\begin{scope}[xshift=.4in,scale=.12]
	\begin{scope}
	\clip (0,0) circle (5);
	\draw[fill=black!10,thick] (-5,-4) to [in=180,out=30] (0,-3) to [in=150,out=0] (5,-4) to [line to] (5,6) to (0,6) to (-5,6);
	\node (1) at (0,-3) [marked] {};
	\draw[thick] (3,4) to [out=-127,in=45] (1,-2);
	\draw[thick] (-3,4) to [out=-53,in=135] (1);
	\end{scope}
	\draw[dashed] (0,0) circle (5);
\end{scope}
\end{tikzpicture}
&
\begin{tikzpicture}
\path[use as bounding box] (-.8in, -.3in) rectangle (.8in,.3in);
\begin{scope}[xshift=-.4in,scale=.12]
    \draw[fill=black!10,dashed] (0,0) circle (5);
    \draw[thick,in=180,out=0] (-5,0) to (0,2) to (5,0);
    \draw[thick,in=60,out=-120,line width=1.5mm,draw=black!10] (2.5,4.33) to (1.73,-.5) to (-2.5,-4.33);
    \draw[thick,in=60,out=-120] (2.5,4.33) to (1.73,-.5) to (-2.5,-4.33);
    \draw[thick,in=-60,out=120,line width=1.5mm,draw=black!10] (2.5,-4.33) to (-1.73,-.5) to (-2.5,4.33);
    \draw[thick,in=-60,out=120] (2.5,-4.33) to (-1.73,-.5) to (-2.5,4.33);
\end{scope}
\node (=) at (0,0) {$=$};
\begin{scope}[xshift=.4in,scale=.12,rotate=180]
    \draw[fill=black!10,dashed] (0,0) circle (5);
    \draw[thick,in=180,out=0] (-5,0) to (0,2) to (5,0);
    \draw[thick,in=60,out=-120,line width=1.5mm,draw=black!10] (2.5,4.33) to (1.73,-.5) to (-2.5,-4.33);
    \draw[thick,in=60,out=-120] (2.5,4.33) to (1.73,-.5) to (-2.5,-4.33);
    \draw[thick,in=-60,out=120,line width=1.5mm,draw=black!10] (2.5,-4.33) to (-1.73,-.5) to (-2.5,4.33);
    \draw[thick,in=-60,out=120] (2.5,-4.33) to (-1.73,-.5) to (-2.5,4.33);
\end{scope}
\end{tikzpicture}
\\
Marked Reidemeister 2.
&
Reidemeister 3.
\\
\hline
\end{tabular}
\caption{The Reidemeister moves for links with endpoints.}
\label{fig: Reide}
\end{figure}
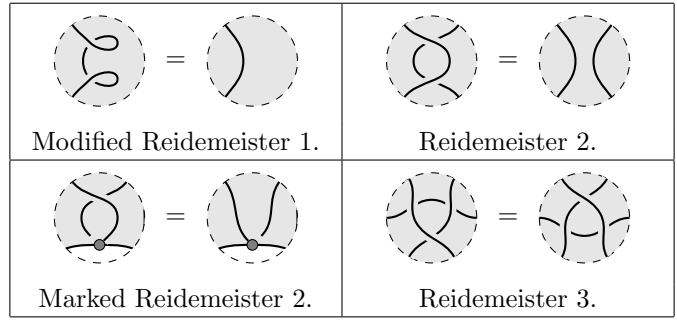

\begin{prop} \label{prop: Reide}
The locally defined relations in Figure \ref{fig: Reide} hold in $\Sk_q(\S)$.
\end{prop}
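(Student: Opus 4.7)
The plan is to derive each of the four Reidemeister moves by iteratively applying the four defining relations of Figure \ref{fig: skein}: the Kauffman skein relation at each crossing, the boundary skein relation at marked endpoints, and the unknot and contractible arc evaluations. Since all four moves are local statements, the computation takes place inside a small disc where the relations apply directly, and we only need to track how the tangle inside the disc rewrites.

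For the (modified) Reidemeister 1 move, I would apply the Kauffman skein relation at the self-crossing of the curl. One smoothing produces a straight strand together with a detached unknot, which evaluates via the unknot relation to $-(q^{2}+q^{-2})$ times the straight strand; the other smoothing produces a straight strand directly. The Kauffman coefficients $q^{\pm 1}$ combine with this factor (and with the matching contribution from the compensating crossing in the figure) to yield the displayed identity. For Reidemeister 2, apply the Kauffman relation at both crossings, producing four tangles with coefficients $q^{2},\,1,\,1,\,q^{-2}$. Three of the four tangles agree up to a detached unknot or give identical ``cup--cap'' configurations with opposite coefficients; substituting $-(q^{2}+q^{-2})$ for the unknot makes three terms cancel and leaves the identity braid, which is the right-hand side.

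Reidemeister 3 is then reduced to Reidemeister 2. Apply the Kauffman relation at one of the three crossings on each side of the claimed equality; each side becomes a sum of two diagrams, and in each diagram the remaining two crossings form a configuration to which R2 applies. Matching terms gives the result. The marked variant of R2 is handled analogously, except one also invokes the boundary skein relation to reorder strands at the marked endpoint, and uses the contractible arc relation to discard the contractible disc produced by one of the resulting smoothings. The main obstacle throughout is the bookkeeping of over/under (and, near a marked point, simultaneous) crossing data under each smoothing — particularly for R3, where the three nested crossings must be resolved in an order that makes the induced R2 configurations visible, and for marked R2, where one must confirm that the undrawn strands at the marked point are preserved in the correct order at every step — but with the resolutions arranged carefully, all four identities reduce to routine coefficient arithmetic mirroring the classical Kauffman bracket calculus.
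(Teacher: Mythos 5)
Your proposal is correct and follows essentially the same route as the paper: all four moves are verified by direct local application of the defining relations, with Reidemeister 2 resolved at both crossings into four terms with coefficients $q^{2},1,1,q^{-2}$ whose unknot term cancels the outer two, and the marked variant using the boundary skein relation together with the vanishing of the contractible arc. The paper writes out only the R2 computation and notes the others are similar, which matches your treatment (your reduction of R3 to R2 is the standard instance of this).
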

\begin{proof}
All four results follow from direct application of the relations in Figure \ref{fig: skein}.  We show the computation for Reidemeister 2; the others are similar.
\[\begin{tikzpicture}
\begin{scope}[xshift=0in,scale=.12]
    \draw[fill=black!10,dashed] (0,0) circle (5);
    \draw[thick] (3,4) to [out=-127,in=90] (-2,0) to [out=270,in=127] (3,-4);
    \draw[thick,line width=1.5mm,draw=black!10] (-3,4) to [out=-53,in=90] (2,0) to [out=270,in=53] (-3,-4);
    \draw[thick] (-3,4) to [out=-53,in=90] (2,0) to [out=270,in=53] (-3,-4);
\end{scope}
\node (=1) at (.35in,0) {$=$};
\node (q1) at (.5in,0.02in) {$q^2$};
\begin{scope}[xshift=.85in,scale=.12]
    \draw[fill=black!10,dashed] (0,0) circle (5);
    \draw[thick] (3,4) to [out=-127,in=0] (0,3) to [out=180,in=-53] (-3,4);
    \draw[thick] (-3,-4) to [out=53,in=270] (-.5,-2) to [out=90,in=270] (-2,0) to [out=90,in=180] (0,2) to [out= 0,in=90] (2,0) to [out=270,in=90] (.5,-2) to [out=270,in=127] (3,-4);
\end{scope}
\node (=2) at (1.2in,0) {$+$};
\begin{scope}[xshift=1.55in,scale=.12]
    \draw[fill=black!10,dashed] (0,0) circle (5);
    \draw[thick] (-3,-4) to [out=53,in=270] (-.5,-2) to [out=90,in=270] (-2,0) to [out=90,in=270] (-.5,2) to [out=90,in=-53] (-3,4);
    \draw[thick] (3,-4) to [out=127,in=270] (.5,-2) to [out=90,in=270] (2,0) to [out=90,in=270] (.5,2) to [out=90,in=-127] (3,4);
\end{scope}
\node (=3) at (1.9in,0) {$+$};
\begin{scope}[xshift=2.25in,scale=.12]
    \draw[fill=black!10,dashed] (0,0) circle (5);
    \draw[thick] (3,4) to [out=-127,in=0] (0,3) to [out=180,in=-53] (-3,4);
    \draw[thick] (3,-4) to [out=127,in=0] (0,-3) to [out=180,in=53] (-3,-4);
		\draw[thick] (0,0) circle (2);
\end{scope}
\node (=4) at (2.6in,0) {$+$};
\node (q4) at (2.8in,0) {$q^{-2}$};
\begin{scope}[xshift=3.2in,scale=.12,rotate=180]
    \draw[fill=black!10,dashed] (0,0) circle (5);
    \draw[thick] (3,4) to [out=-127,in=0] (0,3) to [out=180,in=-53] (-3,4);
    \draw[thick] (-3,-4) to [out=53,in=270] (-.5,-2) to [out=90,in=270] (-2,0) to [out=90,in=180] (0,2) to [out= 0,in=90] (2,0) to [out=270,in=90] (.5,-2) to [out=270,in=127] (3,-4);\end{scope}
\node (=5) at (3.55in,0) {$=$};
\begin{scope}[xshift=3.9in,scale=.12]
    \draw[fill=black!10,dashed] (0,0) circle (5);
    \draw[thick] (3,4) to [out=-127,in=90] (1,0) to [out=270,in=127] (3,-4);
    \draw[thick] (-3,4) to [out=-53,in=90] (-1,0) to [out=270,in=53] (-3,-4);
\end{scope}
\end{tikzpicture}
\]
The four terms in the middle come from applying the Kauffman skein relation to the two crossings.  The value of the unknot then cancels the first and last terms.
\end{proof}
\begin{rem}
The fixed values of unknots and contractible arcs defined in Figure \ref{fig: skein} are the only values for which the above Reidemeister moves hold (assuming the Kauffman and boundary skein relation).
\end{rem}

\begin{rem}
The Reidemeister moves describe the minimal relations needed to relate two links (drawn in $\S$ with crossings) which describe the ambient isotopic 
 framed links in $\S\times [0,1]$.  The endpoints of the framed link in $\S\times[0,1]$ are required to stay in $\M\times [0,1]$, and the framing at the endpoints must stay tangent to $\M\times [0,1]$.
\end{rem}

\subsection{The superposition product}

The $\Zq$-module $\Sk_q(\S)$ may be equipped with a $\Zq$-bilinear, non-commutative product called the \emph{superposition product}.  If $\link$ and $\link[Y]$ are two links such that the union of the underlying multicurves $\multi\cup \multi[Y]$ is transverse, define the \emph{superposition} $\link\cdot\link[Y]$ to be the link which is $\multi\cup \multi[Y]$ where each strand of $\link$ crosses over each strand of $\link[Y]$ and all other crossings are ordered as in $\link$ and $\link[Y]$.
\begin{prop}
$[\link \cdot \link[Y]]$ only depends on the homotopy classes of $\link$ and $\link[Y]$.
\end{prop}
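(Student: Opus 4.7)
The plan is to reduce the claim to the Reidemeister identities already established in Proposition \ref{prop: Reide}. I fix $\link[Y]$ and a homotopy $\{\link_t\}_{t\in[0,1]}$ from $\link_0 = \link$ to a homotopic $\link_1$ through the class of links; it suffices to show $[\link_0 \cdot \link[Y]] = [\link_1 \cdot \link[Y]]$, because the analogous argument with the roles reversed (where strands of $\link[Y]$ pass under strands of $\link$) handles variation in the second factor. By a standard transversality argument I may arrange that for all but finitely many critical times $t_1 < \cdots < t_n$ the union of underlying multicurves $\multi_t \cup \multi[Y]$ is transverse, so that the superposition $\link_t \cdot \link[Y]$ is a well-defined link that changes only by ambient isotopy of $\S$ on each open interval between critical times.

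At each critical time, a single codimension-one degeneration between $\link_t$ and $\link[Y]$ occurs: either an interior strand of $\link_t$ becomes tangent to an interior strand of $\link[Y]$ and passes through it (creating or destroying a pair of transverse crossings of opposite type); or an interior strand of $\link_t$ sweeps across an existing interior self-crossing of $\link[Y]$; or, at a marked point on the boundary, a strand of $\link_t$ sweeps past a strand of $\link[Y]$ in the cyclic order. Since the superposition convention places every new crossing so that strands of $\link_t$ lie over strands of $\link[Y]$, these three degenerations realize precisely the Reidemeister 2, Reidemeister 3, and marked Reidemeister 2 moves of Figure \ref{fig: Reide} between $\link_{t_i - \varepsilon} \cdot \link[Y]$ and $\link_{t_i + \varepsilon} \cdot \link[Y]$.

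By Proposition \ref{prop: Reide}, each of these moves preserves the class in $\Sk_q(\S)$. Composing across the finitely many critical times yields $[\link_0 \cdot \link[Y]] = [\link_1 \cdot \link[Y]]$, and the symmetric argument for the second factor completes the proof. The main technical obstacle is the transversality step, namely verifying that a generic one-parameter family of transverse multicurves in a surface with marked boundary encounters only the listed codimension-one degeneracies and no others; this is standard in the unmarked setting but needs care at marked points, where the constraint that arc endpoints remain in $\M$ changes the local analysis and forces one to confirm that the only new boundary degeneration is the cyclic-order swap captured by marked Reidemeister 2.
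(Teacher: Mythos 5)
Your argument is correct and is essentially the paper's own proof: pass to a generic one-parameter family, observe that the union of underlying multicurves fails transversality only at finitely many times and only via the three elementary degenerations (interior tangency, marked-point tangency, triple point), and invoke Proposition \ref{prop: Reide} to see that crossing each degeneration changes the superposition by Reidemeister 2, marked Reidemeister 2, or Reidemeister 3, hence not its class in $\Sk_q(\S)$. The only cosmetic difference is that you homotope one factor at a time while the paper moves both links in a single family $(\link_t,\link[Y]_t)$, which is immaterial.
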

\begin{proof}
Let $(\link',\link[Y]')$ be a pair of links homotopic to $(\link,\link[Y])$, such that the union of underlying multicurves $\multi'\cup\multi[Y]'$ is transverse.  There exists a family of pairs of links $(\link_t,\link[Y]_t)$ for $t\in [0,1]$ such that...
\begin{itemize}
\item $\link_t$ is a homotopy between $\link$ and $\link'$,
\item $\link[Y]_t$ is a homotopy between $\link[Y]$ and $\link[Y]'$,
\item there is a finite subset $S\subset [0,1]$ such that, for $t\in [0,1]- S$, the union of underlying multicurves $\multi_t\cup \multi[Y]_t$ is transverse, and
\item for $t\in S$, $\multi_t\cup \multi[Y]_t$ is transverse except for a single intersection, which is of one of the three types in Figure \ref{fig: nontrans}.
\end{itemize}
\begin{figure}[h!]
\begin{tikzpicture}
\begin{scope}[xshift=-1in,scale=.12]
    \draw[fill=black!10,dashed] (0,0) circle (5);
    \draw[thick] (3,4) to [out=-127,in=90] (0,0) to [out=270,in=127] (3,-4);
    \draw[thick] (-3,4) to [out=-53,in=90] (0,0) to [out=270,in=53] (-3,-4);
\end{scope}
\begin{scope}[xshift=0in,scale=.12]
	\begin{scope}
	\clip (0,0) circle (5);
	\draw[fill=black!10,thick] (-5,-4) to [in=180,out=30] (0,-3) to [in=150,out=0] (5,-4) to [line to] (5,6) to (0,6) to (-5,6);
	\node (1) at (0,-3) [marked] {};
	\draw[thick] (3,4) to [out=-127,in=90] (1);
	\draw[thick] (-3,4) to [out=-53,in=90] (1);
	\end{scope}
	\draw[dashed] (0,0) circle (5);
\end{scope}
\begin{scope}[xshift=1in,scale=.12]
    \draw[fill=black!10,dashed] (0,0) circle (5);
    \draw[thick,in=180,out=0] (-5,0) to (0,0) to (5,0);
    \draw[thick,in=60,out=-120] (2.5,4.33) to (0,0) to (-2.5,-4.33);
    \draw[thick,in=-60,out=120] (2.5,-4.33) to (0,0) to (-2.5,4.33);
\end{scope}
\end{tikzpicture}
\caption{Elementary failures of transversality}
\label{fig: nontrans}
\end{figure}
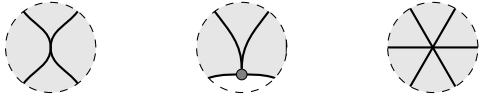
The superpositions $\link_{t_0}\cdot \link[Y]_{t_0}$ and $\link_{t_1}\cdot \link[Y]_{t_1}$ are homotopic if $t_0$ and $t_1$ are in the same component of $[0,1]-S$. If there is a single element of $S$ between $t_0$ and $t_1$, then the two superpositions will be related by Reidemeister 2, Marked Reidemeister 2, or Reidemeister 3, depending on which of the three non-transverse intersections occurs.  Then superpositions in adjacent components of $[0,1]-S$ are related by a single Reidemeister move, and so  $[\link\cdot \link[Y]]$ and $[\link'\cdot\link[Y]']$ are related by a finite sequence of Reidemeister moves.
\end{proof}
\begin{rem}
The quotient of $\Zq^{\Links}$ by the $\Zq$-submodule generated by Reidemeister 2, Marked Reidemeister 2 and Reidemeister 3 also admits a well-defined superposition product, and $\Sk_q(\S)$ can be defined as a quotient algebra of this algebra.  Modified Reidemeister 1 is unnecessary for the product to be well-defined.
\end{rem}

For general $\link$ and $\link[Y]$, define the \textbf{superposition product} $[\link][\link[Y]]$ by choosing homotopic links $\link'$ and $\link[Y]'$ such that $\link'\cup \link[Y]'$ is transverse, and letting
\[ [\link][\link[Y]] := [\link'\cdot \link[Y]']\]
By the proposition, this doesn't depend on the  choice of $\link'$ and $\link[Y]'$.  Extend this product to all of $\Sk_q(\S)$ by $\Zq$-bilinearity.

\begin{prop}\label{prop: skeinalgebra}
The superposition product makes $\Sk_q(\S)$ into an associative $\Zq$-algebra with unit $[\emptyset]$, the class of the empty link.
\end{prop}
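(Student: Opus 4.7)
The plan is to verify three things: the superposition product descends to the quotient $\Sk_q(\S)$, it is associative, and $[\emptyset]$ serves as a two-sided identity. The previous proposition establishes well-definedness of superposition up to homotopy on $\Zq^{\Links}$, so what remains for the first point is to check compatibility with each of the four defining skein relations of Figure \ref{fig: skein}.

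For this, I would fix one such relation, say $\sum_i c_i [\link_i] = 0$, where the links $\link_i$ all agree outside some small disc $D$. For an arbitrary link $\link[Y]$, homotope $\link[Y]$ within its class so that $\multi[Y]$ is transverse to each $\multi_i$ and avoids $D$ as much as possible. The only obstruction to $\multi[Y]$ missing $D$ entirely occurs when $D$ contains a marked point $m$ at which strands of $\link[Y]$ terminate; in that case, route those strands into $D$ along corridors disjoint from the drawn strands before they reach $m$. Because superposition uniformly places every $\link_i$-strand above every $\link[Y]$-strand, the relative ordering between the drawn strands at $m$ is identical for each $i$; the skein relations' explicit clause allowing additional undrawn strands of fixed relative order at a marked point then applies inside $D$ to yield $\sum_i c_i [\link_i \cdot \link[Y]] = 0$ in $\Sk_q(\S)$. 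Right multiplication is symmetric.

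For associativity, take three links $\link[X], \link[Y], \link[Z]$ whose underlying multicurves are pairwise transverse with transverse triple union. Both iterated superpositions $(\link[X] \cdot \link[Y]) \cdot \link[Z]$ and $\link[X] \cdot (\link[Y] \cdot \link[Z])$ produce literally the same link: the underlying multicurve is $\multi[X] \cup \multi[Y] \cup \multi[Z]$, and at every crossing and every marked point the strands are ordered with $\link[X]$-strands above $\link[Y]$-strands above $\link[Z]$-strands, with the internal orderings of each factor preserved. Unitality is immediate, since superposing with the empty link changes neither the multicurve nor the crossing data.

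The principal obstacle is the boundary case in the first step: when $D$ meets $\partial\S$ at a marked point that is also an endpoint of strands of $\link[Y]$, the local picture inside $D$ involves the drawn strands of the relation together with extra $\link[Y]$-strands, and one must verify that the relation as stated still applies in this enlarged configuration. The construction of the superposition product -- every $\link[X]$-strand placed uniformly above every $\link[Y]$-strand at each marked point -- is precisely what guarantees that the ``same relative order'' condition built into the skein relations is satisfied simultaneously on both sides of the relation.
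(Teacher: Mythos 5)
Your proposal is correct and follows essentially the same route as the paper: associativity and unitality are verified exactly as in the paper's proof, by choosing representatives whose triple union of underlying multicurves is transverse, so that both iterated superpositions are literally the same link, and by noting that superposing with $\emptyset$ changes nothing. The additional step you include---checking that the bilinear extension kills the defining skein relations, using locality of the relations, a homotopy pushing the strands of the second link into a corridor near the boundary, and the clause permitting extra undrawn strands of fixed relative order at marked points---is sound and makes explicit a descent-to-the-quotient point that the paper leaves implicit.
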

\begin{proof}
For links $\link, \link[Y],\link[Z]$, find homotopic links $\link',\link[Y]',\link[Z]'$ such that the union of the underlying multicurves $\multi'\cup\multi[Y]'\cup\multi[Z]'$ is transverse. Then
\[ ([\link][\link[Y]])[\link[Z]] = [\link'\cdot\link[Y]'\cdot\link[Z]']=[\link]([\link[Y]][\link[Z]])\]
We also have $[\link][\emptyset]=[\link\cdot\emptyset]=[\link]=[\emptyset\cdot\link] =[\emptyset][\link]$.
\end{proof}

\begin{defn}
The algebra $\Sk_q(\S)$ is the \textbf{(Kauffman) skein algebra} of $\S$.
\end{defn}

When $\S$ has no marked points, this definition coincides with the usual definition of the Kauffman skein algebra of an (unmarked) surface, defined in \cite{Prz91}.  

\begin{rem}
Some authors replace $\Zq$ with a field $\k$ with a distinguished non-zero element $\lambda$, which plays the role of $\rq$.  This setup can be recovered from ours as follows. The map $\Zq\rightarrow \k$ with $\rq\mapsto \lambda$ makes $\k$ into a $\Zq$-algebra.  Then the $\k$-algebra $\k\otimes_{\Zq}\Sk_q(\S)$ is the skein algebra defined over $\k$.  Since $\Sk_q(\S)$ is a free $\Zq$-module (Lemma \ref{lemma: basis}), no torsion complications arise.
\end{rem}

\begin{rem}
In \cite[Definition 2.5]{RY11}, the authors also generalize skein algebras to `marked surfaces'. However, they generalize skein algebras in an orthogonal direction, in that they require $\partial\S=\emptyset$ but allow interior marked points.  It is not clear if the two definitions can be combined in some `best' way; see Remark \ref{rem: punctures}.
\end{rem}

%

\subsection{The bar involution}\label{section: bar}

For $\link$ any link, let $\link^\dag$ be the link with the same underlying multicurve, but all crossing orders reversed.
\begin{prop}
The map $[\link]^\dag:= [\link^\dag]$ and $(\rq)^\dag:=q^{-\frac{1}{2}}$ extends to an involutive ring antiautomorphism of $\Sk_q(\S)$, called the \textbf{bar involution}.
\end{prop}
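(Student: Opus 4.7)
The plan is to prove three things in sequence: (i) the assignment $[\link]\mapsto[\link^\dag]$ descends to a well-defined $\mathbb{Z}$-linear map, (ii) when combined with $\rq\mapsto q^{-\frac{1}{2}}$ it is compatible with the $\Zq$-module structure (giving a $\Zq$-antilinear endomorphism), (iii) it is involutive and reverses superposition.

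First, crossing-reversal is defined on the level of links, and since homotopy of links preserves all crossing data (intersections cannot be created or destroyed), the operation $\link\mapsto \link^\dag$ descends to a well-defined map on homotopy classes and hence to a $\Z$-linear endomorphism of $\Zq^{\Links}$. Combined with $\rq\mapsto q^{-\frac{1}{2}}$, this gives a ring antiautomorphism of the coefficient ring $\Zq$, so the pair extends uniquely to a $\Z$-linear, $\Zq$-antilinear map $\dag$ on $\Zq^{\Links}$.

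Next, I would check that $\dag$ sends the defining submodule $I$ of relations from Figure \ref{fig: skein} into itself. The Kauffman skein relation has the form (overcrossing) $= q\cdot A + q^{-1}\cdot B$ where $A,B$ are the two smoothings. Applying $\dag$ swaps the overcrossing for the undercrossing and swaps $q\leftrightarrow q^{-1}$; the resulting equation is exactly the Kauffman skein relation applied in its other orientation, which is also a generator of $I$. The boundary skein relation is handled identically, with the square-root factors $q^{\pm\frac{1}{2}}$ swapping under $\dag$. The unknot relation has no crossings and its coefficient $-(q^2+q^{-2})$ is fixed by $\rq\mapsto q^{-\frac{1}{2}}$, so $\dag$ preserves it; the contractible arc relation is likewise preserved since $0^\dag=0$ and the crossings in those pictures have no alternative ordering. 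Hence $\dag$ descends to a $\Zq$-antilinear endomorphism of $\Sk_q(\S)$.

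Involutivity is immediate: reversing crossing orders twice restores the original link, and $(q^{-\frac{1}{2}})^\dag=\rq$. For the antimultiplicative property, let $\link,\link[Y]$ be links whose underlying multicurves have transverse union; by definition $\link\cdot\link[Y]$ has every $\link$-strand passing over every $\link[Y]$-strand, and the crossings internal to $\link$ (resp.\ $\link[Y]$) inherit their ordering from $\link$ (resp.\ $\link[Y]$). Applying $\dag$ reverses all of these, so every $\link$-strand now passes under every $\link[Y]$-strand and internal crossings are those of $\link^\dag$ and $\link[Y]^\dag$; this is precisely the link $\link[Y]^\dag\cdot\link^\dag$. Therefore $([\link][\link[Y]])^\dag=[\link[Y]^\dag][\link^\dag]=[\link[Y]]^\dag[\link]^\dag$, and $\Zq$-antilinearity extends this to arbitrary elements.

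The main obstacle is the well-definedness step: one must be careful that the Kauffman and boundary skein relations, as drawn in Figure \ref{fig: skein}, occur in $I$ for both choices of the crossing being resolved, so that applying $\dag$ to one generator lands on another generator rather than something that must be derived. This is automatic once one notes that the local pictures in Figure \ref{fig: skein} include both crossing orientations as one varies the link outside the dashed disc (or equivalently, both crossing orientations independently generate relations in $I$); everything else reduces to bookkeeping with the substitution $\rq\mapsto q^{-\frac{1}{2}}$.
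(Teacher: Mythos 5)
Your proposal is correct and follows essentially the same route as the paper: define $\dag$ semilinearly on $\Zq^{\Links}$, check that each defining relation in Figure \ref{fig: skein} is carried to a relation of the same type so the map descends, and verify antimultiplicativity via $(\link\cdot\link[Y])^\dag=\link[Y]^\dag\cdot\link^\dag$ on transverse representatives. Your write-up simply spells out in more detail the relation-checking and well-definedness points that the paper treats as immediate.
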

\begin{proof}
Let $\dag:\Zq^{\Links(\S)}\rightarrow \Zq^{\Links(\S)}$ send $[\link]$ to $[\link^\dag]$ and $\rq$ to $q^{-\frac{1}{2}}$; this map is manifestly an involution.  Each relation in Figure \ref{fig: skein} goes to a relation of the same type, and so there is a quotient involution $\dag:\Sk_q(\S)\rightarrow \Sk_q(\S)$.

For links $\link,\link[Y]$, let $\link'$ and $\link[Y]'$ be homotopic links with $\link'\cup\link[Y]'$ transverse.  Then
\[ [\link]^\dag[\link[Y]]^\dag = [\link^\dag][\link[Y]^\dag] = [\link^\dag \cdot \link[Y]^\dag] = [(\link[Y]'\cdot \link')^\dag]
= [\link[Y]'\cdot \link']^\dag = ([\link[Y]] [\link])^\dag\]
Since $[\emptyset]^\dag=[\emptyset]$, this is a ring homomorphism.
\end{proof}

The bar involution will be useful for two reasons.  First, it shows $\Sk_q(\S)$ is isomorphic to its opposite algebra, which cuts some proofs in half.  Second, we are particularly interested in elements of $\Sk_q(\S)$ which are fixed by the bar involution.

\subsection{The endpoint $\E$-grading}\label{section: grading}

The skein algebra had an \emph{endpoint grading}, where the degree of an arc is the formal sum of its endpoints, in the lattice $\Z^\M$ spanned by the marked points. This grading restricts to the following sublattice $\E$ in $\Z^\M$. 
%
\[ \E:= \left\{f:\M\rightarrow \Z\, |\, \forall \text{ connected components } \S'\subseteq \S, \sum_{m\in \M\cap\S'}f(m) \text{ is even}\right\}\]
Let $\E_+$ be the subsemigroup whose image lands in $\N\subset \Z$.

For any
 $f:\M\rightarrow \Z$, let $(\Sk_q(\S))_f$ be the $\Z_q$-submodule spanned by links with $f(m)$ strands at each marked point $m$; note that this is zero unless $f\in \E_+$.
\begin{prop}
This defines an $\E_+$-grading on $\Sk_q(\S)$.
\end{prop}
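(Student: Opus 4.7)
The plan is to lift the claimed grading from the level of links to $\Sk_q(\S)$ by checking that the defining relations are homogeneous, and then to verify multiplicativity using the superposition product. I would introduce the preliminary auxiliary grading on the free module $\Zq^{\Links}$, where a link $\link$ sits in degree $f_\link\in \N^\M\subseteq \Z^\M$ with $f_\link(m)$ equal to the number of strands of $\link$ at $m$; this is manifestly well-defined on homotopy classes of links since homotopies preserve endpoint incidence.

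The first step is to check that the submodule $I$ generated by the relations in Figure \ref{fig: skein} is homogeneous for this $\Z^\M$-grading. This reduces to a case-by-case inspection: the Kauffman skein relation is supported away from marked points, so each term has the same (zero) endpoint vector in the local picture; the boundary skein relation has each of its three terms featuring exactly two strands at the single marked point $m$ involved and no others; the unknot relation pairs an empty endpoint vector on each side; and the contractible arc relation again has two strands at $m$ on each side. Hence $I$ is homogeneous and $\Sk_q(\S)$ inherits a $\Z^\M$-grading whose $f$-th component is exactly $(\Sk_q(\S))_f$.

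Next, I would show that this grading is supported on $\E_+$. Since each arc is connected, both of its endpoints lie in a single connected component $\S'$ of $\S$, and it contributes $2$ to $\sum_{m\in \M\cap \S'}f_\link(m)$; a loop contributes nothing. So every link $\link$ has $f_\link\in \E_+$, and the grading factors through $\E\subseteq \Z^\M$ with $(\Sk_q(\S))_f=0$ for $f\not\in \E_+$.

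Finally, for multiplicativity: if $\link$ and $\link[Y]$ are transverse, the superposition $\link\cdot \link[Y]$ has underlying multicurve $\multi\cup\multi[Y]$, so $f_{\link\cdot \link[Y]}=f_\link+f_{\link[Y]}$. Since the superposition product is defined via transverse homotopic representatives, which have the same endpoint counts as the originals, this yields $(\Sk_q(\S))_f\cdot(\Sk_q(\S))_g\subseteq (\Sk_q(\S))_{f+g}$ for all $f,g\in \E$. There is no real obstacle here: the entire argument is bookkeeping, and the only non-formal ingredient is the parity observation that an arc's two endpoints always land in the same component of $\S$.
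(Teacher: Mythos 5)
Your argument is correct and is essentially the paper's proof: the paper likewise grades $\Zq^{\Links}$ by endpoint counts (noting equivalent links have the same endpoints, hence an $\E_+$-grading) and observes that the defining relations are $\E$-homogeneous by inspection. Your added verification of multiplicativity under the superposition product and the explicit parity argument placing degrees in $\E_+$ are just fuller bookkeeping of the same route, not a different method.
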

\begin{proof}
Two homotopic links have the same set of endpoints, so $\Zq^{\Links}$ is naturally $\E_+$-graded.  The defining relations in $\Sk_q(\S)$ are $\E_+$-homogeneous by inspection.
\end{proof}
%
%
The degree zero part is the subalgebra $(\Sk_q(\S))_0$ spanned by links without arcs; this is isomorphic to $\Sk_q(\S_0)$, where $\S_0$ is the unmarked version of $\S$.

%
%

%
%

\section{Simple multicurves}\label{section: multicurve}

\subsection{Simple multicurves in $\Sk_q(\S)$}

Recall that a \textbf{simple multicurve} $\multi$ in $\S$ is a transverse multicurve with no crossings, no unknots and no contractible arcs.  A simple multicurve can be regarded as a link with simultaneous endpoints; let $[\multi]$ denote the corresponding element in $\Sk_q(\S)$.  No factor of $q$ appears in the definition of $[\multi]$, though it can be defined as a $q$-multiple of an ordered version of $\multi$.

This element is fixed by the bar involution,
that is, $[\multi]^\dag= [\multi]$.
Moreso, the element $[\multi]$ is the only $\rq$-multiple of itself or any other ordering of its endpoints which is fixed by the bar-involution.  This gives an alternate definition of $[\multi]$.

Let $\Multi$ be the set of homotopy classes of simple multicurves.
\begin{lemma}\label{lemma: basis}
Under $\multi\mapsto[\multi]$, the set $\Multi$ maps to a $\Zq$-basis of $\Sk_q(\S)$.
\end{lemma}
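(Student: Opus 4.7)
My plan splits into spanning and linear independence, with linear independence being the main obstacle.

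For spanning, define a complexity $c(\link) = (n_c, n_o, n_t) \in \N^3$ where $n_c$ counts interior crossings of the underlying transverse multicurve, $n_o$ counts "non-simultaneous" orderings at marked endpoints, and $n_t$ counts contractible components (unknots plus contractible arcs); order $\N^3$ lexicographically. A link with $c(\link) = 0$ is (a choice of representative of) a simple multicurve. Each relation in Figure \ref{fig: skein} rewrites a link as a $\Zq$-combination of links of strictly smaller complexity: Kauffman strictly decreases $n_c$; the boundary relation decreases $n_o$ (possibly trading away some of $n_c$, but in a lex-reducing way when $n_c$ is already minimized near the endpoint); the unknot and contractible-arc relations decrease $n_t$. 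By induction on $c$, every $[\link]$ lies in the $\Zq$-span of $\{[\multi] \mid \multi \in \Multi\}$.

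For linear independence, the natural strategy is to build a $\Zq$-linear retraction $\rho : \Zq^{\Links} \to \Zq^{\Multi}$ that vanishes on the relation submodule $I$ and acts as the identity on simple multicurves; it then descends to $\Sk_q(\S) \to \Zq^{\Multi}$, which is a left inverse to $\multi \mapsto [\multi]$, forcing linear independence. The map $\rho$ is defined by the reduction algorithm above (applied in some canonical but arbitrary order). The technical heart is to show that the output of this rewriting is independent of the order of reductions; equivalently, that the rewriting system is confluent in the sense of the diamond lemma.

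The main obstacle, therefore, is verifying confluence at each type of critical pair: (a) two disjoint interior crossings, each resolvable by Kauffman; (b) an interior crossing adjacent to an endpoint where the boundary skein relation could also be applied; (c) a crossing whose resolution creates an unknot, a contractible arc, or a configuration amenable to a Reidemeister 1/2/3 move; (d) two overlapping applications of the boundary relation at the same marked point with different orderings. Case (a) reduces to a direct computation of the kind carried out in the proof of Proposition \ref{prop: Reide}. Case (c) is handled by the Reidemeister moves (Proposition \ref{prop: Reide}) together with the value $-(q^2+q^{-2})$ for the unknot and the value $0$ for contractible arcs, which are precisely the coefficients that make Reidemeister 1 and 2 self-consistent. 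Case (b) follows from combining Kauffman and the boundary relation and checking that the two orders of resolution agree after cancellation. Case (d) follows from transitivity of the boundary relation together with the square-root normalization $q^{\pm 1/2}$. Once confluence is established, one also checks that the remaining transverse moves in Figure \ref{fig: nontrans} (which realize homotopies of links) preserve the normal form, exactly because the Reidemeister moves already hold in the quotient; this shows $\rho$ is well-defined on $\Zq^{\Links}$ and vanishes on $I$, completing the argument.
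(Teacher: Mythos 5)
Your proposal is essentially the paper's own argument: the paper also proves spanning and independence simultaneously by constructing a $\Zq$-linear retraction $r\colon \Zq^{\Links}\rightarrow\Zq^{\Multi}$ via exactly your reduction algorithm (normalize endpoint orderings, resolve all crossings by the Kauffman relation, delete contractible components), and justifies that $r$ kills the relation submodule by observing that the local relations can be applied in any order, which is precisely the confluence statement you propose to check via critical pairs. The only point to watch is that in your case (c) the Reidemeister identities must be re-verified as identities among normal forms in the free module $\Zq^{\Multi}$ (by redoing the computation of Proposition \ref{prop: Reide} there) rather than quoted from $\Sk_q(\S)$, since the quotient is what you are trying to control; the same local computation does the job, so this is presentational rather than a gap.
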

\begin{proof}
Let $\Zq^\Multi$ be the free $\Zq$-module with basis $\Multi$.  There is a map
\[s:\Zq^\Multi\rightarrow \Sk_q(\S)\]
which sends $\multi$ to $[\multi]$.

Define a map $\tilde{r}:\Zq^{\Links}\rightarrow \Zq^{\Multi}$ as follows.  Let $\link$ be a link.
\begin{enumerate}
\item First, find $n\in \Z$ such that $[\link]=q^{\frac{n}{2}}[\link']$, where $\link'$ is identical to $\link$ except with the simultaneous ordering on endpoints.
\item Then, by applying the Kauffman skein relation to each crossing in $\link'$, find links $\link_i$ and $m_i\in \Z$ (for an index set $I$) such that each $\link_i$ has no crossings, and
\[ [\link'] =\sum_{i\in I} q^{m_i}[\link_i]\]
\item Finally, remove contractible components of each $\link_i$ using the defined values.  That is, let $\overline{\link}_i$ be $\link_i$ with the contractible components removed, and let $\lambda_i\in \Zq$ be such that $[\link_i]=\lambda_i[\overline{\link}_i]$.
\end{enumerate}
Since each $\overline{\link}_i$ is a simple multicurve, define
\[ \tilde{r}(\link) = \sum_{i\in I} (q^{\frac{n}{2}+m_i}\lambda_i)\overline{\link}_i\in \Zq^\Multi\]
Because the relations are local, the steps in the construction of $r(\link)$ could have been taken in any order, with the exception of removing contractible components created by applying the Kauffman skein relation.  It follows that $r$ descends to a map
\[ r:\Sk_q(\S)\rightarrow \Zq^{\Multi}\]
By construction, $s(r([\link]))=[\link]$ as elements of $\Sk_q(\S)$.  If $\multi$ is a simple multicurve, then the construction of $r(\multi)$ makes no changes, and so $r(s(\multi))=\multi$.  Then $s$ and $r$ are inverses.
%
%
\end{proof}
For any element $x\in \Sk_q(\S)$, there is a unique subset $Supp(x)\subset \Multi$ (called the \textbf{support} of $x$) and unique non-zero $\lambda_{\multi[Y]}\in \Zq$ for each $\multi[Y]\in Supp(x)$, such that
\[ x=\sum_{\multi[Y]\in Supp(x)} \lambda_{\multi[Y]}[\multi[Y]]\]

\begin{rem}\label{rem: support}
Because the skein relations are local, for any link $\link$, there are simple multicurves $\multi_i$ which are each identical to $\link$ away from small neighborhoods of each crossing and marked point, such that $Supp([\link])=\{\multi_i\}$.
\end{rem}

\begin{coro}\label{coro: gens}
The $\Zq$-algebra $\Sk_q(\S)$ is generated by the set of simple curves.
\end{coro}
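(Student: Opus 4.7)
The plan is to combine Lemma \ref{lemma: basis} with a direct computation of the superposition product of the constituent curves of a simple multicurve. By Lemma \ref{lemma: basis}, the set $\{[\multi] : \multi \in \Multi\}$ spans $\Sk_q(\S)$ as a $\Zq$-module, so it suffices to exhibit each $[\multi]$ as a $\Zq$-linear expression in products of simple curves.

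Given a simple multicurve $\multi$, write it as a (possibly repeated) collection $\multi = \{\curve_1,\curve_2,\ldots,\curve_k\}$ of its constituent curves. Since $\multi$ has no interior intersections and no contractible components, each individual $\curve_i$ is itself a simple curve. I would then form the superposition product $[\curve_1][\curve_2]\cdots[\curve_k] \in \Sk_q(\S)$. After small homotopies to achieve transversality, the underlying multicurve of the superposition is exactly $\multi$ (no new interior crossings appear, because the $\curve_i$'s already sit disjointly in the interior), while at each marked point $m$, the strands inherit the total ordering where $\curve_i$ sits above $\curve_j$ whenever $i<j$.

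The remaining step is to convert this total ordering at each marked point into the simultaneous ordering that defines $[\multi]$. The boundary skein relation in Figure \ref{fig: skein} says that swapping the order of two adjacent strands at a marked point, or converting an adjacent pair from ordered to simultaneous, only multiplies the element by a power of $q^{\pm\frac{1}{2}}$. Applying this relation finitely many times at each marked point yields
\[
[\curve_1][\curve_2]\cdots[\curve_k] = q^{\frac{n}{2}}[\multi]
\]
for some $n \in \Z$ determined by the combinatorics of the endpoints. Hence $[\multi] = q^{-\frac{n}{2}}[\curve_1][\curve_2]\cdots[\curve_k]$ lies in the subalgebra generated by simple curves.

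There is no real obstacle: the proof is essentially a bookkeeping argument, and the only thing to be careful about is tracking the $q^{\pm\frac{1}{2}}$ factors arising from the boundary skein relation, but these are units in $\Zq$ so they do not interfere with the conclusion. Combining this with the spanning statement of Lemma \ref{lemma: basis} completes the proof.
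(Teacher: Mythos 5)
Your proposal is correct and follows the same route as the paper: both reduce to the spanning statement of Lemma \ref{lemma: basis} and then observe that $[\multi]$ equals a power of $q^{\frac{1}{2}}$ times the superposition product of its constituent simple curves, the $q$-power coming from the boundary skein relation at the marked points. Your write-up just makes the bookkeeping at the endpoints more explicit than the paper does.
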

\begin{proof}
If $\multi$ is a simple multicurve consisting of simple curves $\curve_1,\curve_2,...,\curve_n$, then $[\multi] = q^{\frac{\lambda}{2}}[\curve_1][\curve_2]...[\curve_n]$ for some $\lambda\in \mathbb{Z}$.  Then the simple curves generate a $\Zq$-subalgebra of $\Sk_q(\S)$ which contains a basis, and so it coincides with all of $\Sk_q(\S)$.
\end{proof}

\subsection{Counting crossings}

For any two simple multicurves $\multi$ and $\multi[Y]$, let $\mu(\multi,\multi[Y])$ denote the minimum number of crossings between $\multi'$ and $\multi[Y]'$, over all transverse pairs $(\multi',\multi[Y]')$ homotopic to $(\multi,\multi[Y])$.  Note that intersections at marked points are not counted.  We will say $\multi$ and $\multi[Y]$ have \emph{minimal crossings} if $\multi\cdot\multi[Y]$ has $\mu(\multi,\multi[Y])$ crossings.

\begin{lemma}\label{lemma: simul}\cite{FHS82}
Let $\multi_1,\multi_2,...,\multi_n$ be a finite collection of simple multicurves.  Then there are simple multicurves $\multi_1',\multi_2',...,\multi_n'$ such that,
\begin{itemize}
\item for all $i$, $\multi_i'$ is homotopic to $\multi_i$, and
\item for all $i$ and $j$, $\multi_i'$ and $\multi_j'$ have minimal crossings.
\end{itemize}
\end{lemma}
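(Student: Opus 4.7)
The plan is to realize each $\multi_i$ by a geodesic representative in an auxiliary hyperbolic structure on $\S$ (after decorating the marked points as ideal boundary vertices), and invoke the classical fact that a collection of geodesics on a negatively curved surface automatically sits in simultaneous minimal position.

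First I would endow $\S$ with a complete, nonpositively curved metric in which each component of $\partial\S\setminus\M$ is geodesic and each marked point in $\M$ is an ideal boundary vertex; on components of negative decorated Euler characteristic this metric can be taken hyperbolic, and on the short list of remaining degenerate components (polygons with at most three marked points, the annulus with no marked points, closed spheres and tori, and the unmarked disk) the set of homotopy classes of simple multicurves is explicit and the lemma is verified by inspection. This reduces the problem to the hyperbolic case.

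Next, for each constituent curve of each $\multi_i$ I would pass to its geodesic representative: a loop becomes the unique simple closed geodesic in its free homotopy class, and an arc becomes the unique complete geodesic asymptotic to the two prescribed ideal boundary vertices. A homotopy class appearing with multiplicity in $\multi_i$ is handled by taking disjoint parallel copies within an annular neighborhood of the common geodesic. The resulting $\multi_i'$ is a simple multicurve homotopic to $\multi_i$, and crucially depends only on $\multi_i$ individually, not on any other $\multi_j$.

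Finally, I would show each pair $(\multi_i',\multi_j')$ is in minimal position via the bigon criterion applied to ordinary and ideal bigons: two simple curves on $\S$ are in minimal position if and only if they cobound no embedded (possibly ideal) bigon, and two geodesic arcs on a nonpositively curved surface cannot bound such a bigon, since a geodesic bigon would have vanishing geodesic curvature along its sides and two positive corner angles, contradicting Gauss--Bonnet. The main obstacle will be the careful treatment of arcs sharing an endpoint at a marked point: one must verify that distinct homotopy classes incident to a common marked point produce geodesic representatives with distinct asymptotic directions at the ideal vertex, so that no ideal bigon forms at the corner. This is a standard feature of the Teichm\"uller theory of bordered surfaces with ideal vertices, and once established the required pairwise minimality follows uniformly from the bigon criterion.
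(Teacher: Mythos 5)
Your proposal is correct and follows essentially the same route as the paper, which only sketches the argument (citing \cite{FHS82}): fix a hyperbolic metric on $\S$ and replace each $\multi_i$ by its geodesic representative, which is automatically in pairwise minimal position. Your additional details --- parallel copies for repeated classes, the (ideal) bigon criterion via Gauss--Bonnet, and the treatment of arcs sharing a marked endpoint --- are exactly the points the paper leaves implicit, so the two arguments agree in substance.
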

\begin{proof}[Idea of proof]
This is done by choosing a hyperbolic metric on $\S$.  Then curve-shortening flow takes $\multi_i$ to a geodesic $\multi_i'$, which also minimizes pairwise intersections.  This may create intersections of higher order, but these can be resolved by a small perturbation.
\end{proof}
\begin{coro}\label{coro: crossingsum}
If $\multi$ and $\multi[Y]$ are simple multicurves with components $\curve_1,\curve_2,...,\curve_m$ and $\curve[y]_1,\curve[y]_2,...,\curve[y]_n$, then
\[ \mu(\multi,\multi[Y]) = \sum_{\substack{1\leq i\leq n \\ 1\leq j\leq m}} \mu(\curve_i,\curve[y]_j)\]
\end{coro}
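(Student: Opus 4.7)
The plan is to prove the identity by sandwiching $\mu(\multi,\multi[Y])$ between $\sum_{i,j}\mu(\curve_i,\curve[y]_j)$ from both sides, using Lemma \ref{lemma: simul} for the harder direction.

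For the lower bound, take any transverse pair $(\multi',\multi[Y]')$ homotopic to $(\multi,\multi[Y])$. Since a homotopy of multicurves is a bijection on components followed by component-wise homotopies, we may write $\multi' = \curve_1' \cup \cdots \cup \curve_m'$ and $\multi[Y]' = \curve[y]_1' \cup \cdots \cup \curve[y]_n'$, where each $\curve_i'$ (respectively $\curve[y]_j'$) is homotopic to $\curve_i$ (respectively $\curve[y]_j$). The set of interior crossings between $\multi'$ and $\multi[Y]'$ partitions according to which pair $(\curve_i',\curve[y]_j')$ contains each crossing, and the crossings lying on $(\curve_i',\curve[y]_j')$ number at least $\mu(\curve_i,\curve[y]_j)$. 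Summing and minimizing over $(\multi',\multi[Y]')$ gives $\mu(\multi,\multi[Y]) \geq \sum_{i,j}\mu(\curve_i,\curve[y]_j)$.

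For the upper bound, apply Lemma \ref{lemma: simul} to the finite family of single-curve multicurves $\curve_1,\ldots,\curve_m,\curve[y]_1,\ldots,\curve[y]_n$, obtaining simultaneous representatives $\curve_i'$ and $\curve[y]_j'$ whose pairwise crossings are all minimal. Since $\multi$ was already simple, $\mu(\curve_i,\curve_j) = 0$ for $i \neq j$, so the $\curve_i'$ are pairwise non-crossing in the interior, and similarly for the $\curve[y]_j'$. Reassembling $\multi' := \curve_1' \cup \cdots \cup \curve_m'$ and $\multi[Y]' := \curve[y]_1' \cup \cdots \cup \curve[y]_n'$ produces transverse multicurve representatives of $(\multi,\multi[Y])$ whose interior crossings number exactly $\sum_{i,j}\mu(\curve_i,\curve[y]_j)$, yielding the reverse inequality.

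The main subtlety is verifying that the simultaneous-minimization step really gives pairwise-disjoint $\curve_i'$ when some components of $\multi$ happen to be homotopic (as multicurves may have duplicates): the curve-shortening flow from the sketch of Lemma \ref{lemma: simul} collapses homotopic components onto the same geodesic. This is resolved by the same perturbation step invoked in the proof of Lemma \ref{lemma: simul}, which splits each coincident geodesic family into a disjoint parallel family without creating new transverse crossings, so minimality against every other component in the collection is preserved.
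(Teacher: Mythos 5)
Your proof is correct and follows essentially the same route as the paper: apply Lemma \ref{lemma: simul} to the full collection of components, observe that simplicity of $\multi$ and $\multi[Y]$ is preserved since homotopic-pair crossing numbers within each multicurve are zero, and count crossings pairwise. You merely spell out the lower-bound inequality (and the duplicate-component perturbation issue) that the paper's proof leaves implicit, which is a reasonable elaboration rather than a different argument.
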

\begin{proof}
By Lemma \ref{lemma: simul}, find $\multi'$ and $\multi[Y]'$ homotopic to $\multi$ and $\multi[Y]$, respectively, so that $\multi'\cup\multi[Y]'$ is transverse and each pair of components has minimal crossings. In particular, components in $\multi'$ do not cross each other, and so $\multi'$ is still a simple multicurve; likewise, $\multi[Y]'$ is still a simple multicurve.  Since these are simple multicurves homotopic to $\multi$ and $\multi[Y]$ with minimal total crossings, $\mu(\multi,\multi[Y])$ is the number of crossings in $\multi'\cup\multi[Y]'$, which can be counted by summing over all pairs.
\end{proof}

Extend $\mu$ to a map
\[ \mu:\Sk_q(\S)\times\Sk_q(\S)\rightarrow \mathbb{N}\]
\[ \mu(x,y)
:= \max\{ \mu\left(\multi,\multi[Y]\right)\,|\,{\multi\in Supp(x),\multi[Y]\in Supp(y)}\}\]
Define $\mu(0,x)=0$ for all $x$.
\begin{rem}
If $\link$ and $\link[Y]$ are general links, then $\mu([\link],[\link[Y]])$ is less than or equal to the minimum number of crossings between $\link'$ and $\link[Y]'$, over all pairs $(\link',\link[Y]')$ homotopic to $(\link,\link[Y])$.  Equality is not always true; see Lemma \ref{lemma: reducing} for an example.
\end{rem}
For a fixed element $x\in \Sk_q(\S)$, $\mu(x,-)$ behaves like the degree of a polynomial.\footnote{If the reader enjoys complicated words, they may correctly call $\mu$ a \emph{bisubtropical} map.}
\begin{lemma}\label{lemma: crossings}
For $x,y,z\in \Sk_q(\S)$,
\begin{enumerate}
\item $\mu(x,y)=\mu(y,x)$.
\item $\mu(x,y+z)\leq \max(\mu(x,y),\mu(x,z))$.
\item $\mu(x,yz)\leq \mu(x,y)+\mu(x,z)$.
\item If $\mu(y,z)=0$, then $\mu(x,yz)=\mu(x,y)+\mu(x,z)$.
\end{enumerate}
\end{lemma}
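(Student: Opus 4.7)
The plan is to handle all four items by reducing the question to pairs of simple multicurves via the basis of Lemma \ref{lemma: basis}, then exploiting the additivity of Corollary \ref{coro: crossingsum} together with the simultaneous minimization of Lemma \ref{lemma: simul}.

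Item (1) is immediate from the symmetry $\mu(\multi, \multi[Y]) = \mu(\multi[Y], \multi)$ of crossing numbers on simple multicurves: the two defining maxes range over the same set of values. Item (2) is essentially set-theoretic: in the unique basis expansion from Lemma \ref{lemma: basis}, any multicurve appearing in $y + z$ must appear in $y$ or in $z$ (with cancellation possibly shrinking the support), so $\operatorname{Supp}(y+z) \subseteq \operatorname{Supp}(y) \cup \operatorname{Supp}(z)$ and the max over the smaller set is bounded by the max over the union.

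For item (3), the key principle is that applying the Kauffman skein relation at a crossing and discarding contractible components are local operations that cannot introduce new crossings with a curve disjoint from the affected disc. Fix $\multi \in \operatorname{Supp}(x)$, $\multi[Y] \in \operatorname{Supp}(y)$, $\multi[Z] \in \operatorname{Supp}(z)$, and invoke Lemma \ref{lemma: simul} to pick representatives $\multi', \multi[Y]', \multi[Z]'$ realizing all three pairwise minima simultaneously. By Corollary \ref{coro: crossingsum}, the union $\multi[Y]' \cup \multi[Z]'$ meets $\multi'$ in exactly $\mu(\multi, \multi[Y]) + \mu(\multi, \multi[Z])$ points. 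Expanding $[\multi[Y]][\multi[Z]]$ via Kauffman resolutions of the $\multi[Y]'$-$\multi[Z]'$ crossings and simplifying contractible components produces, by Remark \ref{rem: support}, simple multicurves each obtained from $\multi[Y]' \cup \multi[Z]'$ by modifications supported away from $\multi'$; none of these surgeries can raise the crossing count with $\multi'$. Hence $\mu(\multi, \multi[W]) \leq \mu(\multi, \multi[Y]) + \mu(\multi, \multi[Z])$ for each $\multi[W] \in \operatorname{Supp}([\multi[Y]][\multi[Z]])$, and taking max over all triples gives the inequality.

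For item (4), the hypothesis $\mu(y, z) = 0$ forces every pair $\multi[Y] \in \operatorname{Supp}(y), \multi[Z] \in \operatorname{Supp}(z)$ to admit disjoint representatives in the interior, so no interior resolutions are required and $[\multi[Y]][\multi[Z]]$ equals a unit of $\Zq$ times $[\multi[Y] \cup \multi[Z]]$. Corollary \ref{coro: crossingsum} then sharpens the inequality from (3) into the on-the-nose identity $\mu(\multi, \multi[Y] \cup \multi[Z]) = \mu(\multi, \multi[Y]) + \mu(\multi, \multi[Z])$. The main obstacle I anticipate is controlling the max over $\operatorname{Supp}(x)$ and over cancellations in $\operatorname{Supp}(yz)$: one must verify that when two different pairs $(\multi[Y], \multi[Z])$ and $(\multi[Y]', \multi[Z]')$ produce the same union, the boundary skein powers of $\rq$ do not conspire to annihilate the terms realizing the extremal crossing count. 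The expected argument is to pick $\multi \in \operatorname{Supp}(x)$ together with maximizing $\multi[Y]$ and $\multi[Z]$ so that $\mu(\multi, \multi[Y]) + \mu(\multi, \multi[Z])$ matches $\mu(x, y) + \mu(x, z)$, and then check that the basis coefficient of $[\multi[Y] \cup \multi[Z]]$ in $yz$ remains non-zero by tracking endpoint orderings.
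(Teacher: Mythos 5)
Your treatment of items (1)--(3) is essentially the paper's own proof: (1) and (2) are read off from the definition of $\mu$ via supports, and for (3) the paper likewise places $\multi,\multi[Y],\multi[Z]$ in simultaneously minimal position using Lemma \ref{lemma: simul}, invokes Remark \ref{rem: support} to see that every multicurve in $Supp([\multi[Y]][\multi[Z]])$ agrees with $\multi[Y]\cdot\multi[Z]$ outside small neighborhoods of its crossings and marked points, and concludes by locality that the crossing count with $\multi$ cannot go up. So for the first three items there is nothing to add.

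The difference is in (4), where the paper's entire argument is that it ``follows from Corollary \ref{coro: crossingsum}'': when $\mu(y,z)=0$ each product $[\multi[Y]][\multi[Z]]$ is a power of $\rq$ times the basis element $[\multi[Y]\cup\multi[Y][Z]]$, crossing numbers add over components, and the passage from simple multicurves to general $x,y,z$ is made without comment. The two difficulties you flag -- cancellation between distinct pairs producing homotopic unions, and the maxima over $Supp(x)$ against $y$ and against $z$ being attained at different multicurves -- are therefore not resolved by the paper; they are ignored. The second one is in fact binding: if $Supp(x)=\{\multi_1,\multi_2\}$ with $\mu(\multi_1,\multi[Y])>0=\mu(\multi_1,\multi[Z])$ and $\mu(\multi_2,\multi[Y])=0<\mu(\multi_2,\multi[Z])$, the stated equality fails, so (4) should really be read (and is only ever applied) in situations where the relevant supports are singletons -- simple multicurves, powers of arcs, monomials in a triangulation -- in which case both of your worries evaporate and there is nothing left to track. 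As written, your (4) ends with an announced but unexecuted verification of non-cancellation; you could not complete it in the stated generality, and note that you also could not simply appeal to $\Sk_q(\S)$ being a domain, since Corollary \ref{coro: domain} is proved later using this circle of results. The clean fix is either to restrict (4) to the singleton-support setting in which it is used, or to observe that the crossing-maximal terms of $yz$ are exactly the product of the crossing-maximal parts of $y$ and $z$ and then argue that this product is non-zero; with that caveat, your approach coincides with the paper's and is, if anything, more careful.
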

\begin{proof}
The first two facts are clear from definitions, and the fourth follows from Corollary \ref{coro: crossingsum}.  The third fact is the only one which requires some work.

Let $\multi$, $\multi[Y]$ and $\multi[Z]$ be simple multicurves whose union is transverse, and such that each pair has minimal crossings.  By Remark \ref{rem: support}, there are simple multicurves $\multi[T]_i$ such that $Supp(\multi[Y]\cdot\multi[Z])=\{\multi[T]_i\}$ and each of the $\multi[T]_i$ are identical to $\multi[Y]\cdot\multi[Z]$ outside small neighborhoods of the intersections between $\multi[Y]$ and $\multi[Z]$.  Then the number of crossings between $\multi$ and any $\multi[T]_i$ is $\mu(\multi,\multi[Y])+\mu(\multi,\multi[Z])$, and so
\[ \mu(\multi,\multi[T]_i)\leq \mu(\multi,\multi[Y])+\mu(\multi,\multi[Z])\]
Then $\mu([\multi],[\multi[Y]][\multi[Z]])$ is the maximum of $\mu(\multi,\multi[T]_i)$ over all $i$, so it also satisfies the inequality.

Since any three simple multicurves are homotopic to such a triple by Lemma \ref{lemma: simul}, the inequality is true for arbitrary simple multicurves.  The general form of the inequality follows directly.
%
\end{proof}
Lemma \ref{lemma: reducing} will give a non-trivial example where inequality (3) is strict.
\begin{rem}
Any element $x\in \Sk_q(\S)$ gives an ascending filtration on $\Sk_q(\S)$, by
\[ \mathcal{F}_{x,i}(\Sk_q(\S)):=\{y\in \Sk_q(\S)\,|\, \mu(x,y)\leq i\}\]
\end{rem}






%

\subsection{Cancelling simple arcs}  


\def\init{\text{in}}
\def\R{\widehat{\multi[R]}}

A useful algebraic lemma is the observation that simple arcs are not zero-divisors in $\Sk_q(\S)$; this will eventually be used to show that all non-zero elements in $\Sk_1(\S)$ are not zero-divisors (Corollary \ref{coro: domain}).  This observation follows by associating an \emph{initial multicurve} to every non-zero element of $\Sk_q(\S)$, and then showing that multiplication by a simple curve induces an injective map on initial multicurves.

Fix an arbitrary total ordering $\preceq$ on the set of simple curves in $\S$.\footnote{This will be used to break ties for defining initial terms, and will not meaningfully affect the outcome.}  We may extend this to a total ordering $\preceq$ on the set $\Multi(\S)$ of simple multicurves in $\S$ with the following rules.\footnote{This is analogous to a \emph{graded lexicographic order} among monomials in a polynomial ring.}
\begin{itemize}
	\item If two multicurves have a different number of curves, then the multicurve with more curves is larger.
	\item If two multicurves have the same number of curves, then the multicurve with the $\preceq$-largest curve not possessed by the other multicurve is larger.
\end{itemize}

By Lemma \ref{lemma: basis}, any element $x$ in $\Sk_q(\S)$ can be uniquely expressed as
\[ x=\sum_{i\in \Z/2}q^{i}x_{i}\]
where $x_i$ is a $\Z$-linear combination of simple multicurves, with all but finitely many $x_i$ zero.  Define the \textbf{initial} multicurve $\init(x)$ of $x$ to be the $\preceq$-largest simple multicurve in $Supp(x_i)$, where $x_i$ is the non-zero term in $x$ with the largest $i$.


\begin{lemma}\label{lemma: ugh}
Let $\curve$ be a simple arcs in $\S$.  Then the map
\[ \multi[Y] \mapsto \init([\curve][\multi[Y]]) \]
is injection from $\Multi$ to itself.
\end{lemma}
\noindent In fact, this map is given by choosing the `positive smoothing' of every crossing in $\curve\cdot \multi[Y]$; consequently, the above map does not depend on $\preceq$.  The proof of this lemma may be found in Appendix \ref{app: lemma}.

The lemma has the following algebraic consequence.
\begin{lemma}\label{lemma: nonzero}
If $\curve$ is a simple arc, then $[\curve]$ is a not a zero divisor in $\Sk_q(\S)$.
\end{lemma}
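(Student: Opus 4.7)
The plan is to prove the left version directly and deduce the right version from the bar involution. Since $[\curve]^\dag = [\curve]$ (as $\curve$ is a simple curve, hence a simple multicurve) and $\dag$ is an anti-automorphism, an equation $x \cdot [\curve] = 0$ would give $[\curve] \cdot x^\dag = 0$ with $x^\dag \neq 0$. So it suffices to show: if $x \neq 0$, then $[\curve] \cdot x \neq 0$.

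The strategy is to track the $\rq$-leading part of the product in the simple multicurve basis from Lemma~\ref{lemma: basis}, using the injectivity established in Lemma~\ref{lemma: injection}. Write $x = \sum_j \lambda_j [\multi_j]$ with distinct $\multi_j \in \Multi$ and nonzero $\lambda_j \in \Zq$. For each $j$, let $c_j \in \Z \setminus \{0\}$ be the coefficient of the top power of $\rq$ in $\lambda_j$, say $(\rq)^{d_j}$, and let $e_j$ be the integer from Lemma~\ref{lemma: injection} so that
\[
[\curve][\multi_j] \;=\; (\rq)^{e_j}[\gamma_\curve(\multi_j)] \;+\; (\text{terms of strictly lower }\rq\text{-degree})
\]
when expanded in the simple multicurve basis. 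Set $D = \max_j (d_j + e_j)$ and $J = \{j : d_j + e_j = D\}$. I claim
\[
in_q\bigl([\curve] \cdot x\bigr) \;=\; \sum_{j \in J} c_j\, [\gamma_\curve(\multi_j)],
\]
which is nonzero because $\gamma_\curve$ is injective (so the $[\gamma_\curve(\multi_j)]$ for $j \in J$ are distinct basis elements) and each $c_j \neq 0$.

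To verify the claim, fix $j_0 \in J$ and consider the coefficient of $[\gamma_\curve(\multi_{j_0})]$ in $[\curve] \cdot x = \sum_k \lambda_k [\curve][\multi_k]$. For the diagonal $k = j_0$, the top-$\rq$-degree contribution is $c_{j_0} (\rq)^{D}[\gamma_\curve(\multi_{j_0})]$ plus strictly lower $\rq$-degree. For $k \neq j_0$, injectivity of $\gamma_\curve$ gives $\gamma_\curve(\multi_k) \neq \gamma_\curve(\multi_{j_0})$, so the coefficient of $[\gamma_\curve(\multi_{j_0})]$ in $[\curve][\multi_k]$ has $\rq$-degree strictly less than $e_k$; multiplying by $\lambda_k$ gives $\rq$-degree strictly less than $d_k + e_k \leq D$. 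Hence no off-diagonal cross-contributions reach degree $D$, and the $\rq^D$-part equals $\sum_{j \in J} c_j [\gamma_\curve(\multi_j)] \neq 0$. By Lemma~\ref{lemma: basis}, $[\curve] \cdot x \neq 0$.

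The main obstacle is the cross-contribution bookkeeping: one has to rule out the a~priori possibility that lower-$q$ pieces of various $[\curve][\multi_k]$ conspire to kill the leading term from the diagonal. The injectivity of $\gamma_\curve$ from Lemma~\ref{lemma: injection}, combined with the fact that it uniquely attains the leading $\rq$-degree in each $[\curve][\multi_k]$, is exactly the input needed to eliminate this possibility cleanly.
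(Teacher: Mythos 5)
Your proof is correct and is essentially the paper's own argument: both reduce to the left-multiplication case via the bar involution and then use the basis of Lemma \ref{lemma: basis} together with the injectivity and leading-term statement of Lemma \ref{lemma: injection} to see that the top $q$-degree part of $[\curve]x$ is a nonzero $\Z$-combination of the distinct basis elements $[\gamma_{\curve}(\multi_j)]$. Your cross-term bookkeeping is just a more explicit version of the paper's one-line appeal to the $\Zq$-independence of the $[\gamma_{\curve}(\multi[Y])]$.
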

\begin{proof}
Let $y\in \Sk_q(\S)$ be such that $[\curve]y=0$.  Write 
\[ y = \sum_{\multi[Y]\in Supp(y)} \lambda_{\multi[Y]}[\multi[Y]]\]
for $\lambda_{\multi[Y]}$ non-zero in $\Zq$.  Then
\[ 0=[\curve]y = \sum_{\multi[Y]\in Supp(y)} \lambda_{\multi[Y]}[x][\multi[Y]]
 = \sum_{\multi[Y]\in I} \lambda_{\multi[Y]}(q^{i_{\multi[Y]}}[\gamma_{\curve}(\multi[Y])] +\text{lower order terms in }q)\]
Let $i=\max_I(\deg_q(\lambda_{\multi[Y]})+i_{\multi[Y]})$, the maximal power of $q$ appearing above.
\[ 0=in_q([\curve]y) = \sum_{\substack{\multi[Y]\in Supp(y) \\ \deg(\lambda_{\multi[Y]})+i_{\multi[Y]}=i}}
in_q(\lambda_{\multi[Y]})[\gamma_{\curve}(\multi[Y])]
\]
Since the map $\gamma_{\curve}$ is an injection and $\Multi$ is a basis, the elements $[\gamma_{\curve}(\multi[Y])]$ are independent over $\Zq$.  Since $in_q(\lambda_{\multi[Y]})$ cannot be zero, the support $Supp(y)$ must be empty, and so $y=0$.

Then $[\curve]$ is not a left zero divisor.  By applying the bar involution, $[\curve]^\dag=[\curve]$ is not a right zero divisor.
\end{proof}

\subsection{Reducing crossings}\label{section: reducing}

Next, we consider how multiplication by a simple curve affects crossing number.  We observe that multiplication by the class of a simple arc $\curve$ reduces the crossing number with respect to that curve.
\begin{lemma}\label{lemma: reducing}
If $\curve$ is a simple arc, then for all $y\in \Sk_q(\S)$ such that $\mu([\curve],y)>0$,
\[ \mu([\curve],[\curve]y)\leq\mu([\curve],y)-1\]
\end{lemma}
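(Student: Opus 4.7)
The plan is to reduce the inequality to the case of a single simple multicurve, expand the product via the Kauffman skein relation, and count crossings of each resulting resolution with $\curve$ through a local analysis in a tubular neighborhood of $\curve$.

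First, by Lemma \ref{lemma: basis} write $y = \sum_{\multi[Y]} \lambda_{\multi[Y]} [\multi[Y]]$; combining the definition of $\mu$ on linear combinations with part (2) of Lemma \ref{lemma: crossings} gives $\mu([\curve],[\curve]y) \leq \max_{\multi[Y]} \mu([\curve],[\curve][\multi[Y]])$ while $\mu([\curve],y) = \max_{\multi[Y]} \mu(\curve,\multi[Y])$. So it suffices to treat $y = [\multi[Y]]$ for a single simple multicurve $\multi[Y]$ with $k := \mu(\curve,\multi[Y]) \geq 1$. By Lemma \ref{lemma: simul}, take transverse representatives of $\curve$ and $\multi[Y]$ meeting in exactly $k$ interior crossings. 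Expanding via the Kauffman skein relation at each of these $k$ crossings and discarding contractible components yields, by Remark \ref{rem: support}, $[\curve][\multi[Y]] = \sum_i c_i [\multi[T]_i]$, where each $\multi[T]_i$ is a simple multicurve agreeing with $\curve \cup \multi[Y]$ outside small crossing discs, with each disc filled by one of the two Kauffman smoothings. The problem reduces to showing $\mu(\curve,\multi[T]_i) \leq k-1$ for every $i$.

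For the crux, I fix a tubular neighborhood $N$ of $\curve$ containing all $k$ crossing discs; inside $N$, $\multi[Y]$ appears as $k$ strands transverse to $\curve$. Because every Kauffman smoothing pairs a $\curve$-arm with a $\multi[Y]$-arm (never two $\curve$-arms together), tracing through the smoothings shows that $\multi[T]_i \cap N$ is a disjoint union of exactly $k+1$ arcs: the two $\curve$-segments adjacent to the marked endpoints of $\curve$ give rise to two ``endpoint arcs'' (each containing one marked endpoint), while the remaining $k-1$ internal $\curve$-segments each give rise to a ``middle arc'' with both ends on $\partial N$. Each endpoint arc can be made disjoint from $\curve$ by an endpoint-fixing homotopy leaving the marked point transversely into one half of $N$, contributing no interior crossings. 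Each middle arc has both ends on $\partial N$ and can be realized within $N$ crossing $\curve$ at most once (zero times if its two ends lie on the same half of $\partial N$, and exactly once otherwise). Outside $N$, $\multi[T]_i$ coincides with $\multi[Y]$, and a parallel perturbation of $\curve$ confined to $N$ avoids all of these portions since $\multi[Y]$ has no crossings with $\curve$ outside $N$. Summing, $\mu(\curve,\multi[T]_i) \leq k - 1$, as required.

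The main obstacle is this third step, and specifically the counting $k+1 = 2 + (k-1)$ of arcs in $N$. This relies essentially on $\curve$ being an arc rather than a loop: the two marked endpoints are exactly what produces two endpoint arcs (rather than folding into a single long cycle), saving precisely the one crossing over the naive bound of $k$. Making this rigorous requires a combinatorial verification that the $2k+2$ terminals in $N$ always pair into $2 + (k-1)$ arcs, independent of both the smoothing pattern and of the global topology of $\multi[Y]$ outside $N$, together with a simultaneous choice of middle-arc homotopies that realizes the stated crossing bound without introducing new mutual intersections.
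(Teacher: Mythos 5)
Your argument is correct and is essentially the paper's own proof: the paper resolves each crossing of $\curve\cdot\multi[Y]$ and analyzes the same two local configurations---between consecutive crossings along $\curve$, where at most one crossing with $\curve$ survives, and between a crossing and a marked endpoint of $\curve$, where none survives---which is exactly your count of $k-1$ middle arcs and two endpoint arcs in a tubular neighborhood. The bookkeeping you flagged as the main obstacle does check out (each Kauffman smoothing joins a $\curve$-arm to a $\multi[Y]$-arm, so each of the $k-1$ internal segments of $\curve$ acquires two half-strands and each endpoint segment one, and the local homotopies are performed in disjoint regions), so there is no gap.
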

\begin{proof}
First consider the case when $y$ is a simple multicurve $\multi[Y]$ so that $\curve\cdot \multi[Y]$ is transverse, and $\curve\cdot\multi[Y]$ has $\mu(\curve,\multi[Y])$ crossings (the minimal number, up to homotopy). 
Consider the set $I$ of multicurves which can be obtained by applying some combination of the following two local relations to each crossing in $\curve\cdot \multi[Y]$.
\begin{center}
\begin{tikzpicture}[scale=.8]
\begin{scope}[xshift=-1in]
\begin{scope}[xshift=-.5in,scale=.15]
    \draw[fill=black!10,dashed] (0,0) circle (4);
    \draw[thick] (-2.83,-2.83) to (2.83,2.83);
    \draw[thick] (-2.83,2.83) to (-.71,.71);
    \draw[thick] (.71,-.71) to (2.83,-2.83);
\end{scope}
\node (=) at (0,0) {$\mapsto$};
\begin{scope}[xshift=.5in,scale=.15]
    \draw[fill=black!10,dashed] (0,0) circle (4);
    \draw[thick] (-2.83,-2.83) to [out=45,in=-45] (-2.83,2.83);
    \draw[thick] (2.83,-2.83) to [out=135,in=-135] (2.83,2.83);
\end{scope}
\end{scope}

\begin{scope}[xshift=1in]
\begin{scope}[xshift=-.5in,scale=.15]
    \draw[fill=black!10,dashed] (0,0) circle (4);
    \draw[thick] (-2.83,-2.83) to (2.83,2.83);
    \draw[thick] (-2.83,2.83) to (-.71,.71);
    \draw[thick] (.71,-.71) to (2.83,-2.83);
\end{scope}
\node (=') at (0,0) {$\mapsto$};
\begin{scope}[xshift=.5in,scale=.15]
    \draw[fill=black!10,dashed] (0,0) circle (4);
    \draw[thick] (-2.83,-2.83) to [out=45,in=135] (2.83,-2.83);
    \draw[thick] (-2.83,2.83) to [out=-45,in=-135] (2.83,2.83);
\end{scope}
\end{scope}
\end{tikzpicture}
\end{center}
Since the simple multicurves in the support $Supp([\curve][\multi[Y]])$ come from applying the Kauffman skein relation to the crossings in $\curve\cdot \multi[Y]$, we have $Supp([\curve][\multi[Y]])\subset I$.

Consider a simple multicurve $\multi[Z]\in I$.  For two adjacent crossings in $\curve\cdot \multi[Y]$ along $\curve$, there are two local possibilities for $\multi[Z]$, up to reflection across $\curve$.
\begin{center}
\begin{tikzpicture}[scale=.8]
\begin{scope}[xshift=-1.5in]
\begin{scope}[xshift=-.75in,scale=.15]
    \draw[fill=black!10,dashed] (-4,-4)  to (4,-4) arc (-90:90:4) to (-4,4) arc (90:270:4);
    \draw[thick] (-4,4) to (-4,-4);
    \draw[thick] (4,4) to (4,-4);
    \draw[outline] (-8,0) to (8,0);
    \draw[thick] (-8,0) to (8,0);
\end{scope}
\node (=) at (0,0) {$\mapsto$};
\begin{scope}[xshift=.75in,scale=.15]
    \draw[fill=black!10,dashed] (-4,-4)  to (4,-4) arc (-90:90:4) to (-4,4) arc (90:270:4);
    \draw[thick] (-8,0) to [out=0,in=270] (-4,4);
    \draw[thick] (-4,-4) to [out=90,in=180] (0,0) to [out=0,in=270] (4,4);
    \draw[thick] (4,-4) to [out=90,in=180] (8,0);
\end{scope}
\end{scope}

\begin{scope}[xshift=1.5in]
\begin{scope}[xshift=-.75in,scale=.15]
    \draw[fill=black!10,dashed] (-4,-4)  to (4,-4) arc (-90:90:4) to (-4,4) arc (90:270:4);
    \draw[thick] (-4,4) to (-4,-4);
    \draw[thick] (4,4) to (4,-4);
    \draw[outline] (-8,0) to (8,0);
    \draw[thick] (-8,0) to (8,0);
\end{scope}
\node (=) at (0,0) {$\mapsto$};
\begin{scope}[xshift=.75in,scale=.15]
    \draw[fill=black!10,dashed] (-4,-4)  to (4,-4) arc (-90:90:4) to (-4,4) arc (90:270:4);
    \draw[thick] (-8,0) to [out=0,in=270] (-4,4);
    \draw[thick] (-4,-4) to [out=90,in=180] (0,0) to [out=0,in=90] (4,-4);
    \draw[thick] (4,4) to [out=270,in=180] (8,0);
\end{scope}
\end{scope}
\end{tikzpicture}
\end{center}
In this local picture, the first case is homotopic to a multicurve with crossing $\curve$ once, and the second is homotopic to a multicurve which does not cross $\curve$.

Between a crossing in $\curve\cdot\multi[Y]$ and an end of $\curve$, there is one local possibility for $\multi[Z]$, up to reflection across $\curve$.
\begin{center}
\begin{tikzpicture}[scale=.8]
\begin{scope}[xshift=-1.5in]
\begin{scope}[xshift=-.75in,scale=.15]
	\begin{scope}
    \clip (-4,-4)  to (4,-4) arc (-90:90:4) to (-4,4) arc (90:270:4);
    \draw[thick, fill=black!10] (-12,-8) to [out=0,in=270] (-4,0) to [out=90,in=0] (-12,8) to [line to] (12,8) to (12,-8) to (-12,-8);
		\node[marked] (1) at (-4,0) {};
    \draw[thick] (4,4) to (4,-4);
    \draw[outline] (1) to (8,0);
    \draw[thick] (1) to (8,0);
	\end{scope}
	\draw[thick, dashed] (-4,-4)  to (4,-4) arc (-90:90:4) to (-4,4) arc (90:270:4);
\end{scope}
\node (=) at (0,0) {$\mapsto$};
\begin{scope}[xshift=.75in,scale=.15]
	\begin{scope}
    \clip (-4,-4)  to (4,-4) arc (-90:90:4) to (-4,4) arc (90:270:4);
    \draw[thick, fill=black!10] (-12,-8) to [out=0,in=270] (-4,0) to [out=90,in=0] (-12,8) to [line to] (12,8) to (12,-8) to (-12,-8);
		\node[marked] (1) at (-4,0) {};
    \draw[thick] (4,-4) to [out=90,in=180] (8,0);
    \draw[thick] (1) to (0,0) to [out=0,in=270] (4,4);
	\end{scope}
	\draw[thick, dashed] (-4,-4)  to (4,-4) arc (-90:90:4) to (-4,4) arc (90:270:4);
\end{scope}
\end{scope}
\end{tikzpicture}
\end{center}
This local picture is homotopic to one which does not cross $\curve$.

Then $\multi[Z]$ is homotopic to a simple multicurve $\multi[Z]'$, such that $\curve\cdot \multi[Z]'$ is transverse and the crossings in $\curve\cdot \multi[Z]'$ occur at most once between each pair of adjacent crossings in $\curve\cdot \multi[Y]$ (and $\curve\cdot\multi[Z]'$ has no other crossings).  Therefore, $\curve\cdot \multi[Z]'$ has strictly fewer crossings than $\curve \cdot \multi[Y]$.  Since the latter already has $\mu(\curve,\multi[Y])$ crossings,
\[ \mu([\curve],[\multi[Z]'])\leq\mu(\curve,\multi[Y])-1\]
Because $Supp([\curve][\multi[Y]])\subset I$,
\[ \mu([\curve],[\curve][\multi[Y]])\leq\mu([\curve],[\multi[Y]])-1\]
The general form of the lemma follows from this case.
\end{proof}
\begin{rem}
Multiplication by simple loops does not reduce crossing number.
\end{rem}

Lemma \ref{lemma: reducing} is useful, because multiplication by a sufficiently high power of $[\curve]$ will make an element $y\in \Sk_q(\S)$ have zero crossing number with $[\curve]$.
\begin{coro}\label{coro: reducing}
If $\curve$ is a simple arc, then for all $y\in \Sk_q(\S)$,
\[ \mu([\curve],[\curve]^{\mu([x],y)}y)=0\]
\end{coro}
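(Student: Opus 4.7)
The plan is to prove the corollary by induction on $n = \mu([\curve], y)$, using Lemma \ref{lemma: reducing} as the engine that drives the crossing number down. The base case $n = 0$ is immediate, since $[\curve]^0 y = y$.

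For the inductive step, set $y' := [\curve] y$. Lemma \ref{lemma: reducing} guarantees that $\mu([\curve], y') \leq n - 1$. The naive hope is that $n$ applications of Lemma \ref{lemma: reducing} strictly decrease the crossing number each time, bringing it to exactly $0$. The subtlety is that a single multiplication by $[\curve]$ might reduce the crossing number by more than $1$, so after fewer than $n$ multiplications we may already have reached $\mu([\curve], \cdot) = 0$, and we need to verify that further left-multiplications by $[\curve]$ do not raise the crossing number back above $0$.

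To handle this, observe first that $\mu([\curve], [\curve]) = 0$: two parallel transverse copies of the simple arc $\curve$ have no interior intersections. Then Lemma \ref{lemma: crossings}(3) gives
\[ \mu([\curve], [\curve] z) \leq \mu([\curve], [\curve]) + \mu([\curve], z) = \mu([\curve], z) \]
for any $z \in \Sk_q(\S)$. In particular, once the crossing number with $[\curve]$ hits $0$, it stays at $0$ under any number of further left-multiplications by $[\curve]$.

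Combining these observations completes the induction: by the inductive hypothesis applied to $y'$ with $k := \mu([\curve], y') \leq n - 1$, we get $\mu([\curve], [\curve]^k y') = 0$. Iterating the preceding inequality $n - 1 - k$ times gives $\mu([\curve], [\curve]^{n-1} y') = 0$, and this equals $\mu([\curve], [\curve]^n y)$, as desired. The main (minor) obstacle is just the bookkeeping around possible overshoot, which is resolved by the fact that $[\curve]$ is homotopic to itself without interior crossings.
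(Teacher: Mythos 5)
Your argument is correct and follows essentially the same route as the paper, which simply iterates Lemma \ref{lemma: reducing} to get $\mu([\curve],[\curve]^iy)\leq \mu([\curve],y)-i$. The only difference is that you explicitly handle the ``overshoot'' case (once the crossing number reaches $0$ it stays $0$, via $\mu([\curve],[\curve])=0$ and Lemma \ref{lemma: crossings}(3)), a detail the paper's one-line proof leaves implicit.
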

\begin{proof}
By iterating Lemma \ref{lemma: reducing}, if $i\leq\mu([\curve],y)$
\[ \mu([\curve],[\curve]^iy)\leq \mu([\curve],y)-i\]
In particular, $\mu([\curve],[\curve]^{\mu([x],y)}y)\leq0$, so it is zero.
\end{proof}


%
%

\section{The localized skein algebra $\Sk_q^o(\S)$}\label{section: localskein}

The connection from $\Sk_q(\S)$ to cluster algebras will be through the localization $\Sk_q^o(\S)$ of $\Sk_q(\S)$ at the set of boundary curves.  

%
%
%

\subsection{The localized skein algebra}

A \textbf{boundary curve} is a simple curve which is homotopic to a subset of the boundary $\partial\S$.  A boundary curve is either an arc connecting adjacent marked points on the same boundary component, or a loop homotopic to an unmarked boundary component.  The set of boundary curves is finite; it is the number of marked points plus the number of unmarked boundary components.
\begin{defn}
The localization of $\Sk_q(\S)$ at the set of boundary curves is the \textbf{localized skein algebra} of $\S$, denoted $\Sk_q^o(\S)$.
\end{defn}

For the moment, $\Sk_q^o(\S)$ is defined as an abstract localization; that is, the universal algebra with a map from $\Sk_q(\S)$ such that every boundary curve is sent to a unit.  This is improved with the following proposition.

\begin{prop}
The algebra $\Sk_q^o(\S)$ is an injective Ore localization of $\Sk_q(\S)$.
\end{prop}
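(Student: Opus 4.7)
The plan is to verify the Ore conditions for the multiplicative set $S$ generated by the (classes of) boundary curves, and to verify that $S$ consists of non-zero-divisors, so that the universal localization coincides with the Ore localization and embeds $\Sk_q(\S)$ injectively.

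The key geometric observation is that a boundary curve $\curve$ quasi-commutes with every simple multicurve $\multi[Y]$ at the level of basis elements. For a boundary arc $\curve$, homotope $\curve$ into the boundary $\partial\S$, so that $\curve$ is disjoint from $\multi[Y]$ in the interior of $\S$. The superposition $\curve\cdot\multi[Y]$ then has no interior crossings, and the only difference between $[\curve][\multi[Y]]$ and $[\multi[Y]][\curve]$ is the order of strands at the marked endpoints of $\curve$. Applying the boundary skein relation at each endpoint produces an identity of the form
\[[\curve][\multi[Y]] \;=\; q^{\alpha(\curve,\multi[Y])}[\multi[Y]][\curve],\]
where $\alpha(\curve,\multi[Y])\in\Z$ depends only on the endpoint grading of $\multi[Y]$ (specifically, on the number of strands of $\multi[Y]$ at the two endpoints of $\curve$). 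For a boundary loop around an unmarked boundary component, the same argument with an even cleaner conclusion (no marked-point swaps are needed) shows that $[\curve]$ commutes with $[\multi[Y]]$ outright.

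Because $\alpha(\curve,-)$ factors through the $\E$-grading of Section \ref{section: grading}, decomposing an arbitrary $a\in\Sk_q(\S)$ into endpoint-homogeneous pieces $a=\sum_f a_f$ yields
\[[\curve]\,a \;=\; \Bigl(\sum_f q^{\alpha(\curve,f)}a_f\Bigr)[\curve],\]
so $[\curve]\,a = b\,[\curve]$ for some $b\in\Sk_q(\S)$. This is exactly the left (and, symmetrically, right) Ore condition for the single element $[\curve]$; iterating over a product gives the Ore condition for the whole multiplicative set $S$ generated by boundary curves.

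For the inclusive part, I would invoke Lemma \ref{lemma: nonzero}: every simple curve, in particular every boundary curve, is a non-zero-divisor in $\Sk_q(\S)$, and products of non-zero-divisors satisfying the Ore condition are again non-zero-divisors. Standard noncommutative localization theory then gives that the universal localization $\Sk_q^o(\S)$ is realized as the Ore localization $\Sk_q(\S)[S^{-1}]$ and that the structure map $\Sk_q(\S)\hookrightarrow \Sk_q^o(\S)$ is injective. The main obstacle is bookkeeping the exponent $\alpha(\curve,\multi[Y])$ from the boundary skein relation and confirming it is grading-determined; once that is in hand, the rest of the argument is formal.
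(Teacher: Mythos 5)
Your proposal is correct and takes essentially the same route as the paper: the paper also homotopes a boundary curve so that it meets any link only at the marked points, applies the boundary skein relation to obtain $[\curve][\link[Y]]=q^{\lambda/2}[\link[Y]][\curve]$, concludes that products of boundary curves form a left and right permutable set (hence Ore), with inclusivity coming from the fact that boundary curves are non-zero-divisors (Lemma \ref{lemma: nonzero}). Your additional bookkeeping showing the exponent factors through the $\E$-grading is true but unnecessary, since term-by-term quasi-commutation with basis links (allowing different exponents on different terms) already yields $[\curve]a=b[\curve]$.
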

\begin{proof}
Given a boundary curve $\curve$ and a link $\link[Y]$, there are homotopic links $\curve'$ and $\link[Y]'$ which only intersect at the boundary.  Then, there is some $\lambda\in \Z$ such that
$ [\curve][\link[Y]] = q^{\frac{\lambda}{2}}[\link[Y]][\curve]$.
Therefore, the set of all products of boundary curves is right and left permutable.  By Ore's theorem, the localization is injective Ore.
\end{proof}
%
%
%
\noindent We will identify $\Sk_q(\S)$ with its image in $\Sk_q^o(\S)$.

The extra structures on $\Sk_q(\S)$ extend to $\Sk_q^o(\S)$.  The bar involution $\dag$ on $\Sk_q(\S)$ (see Section \ref{section: bar}) extends to an involutive ring antiautomorphism on $\Sk_q^o(\S)$, by $(xy^{-1})^\dag=(y^\dag)^{-1}x^\dag$.  
The endpoint $\E$-grading on $\Sk_q^o(\S)$ (see Section \ref{section: grading}) extends to an endpoint $\E$-grading on $\Sk_q^o(\S)$, with $\deg(xy^{-1})= \deg(x)-\deg(y)$.


\subsection{The basis of weighted simple multicurves}

The $\Zq$-basis of $\Sk_q(\S)$ by the set $\Multi$ of simple multicurves can be extended to a $\Zq$-basis of $\Sk_q^o(\S)$ in the following (somewhat artificial) way.

Define a \textbf{weighted simple multicurve} $\multi$ to be a simple multicurve $\multi$, together with an integer `weight' $w_{\curve}$ for each $\curve\in\multi$.  Two weighted simple multicurves $\multi$ and $\multi[Y]$ are \textbf{equivalent} if, for each simple curve $\curve$ in $\S$, the sum of the weights on curves in $\multi$ homotopic to $\curve$ is the same as the sum of the weights on curves in $\multi[Y]$ homotopic to $\curve$.  Intuitively, a curve $\curve$ of weight $w_{\curve}\in\mathbb{N}$ is equivalent to $w_{\curve}$-many copies of $\curve$.

Let $\Multi^o$ be the set of equivalence classes of weighted simple multicurves with positive weights on non-boundary curves (and arbitrary integral weights on boundary curves).  Given $\multi\in \Multi^o$, define an element $[\multi]\in \Sk_q^o(\S)$ by
\[ [\multi] :=q^{\frac{\lambda}{2}} \prod_{\curve\in \multi}[\curve]^{w_{\curve}}\]
where $q^{\frac{\lambda}{2}}$ is the unique $q$-power such that $[\multi]^\dag=[\multi]$.  

The $\Zq$-basis of $\Sk_q(\S)$ then extends to a $\Zq$-basis of $\Sk_q^o(\S)$.
\begin{prop}\label{prop: basiso}
Under $\multi\rightarrow [\multi]$, the set $\Multi^o$ maps to a $\Zq$-basis of $\Sk_q^o(\S)$.
\end{prop}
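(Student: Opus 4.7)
The plan is to verify spanning and linear independence separately, leveraging that $\Sk_q(\S) \hookrightarrow \Sk_q^o(\S)$ is an injective Ore localization at a set of non-zero-divisors (boundary curves are non-zero-divisors by Lemma \ref{lemma: nonzero}) and that $\Multi$ is a $\Zq$-basis of $\Sk_q(\S)$ by Lemma \ref{lemma: basis}.

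For spanning, every element of $\Sk_q^o(\S)$ has the form $x b^{-1}$ with $x \in \Sk_q(\S)$ and $b$ a product of boundary curves, by the Ore property. Expanding $x = \sum_{\multi \in \Multi} \lambda_\multi [\multi]$ in the basis, it suffices to show that each $[\multi] b^{-1}$ is a $\Zq$-multiple of some $[\multi']$ with $\multi' \in \Multi^o$. Using that boundary curves quasi-commute with every curve up to a $\rq$-power, one rewrites $[\multi] b^{-1}$ as $q^{\nu/2} \prod_{\curve} [\curve]^{w_\curve}$, where the exponent $w_\curve$ on a non-boundary curve equals its multiplicity in $\multi$ (hence is positive) and the exponent on a boundary curve is an integer (possibly negative, reflecting cancellation against $b^{-1}$). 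Adjusting the $\rq$-prefactor to the unique power making the expression bar-invariant recovers exactly $[\multi']$ for the $\multi' \in \Multi^o$ whose weights encode these exponents.

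For linear independence, suppose $\sum_i \lambda_i [\multi_i] = 0$ in $\Sk_q^o(\S)$ with the $\multi_i \in \Multi^o$ pairwise distinct and $\lambda_i \in \Zq$ nonzero. Choose $N \in \N$ large enough that, for every boundary curve $\curve$, the $\curve$-weight of each $\multi_i$ becomes positive after adding $N$, and set $b := \prod_{\curve \textrm{ boundary}} [\curve]^N$. Since $b$ is invertible in $\Sk_q^o(\S)$, it is a non-zero-divisor, so right-multiplying the relation by $b$ preserves it. Applying the same rearrangement as in the spanning step, each $[\multi_i] b$ equals $q^{\mu_i/2} [\multi_i^+]$, where $\multi_i^+ \in \Multi$ is obtained from $\multi_i$ by increasing every boundary weight by $N$. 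The map $\multi_i \mapsto \multi_i^+$ is injective on equivalence classes (the shift by $N$ is uniform and reversible), so the $[\multi_i^+]$ are pairwise distinct elements of $\Multi$. The resulting relation $\sum_i \lambda_i q^{\mu_i/2} [\multi_i^+] = 0$ lies in $\Sk_q(\S)$ via the injection, and Lemma \ref{lemma: basis} forces each $\lambda_i q^{\mu_i/2}$, hence each $\lambda_i$, to vanish.

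The main bookkeeping obstacle is tracking the $\rq$-prefactors when passing between a formal product $\prod [\curve]^{w_\curve}$ and its bar-invariant normalization $[\multi']$; these are uniquely pinned down by the extension of the bar involution to $\Sk_q^o(\S)$, which is why the assignment $\multi \mapsto [\multi]$ is well-defined on $\Multi^o$ in the first place. Once this normalization is handled consistently, the proposition reduces cleanly to the basis property of $\Multi$ in $\Sk_q(\S)$.
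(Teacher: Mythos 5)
Your proposal is correct and takes essentially the same route as the paper: spanning by writing an element as $xb^{-1}$, expanding $x$ in the basis $\Multi$ of $\Sk_q(\S)$ and absorbing the boundary-curve inverses into bar-invariant weighted multicurves, and independence by clearing denominators with a sufficiently large boundary monomial and reducing to Lemma \ref{lemma: basis} via the invertibility (non-zero-divisor property) of boundary curves. The only point you leave implicit, which the paper states in one line, is that boundary curves can always be added to a simple multicurve without breaking simplicity, so that the weighted multicurve $\multi'$ you construct is indeed an element of $\Multi^o$.
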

\begin{proof}
Any element of $\Sk_q^o(\S)$ can be written as $xy^{-1}$, with $y$ a product of boundary curves.  
Then $y=q^j[\multi[Y]]$,  where $\multi[Y]$ is some simple multicurve of boundary arcs.
The element $x$ can be written as
\[ x = \sum_i \lambda_i[\multi_i],\]
a $\Zq$-linear combination of simple multicurves $\multi_i$.  Then
\[ xy^{-1} = \sum_i q^{-j}\lambda_i [\multi_i][\multi[Y]]^{-1}\]
It is always possible to add boundary curves to any simple multicurve, without violating simplicity.  Let $\multi_i'$ be the weighted simply multicurve which contains all the curves in $\multi_i$ and $\multi[Y]$, with each weight counting how many times a given curve appeared in $\multi_i$ minus how many times it appeared in $\multi[Y]$.  Then there are $\lambda_i'$ such that
\[ xy^{-1} = \sum_i \lambda_i' [\multi_i']\]
and so $\Multi^o$ spans $\Sk_q^o(\S)$ over $\Zq$.

To show this is a $\Zq$-basis, consider any relation between the weighted simple multicurves.  Denominators may be cleared by multiplying by a sufficiently large multicurve $[\multi[Z]]$ in the boundary curves, giving a relation between weighted simple multicurves with positive weights.  This gives a relation between simple multicurves in $\Sk_q(\S)$, which must be the trivial relation (Lemma \ref{lemma: basis}).  Since $[\multi[Z]]$ is not a zero divisor (Lemma \ref{lemma: nonzero}), the original relation was also trivial.
\end{proof}
\noindent This basis is fixed by the bar involution, and is homogeneous for the $\E$-grading.

\section{Triangulations}
This section explores the extra structure on $\Sk_q(\S)$ coming from a triangulation of $\S$.  Since triangulations only exist when there are enough marked points, this demonstrates an advantage over the unmarked case.

\subsection{Triangulations}
A marked surface $\S$ is \textbf{triangulable} if...
\begin{itemize}
\item $\partial\S$ is not empty,
\item each boundary component contains a marked point, and
\item no connected component of $\S$ is a disc with one or two marked points.\footnote{This last condition is unnecessary for subsequent results on skein and cluster algebras.}
\end{itemize}
A \textbf{triangulation}\footnote{This is sometimes called an \emph{ideal triangulation}, to distinguish from triangulations which are allowed to have vertices away from marked points.} of a triangulable $\S$ is a simple multicurve $\Delta$ such that...
\begin{itemize}
\item no two curves in $\Delta$ are homotopic,
\item $\Delta$ is maximal amongst simple multicurves with the first property, and
\item $\Delta$ consists entirely of arcs.
\end{itemize}
A triangulation of $\Delta$ is a collection of arcs which cut $\S$ into a union of triangles.
\begin{rem}
If only the first two conditions hold, $\Delta$ is called a \emph{maximal multicurve}.
\end{rem}
%


If $\curve\in \Delta$ is a non-boundary arc, then is it an edge in two distinct triangles in $\S-\Delta$.\footnote{This is a consequence of requiring that marked points are on the boundary.  If there are interior marked points, there can be 'self-folded triangles': triangles without all edges distinct.}  
There is a unique other curve $\curve'$ such that $(\Delta-\curve)\cup\curve'$ is also a triangulation; both the curve and the resulting triangulation may be called the \textbf{flip} of $\curve$ in $\Delta$.

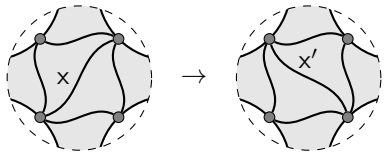
\begin{figure}[h]
\begin{tikzpicture}[scale=.6]
\begin{scope}[scale=.4]
	\begin{scope}
	\clip (0,0) circle (4);
	\draw[fill=black!10,thick] (-5,-1) arc (90:0:4) to (1,-5) arc (180:90:4) to (5,1) arc (270:180:4) to (-1,5) arc (0:-90:4);
	\node[marked] (1) at (-2.17,2.17) {};
	\node[marked] (2) at (2.17,2.17) {};
	\node[marked] (3) at (2.17,-2.17) {};
	\node[marked] (4) at (-2.17,-2.17) {};
	\draw[thick] (4) to [relative,out=-30,in=150] node[left] {$\curve$} (2);
	\draw[thick] (1) to [relative,out=-30,in=150]  (2);
	\draw[thick] (2) to [relative,out=-30,in=150]  (3);
	\draw[thick] (3) to [relative,out=-30,in=150]  (4);
	\draw[thick] (4) to [relative,out=-30,in=150]  (1);
	\end{scope}
	\draw[dashed] (0,0) circle (4);
\end{scope}
\node (a) at (1in,0) {$\rightarrow$};
\begin{scope}[xshift=2in,scale=.4]
	\begin{scope}
	\clip (0,0) circle (4);
	\draw[fill=black!10,thick] (-5,-1) arc (90:0:4) to (1,-5) arc (180:90:4) to (5,1) arc (270:180:4) to (-1,5) arc (0:-90:4);
	\node[marked] (1) at (-2.17,2.17) {};
	\node[marked] (2) at (2.17,2.17) {};
	\node[marked] (3) at (2.17,-2.17) {};
	\node[marked] (4) at (-2.17,-2.17) {};
	\draw[thick] (1) to [relative,out=-30,in=150] node[above] {$\curve'$} (3);
	\draw[thick] (1) to [relative,out=-30,in=150]  (2);
	\draw[thick] (2) to [relative,out=-30,in=150]  (3);
	\draw[thick] (3) to [relative,out=-30,in=150]  (4);
	\draw[thick] (4) to [relative,out=-30,in=150]  (1);
	\end{scope}
	\draw[dashed] (0,0) circle (4);
\end{scope}
\end{tikzpicture}
\caption{Flipping an arc}
\label{fig: flip}
\end{figure}


\begin{prop} \label{prop: triang}
Let $\S$ be a triangulable surface with marked points $\mathcal{M}$.
\begin{enumerate}
\item Triangulations of $\S$ always exist.\footnote{That is, the definitions `triangulable' and `triangulation' behave as expected.}
\item Any simple multicurve of distinct arcs is contained in some triangulation.
\item An arc is in every triangulation if and only if it is a boundary arc.
\item Every triangulation has  $|\Delta| = 6g+3h+2|\M|-6$ arcs, where $g$ is the genus and $h$ is the number of boundary components of $\S$.
\item Every pair of triangulations are related by a sequence of flips.
\end{enumerate}
\end{prop}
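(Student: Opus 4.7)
The plan is to handle parts (1), (2) and (4) together via an Euler characteristic computation, derive (3) from the structural properties of boundary arcs, and treat (5) as the (classical) hard part via a separate induction on intersection numbers.

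I would begin with part (4). A triangulation $\Delta$ by definition cuts $\S$ into a disjoint union of open triangles, so $\Delta$ together with $\M$ and the set of triangles endows $\S$ with a CW structure: $|\M|$ vertices, $|\Delta|$ edges, and $t$ faces, each face triangular. Since each boundary component contains a marked point (triangulability), every boundary arc in $\Delta$ connects two adjacent marked points on a boundary circle; summing over boundary components shows that the number of boundary arcs is exactly $|\M|$. Each interior arc bounds two triangles and each boundary arc bounds one, giving $3t = 2(|\Delta|-|\M|)+|\M| = 2|\Delta|-|\M|$. Combining with $\chi(\S)= |\M|-|\Delta|+t = 2-2g-h$ yields $|\Delta| = 6g+3h+2|\M|-6$.

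Next I would prove the key extension lemma underlying (1) and (2): if $\multi$ is a simple multicurve of pairwise non-homotopic arcs, then either every component of $\S\setminus\multi$ is an open triangle (with three marked points on its closure), in which case $\multi$ is a triangulation, or there exists a simple arc $\curve$, disjoint from $\multi$ and not homotopic to any arc of $\multi$, such that $\multi\cup\{\curve\}$ is still a simple multicurve of distinct arcs. The proof is local: a component $C$ of $\S\setminus\multi$ is a surface with boundary, and if $C$ is not a triangle then either its boundary contains more than three marked points (and a chord between non-adjacent ones gives $\curve$) or $C$ has non-trivial topology (positive genus or an unmarked boundary component, contradicting triangulability after reduction). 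Iterating this extension starting from $\multi=\emptyset$ proves (1); starting from any given simple multicurve of distinct arcs proves (2). Termination is guaranteed because the count in (4) shows every simple multicurve of distinct arcs has at most $6g+3h+2|\M|-6$ elements.

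For part (3), a boundary arc $\curve$ is homotopic into $\partial\S$, hence disjoint (up to homotopy) from every simple arc in $\S$. Consequently $\{\curve\}\cup\Delta$ is a simple multicurve of distinct arcs for any triangulation $\Delta$ not containing $\curve$, contradicting maximality; so every boundary arc lies in every triangulation. Conversely, if $\curve\in\Delta$ is not a boundary arc, then $\curve$ is an edge shared by two distinct triangles of $\S\setminus\Delta$ whose union is a quadrilateral; the opposite diagonal $\curve'$ is distinct from $\curve$, and $(\Delta\setminus\{\curve\})\cup\{\curve'\}$ is a triangulation not containing $\curve$.

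The main obstacle is part (5). I would prove it by induction on the geometric intersection number $\sum_{\curve\in\Delta,\curve[y]\in\Delta'} \mu(\curve,\curve[y])$. If this sum is zero, then $\Delta=\Delta'$ by maximality. Otherwise, pick an arc $\curve\in\Delta$ that crosses some $\curve[y]\in\Delta'$ and consider the two triangles of $\S\setminus\Delta$ adjacent to $\curve$: a careful local analysis (following the classical argument of Hatcher and Mosher for the connectivity of the arc/flip complex) shows that flipping $\curve$ strictly decreases the intersection number with $\Delta'$, completing the induction. This step is technical because one must rule out configurations in which every flip increases the intersection number, and the verification relies on the bigon criterion for simple arcs together with the local structure near the flipped edge.
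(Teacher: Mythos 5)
Your route is genuinely different from the paper's: the paper disposes of (1), (4) and (5) by citing Fomin--Shapiro--Thurston (their Lemma 2.13, Proposition 2.10 and Proposition 3.8) and only argues (2) and (3) directly, whereas you give a self-contained Euler-characteristic count for (4), an extension lemma for (1)--(2), and a sketch of the classical flip-connectivity induction for (5). The count in (4), the maximality argument for the forward half of (3), and the flip argument for its converse are all sound, with one bookkeeping remark: the computation in (4) uses that all $|\M|$ boundary arcs lie in $\Delta$ (so that every edge of every triangle is an arc of $\Delta$), i.e.\ the forward direction of (3), so that direction should be established first; this is harmless since it does not use (4). Deferring (5) to the classical connectivity of the flip graph is no worse than the paper's own citation.

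The genuine gap is in the extension lemma underlying (1) and (2). Your case analysis for a complementary component $C$ of $\S\setminus\multi$ that is not a triangle covers only ``disc with more than three marked points'' and dismisses ``non-trivial topology'' as contradicting triangulability. That is false: components of positive genus or with several boundary circles occur constantly --- already for $\multi=\emptyset$ the single component $C=\S$ is not a disc whenever $\S$ is an annulus or has positive genus, and nothing is contradicted. (What is true is only that every boundary circle of $C$ carries a marked point, because arcs of $\multi$ end at marked points and unmarked components of $\partial\S$ are excluded by triangulability.) In exactly this case your recipe produces no new arc, so as written (1) and (2) are unproved for every $\S$ that is not a union of discs. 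To repair it you must, when $C$ is not a disc, construct an essential arc inside $C$ --- for instance an arc joining marked points on two distinct boundary circles of $C$, or a non-separating arc cutting a handle --- and then check it is not homotopic in $\S$ to an arc of $\multi$; the latter needs an innermost-bigon argument using that the arcs of $\multi$ meeting a potential bigon lie on $\partial C$ and that the homotopy classes in $\multi$ are distinct. With that case added, termination via your bound from (4) and the rest of the proposal go through.
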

\begin{proof}Our triangulations differ from those in \cite{FST08}, in that they forbid boundary arcs.  However, their results can still be applied with appropriate modification.

$(1)$. \cite[Lemma 2.13]{FST08}.

$(2)$. This follows from the given definition of `triangulation'.

$(3)$. A boundary arc is in every triangulation because it has no crossings with any other arcs, and so it can always be added to a simple multicurve without breaking simplicity.  For any non-boundary arc $\curve$, find a triangulation containing $\curve$ and flip $\curve$, to get a new triangulation which does not contain $\curve$.

$(4)$. By \cite[Proposition 2.10]{FST08}, there are $|\Delta|-|\M|$ non-boundary arcs in every triangulation.  Since there are always $|\M|$ boundary arcs, the claim follows.

$(5)$. \cite[Proposition 3.8]{FST08}.
\end{proof}
It will frequently be useful to {index} the arcs in a triangulation with numbers $1,2,...,|\Delta|$; this will often be done without comment.  Then we can write
\[ \Delta=\{ \curve_1,\curve_2,...,\curve_{|\Delta|}\}\]
Let $\Z^\Delta$ denote the rank ${|\Delta|}$ lattice generated by the elements of $\Delta$.  For an indexed triangulation, $\Z^\Delta\simeq \Z^{|\Delta|}$, and we identify elements $\alpha$ of $\Z^\Delta$ with ${|\Delta|}$-tuples of integers $(\alpha_1,\alpha_2,...,\alpha_{|\Delta|})$.

\subsection{The orientation matrix and the signed adjacency matrix}\label{section: skewmatrix}

An \textbf{end} of an arc $\curve$ will be a strand of $\curve$ in a small neighborhood of an endpoint.  For an arc $\curve$, let $\partial_1(\curve)$ and $\partial_2(\curve)$ denote the two ends of $\curve$ (for an arbitrary numbering).

For two simple curves $\curve,\curve[y]$ with $\curve\cup\curve[y]$ simple, define
\[ \Lambda_{\curve,\curve[y]} = \sum_{i,j\in \{1,2\}} \left\{
\begin{array}{c|c}
0 & \text{if $\partial_i(\curve)$ and $\partial_j(\curve[y])$ have different endpoints} \\
1 & \text{if $\partial_i(\curve)$ is clockwise to $\partial_j(\curve[y])$} \\
-1 & \text{if $\partial_i(\curve[y])$ is clockwise to $\partial_j(\curve)$} \\
\end{array}\right\}\]
This measures the power of $q$ which relates the superposition $[\curve][\curve[y]]=[\curve\cdot\curve[y]]$ to the (simultaneous) simple multicurve $[\curve\cup\curve[y]]$.
\begin{prop}
Let $\curve$ and $\curve[y]$ be simple curves with $\multi=\curve\cup\curve[y]$ a simple multicurve.
\[ [\curve][\curve[y]]=q^{\frac{1}{2}\Lambda_{\curve,\curve[y]}}[\multi] = q^{\Lambda_{\curve,\curve[y]}}[\curve[y]][\curve]\]
\end{prop}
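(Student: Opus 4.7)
The plan is to reduce the identity to repeated local applications of the boundary skein relation at the marked points where $\curve$ and $\curve[y]$ share endpoints. Since $\multi = \curve\cup\curve[y]$ is a simple multicurve, it has no interior crossings and no contractible components, so the underlying multicurve of the superposition $\curve\cdot\curve[y]$ is already transverse and differs from $\multi$ (viewed as a link with simultaneous orderings at all marked points) only in the crossing data at the shared endpoints.

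First I would enumerate the contributions. For each pair of ends $(\partial_i(\curve),\partial_j(\curve[y]))$ that land at a common marked point, the definition of superposition puts the strand of $\curve$ above the strand of $\curve[y]$ in the local ordering at that point, whereas in $[\multi]$ the two strands are declared simultaneous. The boundary skein relation of Figure \ref{fig: skein} converts between these two local pictures at the cost of a factor of $q^{\pm \frac{1}{2}}$, with the sign determined by which of the two strands is clockwise from the other. Inspection of the two oriented cases in the boundary skein relation shows that the sign is exactly the summand $\pm 1$ appearing in the definition of $\Lambda_{\curve,\curve[y]}$: it is $+1$ when $\partial_i(\curve)$ is clockwise from $\partial_j(\curve[y])$ and $-1$ in the opposite case (and $0$ when the ends do not share an endpoint).

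Next I would apply the boundary skein relation independently at each shared endpoint. Because these relations are local and affect disjoint neighborhoods, they can be applied simultaneously, and the accumulated $q$-factor is the sum of the local contributions, namely $q^{\frac{1}{2}\Lambda_{\curve,\curve[y]}}$. This yields the first equality
\[ [\curve][\curve[y]] = [\curve\cdot\curve[y]] = q^{\frac{1}{2}\Lambda_{\curve,\curve[y]}}[\multi]. \]
For the second equality, I would apply the same argument to the opposite product $[\curve[y]][\curve]$: in this superposition the strand of $\curve[y]$ lies above $\curve$ at each shared endpoint, so the boundary skein relation supplies the opposite local sign, giving $[\curve[y]][\curve] = q^{-\frac{1}{2}\Lambda_{\curve,\curve[y]}}[\multi]$. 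Dividing the two expressions (or applying $\dag$ to the first, using that $[\multi]^\dag=[\multi]$) produces $[\curve][\curve[y]] = q^{\Lambda_{\curve,\curve[y]}}[\curve[y]][\curve]$.

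The only real bookkeeping is to verify the sign convention matches the definition of $\Lambda_{\curve,\curve[y]}$; this is a case-by-case check against the oriented form of the boundary skein relation. Once that is fixed, everything else is a direct application of local relations on disjoint disc neighborhoods and has no topological subtlety, precisely because $\multi$ being a simple multicurve guarantees no other skein relation (Kauffman, unknot, or contractible arc) is triggered.
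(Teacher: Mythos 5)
Your proposal is correct and follows the same route as the paper, whose proof is simply the remark that the identity is a restatement of the boundary skein relation of Figure \ref{fig: skein}; you have merely spelled out the bookkeeping of applying that relation at each shared endpoint and matching the local $q^{\pm\frac{1}{2}}$ factors with the clockwise convention in the definition of $\Lambda_{\curve,\curve[y]}$. The only detail worth stating carefully is that when several of the relevant ends meet at the same marked point the local moves are not literally in disjoint discs, but the boundary skein relation is stated to hold in the presence of additional undrawn strands, so the factors still accumulate additively.
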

\begin{proof}
This is a restatement of the boundary skein relation (Figure \ref{fig: skein}).
\end{proof}

Given an indexed triangulation $\Delta=\{\curve_1,\curve_2,...,\curve_{|\Delta|}\}$, define a skew-symmetric ${|\Delta|}\times {|\Delta|}$-matrix $\Lambda^\Delta$, called the \textbf{orientation matrix of $\Delta$}, by
\[ \Lambda^\Delta_{ij} := \Lambda_{\curve_i,\curve_j}\]
Finally, extend $\Lambda^\Delta$ to a skew-symmetric bilinear form $\Lambda^\Delta:\Z^\Delta\times\Z^\Delta\rightarrow \Z$ by
\[ \Lambda^\Delta(\alpha,\beta):=\langle \alpha, \Lambda^\Delta\beta\rangle = \sum_{1\leq i,j\leq {|\Delta|}}\Lambda^\Delta_{ij}\alpha_i\beta_j\]

Later on, we will also need a related matrix which measures when two ends are immediately clockwise in a triangulation.  For two simple curves $\curve,\curve[y]$ in a indexed triangulation $\Delta$, define
\[ \Q^\Delta_{\curve,\curve[y]} = \sum_{i,j\in \{1,2\}} \left\{
\begin{array}{c|c}
0 & \text{if $\partial_i(\curve)$ and $\partial_j(\curve[y])$ have different endpoints} \\
-1 & \text{if $\partial_i(\curve)$ is immediately clockwise to $\partial_j(\curve[y])$ in $\Delta$} \\
1 & \text{if $\partial_i(\curve[y])$ is immediately clockwise to $\partial_j(\curve)$ in $\Delta$} \\
\end{array}\right\}\]
Note the sign-reversal.
Define a skew-symmetric ${|\Delta|}\times {|\Delta|}$ matrix $\Q^\Delta$, called the \textbf{skew-adjacency matrix of $\Delta$}, by
\[ \Q^\Delta_{ij} := \Q^\Delta_{\curve_i,\curve_j}\]
Finally, extend $\Q^\Delta$ to a skew-symmetric bilinear form $\Q^\Delta:\Z^\Delta\times\Z^\Delta\rightarrow \Z$ by
\[ \Q^\Delta(\alpha,\beta):=\alpha^\dag \Q^\Delta\beta = \sum_{1\leq i,j\leq {|\Delta|}}\Q^\Delta_{ij}\alpha_i\beta_j\]

\subsection{Monomials in $\Delta$}

Fix a triangulation $\Delta$ of $\S$.  For $\alpha\in \N^\Delta$, let $\Delta^\alpha$ denote a simple multicurve which has $\alpha_i$-many curves homotopic to $\curve_i$, for each $i$, and no other components.  The corresponding class $[\Delta^\alpha]\in \Sk_q(\S)$ does not depend on the choice of such a multicurve.  Such an element is called a \textbf{monomial} in the triangulation $\Delta$.

Multiplication of monomials can be computed using the following proposition.
\begin{prop}\label{prop: monomialmulti}
\[ [\Delta^\alpha] = q^{-\frac{1}{2}\sum_{i<j}\Lambda^\Delta_{ij}\alpha_i\alpha_j}[\curve_1]^{\alpha_1}[\curve_2]^{\alpha_2}...[\curve_{|\Delta|}]^{\alpha_{|\Delta|}}\]
\[ [\Delta^\alpha][\Delta^\beta] = q^{\frac{1}{2}\Lambda^\Delta(\alpha,\beta)} [\Delta^{\alpha+\beta}] = q^{\Lambda^\Delta(\alpha,\beta)} [\Delta^\beta][\Delta^\alpha]\]
\end{prop}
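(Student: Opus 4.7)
The plan is to prove the first identity by iterating the boundary skein relation to collapse an ordered product of the $[\curve_i]$ into the simultaneous class $[\Delta^\alpha]$, and then derive the second identity from pairwise quasi-commutation of the $[\curve_i]$.

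First, the two-curve identity $[\curve][\curve[y]] = q^{\frac{1}{2}\Lambda_{\curve,\curve[y]}}[\curve\cup\curve[y]]$ established just above (for simple curves with simple union) extends to the ordered product $[\curve_1]^{\alpha_1}\cdots[\curve_{|\Delta|}]^{\alpha_{|\Delta|}}$ by iteration. The arcs of $\Delta$ are pairwise non-crossing in the interior and meet only at marked points, so the superposition has all its self-intersections concentrated at the marked points, with strands of earlier-indexed arcs stacked above later ones. Applying the boundary skein relation (with undrawn strands at the marked point held fixed) pairwise at each marked point converts the fully-ordered configuration into the simultaneous one: each pair of strands coming from $\curve_i$ and $\curve_j$ contributes a factor $q^{\pm 1/2}$, whose sign summed over the two endpoints matches exactly $\frac{1}{2}\Lambda_{\curve_i,\curve_j}=\frac{1}{2}\Lambda^\Delta_{ij}$. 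Since $\Lambda^\Delta$ is skew-symmetric, $\Lambda^\Delta_{ii}=0$, so parallel copies of a single arc commute and contribute no extra factor. Summing over all $\alpha_i\alpha_j$ pairs with $i<j$ gives the total exponent $\frac{1}{2}\sum_{i<j}\Lambda^\Delta_{ij}\alpha_i\alpha_j$, from which the first identity follows by inverting.

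For the second identity, substitute the first identity into both $[\Delta^\alpha]$ and $[\Delta^\beta]$ and concatenate to obtain
\[
[\Delta^\alpha][\Delta^\beta] = q^{c_\alpha+c_\beta}\,[\curve_1]^{\alpha_1}\cdots[\curve_{|\Delta|}]^{\alpha_{|\Delta|}}\,[\curve_1]^{\beta_1}\cdots[\curve_{|\Delta|}]^{\beta_{|\Delta|}},
\]
where $c_\gamma := -\frac{1}{2}\sum_{i<j}\Lambda^\Delta_{ij}\gamma_i\gamma_j$. Using the two-variable quasi-commutation $[\curve_i][\curve_j] = q^{\Lambda^\Delta_{ij}}[\curve_j][\curve_i]$ (an immediate consequence of the base case), I would move each block $[\curve_j]^{\beta_j}$ leftward past every $[\curve_i]^{\alpha_i}$ with $i>j$; each individual swap introduces a factor $q^{\alpha_i\beta_j\Lambda^\Delta_{ij}}$. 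The re-ordered product is then $q^{-c_{\alpha+\beta}}[\Delta^{\alpha+\beta}]$ by the first identity again. A short manipulation using only the skew-symmetry of $\Lambda^\Delta$ shows that the collected exponent $c_\alpha+c_\beta+\sum_{i>j}\alpha_i\beta_j\Lambda^\Delta_{ij} - c_{\alpha+\beta}$ collapses to $\frac{1}{2}\Lambda^\Delta(\alpha,\beta)$, proving the first equality. The right-hand equality $q^{\frac{1}{2}\Lambda^\Delta(\alpha,\beta)}[\Delta^{\alpha+\beta}] = q^{\Lambda^\Delta(\alpha,\beta)}[\Delta^\beta][\Delta^\alpha]$ then follows by exchanging the roles of $\alpha$ and $\beta$ and applying skew-symmetry of $\Lambda^\Delta(\cdot,\cdot)$.

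The main obstacle is the sign-sensitive bookkeeping: verifying that the iterated application of the boundary skein relation at a single marked point with many strands aggregates to precisely $\frac{1}{2}\Lambda^\Delta_{ij}$ per pair of curve classes, and that the discrepancy in the second identity simplifies as claimed. Both reduce to mechanical computations with a skew-symmetric bilinear form, but care is needed because the sign in the boundary skein relation depends on the clockwise/counter-clockwise ordering of ends at each marked point, which is exactly the data encoded in $\Lambda^\Delta$.
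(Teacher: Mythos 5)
Your proposal is correct and follows essentially the same route as the paper: the first identity is obtained by repeatedly applying the boundary skein relation (equivalently, iterating the two-curve relation $[\curve][\curve[y]]=q^{\frac{1}{2}\Lambda_{\curve,\curve[y]}}[\curve\cup\curve[y]]$) to convert the superposition ordering into the simultaneous one, and the second identity is deduced from the first by quasi-commutation bookkeeping with the skew form $\Lambda^\Delta$. The paper's proof is just a terser version of this, so your extra sign-tracking (including the observation that parallel copies of one arc contribute no factor, since their clockwise contributions cancel at the two endpoints) simply fills in details the paper leaves implicit.
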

\begin{proof}
The superposition product $[\curve_1]^{\alpha_1}[\curve_2]^{\alpha_2}...[\curve_{|\Delta|}]^{\alpha_{|\Delta|}}$ corresponds to a link $\link$ which has the same underlying multicurve as $\Delta^\alpha$; however, the ordering on $\link$ is via superposition, and the ordering on $\Delta^\alpha$ is simultaneous.  By repeatedly applying the boundary skein relation (Figure \ref{fig: skein}), one obtains the first identity.

The second identity follows from the first identity, or by direct application of the boundary skein relation.
\end{proof}

Monomials can be characterized as follows.  For any element $y\in Sk_q(\S)$, define the element $\mu_{\Delta}(y)\in \N^\Delta$ by
\[ \mu_{\Delta}(y) := (\mu([\curve_1],y),\mu([\curve_2],y),...,\mu([\curve_{|\Delta|}],y))\]
Lemma \ref{lemma: crossings} implies that
\[ \mu([\Delta^\alpha],x) = \alpha\cdot \mu_\Delta(x)\]
where the dot product uses the standard basis in $\N^\Delta$.

\begin{prop}\label{prop: monomial}
For $\multi$ a simple multicurve, $[\multi]$ is a monomial in $\Delta$ if and only if $\mu_\Delta([\multi])= 0$.
\end{prop}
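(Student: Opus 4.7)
My plan is to treat the two directions separately. The forward direction (monomial implies $\mu_\Delta=0$) is routine: if $[\multi]=[\Delta^\alpha]$, then Lemma \ref{lemma: basis} forces $\multi\sim\Delta^\alpha$ in $\Multi$, so each component of $\multi$ is homotopic to some $\curve_j\in\Delta$. Since $\Delta$ is itself a simple multicurve, its arcs have pairwise crossing number zero, and Corollary \ref{coro: crossingsum} then gives $\mu(\curve_i,\multi)=0$ for every $i$.

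For the converse, the strategy is to realize $\Delta$ and $\multi$ by homotopic representatives $\Delta',\multi'$ that are simultaneously minimal in pairwise intersections (Lemma \ref{lemma: simul}). The hypothesis combined with Corollary \ref{coro: crossingsum} forces $\mu(\Delta,\multi)=0$, so $\Delta'$ and $\multi'$ have no interior intersections at all, and each component $\curve[y]$ of $\multi'$ is confined to the closure of a single triangular component $T$ of $\S-\Delta'$. The substantive work is then a local classification of simple curves inside the topological disc $T$ with three marked boundary points: (i) any loop is contractible in $T$, hence an unknot forbidden by simplicity; (ii) an arc with both endpoints at a single vertex $v_i$ and interior in $\mathrm{int}(T)$ is a Jordan curve in $T$ bounding a sub-disc that meets $\partial T$ only at $v_i$ and therefore contains no marked point, so the arc is contractible and excluded; (iii) an arc between two distinct vertices of $T$ is, by simple connectivity of $T$ rel endpoints, homotopic to the edge of $T$ connecting them, which is an arc in $\Delta$. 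Organizing the components of $\multi'$ by this case analysis exhibits $\multi'\sim\Delta^\alpha$ for some $\alpha\in\N^\Delta$, proving $[\multi]=[\Delta^\alpha]$ is a monomial.

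The main obstacle I anticipate is bookkeeping for non-generic triangles, where the three vertices of $T$ may be identified in $\S$ or two edges of $T$ may descend to the same arc in $\Delta$. These identifications should not actually interfere, because the classification is carried out inside the topological disc $T$ prior to the gluing map $T\to\S$; a curve homotopic in $T$ to an edge of $T$ descends to a curve homotopic in $\S$ to an arc of $\Delta$ regardless of identifications.
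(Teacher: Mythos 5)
Your argument is correct, and the forward direction is essentially the paper's own computation, but your converse takes a genuinely different route. After reducing (via Lemma \ref{lemma: simul} and Corollary \ref{coro: crossingsum}) to a representative $\multi'$ whose interior is disjoint from $\Delta'$, the paper finishes in one line by invoking the \emph{maximality} clause in the definition of a triangulation: any non-contractible simple curve disjoint from $\Delta$ must already be homotopic to an arc of $\Delta$, since otherwise it could be adjoined to $\Delta$, contradicting maximality; this handles loops and arcs simultaneously and requires no local analysis. You instead use the other characterization of a triangulation --- that the complement is a union of discs with three marked points --- and classify simple curves in the closure of a complementary triangle directly: loops lie in the open disc and are unknots, arcs returning to the same corner cut off a disc and are contractible arcs, and arcs joining distinct corners are homotopic to an edge, hence to an arc of $\Delta$. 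This is more hands-on and self-contained, at the cost of the bookkeeping you flag: the case division by ``same vertex'' versus ``distinct vertices'' must be made at the level of corners of the abstract triangle before the gluing map, since two corners of $T$ identified to one marked point of $\S$ produce an arc that is \emph{not} contractible but is homotopic to the corresponding edge; your final remark handles exactly this, so there is no gap. The trade-off is that the paper's argument leans on the maximality property (and so is shorter), while yours leans on the triangle decomposition of the complement and would survive even if one only knew that characterization.
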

\begin{proof} If $[\multi]=[\Delta^\alpha]$, then
\[ \mu([\curve_i],[\Delta^\alpha]) = \sum_{1\leq j\leq {|\Delta|}} \alpha_j\mu([\curve_i],[\curve_j])=0\]
and so $\mu_\Delta([\Delta^\alpha])=0$.

Now, assume $\mu_\Delta([\multi])=0$.  Then there is a homotopic simple multicurve $\multi'$ which does not cross any $\curve_i\in \Delta$.  Then each component of $\multi'$ is a simple curve which does not cross any $\curve_i\in \Delta$.  Because $\Delta$ is maximal, each component of $\multi'$ is homotopic to some arc in $\Delta$.  Then every component of $\multi'$ is homotopic to an arc in $\Delta$, and so $[\multi']=[\Delta^\alpha]$ for some $\alpha$.
\end{proof}

A \emph{polynomial}\footnote{Some call these `skew-polynomials', to emphasize their monomials only quasi-commute.} in $\Delta$ is a $\Zq$-linear combination of monomials, and the set of polynomials in $\Delta$ is a $\Zq$-subalgebra of $\Sk_q(\S)$ by the proposition.  By the proposition, $x\in \Sk_q(\S)$ is a polynomial in $\Delta$ if and only if $\mu_\Delta(x)=0$.
\begin{rem}
A triangulation $\Delta$ gives $\Sk_q(\S)$ an $\N^N$-filtration, where $\N^N$ has the partial order $\alpha\leq\beta$ if $\alpha_i\leq\beta_i$ for all $i$.  The filtration is
\[\mathcal{F}_{\Delta,\alpha}(\Sk_q(\S)):=\{x\in \Sk_q(\S)|\mu_{\Delta}(x)\leq\alpha\}\]  Then the subalgebra of polynomials in $\Delta$ is $\mathcal{F}_{\Delta,\mathbf{0}}(\Sk_q(\S))$.
\end{rem}

\begin{rem}
If $\Delta$ is a maximal multicurve (possibly with loops), the results of this section remain true (where $\Lambda^\Delta_{ij}: = 0$ if either $\curve_i$ or $\curve_j$ is a loop).
\end{rem}

\subsection{Laurent expressions}

In Section \ref{section: reducing}, it was shown that multiplying $y$ by a sufficiently high power of an arc $[\curve]$ had zero crossing number with $[\curve]$.  This can be directly generalized to triangulations.
\begin{lemma}
For all $y\in Sk_q(\S)$, $\mu_\Delta([\Delta^{\mu_\Delta(y)}]y)=0$.
\end{lemma}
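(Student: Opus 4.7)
The goal is to show that for each $i$, $\mu([\curve_i],[\Delta^{\mu_\Delta(y)}]y)=0$, where $\alpha:=\mu_\Delta(y)=(\alpha_1,\ldots,\alpha_{|\Delta|})$. My plan is to reduce this to a single-arc application of Corollary \ref{coro: reducing}, by moving the factor $[\curve_i]^{\alpha_i}$ to the outside and showing that the remaining product barely changes the $i$th crossing number of $y$.

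First, I would fix an index $i$ and use Proposition \ref{prop: monomialmulti} to rewrite
\[ [\Delta^\alpha] \;=\; q^{\lambda}\,[\curve_i]^{\alpha_i}\prod_{j\neq i}[\curve_j]^{\alpha_j}\]
for some $\lambda\in\frac12\Z$, where the product on the right is taken in any fixed order. (The proposition expresses $[\Delta^\alpha]$ as an ordered product over \emph{all} indices, but via the quasi-commutation relation $[\curve_j][\curve_k]=q^{\Lambda^\Delta_{jk}}[\curve_k][\curve_j]$ one may pull the $[\curve_i]^{\alpha_i}$ factor to the left at the cost of a $q$-factor.) Since scalar $q$-powers have the same support as the original element, they do not affect $\mu$, so it suffices to prove $\mu\big([\curve_i],\,[\curve_i]^{\alpha_i}w\big)=0$ where $w:=\prod_{j\neq i}[\curve_j]^{\alpha_j}\cdot y$.

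Next I would bound $\mu([\curve_i],w)$. Because $\curve_i$ and $\curve_j$ are two distinct arcs of the triangulation $\Delta$, the pair $\{\curve_i,\curve_j\}$ is already a simple multicurve and hence $\mu([\curve_i],[\curve_j])=0$ for all $j\neq i$. Iterating the submultiplicativity from Lemma \ref{lemma: crossings}(3), this gives $\mu\big([\curve_i],\prod_{j\neq i}[\curve_j]^{\alpha_j}\big)\leq \sum_{j\neq i}\alpha_j\mu([\curve_i],[\curve_j])=0$. One more application of Lemma \ref{lemma: crossings}(3) yields
\[ \mu([\curve_i],w)\;\leq\; \mu\Big([\curve_i],\prod_{j\neq i}[\curve_j]^{\alpha_j}\Big)+\mu([\curve_i],y)\;=\;0+\alpha_i\;=\;\alpha_i.\]

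Finally, I would apply the iterated form of Lemma \ref{lemma: reducing} (as recorded in the proof of Corollary \ref{coro: reducing}): multiplication by $[\curve_i]$ from the left strictly decreases $\mu([\curve_i],\cdot)$ whenever it is positive, so $\mu([\curve_i],[\curve_i]^{\alpha_i}w)\leq\max(0,\mu([\curve_i],w)-\alpha_i)=0$. This gives $\mu([\curve_i],[\Delta^\alpha]y)=0$ for every $i$, which is the definition of $\mu_\Delta([\Delta^{\mu_\Delta(y)}]y)=0$. No real obstacle arises; the only bookkeeping subtlety is to remember that the reordering in Proposition \ref{prop: monomialmulti} only changes things by a $q$-factor, so the crossing-number estimates are indifferent to the order chosen.
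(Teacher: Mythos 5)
Your proof is correct and follows essentially the same route as the paper's: rewrite the monomial $[\Delta^{\mu_\Delta(y)}]$ via Proposition \ref{prop: monomialmulti} and quasi-commutation, use that distinct arcs of $\Delta$ have $\mu([\curve_i],[\curve_j])=0$ together with the subadditivity in Lemma \ref{lemma: crossings}, and finish with the crossing-reduction mechanism of Lemma \ref{lemma: reducing}. The only cosmetic difference is that you pull $[\curve_i]^{\alpha_i}$ to the far left instead of placing it adjacent to $y$ (as the paper does, so that Corollary \ref{coro: reducing} applies verbatim), which forces you to use the slightly strengthened $\max(0,\cdot)$ form of the iteration since $\mu([\curve_i],w)$ may be strictly less than $\alpha_i$; as you note, that form does follow from Lemma \ref{lemma: reducing} plus Lemma \ref{lemma: crossings}(3), so there is no gap.
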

\begin{proof}
By Proposition \ref{prop: monomialmulti}, there is some $n\in \Z$ such that
\[ [\Delta^{\mu_\Delta(y)}] = q^{\frac{1}{2}n} [\Delta^\alpha][\curve_i]^{\mu(\curve_i,y)}\]
By Lemma \ref{lemma: crossings},
\[ \mu([\curve_i],[\Delta^{\mu_\Delta(y)}]y) \leq \mu([\curve_i],[\Delta^\alpha])+\mu([\curve_i],[\curve_i]^{\mu(\curve_i,y)}y)\]
The first term on the right is zero by Proposition \ref{prop: monomial}, and the second is zero by Corollary \ref{coro: reducing}.  Therefore, $ \mu([\curve_i],[\Delta^{\mu_\Delta(y)}]y)=0$ and so every term in $\mu_\Delta([\Delta^{\mu_\Delta(y)}]y)$ is zero.
\end{proof}

\begin{coro}\label{coro: Laurent}
For all $y\in \Sk_q(\S)$, $[\Delta^{\mu_{\Delta}(y)}]y$ is a polynomial in $\Delta$.
\end{coro}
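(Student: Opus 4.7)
The plan is to deduce this corollary from the preceding lemma by passing through the basis of simple multicurves. First, I would apply the preceding lemma to obtain $\mu_\Delta([\Delta^{\mu_\Delta(y)}]y) = 0$, so the task reduces to showing that any element $x \in \Sk_q(\S)$ with $\mu_\Delta(x) = 0$ is a polynomial in $\Delta$ (this is the ``only if'' direction of the equivalence flagged in the remark following Proposition \ref{prop: monomial}).

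For that reduction, I would expand $x$ uniquely in the simple multicurve basis from Lemma \ref{lemma: basis} as
\[
x = \sum_{\multi[Y] \in Supp(x)} \lambda_{\multi[Y]} [\multi[Y]],
\qquad \lambda_{\multi[Y]} \neq 0.
\]
By the definition of $\mu$ on arbitrary elements as a maximum over supports, the assumption $\mu_\Delta(x)=0$ means $\mu([\curve_j], \multi[Y]) = 0$ for every $\multi[Y] \in Supp(x)$ and every arc $\curve_j \in \Delta$. Proposition \ref{prop: monomial} then says each $[\multi[Y]]$ is a monomial in $\Delta$, so $x$ is a $\Zq$-linear combination of monomials, i.e., a polynomial in $\Delta$.

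Applying this with $x = [\Delta^{\mu_\Delta(y)}]y$ yields the corollary. There is no real obstacle: the bulk of the work was done in the preceding lemma (which in turn uses Corollary \ref{coro: reducing} arc-by-arc), and the step here is essentially unpacking the definition of $\mu_\Delta$ on a general element together with Proposition \ref{prop: monomial}. The only minor care is noting that the simple multicurve expansion is unique and that $\mu$ of a sum is controlled by the maximum over the support, both of which are immediate from Lemma \ref{lemma: basis} and the definition of $\mu$.
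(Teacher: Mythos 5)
Your proof is correct and matches the paper's route: the corollary is an immediate consequence of the preceding lemma together with the characterization stated after Proposition \ref{prop: monomial} (that $x$ is a polynomial in $\Delta$ iff $\mu_\Delta(x)=0$), and your support-expansion argument is exactly how that characterization is justified. No gaps.
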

\begin{rem}\label{rem: Laurent}
If $[\Delta^{\mu_\Delta(y)}]$ had a left inverse in $\Sk_q(\S)$, then we could write
\[ y= \sum_{\alpha}\lambda_\alpha [\Delta^{\mu_\Delta(y)}]^{-1}[\Delta^\alpha]\]
This can be regarded as a (skew) Laurent polynomial in $\Delta$; this will be made precise by introducing quantum tori.  Such an inverse does not exist in $\Sk_q(\S)$, but it will exist in an appropriate localization.
\end{rem}


\subsection{Quantum tori}

Let $\Lambda$ be a skew-symmetric $N\times N$ matrix with integral coefficients.  Define the \textbf{(based) quantum torus} $\T_\Lambda$ of $\Lambda$ to be the associative $\Zq$-algebra such that...
\begin{itemize}
\item As a $\Zq$-module, $\T_\Lambda$ has a free $\Zq$-basis denoted $M^\alpha$ as $\alpha$ runs over $\Z^N$.
\item The product of these basis elements is given by
\[M^\alpha\cdot M^\beta= q^{\frac{1}{2}\Lambda(\alpha,\beta)}M^{\alpha+\beta},\] and general products are determined by $\Zq$-bilinearity.
\end{itemize}
These are `based' quantum tori because the lattice $\Z^N$ comes with an explicit basis, denoted $\{e_1,e_2,...,e_N\}$. There are then distinguished elements of the form $M^{e_i}$, which generate $\T_\Lambda$ together with $M^{-e_i}$.
The basis $\{e_1,e_2,...,e_N\}$ of $\Z^N$ gives elements $\{M^{e_1},M^{e_2},...,M^{e_N}\}$ and $\{M^{-e_1},M^{-e_2},...,M^{-e_N}\}$ which generate the algebra $\T_\Lambda$.

\begin{rem}
The ring $\T_\Lambda$ is also called a ring of `skew-Laurent polynomials'.  The name `quantum torus' is motivated as follows.  The ring $\mathbb{C}\otimes_{\mathbb{Z}}(\T_\Lambda/\langle \rq-1\rangle)$ is a ring of complex Laurent polynomials in $N$ variables (independent of $\Lambda$), which is the ring of regular functions on the variety $(\mathbb{C}^*)^N$, called the `$N$-dimensional algebraic torus'.  In this way, $\mathbb{C}\otimes_\Z\T_\Lambda$ defines a quantization of the algebraic torus with parameter $\rq$.
\end{rem}
%
%
\begin{prop}\cite{GW89}
The quantum torus $\T_\Lambda$ is a Noetherian Ore domain.
\end{prop}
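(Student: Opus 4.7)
The plan is to realize $\T_\Lambda$ as an Ore localization of an iterated skew polynomial ring (iterated Ore extension) over $\Zq$, so that the domain and Noetherian properties are inherited step by step from the base ring $\Zq$, which is a commutative Noetherian domain.

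First I would build the ``positive'' subalgebra $\T_\Lambda^+\subset \T_\Lambda$ spanned over $\Zq$ by $\{M^\alpha:\alpha\in\N^N\}$, and exhibit it as an iterated Ore extension. Set $R_0=\Zq$ and define recursively $R_i = R_{i-1}[x_i;\sigma_i]$, where $\sigma_i$ is the $\Zq$-algebra automorphism of $R_{i-1}$ determined by $\sigma_i(x_j)=q^{\Lambda_{ji}}x_j$ for $j<i$. The defining relation of the quantum torus is $M^{e_i}M^{e_j}=q^{\Lambda_{ij}}M^{e_j}M^{e_i}$, so identifying $x_i$ with $M^{e_i}$ makes this consistent; no derivation is needed because all relations are homogeneous. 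A standard fact about Ore extensions by an automorphism is that if $R_{i-1}$ is a Noetherian domain, then so is $R_i$ (domain: compare leading coefficients in $x_i$; Noetherian: the skew Hilbert basis theorem). By induction $\T_\Lambda^+=R_N$ is a Noetherian domain.

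Next I would identify $\T_\Lambda$ with the localization of $\T_\Lambda^+$ at the multiplicative set $S$ generated by $\{M^{e_1},\dots,M^{e_N}\}$. Each $M^{e_i}$ is a normal element of $\T_\Lambda^+$: for every $\alpha\in\N^N$ we have $M^{e_i}M^\alpha = q^{\Lambda(e_i,\alpha)}M^\alpha M^{e_i}$, so $M^{e_i}\cdot \T_\Lambda^+ = \T_\Lambda^+\cdot M^{e_i}$. A multiplicative set generated by normal nonzerodivisors in a Noetherian domain is automatically a (two-sided) Ore set, and the corresponding Ore localization $\T_\Lambda^+[S^{-1}]$ is again a Noetherian domain. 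By construction this localization has the same $\Zq$-basis $\{M^\alpha:\alpha\in\Z^N\}$ with the same multiplication rule, so it is canonically isomorphic to $\T_\Lambda$.

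Finally, the Ore condition for $\T_\Lambda$ on all nonzero elements follows from Goldie's theorem, since any Noetherian domain is an Ore domain (it embeds in its classical ring of quotients). The only place I would need to be careful is checking that the iterative automorphisms $\sigma_i$ genuinely extend to $R_{i-1}$ — that is, that the prescribed scalings on the previously introduced generators respect the already-imposed quasi-commutation relations, which reduces to the skew-symmetry $\Lambda_{jk}+\Lambda_{kj}=0$ — and verifying the normality of $M^{e_i}$ globally rather than just on generators, which is immediate from $\Zq$-bilinearity of $\Lambda$. Beyond these bookkeeping points, every step is classical, and the cited reference \cite{GW89} covers both the iterated Ore extension version of the Hilbert basis theorem and the Ore localization of Noetherian domains.
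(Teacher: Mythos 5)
Your argument is correct and is essentially the standard one: the paper offers no proof of this proposition, only the citation to \cite{GW89}, and your route — realizing $\T_\Lambda$ as the localization of an iterated skew-polynomial extension of $\Zq$ at the normal elements $M^{e_1},\dots,M^{e_N}$, then invoking the skew Hilbert basis theorem and Goldie's theorem (Noetherian domain $\Rightarrow$ Ore domain) — is exactly the argument found in that reference. The only bookkeeping point: with the usual convention $x_i r=\sigma_i(r)x_i$ in $R_{i-1}[x_i;\sigma_i]$, you want $\sigma_i(x_j)=q^{\Lambda_{ij}}x_j$ rather than $q^{\Lambda_{ji}}x_j$ to match $M^{e_i}M^{e_j}=q^{\Lambda_{ij}}M^{e_j}M^{e_i}$, a harmless transposition given the skew-symmetry of $\Lambda$.
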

\noindent As a consequence, $\T_\Lambda$ embeds into its skew-field of fractions $\mathcal{F}$.

When $\Lambda=\Lambda^\Delta$, the orientation matrix of a triangulation, we write $\T_\Delta$ for $\T_{(\Lambda^\Delta)}$.

\subsection{Embeddings into quantum tori}

We now have all the tools needed to show that a skein algebra embeds into a quantum torus for each triangulation.

\begin{lemma}
The set of monomials in $\Delta$ generate an Ore set.
\end{lemma}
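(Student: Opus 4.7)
The plan is to verify the Ore conditions directly, by leveraging Corollary \ref{coro: Laurent} together with Proposition \ref{prop: monomialmulti}. Because $[\Delta^\alpha][\Delta^\beta] = q^{\frac{1}{2}\Lambda^\Delta(\alpha,\beta)}[\Delta^{\alpha+\beta}]$, any product of monomials is again a monomial up to a $\Zq$-unit, so the multiplicative set generated by the monomials in $\Delta$ consists precisely of $\Zq$-unit multiples of monomials $[\Delta^\alpha]$, $\alpha\in\N^\Delta$. It therefore suffices to show: for every $y\in\Sk_q(\S)$ and every monomial $[\Delta^\alpha]$, there exist a monomial $[\Delta^\gamma]$ and an element $y'\in\Sk_q(\S)$ with $y[\Delta^\gamma] = [\Delta^\alpha]y'$, together with the symmetric left-sided statement.

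First I would extract a right-sided companion to Corollary \ref{coro: Laurent}. The corollary applied to $y^\dag$ gives that $[\Delta^{\mu_\Delta(y^\dag)}]y^\dag$ lies in the polynomial subalgebra in $\Delta$. Since simple-multicurve basis elements, and in particular monomials $[\Delta^\gamma]$, are fixed by the bar involution, the subalgebra of polynomials in $\Delta$ is preserved by $\dag$; applying $\dag$ therefore yields that $y[\Delta^{\mu_\Delta(y^\dag)}]$ is itself a polynomial in $\Delta$, which we may write as $P = \sum_\gamma \lambda_\gamma[\Delta^\gamma]$ for finitely many nonzero $\lambda_\gamma\in\Zq$.

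Next, I would multiply on the right by $[\Delta^\alpha]$ and regroup. Using Proposition \ref{prop: monomialmulti} to rewrite products of monomials,
\[ y[\Delta^{\mu_\Delta(y^\dag)+\alpha}] \;=\; q^{c}\,y[\Delta^{\mu_\Delta(y^\dag)}][\Delta^\alpha] \;=\; q^{c}P[\Delta^\alpha] \;=\; \sum_\gamma q^{c_\gamma}\lambda_\gamma[\Delta^{\gamma+\alpha}]\]
for suitable half-integer exponents $c,c_\gamma$. Each $[\Delta^{\gamma+\alpha}]$ has exponent vector componentwise $\geq \alpha$, so it factors as $q^{d_\gamma}[\Delta^\alpha][\Delta^\gamma]$. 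Collecting,
\[ y[\Delta^{\mu_\Delta(y^\dag)+\alpha}] \;=\; [\Delta^\alpha]\Bigl(\sum_\gamma q^{c_\gamma+d_\gamma}\lambda_\gamma[\Delta^\gamma]\Bigr),\]
which is of the required form $[\Delta^\alpha]y'$. This establishes the right Ore condition, and the left Ore condition follows by applying the bar involution to the identity, using that $\dag$ is an antiautomorphism fixing each $[\Delta^\beta]$.

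The hard part does not really materialize: Corollary \ref{coro: Laurent} does essentially all the work. The two points that need care are (i) bookkeeping of the $q^{1/2}$ prefactors arising from quasi-commutation, which are units in $\Zq$ and so do not affect the Ore property, and (ii) invoking the bar involution cleanly, which requires that both monomials and the polynomial subalgebra in $\Delta$ be $\dag$-stable—as they are—so that the one-sided statement of Corollary \ref{coro: Laurent} can be transferred to the other side.
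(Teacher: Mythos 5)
Your proposal is correct and is essentially the paper's argument: Corollary \ref{coro: Laurent} to make $y$ polynomial in $\Delta$ after multiplying by a suitable monomial, Proposition \ref{prop: monomialmulti} to shuffle the extra monomial past the resulting polynomial at the cost of $q$-powers, and the bar involution to transfer one Ore condition to the other. The only (cosmetic) difference is that you verify the right-sided condition first by conjugating the corollary through $\dag$, while the paper verifies the left-sided condition directly and invokes $\dag$ once at the end.
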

\begin{proof}
Let $x\in \Sk_q(\S)$ and $[\Delta^{\beta}]$ be a monomial in $\Delta$.  By Corollary \ref{coro: Laurent},
\[ [\Delta^{\mu_\Delta(x)}]x= \sum_{\alpha\in \N^N}\lambda_\alpha[\Delta^\alpha]\]
for finitely many non-zero $\lambda_\alpha\in \Zq$, and so
\begin{eqnarray*}
[\Delta^{\beta+\mu_\Delta(x)}]x &=& q^{-\frac{1}{2}\Lambda^\Delta(\beta,\mu_\Delta(x))}[\Delta^\beta]\sum_{\alpha\in \N^N}\lambda_\alpha[\Delta^\alpha] \\
&=& \left(q^{-\frac{1}{2}\Lambda^\Delta(\beta,\mu_\Delta(x))}\sum_{\alpha\in \N^N}q^{\Lambda^{\Delta}(\beta,\alpha)}\lambda_\alpha[\Delta^\alpha]\right)[\Delta^\beta]
\end{eqnarray*}
Then the set of monomials in $\Delta$ satisfies the left Ore condition.  Since the bar-involution sends monomials to themselves, they automatically satisfy the right Ore condition as well.
\end{proof}

Let $\Sk_q(\S)[\Delta^{-1}]$ be the localization at the monomials in $\Delta$.\footnote{This notation is non-abusive, because $\Sk_q(\S)$ and $[\Delta]^{-1}$ generate $\Sk_q(\S)[\Delta^{-1}]$.}


For any $\alpha\in \Z^\Delta$, define the \emph{Laurent monomial} $[\Delta^\alpha]\in \Sk_q(\S)[\Delta^{-1}]$ by the rule
\[ [\Delta^{\beta'-\beta}]:=q^{\frac{1}{2}\Lambda^\Delta(\beta,\beta')}[\Delta^\beta]^{-1}[\Delta^{\beta'}]\]
One may check that this is independent of the representation $\alpha=\beta'-\beta$, and the multiplication rules of Proposition \ref{prop: monomialmulti} hold for general $\alpha,\beta\in \Z^N$.

\begin{thm}\label{thm: Laurent}
For each triangulation $\Delta$ of $\S$, there is an injective Ore localization
\[ \Sk_q(\S)\hookrightarrow \Sk_q(\S)[\Delta^{-1}]\simeq\T_\Delta\]
which sends $[\Delta^\alpha]$ to $M^\alpha$.
%
\end{thm}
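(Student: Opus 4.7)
The plan is to build the isomorphism from both sides: first show the Ore localization at monomials is injective, and then match it explicitly with $\T_\Delta$ via the assignment $M^\alpha\mapsto [\Delta^\alpha]$.

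First I would establish injectivity of $\Sk_q(\S)\hookrightarrow\Sk_q(\S)[\Delta^{-1}]$. This amounts to checking that every monomial $[\Delta^\beta]$ with $\beta\in\N^\Delta$ is a non-zero-divisor. By Proposition \ref{prop: monomialmulti}, $[\Delta^\beta]$ equals a unit in $\Zq$ times an ordered product $[\curve_{i_1}][\curve_{i_2}]\cdots[\curve_{i_n}]$ of classes of simple arcs. Each factor is a non-zero-divisor by Lemma \ref{lemma: nonzero}, and a finite product of non-zero-divisors is a non-zero-divisor, so $[\Delta^\beta]$ acts injectively, and hence the canonical map to the Ore localization is injective.

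Next I would define the map $\phi:\T_\Delta\to \Sk_q(\S)[\Delta^{-1}]$ on basis by $M^\alpha\mapsto [\Delta^\alpha]$, using the Laurent monomial defined just before the theorem. The small check here is that $[\Delta^\alpha]$ is independent of the representation $\alpha=\beta'-\beta\in\Z^\Delta$: if $\alpha=\beta_1'-\beta_1=\beta_2'-\beta_2$, then writing both expressions with a common $\beta$ via further translation (add any $\gamma\in\N^\Delta$) and using the quasi-commutation $[\Delta^\beta][\Delta^{\beta'}]=q^{\frac{1}{2}\Lambda^\Delta(\beta,\beta')}[\Delta^{\beta+\beta'}]$ from Proposition \ref{prop: monomialmulti} shows that both definitions agree. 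Once well-defined, $\phi$ is an algebra homomorphism because the multiplicative rule $M^\alpha M^\beta=q^{\frac{1}{2}\Lambda^\Delta(\alpha,\beta)}M^{\alpha+\beta}$ in $\T_\Delta$ is precisely the extension of Proposition \ref{prop: monomialmulti} from $\N^\Delta$ to $\Z^\Delta$.

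For surjectivity of $\phi$, I would use Corollary \ref{coro: Laurent}: given any $y\in\Sk_q(\S)$, the element $[\Delta^{\mu_\Delta(y)}]y$ is a polynomial in $\Delta$, i.e.\ a $\Zq$-linear combination of $\{[\Delta^\alpha]\mid \alpha\in\N^\Delta\}$; multiplying by $[\Delta^{-\mu_\Delta(y)}]$ on the left inside the localization expresses $y$ as an element of the image of $\phi$. A general fraction $y[\Delta^\beta]^{-1}$ is then also in the image.

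For injectivity of $\phi$, suppose $\phi(\sum_{\alpha\in F}\lambda_\alpha M^\alpha)=0$ for a finite set $F\subset\Z^\Delta$. Choose $N\in\N^\Delta$ componentwise large enough that $\alpha+N\in\N^\Delta$ for every $\alpha\in F$, and left-multiply by $[\Delta^N]$; using the multiplication rule we obtain a relation $\sum_\alpha \lambda_\alpha'[\Delta^{\alpha+N}]=0$ inside $\Sk_q(\S)$, with each $\lambda_\alpha'$ a nonzero $\Zq$-multiple of $\lambda_\alpha$. The elements $\{[\Delta^{\alpha+N}]\}_{\alpha\in F}$ are distinct simple multicurves, hence linearly independent over $\Zq$ by Lemma \ref{lemma: basis}, so every $\lambda_\alpha=0$.

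The main obstacle is the bookkeeping in defining $[\Delta^\alpha]$ for $\alpha\in\Z^\Delta$ and verifying that the quasi-commutation constants line up correctly with the defining relation of $\T_\Delta$; once that is done, surjectivity is essentially Corollary \ref{coro: Laurent} and injectivity reduces to clearing denominators and invoking the simple-multicurve basis.
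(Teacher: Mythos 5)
Your proposal is correct and follows essentially the same route as the paper: injectivity of the localization from the monomials being non-zero-divisors (via Lemma \ref{lemma: nonzero}), the homomorphism $M^\alpha\mapsto[\Delta^\alpha]$ justified by Proposition \ref{prop: monomialmulti}, surjectivity from Corollary \ref{coro: Laurent}, and injectivity by clearing denominators into $\N^\Delta$ and invoking the simple-multicurve basis of Lemma \ref{lemma: basis}. Your extra check that $[\Delta^\alpha]$ is independent of the representation $\alpha=\beta'-\beta$ is a point the paper leaves to the reader, but the overall argument is the same.
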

\begin{proof}
The injectivity of the Ore localization $\Sk_q(\S)\rightarrow \Sk_q(\S)[\Delta^{-1}]$ follows because the Ore set consists of non-zero-divisors.

Let $f:\T_\Delta\rightarrow \Sk_q(\S)[\Delta^{-1}]$ be the $\Zq$-linear map defined by $f(M^\alpha)=[\Delta^\alpha]$.  This is an algebra homomorphism by Proposition \ref{prop: monomialmulti}.

Let $[\Delta^\alpha]^{-1}x$ be an arbitrary element in $\Sk_q(\S)[\Delta^{-1}]$, with $x\in \Sk_q(\S)$ and $\alpha\in \N^N$.
By Corollary \ref{coro: Laurent}, $y=[\Delta^{\mu_\Delta(x)}]x$ is a polynomial in $\Delta$, so there is some $Y\in \T_\Delta$ with $f(Y)=y$.  Then
\begin{eqnarray*}
f(q^{\Lambda(\alpha,\mu_\Delta(x))}M^{-(\alpha+\mu_\Delta(x))}Y)
&=& q^{\Lambda(\alpha,\mu_\Delta(x))}[\Delta^{-(\alpha+\mu_\Delta(x))}]y\\ &=& [\Delta^\alpha]^{-1}[\Delta^{\mu_\Delta(x)}]^{-1}y=[\Delta^\alpha]^{-1}x
\end{eqnarray*}
Therefore, $f$ is surjective.

Let $\gamma=\sum_\alpha\lambda_\alpha M^\alpha$ be an element in the kernel of $f$.  Let $\beta\in \N^N$ such that $\alpha+\beta\in \N^N$ for all $\alpha$ with $\lambda_\alpha\neq0$.
\[ 0=[\Delta^\beta]f\left( \sum_\alpha\lambda_\alpha M^\alpha \right)
 = \sum_\alpha\lambda_\alpha [\Delta^\beta][\Delta^\alpha]
 = \sum_\alpha\lambda_\alpha q^{\lambda^\Delta(\beta,\alpha)/2} [\Delta^{\alpha+\beta}]\]
Since $\alpha+\beta$ is in $\N^N$, the elements $[\Delta^{\alpha+\beta}]$ are simple multicurves.  By Lemma \ref{lemma: basis}, these are independent over $\Zq$, and so $\lambda_\alpha=0$ for all $\alpha$.  Then the kernel of $f$ is $0$, so $f$ is an isomorphism.
\end{proof}
\begin{coro}
The Laurent monomials in $\Delta$ are a $\Zq$-basis of $\Sk_q(\S)[\Delta^{-1}]$.
\end{coro}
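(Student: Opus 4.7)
The plan is to deduce this corollary almost immediately from Theorem \ref{thm: Laurent}. By construction, the quantum torus $\T_\Delta$ comes with a distinguished free $\Zq$-basis $\{M^\alpha\}_{\alpha\in\Z^\Delta}$. Theorem \ref{thm: Laurent} produces an isomorphism of $\Zq$-algebras $f:\T_\Delta\xrightarrow{\sim}\Sk_q(\S)[\Delta^{-1}]$ sending $M^\alpha$ to the Laurent monomial $[\Delta^\alpha]$. Since a $\Zq$-algebra isomorphism is in particular a $\Zq$-module isomorphism, it carries a $\Zq$-basis to a $\Zq$-basis, so $\{[\Delta^\alpha]\}_{\alpha\in\Z^\Delta}$ is a free $\Zq$-basis of $\Sk_q(\S)[\Delta^{-1}]$.

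To make the argument complete one should first record that the notation $[\Delta^\alpha]$ for negative or mixed-sign $\alpha\in\Z^\Delta$ is unambiguous. This was asserted just before Theorem \ref{thm: Laurent}: one checks that if $\alpha=\beta'-\beta=\gamma'-\gamma$ with $\beta,\beta',\gamma,\gamma'\in\N^\Delta$, then
\[q^{\tfrac12\Lambda^\Delta(\beta,\beta')}[\Delta^\beta]^{-1}[\Delta^{\beta'}]=q^{\tfrac12\Lambda^\Delta(\gamma,\gamma')}[\Delta^\gamma]^{-1}[\Delta^{\gamma'}],\]
which reduces to a quasi-commutation computation using Proposition \ref{prop: monomialmulti}. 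With this in place, $f(M^\alpha)=[\Delta^\alpha]$ is a well-defined statement for all $\alpha\in\Z^\Delta$, and $f$ then respects the multiplication rule $M^\alpha M^\beta=q^{\tfrac12\Lambda^\Delta(\alpha,\beta)}M^{\alpha+\beta}$ on the nose.

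There is essentially no obstacle here, since the hard content (injectivity, surjectivity, and the algebra homomorphism property of $f$) is already packaged in Theorem \ref{thm: Laurent}. The only thing to be careful about is not to circularly invoke the corollary when proving Theorem \ref{thm: Laurent}: there, the surjectivity used Corollary \ref{coro: Laurent} and the injectivity used only that the monomials $[\Delta^{\alpha+\beta}]$ for $\alpha+\beta\in\N^\Delta$ are $\Zq$-independent, which is Lemma \ref{lemma: basis}. Granted that, the present corollary is a one-line consequence of transporting the tautological basis of $\T_\Delta$ across $f$.
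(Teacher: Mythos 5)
Your proposal is correct and is exactly the paper's argument: the paper's proof consists of the single line "This is true for $\T_\Delta$ by construction," i.e.\ transporting the tautological basis $\{M^\alpha\}$ across the isomorphism of Theorem \ref{thm: Laurent}, which is precisely what you do (with the well-definedness of $[\Delta^\alpha]$ and the non-circularity remarks spelled out a bit more carefully).
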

\begin{proof}
This is true for $\T_\Delta$ by construction.
\end{proof}
\begin{coro}\label{coro: domain}
For any $\S$, $\Sk_q(\S)$ and $\Sk_q^o(\S)$ are Ore domains.
\end{coro}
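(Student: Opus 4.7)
The plan is to embed $\Sk_q(\S)$ into an ambient Ore domain and pull back both the no-zero-divisor and the Ore properties. Theorem \ref{thm: Laurent} provides exactly such an embedding $\Sk_q(\S) \hookrightarrow \T_\Delta$ whenever $\S$ admits a triangulation $\Delta$, and $\T_\Delta$ is a Noetherian Ore domain by the Goodearl--Warfield proposition stated just before that theorem. For the excluded non-triangulable cases (the disc with $\leq 2$ marked points, closed surfaces, and surfaces with an unmarked boundary component) I would reduce to the triangulable case by embedding $\S$ as a submanifold of a triangulable marked surface $\S'$ (adjoining marked points to unmarked boundary components, or capping off closed $\S$ with a marked disc), and arguing that links in $\S$ inject into links in $\S'$ compatibly with the basis of simple multicurves of Lemma \ref{lemma: basis}, yielding an injection $\Sk_q(\S) \hookrightarrow \Sk_q(\S')$ and thence into a quantum torus.

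Given the embedding $\Sk_q(\S) \hookrightarrow \T_\Delta$, the no-zero-divisor property is immediate. For the left Ore condition, I would take $x, y \in \Sk_q(\S)$ with $y \neq 0$ and use the Ore property of $\T_\Delta$ to produce $u, v \in \T_\Delta$ with $u \neq 0$ and $uy = vx$. Because $\Sk_q(\S) \hookrightarrow \T_\Delta$ is the two-sided Ore localization at the set $S$ of monomials in $\Delta$ (the lemma preceding Theorem \ref{thm: Laurent}), every element of $\T_\Delta$ is a left fraction $s^{-1}w$ with $s \in S$ and $w \in \Sk_q(\S)$. I would then put $u$ and $v$ over a common denominator: the quasi-commutation rule $[\Delta^\alpha][\Delta^\beta] = q^{\frac{1}{2}\Lambda^\Delta(\alpha,\beta)}[\Delta^{\alpha+\beta}]$ of Proposition \ref{prop: monomialmulti} exhibits $[\Delta^{\alpha+\beta}]$ as a common left multiple of $[\Delta^\alpha]$ and $[\Delta^\beta]$ up to an invertible $q$-power, so any two elements of $S$ admit such a common left multiple in $S$. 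This lets me write $u = s^{-1}a$ and $v = s^{-1}b$ with $a, b \in \Sk_q(\S)$ and $a \neq 0$; multiplying $uy = vx$ by $s$ on the left yields $ay = bx$ in $\Sk_q(\S)$, which is the left Ore condition. The right Ore condition follows by applying the bar involution of Section \ref{section: bar}, which is a ring anti-automorphism and so converts a left Ore identity into a right Ore identity.

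The main obstacle I anticipate is the treatment of the non-triangulable cases: verifying that the surface inclusion $\S \hookrightarrow \S'$ really does induce an injective algebra map $\Sk_q(\S) \hookrightarrow \Sk_q(\S')$ requires checking that distinct classes of simple multicurves in $\S$ remain distinct when regarded in $\S'$, which in turn follows from the fact that the added marked points of $\S'$ sit in collar neighborhoods disjoint from the support of any simple multicurve in $\S$. The triangulable core of the argument is otherwise a routine pullback from an Ore localization, with the only subtlety being the bookkeeping of $q$-scalars when producing common denominators from quasi-commuting monomials.
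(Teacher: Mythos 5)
Your core argument works, and matches the paper, exactly when $\S$ is itself triangulable: there Theorem \ref{thm: Laurent} exhibits $\T_\Delta$ as the Ore localization of $\Sk_q(\S)$ at the monomials in $\Delta$, every element of $\T_\Delta$ is a left fraction $s^{-1}w$ with $s$ a monomial and $w\in\Sk_q(\S)$, and your denominator-clearing step (plus the bar involution for the right-hand condition) correctly pulls the Ore property back; this is a more explicit version of the step the paper leaves implicit. The gaps are in the non-triangulable cases. First, the closed case cannot be handled by any embedding trick: every component of a triangulable marked surface has nonempty boundary containing a marked point, so a closed $\S$ admits no embedding into a triangulable $\S'$ at all --- there is no boundary circle to ``cap off,'' and deleting a disc instead gives a map $\Sk_q(\S\setminus D)\rightarrow \Sk_q(\S)$ in the wrong direction (and not injective). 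The paper disposes of $\partial\S=\emptyset$ by citing \cite[Theorem 4.7]{PS00}, not by a reduction of this kind.

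Second, for surfaces with boundary that are not themselves triangulable, your reduction only produces a chain $\Sk_q(\S)\subseteq \Sk_q(\S')\subseteq \T_\Delta$, where $\Delta$ is a triangulation of the \emph{enlarged} surface $\S'$. The sentence on which your Ore argument rests --- that $\Sk_q(\S)\hookrightarrow\T_\Delta$ is the Ore localization at the monomials in $\Delta$ --- is false here: the monomials are products of arcs of $\S'$, some of which necessarily end at the newly added marked points and hence do not lie in $\Sk_q(\S)$, and $\T_\Delta$ is a localization of $\Sk_q(\S')$, not of $\Sk_q(\S)$. Clearing denominators in $uy=vx$ therefore yields an Ore relation with coefficients only in $\Sk_q(\S')$. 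Since a subalgebra of an Ore domain need not be Ore (free algebras embed into division rings), the inclusion alone does not finish the argument, and some additional input is required for these cases --- for instance, finite generation (Theorem \ref{thm: fingen}) combined with the fact that finitely generated domains of subexponential growth are Ore, or a direct verification. (The paper's own one-line conclusion for the $\partial\S\neq\emptyset$ case is similarly terse, but your specific mechanism, as written, only covers triangulable $\S$.)
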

\begin{proof}
If $\partial\S=\emptyset$, then this is \cite[Theorem 4.7]{PS00}.  For any $\S$ with $\partial\S\neq \emptyset$, it is possible to add marked points to $\S$ to get a marked surface $\S'$ with a triangulation $\Delta$.  By  Theorem \ref{thm: Laurent},
\[\Sk_q(\S)\hookrightarrow \Sk_q(\S')\hookrightarrow \Sk_q(\S')[\Delta^{-1}]\simeq \T_\Delta\]
Then $\Sk_q(\S)$ includes into an Ore domain, so it is an Ore domain.  Since $\Sk_q^o(\S)$ is an injective Ore localization of an Ore domain, it is also an Ore domain.
\end{proof}

\section{Quantum cluster algebras of marked surfaces}\label{section: QCA}

We now turn to cluster algebras of marked surfaces.  
Cluster algebras are defined in terms of a set of `seeds'; combinatorial objects with the property that the full set of seeds can be recovered from any individual seed by `mutation'.  In the case of triangulable marked surfaces, seeds will correspond to triangulations and mutation will correspond to flipping an arc inside a triangulation.

There are many variations on cluster algebras. We highlight one distinction.
\begin{itemize}
	\item \emph{Commutative cluster algebras} $\A$ are (as you would expect) commutative algebras, defined as subalgebras of $\mathbb{Q}(x_1,x_2,...,x_n)$ generated by a set of elements produced by an iterative mutation rule.
	\item \emph{Quantum cluster algebras} $\A_q$ are $\Zq$-subalgebras of a skew-field $\mathcal{F}$ generated by a set of elements produced by an iterative mutation rule.
\end{itemize}
A quantum cluster algebra $\A_q$ always becomes a commutative cluster algebra $\A_1$ under the specialization $\rq\rightarrow 1$. However, not every commutative cluster algebra can arise this way (see Remark 7.15 for a relevant example), and multiple quantum cluster algebras can have the same commutative specialization (see Section 12.1 for a relevant example).

We focus on the quantum case, and so `cluster algebra' will refer to a quantum cluster algebra.  Commutative cluster algebras will always be labeled as such.



%

\subsection{Quantum cluster algebras}\label{section: cluster1}

In \cite{FZ02}, commutative cluster algebras were introduced to axiomatize structures occurring in the study of canonical bases, and it was rapidly discovered that these algebras occur in many areas of math.  In \cite{GSV03}, the authors introduced the idea of a `compatible' Poisson structure on a commutative cluster algebra; and in \cite{BZ05}, these Poisson structures were `quantized' by quantum cluster algebras.


A \textbf{quantum seed} (of skew-symmetric type\footnote{This is to distinguish from more general `skew-symmetrizable' quantum seeds.}) in a skew-field $\mathcal{F}$ is a triple $(\B,\Lambda,M)$, where...
\begin{itemize}
\item The \emph{exchange matrix} $\B$ is an $N\times \ex$ integer matrix (for a subset $\ex\subseteq \{1,...,N\}$), such that $\pi\B$ is skew-symmetric, where $\pi$ is the $\ex\times N$ matrix which projects $\Z^N$ onto $\Z^\ex$.
\item The \emph{compatibility matrix} $\Lambda$ is an $N\times N$ skew-symmetric, integer matrix, such that $\Lambda\B=D\iota$, where $\iota$ is the $N\times \ex$ matrix which includes $\Z^\ex$ into $\Z^\N$, and $D$ is an $N\times N$ diagonal matrix with entries $D_{ii}>0$.  The identity $\Lambda\B=D\iota$ is called the \emph{compatibility condition}.
\item $M:\Z^N\rightarrow \mathcal{F}-\{0\}$ is a function such that
\[ M(\alpha)M(\beta) = q^{\frac{1}{2}\Lambda(\alpha,\beta)}M(\alpha+\beta)\]
We require that the $\Zq$-span of $M(\Z^N)\subset\mathcal{F}$ is a based quantum torus of $\Lambda$ whose skew-field of fractions is $\mathcal{F}$.
\end{itemize}
Note that $\Lambda$ can be recovered from $M$ by the quasi-commutation relations. 

\begin{rem}
The notation for a quantum seed here differs from \cite{BZ05}, who would write $(M,\B)$ where we write $(\B,\Lambda,M)$.
\end{rem}

The following proposition is useful to know.
\begin{prop}\label{prop: fullrank}\cite[Proposition 3.3]{BZ05},\cite{GSV03}
For a quantum seed $(\B,\Lambda,M)$, the matrix $\B$ has rank $|\ex|$ (the largest possible).
\end{prop}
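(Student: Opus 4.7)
The plan is to exploit the compatibility condition $\Lambda\B = D\iota$ directly, extracting the rank of $\B$ from the rank of the right-hand side via a simple rank inequality.

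First I would unpack the right-hand side $D\iota$. The matrix $\iota$ is the $N\times|\ex|$ matrix whose columns are the standard basis vectors $e_i$ for $i\in\ex$, so $D\iota$ is the $N\times|\ex|$ matrix whose $j$-th column (for $j\in\ex$) is $D_{jj} e_j$. Since the diagonal entries $D_{jj}$ are strictly positive, in particular nonzero, these $|\ex|$ columns are linearly independent, so $\mathrm{rank}(D\iota) = |\ex|$.

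Next, I would apply the standard rank inequality $\mathrm{rank}(XY)\leq\min(\mathrm{rank}(X),\mathrm{rank}(Y))$ to $\Lambda\B$. This gives
\[ |\ex| \;=\; \mathrm{rank}(\Lambda\B) \;\leq\; \mathrm{rank}(\B). \]
On the other hand, $\B$ is an $N\times|\ex|$ matrix, so $\mathrm{rank}(\B)\leq|\ex|$ trivially. Combining the two inequalities yields $\mathrm{rank}(\B) = |\ex|$, as claimed.

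There is no substantial obstacle here; the work is all packed into the compatibility condition. The only point worth double-checking is that the rank computation is valid over $\Z$ in the sense of ``rank'' meaning rank of the associated $\mathbb{Q}$-linear map (equivalently, the largest size of a nonvanishing minor), which is the standard convention for integer matrices in this setting and is the notion used throughout \cite{BZ05}.
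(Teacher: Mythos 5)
Your argument is correct: the compatibility condition $\Lambda\B=D\iota$ forces $\mathrm{rank}(\Lambda\B)=|\ex|$, and the inequality $\mathrm{rank}(\Lambda\B)\leq\mathrm{rank}(\B)\leq|\ex|$ then pins down $\mathrm{rank}(\B)=|\ex|$. The paper does not prove this statement itself but cites \cite{BZ05}, and your proof is essentially the same standard argument given there, so there is nothing further to add.
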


A quantum seed $(\B',\Lambda',M')$ is the \textbf{mutation at $i\in \ex$} of a quantum seed $(\B,\Lambda,M)$, both in $\mathcal{F}$, if
\begin{itemize}
\item the \emph{exchange relation} holds:
\[ \B'_{jk} = \left\{\begin{array}{cc}
-\B_{jk} & \text{if $i=j$ or $i=k$} \\
\B_{jk}+\frac{1}{2}(|\B_{ji}|\B_{ik}+\B_{ji}|\B_{ik}|) & \text{otherwise} \\
\end{array}\right.\]
\item for $\alpha\in \Z^N$ such that $\alpha_i=0$, $M(\alpha)=M'(\alpha)$, and
\item the \emph{quantum cluster relation} holds:
\[M'(e_i) = M\left(-e_i+\sum_{\B_{ji}>0}\B_{ji}e_j\right)+M\left(-e_i-\sum_{\B_{ji}<0}\B_{ji}e_j\right)\]
\end{itemize}
For a given quantum seed $(\B,\Lambda,M)$ and $i$, there always exists a unique mutation at $i$ (see \cite[Section 4.4]{BZ05}).  Mutating twice in a row at the same index returns to the original quantum seed, and if $\B_{ij}=0$, then mutating at $i$ and at $j$ commutes.  Two quantum seeds $(\B,\Lambda,M)$ and $(\B',\Lambda',M')$ in $\mathcal{F}$ are \textbf{mutation equivalent} if they can be related by an arbitrary sequence of mutations and reordering indices.

\begin{defn}
The \textbf{quantum cluster algebra} $\A_q(\B,\Lambda,M)$ of a quantum seed $(\B,\Lambda,M)$ is the $\Zq$-subalgebra of $\mathcal{F}$ generated by all elements of the form $M'(\alpha)$, with $(\B',\Lambda',M')$ mutation equivalent to $(\B,\Lambda,M)$, $\alpha_i\in \N$ for $i\in \ex$ and $\alpha_i\in \Z$ for $i\in N-\ex$.
\end{defn}

When the quantum seed is clear, the cluster algebra will be denoted $\A_q$.
An element of the form $M'(e_i)\in \mathcal{F}$ is called a \textbf{cluster variable} in $\A_q(\B,\Lambda,M)$. If $i\in\ex$, then $M(e_i)$ is called a \textbf{mutable variable}; otherwise, it is a \textbf{frozen variable}.  Then $\A_q(\B,\Lambda,M)$ is the subalgebra of $\mathcal{F}$ generated by the cluster variables, together with the inverses of the frozen variables.
\begin{prop}\label{prop: frozenOre}
Any element of $\A_q$ may be written as $a^{-1}b$, where $a$ is a product of frozen variables and $b$ is a polynomial in cluster variables.
\end{prop}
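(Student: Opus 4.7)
The plan is to prove this by showing that the set
\[ S := \{a^{-1}b : a \text{ is a product of frozen variables},\ b \text{ is a polynomial in cluster variables}\} \]
is a $\Zq$-subalgebra of $\A_q$ containing every cluster variable and every inverse of a frozen variable, and hence coincides with $\A_q$. Here ``polynomial in cluster variables'' means an element of the $\Zq$-subalgebra generated by the set of all cluster variables (mutable and frozen).

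The essential input is that every frozen variable quasi-commutes with every cluster variable. Indeed, if $f$ is a frozen variable and $x = M'(e_i)$ is a cluster variable arising in some seed $(\B',\Lambda',M')$, then since frozen variables are unchanged by mutation we have $f = M'(e_j)$ for some $j\notin\ex$, and
\[ xf = M'(e_i)M'(e_j) = q^{\Lambda'(e_i,e_j)}M'(e_j)M'(e_i) = q^{\Lambda'(e_i,e_j)}fx \]
holds in the quantum torus of $(\B',\Lambda',M')$ and thus in $\mathcal{F}$. By $\Zq$-linearity, for any product $a$ of frozen variables and any polynomial $b$ in cluster variables there exist polynomials $b', b''$ in cluster variables with $ab = b'a$ and $a^{-1}b = b''a^{-1}$.

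With this conjugation property in hand, closure of $S$ is routine. For multiplication,
\[ (a_1^{-1}b_1)(a_2^{-1}b_2) = a_1^{-1}a_2^{-1}(a_2 b_1 a_2^{-1})b_2 = (a_2 a_1)^{-1}(b_1' b_2), \]
where $b_1' := a_2 b_1 a_2^{-1}$ is a polynomial in cluster variables, so $b_1'b_2$ is as well. For addition, one uses that $a_1$ and $a_2$ themselves quasi-commute (products of frozen variables pairwise quasi-commute up to a $q$-power), so $a_2 a_1$ is again a product of frozen variables and serves as a common left denominator; rewriting each term over this denominator multiplies each $b_i$ on the left by a product of frozen variables, which remains in the polynomial subalgebra of cluster variables precisely because frozen variables are themselves cluster variables. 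Closure under $\Zq$-scaling is immediate since $\Zq$ is central.

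Finally, $S$ contains every cluster variable $x$ (take $a=1$, $b=x$) and every inverse $f^{-1}$ of a frozen variable (take $a=f$, $b=1$), so $S$ contains all generators of $\A_q$; since $S\subseteq\A_q$ by construction, $S = \A_q$. The only step requiring care is the bookkeeping of $q$-powers when commuting frozen-variable products past cluster monomials, but because each frozen-cluster pair quasi-commutes with a single $q$-power, these adjustments are harmless.
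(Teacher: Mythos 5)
Your proof is correct and follows essentially the same route as the paper: the key point in both is that frozen variables survive in every seed, hence quasi-commute with every cluster variable, so inverses of frozen variables can be collected on the left. You simply spell out the closure bookkeeping that the paper's one-line argument leaves implicit.
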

\begin{proof}
Since they are never mutated, frozen variables are represented in every quantum seed of $\A_q$.  Then frozen variables and their inverses quasi-commute with every cluster variable, so they may be collected on the left of any expression in $\A_q$.
\end{proof}
Any quantum cluster algebra determines a quantum upper cluster algebra.
\begin{defn}
The \textbf{quantum upper cluster algebra} $\U_q(\B,\Lambda,M)$ is defined as the intersection of the based quantum tori defined by $M'$, for each quantum seed $(B',\Lambda',M')$ equivalent to $(B,\Lambda,M)$.
\[ \U_q(\B,\Lambda,M) = \bigcap_{(\B',\Lambda',M')\sim (\B,\Lambda,M)} \Zq\cdot M'(\Z^N)\]
\end{defn}
\begin{rem}
By \cite[Theorem 5.1]{BZ05}, it suffices to only intersect the $|\ex|+1$ quantum tori corresponding to $(\B,\Lambda,M)$ and its one-step mutations.
\end{rem}
A main result in the theory of cluster algebras is the \emph{Laurent phenomenon}.
\begin{thm}\cite[Corollary 5.2]{BZ05}
$\A_q(\B,\Lambda,M)\subseteq \U_q(\B,\Lambda,M)$.
\end{thm}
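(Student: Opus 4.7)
The plan is to establish the quantum Laurent phenomenon directly: every cluster variable obtained by any finite sequence of mutations from $(\B,\Lambda,M)$ lies in the based quantum torus $\Zq \cdot M(\Z^N)$. Since this would apply equally well starting from any seed mutation equivalent to $(\B,\Lambda,M)$, every element of $\A_q$ would lie in the intersection $\U_q$ by definition. So the whole matter reduces to showing cluster variables are skew-Laurent in every initial seed.

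Following Fomin--Zelevinsky's strategy adapted to the quantum setting, I would introduce the \emph{upper bound} attached to a single seed, defined as the intersection of the initial quantum torus with the quantum tori of its one-step mutations:
\[
\mathcal{V}(\B,\Lambda,M) := \Zq\cdot M(\Z^N) \;\cap\; \bigcap_{k\in\ex} \Zq\cdot M^{(k)}(\Z^N),
\]
where $M^{(k)}$ denotes the function of the seed obtained by mutating at $k$. By construction $\mathcal{V}(\B,\Lambda,M) \subseteq \Zq\cdot M(\Z^N)$, and the frozen variables $M(e_j)^{\pm 1}$ ($j \notin \ex$), the mutable variables $M(e_i)$ ($i \in \ex$), and their one-step mutations $M^{(i)}(e_i)$ all lie in $\mathcal{V}$ (the last because the quantum cluster relation expresses $M^{(i)}(e_i)$ as a skew-Laurent polynomial in $M(\Z^N)$ whose coefficients and exponents are compatible).

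The central claim is the \emph{invariance of the upper bound}: for every $k \in \ex$,
\[
\mathcal{V}(\B,\Lambda,M) \;=\; \mathcal{V}\bigl(\mu_k(\B,\Lambda,M)\bigr).
\]
Once this is in hand, an induction on the length of a mutation sequence finishes the proof: if $(\B',\Lambda',M')$ is $n$-step mutation equivalent to $(\B,\Lambda,M)$, its cluster variables $M'(e_j)$ all lie in $\mathcal{V}(\B',\Lambda',M') = \mathcal{V}(\B,\Lambda,M) \subseteq \Zq\cdot M(\Z^N)$ by applying invariance at each step and using that the one-step cluster variables of any seed always lie in its own upper bound.

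The main obstacle is proving invariance of $\mathcal{V}$. This is a local, seed-level check: one must show that any element which is skew-Laurent in $M(\Z^N)$ and in $M^{(k)}(\Z^N)$ is also skew-Laurent in $M^{(i)}(\Z^N)$ for every other $i \in \ex$. The argument reduces to analyzing the algebra generated by $M(e_j)$ for $j \neq k$ together with $M(e_k)$ and $M^{(k)}(e_k)$, which, via the quantum exchange relation, yields a `pentagon-type' identity relating the four relevant one-step mutations $\mu_i$, $\mu_k$, $\mu_i\mu_k$, $\mu_k\mu_i$. The key ingredients that make the bookkeeping work out are the compatibility condition $\Lambda\B = D\iota$ (which guarantees that the exchange monomials quasi-commute with their cofactors in a controlled way) and the full-rank assertion of Proposition \ref{prop: fullrank} (which ensures that matching quasi-commutation data forces matching exponents, so that Laurent expansions in adjacent seeds are unique). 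The rest is careful manipulation of $q$-binomial-type identities coming from the two quantum cluster relations at $i$ and at $k$.
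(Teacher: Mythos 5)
The paper does not actually prove this statement: it is imported verbatim from Berenstein--Zelevinsky, where the Laurent phenomenon (their Corollary 5.2) is deduced from the invariance of upper bounds under mutation (their Theorem 5.1). Your outline is exactly that route: attach to each seed the upper bound $\mathcal{V}$, observe that every seed's cluster variables lie in its own upper bound (your verification of this, via the exchange relation at the mutated index and the equality $M(\alpha)=M^{(k)}(\alpha)$ for $\alpha_k=0$, is fine), prove $\mathcal{V}$ is unchanged by one-step mutation, and induct along a mutation sequence; since the argument applies to every seed, $\A_q$ lands in every seed's torus, i.e.\ in $\U_q$. So the outer reduction is correct and is the same approach as the paper's cited source.

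The genuine gap is that the central claim --- mutation-invariance of $\mathcal{V}(\B,\Lambda,M)$ --- is the entire technical content of the theorem, and your sketch of it would not go through as described. Asserting that it ``reduces to a pentagon-type identity'' among $\mu_i,\mu_k,\mu_i\mu_k,\mu_k\mu_i$, with Proposition \ref{prop: fullrank} ``forcing matching exponents,'' misplaces where the difficulty lies: uniqueness of skew-Laurent expansions is automatic because the monomials $M(\alpha)$ form a $\Zq$-basis of the quantum torus, so full rank buys you nothing there. What the Berenstein--Zelevinsky proof actually requires is (i) rewriting the upper bound as an intersection of the smaller algebras generated by $M(e_j)^{\pm1}$ for $j$ frozen, $M(e_j)$ for $j\in\ex$, $j\neq k$, and the pair $M(e_k)$, $M^{(k)}(e_k)$; (ii) a reduction of the comparison of $\mathcal{V}$ for a seed and its mutation to the case of at most two exchangeable indices; and (iii) in that two-index case, a coprimality-type analysis of the two quantum exchange binomials inside suitable skew-Laurent rings, which is where the compatibility condition $\Lambda\B=D\iota$ genuinely enters. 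None of this is supplied or correctly anticipated by the ``$q$-binomial manipulation'' you gesture at, so as written the proposal is a roadmap to the cited proof rather than a proof; if you instead cite BZ05's Theorem 5.1 for the invariance, the statement being proved is already their Corollary 5.2 and nothing remains to show.
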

While this inclusion is not always equality, there are many important examples where it is.  Determining when $\A_q=\U_q$ is an active area of research in both the quantum and commutative settings.  Techniques for attacking this problem will be developed in Section \ref{section: A=U}.



Quantum cluster algebras are quantizations of \emph{commutative cluster algebras}, as defined in \cite{FZ02}.
These are commutative algebras defined only by an exchange matrix $\B$.  

Commutative cluster algebras may be recovered from their quantizations by specializing $\rq$ to $1$; that is, quotienting out by the ideal generated by $\rq-1\in \Zq$.
\[\A_1(\B):=\A_q(\B,\Lambda,M)/\langle \rq-1\rangle\]
\[ \U_1(\B):=\U_q(\B,\Lambda,M)/\langle \rq-1\rangle\]

\subsection{Quantum cluster algebras of marked surfaces}\label{section: cluster2}

In \cite{GSV05}, the authors observe that a triangulable marked surface $\S$ determines a commutative cluster algebra.
 We now extend their construction to a quantum cluster algebra.

Let $\S$ be a marked surface, and let $\mathcal{F}$ be the skew-field of fractions of the skein algebra $\Sk_q(\S)$. For any triangulation $\Delta$, construct a quantum seed in $\mathcal{F}$ as follows.
\begin{itemize}
\item $\ex\subset\{1,2,...,N\}\simeq \Delta$ is the subset of non-boundary arcs in $\Delta$.\footnote{Recall that a \emph{boundary arc} is an arc homotopic to an arc contained in the boundary $\partial \S$.}
\item $\B^\Delta = \Q^\Delta\circ\iota$, where $\iota:\Z^\ex\rightarrow \Z^N$ is the natural inclusion.
\item $\Lambda^\Delta$ is the orientation matrix of $\Delta$.
\item $M^\Delta:\Z^N\rightarrow\mathcal{F}$ is given by $M_\Delta(\alpha)=[\Delta^\alpha]$.
\end{itemize}
\begin{prop}\label{prop: quantumseed}
The triple $(\B^\Delta,\Lambda^\Delta,M^\Delta)$ is a quantum seed.
\end{prop}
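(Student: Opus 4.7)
The plan is to verify the three axioms of a quantum seed in turn, leaving the compatibility condition $\Lambda^\Delta \B^\Delta = D\iota$ for last, as it is the only step that requires nontrivial geometric input.

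First, the skew-symmetry statements are essentially bookkeeping. $\Lambda^\Delta$ is skew-symmetric by the definition of $\Lambda_{\curve,\curve[y]}$ in Section \ref{section: skewmatrix}: swapping the roles of $\curve$ and $\curve[y]$ exchanges the $+1$ and $-1$ cases. Likewise $\Q^\Delta$ is skew-symmetric by inspection, so $\pi \B^\Delta = \pi \Q^\Delta \iota$ is the principal submatrix of $\Q^\Delta$ on rows and columns indexed by $\ex$, and hence is skew-symmetric.

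Next, I would verify that $M^\Delta$ satisfies the required multiplicativity and spans a based quantum torus. The identity $M^\Delta(\alpha)M^\Delta(\beta) = q^{\frac{1}{2}\Lambda^\Delta(\alpha,\beta)}M^\Delta(\alpha+\beta)$ is exactly the second assertion of Proposition \ref{prop: monomialmulti} (extended to $\Z^\Delta$ using the Laurent monomials defined just before Theorem \ref{thm: Laurent}). Theorem \ref{thm: Laurent} then identifies the $\Zq$-span of $\{M^\Delta(\alpha)\}_{\alpha\in \Z^\Delta}$ with $\Sk_q(\S)[\Delta^{-1}]\simeq\T_\Delta$, which is a based quantum torus of $\Lambda^\Delta$ by construction. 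Its skew-field of fractions contains the skew-field of fractions $\mathcal{F}$ of $\Sk_q(\S)$ (by Corollary \ref{coro: domain}), and the reverse containment is immediate, so the two skew-fields agree.

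The heart of the proof is the compatibility condition $\Lambda^\Delta \B^\Delta = D\iota$. Fix a mutable index $k\in\ex$, so $\curve_k$ is a non-boundary arc. I would compute
\[ (\Lambda^\Delta \B^\Delta)_{ik} \;=\; \sum_{j=1}^{|\Delta|} \Lambda^\Delta_{ij}\,\Q^\Delta_{jk} \]
using the fact that $\curve_k$ sits as the diagonal of the quadrilateral $T_1\cup T_2$ formed by the two triangles of $\S-\Delta$ having $\curve_k$ as a side. Labeling the four remaining sides $a,d$ (incident to the endpoint $p_1$) and $b,c$ (incident to the endpoint $p_2$), with $d,c$ immediately clockwise of $\curve_k$ and $a,b$ immediately counterclockwise of $\curve_k$, the column $\Q^\Delta(-,k)$ is supported on these four indices with contributions $-1$ from $\{d,c\}$ and $+1$ from $\{a,b\}$ at each endpoint. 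Hence
\[ (\Lambda^\Delta \B^\Delta)_{ik} \;=\; \Lambda^\Delta_{i,a} - \Lambda^\Delta_{i,d} + \Lambda^\Delta_{i,b} - \Lambda^\Delta_{i,c}. \]
Because $\Lambda^\Delta_{i,\curve[y]}$ vanishes unless $\curve_i$ has an end at one of $p_1,p_2$, the right-hand side is supported on indices $i$ meeting the quadrilateral. A local analysis at $p_1$ and $p_2$ shows that the contributions from $a$ and $d$ to $\Lambda^\Delta_{i,a}-\Lambda^\Delta_{i,d}$ cancel for every $\curve_i$ except $\curve_k$ itself, with the analogous cancellation at $p_2$. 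For $i=k$, all four terms contribute with the same sign, giving a positive integer $D_{kk}$.

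The main obstacle is handling the degenerate configurations in which two or more of the sides $a,b,c,d$ coincide (e.g.\ a self-folded triangle, or when the quadrilateral has some identified vertices). In these cases the sum collapses and some contributions must be counted with multiplicity, but because both $\Lambda^\Delta$ and $\Q^\Delta$ are defined as sums over pairs of ends rather than over arcs, the same end-by-end bookkeeping goes through and still produces a diagonal output $D_{kk}>0$. I would organize this as a case analysis on how many of $\{a,b,c,d\}$ are distinct, verifying cancellation for $i\neq k$ and positivity for $i=k$ in each case.
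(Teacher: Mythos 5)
Your overall route is the same as the paper's: treat the skew-symmetry and quantum-torus conditions as consequences of Proposition \ref{prop: monomialmulti} and Theorem \ref{thm: Laurent}, and reduce everything to the compatibility condition, which you verify by expanding the column $(\Lambda^\Delta\Q^\Delta)_{\cdot k}$ as a signed sum of four $\Lambda^\Delta$-columns indexed by the sides of the quadrilateral containing $\curve_k$, cancelling for $i\neq k$ and getting a positive diagonal entry for $i=k$. This is exactly the paper's computation (the paper pins the diagonal down as $4\iota$, but positivity is all the definition needs, and your remark that degeneracies are handled because $\Lambda^\Delta$ and $\Q^\Delta$ are sums over ends rather than arcs is the paper's ``the arcs $k_\ell$ need not be distinct'' observation; note also that since all marked points lie on $\partial\S$ there are no self-folded triangles, only possible coincidences among the sides and corners).

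There is, however, one concrete misstep in your cancellation bookkeeping. You claim that $\Lambda^\Delta_{i,a}-\Lambda^\Delta_{i,d}$ and $\Lambda^\Delta_{i,b}-\Lambda^\Delta_{i,c}$ each vanish separately for $i\neq k$, by analysis at the endpoints $p_1,p_2$ of $\curve_k$. That is false as stated: take $\curve_i$ an arc whose only incidence with the quadrilateral is at the corner where $a$ meets $c$ (one of the two corners away from $\curve_k$); then $\Lambda^\Delta_{i,a}\neq 0$ in general while $\Lambda^\Delta_{i,d}=0$, and the term that saves you is $-\Lambda^\Delta_{i,c}$, not $-\Lambda^\Delta_{i,d}$. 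The correct organization — which is the paper's Case 1 — is corner-by-corner over all four corners of the quadrilateral: at each corner the two sides meeting there carry opposite signs in the column of $\Q^\Delta$ (at $p_1$: $a$ with $d$; at $p_2$: $b$ with $c$; at the remaining corners: $a$ with $c$, and $b$ with $d$), and both lie on the same side (clockwise or counterclockwise) of any other end of $\curve_i$ at that corner, so their $\Lambda^\Delta$-contributions are equal and cancel. The case where $\curve_i$ is itself one of the four sides also needs its own short check (the paper's Case 2, where the two surviving terms are negatives of each other). With the pairing corrected in this way your argument matches the paper's proof.
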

\begin{proof}

The only non-trivial fact to prove is that $\Lambda^\Delta\B^\Delta=4\iota$ (the \emph{compatibility condition}).  Let $\curve_j\in \Delta$ be a non-boundary arc.  For $\curve_i\in \Delta$, consider the matrix entry
\begin{equation*} (\Lambda^\Delta\Q^\Delta)_{ij}=\sum_{1\leq k\leq N} \Lambda^\Delta_{ik}\Q^\Delta_{kj}
\end{equation*}
The curve $\curve_j$ is an edge in two distinct triangles in $\S-\Delta$.  Let $\curve_{k_1},\curve_{k_2},\curve_{k_3},\curve_{k_4}$ be the other arcs around these triangles, ordered as in Figure \ref{fig: adjacent}.  Note that these arcs need not be distinct nor have distinct endpoints, despite the figure.
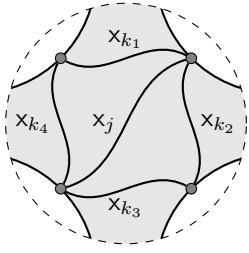
\begin{figure}[h]
\begin{tikzpicture}[scale=.4]
	\begin{scope}
	\clip (0,0) circle (4);
	\draw[fill=black!10,thick] (-5,-1) arc (90:0:4) to (1,-5) arc (180:90:4) to (5,1) arc (270:180:4) to (-1,5) arc (0:-90:4);
	\node[marked] (1) at (-2.17,2.17) {};
	\node[marked] (2) at (2.17,2.17) {};
	\node[marked] (3) at (2.17,-2.17) {};
	\node[marked] (4) at (-2.17,-2.17) {};
	\draw[thick] (4) to [relative,out=-30,in=150] node[left] {$\curve_j$} (2);
	\draw[thick] (1) to [relative,out=-30,in=150] node[above] {$\curve_{k_1}$} (2);
	\draw[thick] (2) to [relative,out=-30,in=150] node[right] {$\curve_{k_2}$} (3);
	\draw[thick] (3) to [relative,out=-30,in=150] node[below] {$\curve_{k_3}$} (4);
	\draw[thick] (4) to [relative,out=-30,in=150] node[left] {$\curve_{k_4}$} (1);
	\end{scope}
	\draw[dashed] (0,0) circle (4);
\end{tikzpicture}
\caption{The adjacent arcs}
\label{fig: adjacent}
\end{figure}

From the definition of $\Q^\Delta$, $\Q^\Delta_{kj}=(-1)^{\ell}$ if $k=k_\ell$, and $0$ otherwise.  Then
\[ (\Lambda^\Delta\Q^\Delta)_{ij}=\sum_{1\leq \ell\leq 4} (-1)^\ell\Lambda^\Delta_{ik_\ell}\]
The arcs $k_\ell$ need not be distinct for the above sum to remain valid.

We consider $\curve_i$ in three cases.
\begin{itemize}
\item Case 1: $i\not\in\{j,k_1,k_2,k_3,k_4\}$.  At each end of $\curve_i$, either there are no ends of the arcs $\curve_{k_\ell}$, or there are two of the form $\curve_{k_\ell}$ and $\curve_{k_{\ell+1}}$ for some $\ell$.  In the latter case, both $\curve_{k_\ell}$ and $\curve_{k_{\ell+1}}$ are either clockwise or counter-clockwise to $\curve_i$, and so $\Lambda^\Delta_{ik_{\ell}}=\Lambda^\Delta_{ik_{\ell+1}}$.  Therefore, $(\Lambda^\Delta\Q^\Delta)_{ij}=0$.
\item Case 2: $i=k_\ell$ for some $\ell$.  Then $\Lambda^\Delta_{ik_{\ell+1}}=-\Lambda^\Delta_{ik_{\ell-1}}$ and all others are zero, so $(\Lambda^\Delta\Q^\Delta)_{ij}=0$.
\item Case 3: $i=j$.  In this case, $\Lambda^\Delta_{ik_{\ell}}=\Q^\Delta_{ik_{\ell}}=(-1)^\ell$, and so $(\Lambda^\Delta\Q^\Delta)_{ij}=4$.
\end{itemize}
By definition, $e_j$ is in the image of $\iota$ if and only if $\curve_j$ is a non-boundary arc, so $\Lambda^\Delta\B^\Delta=\Lambda^\Delta\Q^\Delta\iota = 4\iota$.
%
%
%
%
%
\end{proof}

From the definitions, $M^\Delta(e_i)=[\Delta^{e_i}]=[\curve[x]_i]\in \Sk_q(\S)$.

\begin{thm} For any triangulation $\Delta$,
and any flip $\Delta'$ of $\Delta$ at a non-boundary arc $\curve_j$, $(\B^{\Delta'},\Lambda^{\Delta'},M^{\Delta'})$ is the mutation of $(\B^\Delta,\Lambda^\Delta,M^\Delta)$ at $j$.
\end{thm}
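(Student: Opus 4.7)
The plan is to verify the three defining conditions of mutation for the seed $(\B^{\Delta'},\Lambda^{\Delta'},M^{\Delta'})$ relative to $(\B^\Delta,\Lambda^\Delta,M^\Delta)$: (i) the exchange-matrix transformation $\B^{\Delta'}=\mu_j(\B^\Delta)$, (ii) the equality $M^{\Delta'}(\alpha)=M^\Delta(\alpha)$ for all $\alpha\in\Z^N$ with $\alpha_j=0$, and (iii) the quantum cluster relation expressing $M^{\Delta'}(e_j)=[\curve_j']$ as a sum of two monomials in $\Delta$. Since mutations are unique, establishing (i)--(iii) finishes the proof; $\Lambda^{\Delta'}$ is then automatically the correct mutated compatibility matrix because it is determined by $M^{\Delta'}$ via the quasi-commutation rule of the quantum torus.

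Condition (ii) is immediate from the observation that $\Delta\setminus\{\curve_j\}=\Delta'\setminus\{\curve_j'\}$; the orientation matrix entry $\Lambda_{\curve_k,\curve_\ell}$ depends only on the local clockwise configuration of ends, which is identical for these shared arcs in either triangulation, so both $M^\Delta$ and $M^{\Delta'}$ assign the same element $[\Delta^\alpha]$ to any $\alpha$ with $\alpha_j=0$. Condition (i) is a purely combinatorial statement about signed adjacency matrices of triangulations that is well known (see \cite{FST08}); it can also be verified directly by comparing the two triangles adjacent to $\curve_j$ in Figure \ref{fig: adjacent} with the two triangles adjacent to $\curve_j'$ in the flipped picture and checking, case by case on which arc $\curve_{k_\ell}$ is involved, that the change in clockwise relationships around shared endpoints exactly matches the Fomin--Zelevinsky exchange rule.

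The main obstacle is condition (iii), which is a quantum Ptolemy identity. Inside the quadrilateral (or degeneration thereof) with sides $\curve_{k_1},\curve_{k_2},\curve_{k_3},\curve_{k_4}$ of Figure \ref{fig: adjacent}, the two diagonals $\curve_j$ and $\curve_j'$ cross transversely at exactly one interior point; applying the Kauffman skein relation to $[\curve_j][\curve_j']$ at that crossing produces
\[ [\curve_j][\curve_j'] = q\,[L_+] + q^{-1}\,[L_-], \]
where $L_\pm$ are the two smoothings. Each smoothing reconnects the four endpoints into a pair of arcs, one pair homotopic (with endpoints pushed onto the four marked corners) to $\curve_{k_1}\cup\curve_{k_3}$ and the other to $\curve_{k_2}\cup\curve_{k_4}$. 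Left-multiplying by $[\curve_j]^{-1}$ in $\Sk_q(\S)[\Delta^{-1}]$ and collecting the two terms then gives the desired expression for $[\curve_j']=M^{\Delta'}(e_j)$.

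The delicate point is keeping the $q$-bookkeeping consistent. At each of the four corners, the strands of the smoothing must be pushed past $\curve_{k_\ell}$ using the boundary skein relation, each push introducing a factor of $q^{\pm 1/2}$ determined by local clockwise order; combined with the factors introduced when converting the simple multicurves $\curve_{k_1}\cup\curve_{k_3}$ and $\curve_{k_2}\cup\curve_{k_4}$ into the monomials $[\Delta^{e_{k_1}+e_{k_3}}]$ and $[\Delta^{e_{k_2}+e_{k_4}}]$ via Proposition \ref{prop: monomialmulti}, and with those from multiplication by $[\Delta^{-e_j}]$, these exponents must cancel against the leading $q^{\pm 1}$ from the Kauffman relation. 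The identification $\Q^\Delta_{k_\ell,j}=(-1)^\ell$ from the proof of Proposition \ref{prop: quantumseed} pairs the positive-coefficient monomial in the mutation formula with $L_+$ and the negative-coefficient one with $L_-$, so once the local $q$-count is verified in the generic configuration of Figure \ref{fig: adjacent}, the degenerate cases (where some $\curve_{k_\ell}$ coincide or share endpoints) follow because the skein relations and the orientation form $\Lambda^\Delta$ are local inside the two triangles meeting along $\curve_j$.
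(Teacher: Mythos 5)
Your proposal follows essentially the same route as the paper's proof: conditions (i) and (ii) are handled exactly as in the text (the exchange-matrix rule via the commutative result of \cite{FST08}, and the shared arcs of $\Delta$ and $\Delta'$ giving $M^\Delta(\alpha)=M^{\Delta'}(\alpha)$ when $\alpha_j=0$), and the quantum cluster relation is extracted from the single-crossing Kauffman smoothing of $[\curve_j][\curve_j']$, with the degenerate endpoint configurations absorbed into boundary-skein corrections. The one step you leave as an assertion --- that the corner corrections make the $q$-exponents collapse so that both monomials appear with coefficient exactly $1$ --- is precisely the content of the paper's embedded lemma $\lambda=\Lambda^\Delta_{jk_2}+\Lambda^\Delta_{jk_4}-2=\Lambda^\Delta_{jk_1}+\Lambda^\Delta_{jk_3}+2$, proved there by the wedge bookkeeping $\Pi_{m,n}$; carrying out that computation is what would turn your sketch into the paper's argument.
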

\begin{proof}
The exchange relation is unchanged from the commutative version of this theorem, which can be found in \cite[Proposition 4.8]{FST08}.  It is also clear that $M^{\Delta}(\alpha)=M^{\Delta'}(\alpha)$ if $\alpha_j=0$.  The remaining work is the quantum cluster relation.

Let $\curve[x]_j'$ be the flip of $\curve[x]_j$ in $\Delta$, so that $\Delta' = (\Delta -\curve_j)\cup \curve_j'$, and let $\curve_{k_1},\curve_{k_2},\curve_{k_3},\curve_{k_4}$ be as in Figure \ref{fig: adjacent}.  Because the endpoints of $\curve_j$ and $\curve_j'$ need not be distinct, the superposition $\curve_j\cdot\curve_j'$ may not have the simultaneous ordering on all of its ends.  Let $\link$ be the link which is identical to $\curve_j\cdot \curve_j'$ except with the simultaneous ordering on the ends.  There is then some $\lambda\in \Z$  such that
\[[\curve_j][\curve_j']=q^{\frac{1}{2}\lambda}[\link]\]
If the endpoints of $\curve_j$ and $\curve_j'$ are all distinct, this correction is unneeded and $\lambda=0$.

The link $\link$ has a single transverse crossing; by the Kauffman skein relation,
\begin{center}
\begin{tikzpicture}[scale=.5]
	\begin{scope}[xshift=-.1in,scale=.3]
		\begin{scope}
		\clip (0,0) circle (4);
		\draw[fill=black!10,thick] (-5,-1) arc (90:0:4) to (1,-5) arc (180:90:4) to (5,1) arc (270:180:4) to (-1,5) arc (0:-90:4);
		\node[marked] (1) at (-2.17,2.17) {};
		\node[marked] (2) at (2.17,2.17) {};
		\node[marked] (3) at (2.17,-2.17) {};
		\node[marked] (4) at (-2.17,-2.17) {};
		\draw[thick] (1) to [relative,out=-30,in=150](3);
		\draw[thick,line width=1.5mm,draw=black!10] (4) to [relative,out=-30,in=150](2);
		\draw[thick] (4) to [relative,out=-30,in=150] (2);
		\end{scope}
		\draw[dashed] (0,0) circle (4);
	\end{scope}
	\node (=) at (.7in,0) {$=q$};
	\begin{scope}[xshift=1.5in,scale=.3]
		\begin{scope}
		\clip (0,0) circle (4);
		\draw[fill=black!10,thick] (-5,-1) arc (90:0:4) to (1,-5) arc (180:90:4) to (5,1) arc (270:180:4) to (-1,5) arc (0:-90:4);
		\node[marked] (1) at (-2.17,2.17) {};
		\node[marked] (2) at (2.17,2.17) {};
		\node[marked] (3) at (2.17,-2.17) {};
		\node[marked] (4) at (-2.17,-2.17) {};
		\draw[thick] (4) to [relative,out=-30,in=210] (-.5,-.866) to (-.866,.5) to [relative,out=30,in=150] (1);
		\draw[thick] (2) to [relative,out=-30,in=210] (.5,.866) to (.866,-.5) to [relative,out=30,in=150] (3);
		\end{scope}
		\draw[dashed] (0,0) circle (4);
	\end{scope}
	\node (=) at (2.4in,0.05in) {$+q^{-1}$};
	\begin{scope}[xshift=3.3in,scale=.3,rotate=90]
		\begin{scope}
		\clip (0,0) circle (4);
		\draw[fill=black!10,thick] (-5,-1) arc (90:0:4) to (1,-5) arc (180:90:4) to (5,1) arc (270:180:4) to (-1,5) arc (0:-90:4);
		\node[marked] (1) at (-2.17,2.17) {};
		\node[marked] (2) at (2.17,2.17) {};
		\node[marked] (3) at (2.17,-2.17) {};
		\node[marked] (4) at (-2.17,-2.17) {};
		\draw[thick] (4) to [relative,out=-30,in=210] (-.5,-.866) to (-.866,.5) to [relative,out=30,in=150] (1);
		\draw[thick] (2) to [relative,out=-30,in=210] (.5,.866) to (.866,-.5) to [relative,out=30,in=150] (3);
		\end{scope}
		\draw[dashed] (0,0) circle (4);
	\end{scope}
	\node (=) at (4.1in,0) {$=q$};
	\begin{scope}[xshift=4.9in,scale=.3]
		\begin{scope}
		\clip (0,0) circle (4);
		\draw[fill=black!10,thick] (-5,-1) arc (90:0:4) to (1,-5) arc (180:90:4) to (5,1) arc (270:180:4) to (-1,5) arc (0:-90:4);
		\node[marked] (1) at (-2.17,2.17) {};
		\node[marked] (2) at (2.17,2.17) {};
		\node[marked] (3) at (2.17,-2.17) {};
		\node[marked] (4) at (-2.17,-2.17) {};
		\draw[thick] (2) to [relative,out=-30,in=150] (3);
		\draw[thick] (4) to [relative,out=-30,in=150] (1);
		\end{scope}
		\draw[dashed] (0,0) circle (4);
	\end{scope}
	\node (=) at (5.8in,0.05in) {$+q^{-1}$};
	\begin{scope}[xshift=6.7in,scale=.3]
		\begin{scope}
		\clip (0,0) circle (4);
		\draw[fill=black!10,thick] (-5,-1) arc (90:0:4) to (1,-5) arc (180:90:4) to (5,1) arc (270:180:4) to (-1,5) arc (0:-90:4);
		\node[marked] (1) at (-2.17,2.17) {};
		\node[marked] (2) at (2.17,2.17) {};
		\node[marked] (3) at (2.17,-2.17) {};
		\node[marked] (4) at (-2.17,-2.17) {};
		\draw[thick] (1) to [relative,out=-30,in=150] (2);
		\draw[thick] (3) to [relative,out=-30,in=150] (4);
		\end{scope}
		\draw[dashed] (0,0) circle (4);
	\end{scope}
\end{tikzpicture}
\end{center}
In the second equality, we have used homotopy to show that the resulting links have components corresponding to $\curve_{k_2},\curve_{k_4}$ and $\curve_{k_1},\curve_{k_3}$, respectively.  Since $\link$ had simultaneous ends and a single transverse crossing, we can be assured that the right-hand side consists of simple multicurves.  Therefore,
\[ q^{-\frac{1}{2}\lambda}[\curve_j][\curve_j']=q[\curve_{k_2}\cup\curve_{k_4}]+q^{-1} [\curve_{k_1}\cup \curve_{k_3}]\]
which we may rewrite as monomials in $\Delta$, and divide by $q^{-\frac{1}{2}\lambda}[\Delta^{e_j}]$.
\begin{equation} \label{eq: precluster}
[(\Delta')^{e_j}] = q^{\frac{1}{2}(\lambda+2)}[\Delta^{-e_j}][\Delta^{e_{k_2}+e_{k_4}}]+q^{\frac{1}{2}(\lambda-2)}[\Delta^{-e_j}][\Delta^{e_{k_1}+e_{k_3}}]
\end{equation}

\begin{lemma}
$\lambda = \Lambda^\Delta_{jk_2}+\Lambda^\Delta_{jk_4}-2=\Lambda^\Delta_{jk_1}+\Lambda^\Delta_{jk_3}+2$.
\end{lemma}
\begin{proof}
Let $w_1,w_2,w_3,w_4$ denote the four corners of the quadralateral cut out by the $\{\curve_{k_\ell}\}$, thought of as wedges in small neighborhoods of the marked points.
\begin{center}
	\begin{tikzpicture}[scale=.3]
		\begin{scope}
		\clip (0,0) circle (4);
		\draw[fill=black!10,thick] (-5,-1) arc (90:0:4) to (1,-5) arc (180:90:4) to (5,1) arc (270:180:4) to (-1,5) arc (0:-90:4);
		\begin{scope}
			\clip (-2.17,2.17) to [relative,out=-30,in=150]  (2.17,2.17) to (2.17,-2.17) to (-2.17,-2.17) to (-2.17,2.17);
			\draw[fill=black!30,draw=black!30] (-2.17,2.17) circle (1.5);
			\draw[fill=black!30,draw=black!30] (2.17,2.17) circle (1.5);
			\draw[fill=black!30,draw=black!30] (2.17,-2.17) circle (1.5);
			\draw[fill=black!30,draw=black!30] (-2.17,-2.17) circle (1.5);
			\node (w1) at (1.1,1.5) {$w_2$};
			\node (w2) at (1.3,-1.1) {$w_3$};
			\node (w3) at (-1.1,-1.5) {$w_4$};
			\node (w4) at (-1.3,1.1) {$w_1$};
		\end{scope}
		\draw[thick] (-2.17,2.17) to [relative,out=-30,in=150]  (2.17,2.17) to (2.17,-2.17) to (-2.17,-2.17) to (-2.17,2.17);
		\node[marked] (1) at (-2.17,2.17) {};
		\node[marked] (2) at (2.17,2.17) {};
		\node[marked] (3) at (2.17,-2.17) {};
		\node[marked] (4) at (-2.17,-2.17) {};
		\end{scope}
		\draw[dashed] (0,0) circle (4);
\end{tikzpicture}
\end{center}
For $m\neq n\in \{1,2,3,4\}$, define
\[ \Pi_{m,n} = \left\{
\begin{array}{c|c}
0 & \text{if $w_m$ and $w_n$ have disjoint marked points} \\
1 & \text{if $w_m$ is clockwise to $w_n$ at a shared marked point} \\
-1 & \text{if $w_n$ is clockwise to $w_m$ at a shared marked point} \\
\end{array}\right\}\]
Since the interiors of the wedges are disjoint from each other, this is well-defined.  Note that $\Pi_{m,n}=-\Pi_{n,m}$.  From the definitions, 
\[\lambda= \Pi_{2,1}+\Pi_{2,3}+\Pi_{4,1}+\Pi_{4,3}\]
\[\Lambda^\Delta_{jk_2} = 1+\Pi_{2,3}+\Pi_{4,2}+\Pi_{4,3}\]
\[\Lambda^\Delta_{jk_4} = 1+\Pi_{4,1}+\Pi_{2,4}+\Pi_{2,1}\]
The first equality follows.  The second equality is proved similarly.
\end{proof}
The lemma and Proposition \ref{prop: monomialmulti} imply that
\[q^{\frac{1}{2}(\lambda+2)}[\Delta^{-e_j}][\Delta^{e_{k_2}+e_{k_4}}]
= q^{\frac{1}{2}(\lambda+2)}q^{\frac{1}{2}(-\Lambda^\Delta_{jk_2}-\Lambda^\Delta_{jk_4})}[\Delta^{e_{k_2}+e_{k_4}-e_j}]
= [\Delta^{e_{k_2}+e_{k_4}-e_j}]\]
\[q^{\frac{1}{2}(\lambda-2)}[\Delta^{-e_j}][\Delta^{e_{k_1}+e_{k_3}}]
= q^{\frac{1}{2}(\lambda-2)}q^{\frac{1}{2}(-\Lambda^\Delta_{jk_1}-\Lambda^\Delta_{jk_3})}[\Delta^{e_{k_1}+e_{k_3}-e_j}]
= [\Delta^{e_{k_1}+e_{k_3}-e_j}]\]
Equation \eqref{eq: precluster} then becomes
\[ [(\Delta')^{e_j}] = [\Delta^{e_{k_2}+e_{k_4}-e_j}]+[\Delta^{e_{k_1}+e_{k_3}-e_j}]\]
Switching term on the right, this is the quantum cluster relation, as required.
\[M^{\Delta'}(e_j) = M^\Delta\left(-e_j+\sum_{\B_{kj}>0}\B_{kj}e_k\right)+M^\Delta\left(-e_j-\sum_{\B_{kj}<0}\B_{kj}e_k\right)\]
Then $(\B^{\Delta'},\Lambda^{\Delta'},M^{\Delta'})$ is the mutation of $(\B^\Delta,\Lambda^\Delta,M^\Delta)$ at $j$.
%
%
\end{proof}
\begin{coro}\label{coro: independent}
For any two triangulations $\Delta$ and $\Delta'$ of $\S$, the quantum seed $(\B^{\Delta'},\Lambda^{\Delta'},M^{\Delta'})$ is mutation equivalent to $(\B^\Delta,\Lambda^\Delta,M^\Delta)$, and every seed mutation equivalent to $(\B^\Delta,\Lambda^\Delta,M^\Delta)$ is of this form.
\end{coro}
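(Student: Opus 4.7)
The plan is to prove both directions by induction on the length of a flip/mutation sequence, using the preceding theorem as the single step that links the topology of flips to the algebra of mutations.

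For the first assertion, I would invoke Proposition \ref{prop: triang}(5): any two triangulations $\Delta$ and $\Delta'$ of $\S$ can be related by a finite sequence of flips $\Delta = \Delta_0, \Delta_1, \ldots, \Delta_n = \Delta'$. Each individual flip occurs at a non-boundary arc (boundary arcs cannot be flipped, by Proposition \ref{prop: triang}(3)), so by the preceding theorem each consecutive pair $(\B^{\Delta_i},\Lambda^{\Delta_i},M^{\Delta_i})$ and $(\B^{\Delta_{i+1}},\Lambda^{\Delta_{i+1}},M^{\Delta_{i+1}})$ is related by a single mutation at an index in $\ex$, after a suitable reindexing of the arcs. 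Composing these mutations (and reorderings) gives the required mutation equivalence between $(\B^\Delta,\Lambda^\Delta,M^\Delta)$ and $(\B^{\Delta'},\Lambda^{\Delta'},M^{\Delta'})$.

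For the second assertion, I would induct on the length $n$ of a sequence of mutations producing a quantum seed $(\B,\Lambda,M)$ from $(\B^\Delta,\Lambda^\Delta,M^\Delta)$. The base case $n=0$ is trivial (reindexing a triangulation is still a triangulation). For the inductive step, suppose $(\B,\Lambda,M)$ arises from some $(\B^{\Delta''},\Lambda^{\Delta''},M^{\Delta''})$ by mutation at an index $j \in \ex$, i.e.\ at a non-boundary arc $\curve_j \in \Delta''$. By the preceding theorem applied to the flip $\Delta' := (\Delta''-\curve_j)\cup\curve_j'$, the mutation of $(\B^{\Delta''},\Lambda^{\Delta''},M^{\Delta''})$ at $j$ coincides with $(\B^{\Delta'},\Lambda^{\Delta'},M^{\Delta'})$. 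Since mutation at a fixed index produces a unique quantum seed, $(\B,\Lambda,M) = (\B^{\Delta'},\Lambda^{\Delta'},M^{\Delta'})$, completing the induction.

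The only subtle point to check is that the set of frozen indices is consistent under mutation and matches the set of boundary arcs in the new triangulation; this is automatic because the exchange matrix mutation fixes $\ex$ by construction, and Proposition \ref{prop: triang}(3) ensures that the boundary arcs of $\S$ are precisely the arcs common to every triangulation, hence remain the frozen variables at every stage. Reindexings of arcs correspond to reorderings of indices in the quantum seed, which are built into the definition of mutation equivalence, so no additional bookkeeping is required.
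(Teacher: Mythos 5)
Your proposal is correct and follows essentially the same route as the paper: the preceding theorem identifies flips with mutations, Proposition \ref{prop: triang}(5) connects any two triangulations by flips, and uniqueness of mutation (together with the fact that boundary arcs stay frozen) gives the converse. Your version just spells out the induction and bookkeeping that the paper leaves implicit.
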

\begin{proof}
Every mutation at $i\in \ex$ corresponds to a flipping a non-boundary arc in a triangulation, so any sequence of mutations corresponds to a sequence of flips.  Since every triangulation $\Delta'$ is related to $\Delta$ by a sequence of flips, every quantum seed coming from a triangulation is mutation equivalent to $(\B^\Delta,\Lambda^\Delta,M^\Delta)$.
\end{proof}

Thus, we can speak unambiguously about `the' quantum cluster algebra $\A_q(\S)$ and quantum upper cluster algebra $\U_q(\S)$ of a triangulable marked surface $\S$.

\begin{defn}\label{defn: CAmarked}
For any triangulation $\Delta$ of $\S$, the subalgebras
\[ \A_q(\S):=\A_q(\B^\Delta,\Lambda^\Delta,M^\Delta)\subset\mathcal{F}\]
\[\U_q(\S):=\U_q(\B^\Delta,\Lambda^\Delta,M^\Delta)\subset\mathcal{F}\]
are the \textbf{quantum cluster algebra of $\S$} and the \textbf{quantum upper cluster algebra of $\S$}, respectively.
\end{defn}

\begin{rem}
These quantum cluster algebras are quantizations of the commutative cluster algebras of marked surfaces defined in \cite{GSV05} and \cite{FST08}, with \emph{boundary coefficients}.  This means the coefficients (in the sense of \cite{FZ02}) are the Laurent ring generated by the set of boundary arcs.
The coefficient-free case may be recovered by quotienting $\A_q(\S)$ or $\U_q(\S)$ by the ideal generated by $\rq-1$ and $\{\curve-1\}$ as $\curve$ runs over the set of boundary arcs.  
\end{rem}

%


\begin{rem}\label{rem: punctures}
We can now justify requiring that the marked points are contained in the boundary.  For a marked surface $\S$ with internal marked points, there is an associated commutative cluster algebra $\A(\S)$ defined in \cite{GSV03} and \cite{FST08} (where the coefficients are the Laurent ring generated by the boundary arcs).  It is possible to use the `tagged arcs' of \cite{FST08} to define a commutative `tagged skein algebra' $\Sk(\S)$ (with $\rq=1$) which has a localization $\Sk^o(\S)$ which is naturally a cluster algebra.\footnote{This is intended for a subsequent publication.}

However, for any triangulation $\Delta$ of $\S$, the corresponding exchange matrix $\B^\Delta$ will never be of full rank.  Therefore, by Proposition \ref{prop: fullrank}, this commutative cluster algebra admits no quantization.  It is possible there is a well-behaved generalization of $\S$ to the case of internal marked points for general $q$, but it cannot correspond to the quantum cluster algebra of $\S$ (with coefficients coming from boundary arcs).
\end{rem}

\subsection{Relation to the skein algebra}



The algebras $\A_q(\S)$ and $\U_q(\S)$ were defined as subalgebras of $\mathcal{F}$, the skew-field of fractions of $\Sk_q^o(\S)$, and so the three algebras can be compared as subalgebras.

\begin{thm}\label{thm: main1}
For any triangulable marked surface $\S$,
\[ \A_q(\S)\subseteq \Sk_q^o(\S)\subseteq \U_q(\S)\]
\end{thm}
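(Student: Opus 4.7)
The plan is that both inclusions should follow essentially formally from the machinery already in place; the content lies in invoking the right earlier results, not in new constructions.

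For the left inclusion $\A_q(\S)\subseteq\Sk_q^o(\S)$, I would argue generator by generator. By Corollary \ref{coro: independent}, every quantum seed mutation-equivalent to $(\B^\Delta,\Lambda^\Delta,M^\Delta)$ is of the form $(\B^{\Delta'},\Lambda^{\Delta'},M^{\Delta'})$ for some triangulation $\Delta'$ of $\S$. The cluster variables in that seed are the elements $M^{\Delta'}(e_i)=[\curve_i]$ for $\curve_i\in\Delta'$, each of which lies in $\Sk_q(\S)\subseteq\Sk_q^o(\S)$. The frozen variables are precisely the boundary arcs, and these are units in $\Sk_q^o(\S)$ by definition. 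Since $\A_q(\S)$ is generated by cluster variables together with inverses of frozens (Proposition \ref{prop: frozenOre}), it is contained in $\Sk_q^o(\S)$.

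For the right inclusion $\Sk_q^o(\S)\subseteq\U_q(\S)$, I would use Theorem \ref{thm: Laurent} one triangulation at a time. Fix a triangulation $\Delta$ of $\S$; by Proposition \ref{prop: triang}(3) every boundary arc belongs to $\Delta$, so the Ore set of monomials in $\Delta$ contains (products of) all the boundary arcs. Hence the localization map $\Sk_q(\S)\hookrightarrow\Sk_q(\S)[\Delta^{-1}]\simeq\T_\Delta$ already inverts every boundary arc, and by the universal property of Ore localization it factors uniquely through $\Sk_q^o(\S)$. Viewing everything inside the common skew-field $\mathcal{F}$, this gives $\Sk_q^o(\S)\subseteq \T_\Delta$ (identified with the $\Zq$-span of $M^\Delta(\Z^\Delta)$ in $\mathcal{F}$). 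Intersecting over all triangulations yields
\[
\Sk_q^o(\S)\;\subseteq\;\bigcap_{\Delta}\T_\Delta\;=\;\U_q(\S),
\]
where the last equality is the definition of $\U_q(\S)$ (combined with Corollary \ref{coro: independent}, which guarantees that every mutation-equivalent quantum seed arises from a triangulation).

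There is no real obstacle here: the hard work was done in Theorem \ref{thm: Laurent} (which produces the quantum torus embeddings) and in Corollary \ref{coro: independent} (which identifies mutation-equivalent seeds with triangulations). The only thing to keep honest is that $\A_q(\S)$, $\Sk_q^o(\S)$, and $\U_q(\S)$ all sit inside the same skew-field $\mathcal{F}$, so that the three inclusions have the same ambient meaning; this is automatic once one recalls that $\mathcal{F}$ is the skew-field of fractions of $\Sk_q(\S)$ (which is an Ore domain by Corollary \ref{coro: domain}) and that each $\T_\Delta$ is identified with a subalgebra of $\mathcal{F}$ via its embedding $\T_\Delta\simeq \Sk_q(\S)[\Delta^{-1}]\hookrightarrow\mathcal{F}$.
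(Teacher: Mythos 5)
Your proposal is correct and follows essentially the same route as the paper: the first inclusion is checked on generators (arcs lie in $\Sk_q(\S)$ and the frozen variables, being boundary arcs, are invertible in $\Sk_q^o(\S)$, exactly as the paper does by writing each $M^\Delta(\alpha)$ as a monomial in arcs times the inverse of a monomial in boundary arcs), and the second inclusion uses Theorem \ref{thm: Laurent} together with the fact that every triangulation contains the boundary arcs, then intersects over all triangulations, which by Corollary \ref{coro: independent} exhaust the mutation-equivalent seeds. The only difference is that you spell out the universal-property/identification-inside-$\mathcal{F}$ bookkeeping that the paper leaves implicit.
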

It will be shown in Theorem \ref{thm: main2} that these inclusions are equalities so long as $\S$ contains at least two marked points.
\begin{proof}[Proof of Theorem \ref{thm: main1}]
Let $\Delta$ be a triangulation of $\S$. 
By definition, $M^\Delta(\alpha)=[\Delta^\alpha]$.  For any $\alpha\in \Z^N$ with $\alpha_i\geq 0$ for $i\in \ex$, write $\alpha=\beta-\beta'$, where $\beta,\beta'\in \N^N$ and $\beta'_i=0$ for $i\in \ex$.  Then
\[ M^\Delta(\alpha)=[\Delta^\alpha]= q^{-\frac{1}{2}\Lambda^\Delta(\beta,\beta')}[\Delta^\beta][\Delta^{\beta'}]^{-1}\]
Since $[\Delta^{\beta'}]$ is a monomial in the boundary arcs, $M^\Delta(\alpha)\in \Sk^o_q(\S)$.  Since this is true for any quantum seed and any $\alpha$ with $\alpha_i\geq0$ for $i\in \ex$, $\A_q(\S)\subseteq \Sk_q^o(\S)$.

The quantum torus $\Zq\cdot M^\Delta(\Z^N)$ is the same as $\T_\Delta$, because they are both the $\Zq$-span of the $[\Delta^\alpha]$ for $\alpha\in \Z^N$.  Then, Theorem \ref{thm: Laurent} implies that $\Sk_q^o(\S)\subseteq \T_\Delta$.  Since this is true for any quantum seed, $\Sk_q^o(\S)\subseteq \U_q(\S)$.
\end{proof}

\begin{rem}\label{rem: main1rem}
Under this inclusion, $\A_q(\S)$ is the $\Zq$-subalgebra of $\Sk^o_q(\S)$ generated by arcs (ie, cluster variables) and inverses to boundary arcs.  Hence, the definition given here for $\A_q(\S)$ (Definition \ref{defn: CAmarked}) agrees with the one give in the introduction.
\end{rem}

\begin{rem}
Let $\A^\natural_q(\S)$ be the $\Zq$-subalgebra of $\mathcal{F}$ generated by the cluster variables, but not the inverses to `frozen' variables.  Then $\A^\natural_q(\S)\subset \Sk_q(\S)$ as the subalgebra generated by the arcs; however, this is only an equality when $\S$ is contractible.  If there is a non-trivial loop $\ell\in \S$, then $[\ell]\in\Sk_q(\S)$ is not a scalar, but does have $E$-degree zero.  This cannot happen in $\A^\natural_q(\S)$, so $\A^\natural_q(\S)\neq \Sk_q(\S)$.
\end{rem}

\subsection{Laurent formulae and denominators}

Given a link $\link$ and a triangulation $\Delta$, the proof of Theorem \ref{thm: Laurent} gives an explicit method to express $[\link]$ as an element of the quantum torus $\mathbb{T}_\Delta$.  Specifically, applying the Kauffman skein relation repeatedly to $[\Delta^{\mu_{\Delta}([\link])}][\link]$ eventually gives a polynomial $\sum_\alpha\lambda_\alpha[\Delta^\alpha]$ in $\Delta$, and so
\[ [\link] = [\Delta^{\mu_{\Delta}([\link])}]^{-1}\sum_\alpha\lambda_{\alpha} [\Delta^{\alpha}]= \sum_\alpha \lambda_\alpha'[\Delta^{\alpha-\mu_{\Delta}([\link])}]\]
Since $\A_q(\S)\subseteq \Sk_q^o(\S)$, this can also be applied to any cluster variable in $\A_q(\S)$.  A cluster variable will correspond to a simple arc $\curve$, and so
\[[\curve] = [\Delta^{\mu_{\Delta}([\curve])}]^{-1}\sum_\alpha\lambda_{\alpha} [\Delta^{\alpha}]= \sum_\alpha \lambda_\alpha'[\Delta^{\alpha-\mu_{\Delta}([\curve])}]\]
This gives an effective method for expressing a cluster variable as a skew-Laurent polynomial in the cluster variables of any other cluster.  This approach is already well-known for commutative cluster algebras.  For discs, explicit formulas appear in the work of Schiffler \cite{Sch08}, and are more explicitly related to the skein relations in \cite[Section 2.1.5]{GSV10}.  For general marked surfaces, a related method for producing Laurent expansions in terms of \emph{T-paths} and \emph{snake graphs} has been developed in \cite{Sch10}, \cite{ST09}, and \cite{MS10}.

One consequence of this formula is a denominator $[\Delta^{\mu_{\Delta}([\curve])}]$ for the skew-Laurent expression.  This is the smallest possible denominator, as this proposition shows.
\begin{prop}\cite[Theorem 8.6]{FST08}\footnote{Their result is for commutative cluster algebras, but it implies the quantum result.}
If $\curve$ is a simple arc in $\S$, $\Delta$ is a triangulation of $\S$ and $[\Delta^\alpha][\curve]\in \Sk_q(\S)$ is a polynomial in $\Delta$, then \[\alpha - \mu_\Delta([\curve])\in \N^\Delta\]
\end{prop}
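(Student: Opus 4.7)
The plan is to reduce the statement to Fomin--Shapiro--Thurston's sharpness theorem for denominator vectors of cluster variables on surfaces, \cite[Theorem 8.6]{FST08}, via specialization at $\rq = 1$. By Theorem \ref{thm: Laurent}, inside $\T_\Delta$ the element $[\curve]$ has a unique Laurent expansion
\[ [\curve] = \sum_{\gamma \in \Z^\Delta} c_\gamma [\Delta^\gamma] \]
with finitely many $c_\gamma \in \Zq$ nonzero. Since $\{[\Delta^\beta] : \beta \in \N^\Delta\}$ is $\Zq$-linearly independent (Lemma \ref{lemma: basis}), the hypothesis that $[\Delta^\alpha][\curve]$ is a polynomial in $\Delta$ is equivalent to $\alpha + \gamma \in \N^\Delta$ for every $\gamma$ in the support, i.e.\ $\alpha_i \geq -\gamma_i$ for every $i$ and every $\gamma$. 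The proposition thus reduces to showing that $\max_{c_\gamma \neq 0}(-\gamma_i) = \mu([\curve_i],[\curve])$ for each arc $\curve_i \in \Delta$.

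The inequality $\leq$ is already in hand: Corollary \ref{coro: Laurent} furnishes an expansion $[\curve] = [\Delta^{-\mu_\Delta([\curve])}] P$ with $P$ a polynomial in $\Delta$, which forces every $\gamma$ in the support to satisfy $\gamma_i \geq -\mu([\curve_i],[\curve])$. The reverse inequality is the content of the cited theorem, imported as follows. The specialization map at $\rq = 1$ sends $\Sk_q(\S)$ to the commutative skein algebra and extends to the quantum torus; crucially, if the specialized coefficient $c_\gamma(1)$ is nonzero in $\Z$, then $c_\gamma$ is nonzero in $\Zq$, so the commutative Laurent support of $[\curve]$ is contained in the quantum one. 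Under the identification of $\Sk_1^o(\S)$ with the commutative cluster algebra $\A_1(\S)$ of $\S$ discussed in the introduction, the image of $[\curve]$ is a cluster variable whose Fomin--Zelevinsky denominator vector relative to the cluster $\Delta$ coincides with $\mu_\Delta([\curve])$. Theorem 8.6 of \cite{FST08} asserts this denominator vector is sharp, producing for each $i$ an exponent $\gamma$ in the commutative support--and hence in the quantum support--with $\gamma_i = -\mu([\curve_i],[\curve])$.

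The main obstacle is a translation question rather than a genuine combinatorial or geometric difficulty: one must verify that our geometric intersection invariant $\mu_\Delta([\curve])$ agrees with the algebraic denominator vector of \cite{FST08}, and that the notion ``polynomial in $\Delta$'' is preserved under the specialization $\rq \mapsto 1$ on the basis $\{[\Delta^\gamma]\}$. Both are essentially definition-chasing, and once carried out the proposition follows from the cited result without further work.
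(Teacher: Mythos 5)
Your proposal is correct and takes essentially the same route as the paper: the paper gives no independent argument, citing \cite[Theorem 8.6]{FST08} with the footnote that the commutative result "immediately implies the quantum result," and your specialization-at-$\rq=1$ argument (commutative Laurent support contained in quantum support, plus identification of $\mu_\Delta$ with the denominator vector) is precisely the content of that footnote spelled out. The one caveat — shared equally by the paper, since its proof is the same citation — is that the cited theorem literally concerns arcs (cluster variables), so your phrase "the image of $[\curve]$ is a cluster variable" only covers the arc case of "simple curve," and the loop case would need the analogous commutative statement for closed curves.
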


\subsection{Gradings on $\A_q(\S)$} 

In \cite[Section 2.2]{GSV03}, the authors define a grading\footnote{In truth, \cite{GSV03} define a torus action on $\A$, but semi-simple torus actions by $\T$ are equivalent to gradings by the character lattice of $\T$.} on any cluster algebra, which is the `largest possible' compatible grading.  For $\A_q(\S)$, this is shown to coincide with the endpoint $\E$-grading defined in Section \ref{section: grading}.

For any abelian group $L$, an $L$-grading on a cluster algebra $\A_q$ is \emph{compatible} if each cluster variable is homogeneous.
Given a compatible $L$-grading on $\A_q$, a morphism $f:L\rightarrow L'$ induces a compatible $L'$-grading on $\A_q$, by $\deg_{L'}(x):= f(\deg_L(x))$.  A compatible grading on $\A_q$ is \emph{universal} if it is the initial object in the category of compatible gradings of $\A_q$ and induction maps between them.

In \cite{GSV03}, the authors define compatible gradings, and characterize the universal compatible grading of any cluster algebra.  

\begin{lemma}\cite[Lemma 2.3]{GSV03}\footnote{The result in \cite{GSV03} is stated for torus actions, but is equivalent to the result stated here.}
Let $(\B,\Lambda,M)$ be a quantum seed for $\A_q$.  Then
\[ \deg(M(\alpha)) = \alpha +\B( \mathbb{Z}^{\ex}) \in (\mathbb{Z}^N/\B( \mathbb{Z}^{\ex}))\]
extends to a universal compatible grading of $\A_q$ by $\Z^N/\B(\Z^\ex)$.
\end{lemma}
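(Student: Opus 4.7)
The plan is to verify three things in turn: well-definedness on the quantum torus of the initial seed, compatibility with mutation (so the grading extends to all of $\A_q$), and universality.

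First I would observe that the formula is manifestly additive on the quantum torus $\Zq \cdot M(\Z^N)$: the multiplication rule $M(\alpha)M(\beta) = q^{\frac{1}{2}\Lambda(\alpha,\beta)} M(\alpha+\beta)$ shows that placing $M(\alpha)$ in degree $\alpha + \B(\Z^\ex)$ and $\Zq$ in degree $0$ gives an unambiguous $\Z^N/\B(\Z^\ex)$-grading on $\Zq \cdot M(\Z^N)$, since the scalar $q^{\frac{1}{2}\Lambda(\alpha,\beta)}$ sits in degree zero.

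Next I would show that every cluster variable arising from any mutation-equivalent seed is homogeneous with respect to this grading. It suffices to treat a single mutation at some $k\in\ex$: the quantum cluster relation writes
\[
M'(e_k) = M\!\left(-e_k+\sum_{\B_{jk}>0}\B_{jk}e_j\right)+M\!\left(-e_k-\sum_{\B_{jk}<0}\B_{jk}e_j\right),
\]
and the two exponent vectors differ by $\sum_j \B_{jk} e_j = \B e_k$, which lies in $\B(\Z^\ex)$ because $k\in\ex$. Hence both monomials represent the same class in $\Z^N/\B(\Z^\ex)$, making $M'(e_k)$ homogeneous of that degree. An easy induction on the mutation distance from $(\B,\Lambda,M)$, combined with the observation that $M'(e_i) = M(e_i)$ for $i\ne k$, shows that every cluster variable in every mutation-equivalent seed is homogeneous. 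Since $\A_q$ is $\Zq$-generated by cluster variables and inverses of frozen variables, the grading extends to all of $\A_q$.

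For universality, suppose $L$ is any abelian group carrying a compatible $L$-grading of $\A_q$; let $d_i := \deg_L(M(e_i))\in L$. The homogeneity of $M'(e_k)$ in $L$ forces the two monomials on the right-hand side of the exchange relation to have equal $L$-degree, which translates into $\sum_j \B_{jk}\, d_j = 0$ in $L$ for every $k\in\ex$. Equivalently, the homomorphism $\varphi\colon \Z^N\to L$ sending $e_i\mapsto d_i$ vanishes on the image of $\B\colon \Z^\ex\to \Z^N$, so it factors uniquely through $\Z^N/\B(\Z^\ex)$, giving the required initial morphism. The hardest step is really just the compatibility-with-mutation verification, and even that reduces to the one-line identity $\B e_k \in \B(\Z^\ex)$; no deeper properties of the seed (such as the compatibility condition $\Lambda\B=D\iota$) are needed here.
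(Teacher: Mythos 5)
The paper does not prove this lemma at all: it is quoted from \cite[Lemma 2.3]{GSV03} (stated there in the language of torus actions), so there is no in-paper argument to compare yours against and it must be judged on its own terms. Your universality paragraph is essentially the standard argument and is correct up to a small repair: the two monomials on the right of the quantum cluster relation involve $M(e_k)^{-1}$ and so need not lie in $\A_q$, where the compatible $L$-grading is defined; multiply the relation through by $M(e_k)$ first, so that both sides are products of cluster variables, and then compare degrees of the two (nonzero, homogeneous) terms to get $\sum_j \B_{jk} d_j = 0$.

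The real gap is in the ``easy induction on mutation distance.'' Your one-step verification works only because at the initial seed the degree map is literally $\alpha \mapsto \alpha + \B(\Z^{\ex})$, so the two exponent vectors differ by $\B e_k$. At a seed two or more mutations away, the exchange relation is written in the mutated frame $M'$ and the mutated matrix $\B'$, so to conclude that its two terms have equal degree you must know that the degrees $d'_j := \deg M'(e_j)$ (where $d'_k = -d_k + \sum_{\B_{jk}>0}\B_{jk}d_j$ and $d'_j = d_j$ for $j \neq k$) still annihilate the columns of $\B'$, i.e.\ $\sum_j \B'_{j\ell} d'_j = 0$ in $\Z^N/\B(\Z^{\ex})$ for all $\ell \in \ex$. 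That preservation of the kernel condition under matrix mutation is the actual content of the inductive step; it does hold, by a short computation with the mutation rule $\B'_{j\ell} = \B_{j\ell} + \tfrac{1}{2}(|\B_{jk}|\B_{k\ell} + \B_{jk}|\B_{k\ell}|)$ using $\sum_j \B_{jk} d_j = 0$ twice, but it is not supplied by the one-line identity $\B e_k \in \B(\Z^{\ex})$ together with $M'(e_i) = M(e_i)$ for $i \neq k$, and your closing claim that nothing more is needed overstates the case (you are right, however, that $\Lambda\B = D\iota$ plays no role). A secondary point: to speak of homogeneity of cluster variables from distant seeds you should either invoke the quantum Laurent phenomenon to place them in the graded initial quantum torus, or observe that localizing the graded torus at homogeneous elements again yields a graded ring in which the induction can be run.
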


For the cluster algebras we consider, this coincides with the endpoint $\E$-grading.

\begin{prop}
For any $\Delta$, the map $\delta:\Z^\Delta\rightarrow \E$ which sends an arc in $\Delta$ to its endpoints induces an isomorphism $\Z^\Delta/\B(\Z^\ex)\stackrel{\sim}{\longrightarrow} \E$.
\end{prop}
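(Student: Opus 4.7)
The map $\delta$ is well-defined into $\E$ because each arc of $\Delta$ has both endpoints in one component of $\S$, contributing $2$ (an even number) to the parity sum of that component. To see that $\delta$ descends to $\Z^\Delta/\B(\Z^\ex)$, I verify $\delta(\B^\Delta e_j)=0$ for each $j\in\ex$: from the calculation in the proof of Proposition~\ref{prop: quantumseed}, the $j$-th column of $\B^\Delta$ is supported on the four cyclically-ordered arcs $\curve_{k_1},\ldots,\curve_{k_4}$ around the quadrilateral of $\curve_j$, with entries $(-1)^\ell$. Writing $v_1,v_2,v_3,v_4$ for the cyclic vertices, the telescoping sum
$$\delta(\B^\Delta e_j)=-(v_1+v_2)+(v_2+v_3)-(v_3+v_4)+(v_4+v_1)=0$$
produces the induced map $\bar\delta:\Z^\Delta/\B(\Z^\ex)\to\E$.

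For surjectivity, consider the \emph{arc graph} $\Gamma$ with vertex set $\M$ whose edges are the arcs of $\Delta$. Since $\Delta$ cuts $\S$ into triangles, $\Gamma$ is connected on each component of $\S$, and each component contains at least one triangle of $\Delta$, yielding a $3$-cycle in $\Gamma$. For any arc-path $a_0,\ldots,a_{k-1}$ from $m$ to $m'$ in $\Gamma$, one computes $\sum_i(-1)^i\delta(a_i)=e_m+(-1)^{k-1}e_{m'}$; inserting a triangle-loop into the path shifts the parity of $k$ by three, so both $e_m+e_{m'}$ and $e_m-e_{m'}$ (and hence $2e_m$) lie in $\delta(\Z^\Delta)$. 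These elements generate $\E$, so $\bar\delta$ is surjective.

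For injectivity I combine a rank count with flip-invariance. By Proposition~\ref{prop: fullrank}, $\B^\Delta$ has full rank $|\ex|$, so $\Z^\Delta/\B(\Z^\ex)$ has rank $|\Delta|-|\ex|=|\M|=\mathrm{rank}\,\E$. Together with surjectivity, this forces $\ker\bar\delta$ to be a torsion subgroup. To eliminate the torsion, I observe that the claim is invariant under flipping: the canonical mutation isomorphism $\Z^{\Delta'}/\B'(\Z^\ex)\cong\Z^\Delta/\B(\Z^\ex)$ sends the class of $e_j'$ to the class of $-e_j+e_{k_2}+e_{k_4}$, and the identity $\delta(-\curve_j+\curve_{k_2}+\curve_{k_4})=v_2+v_4=\delta(\curve_j')$ shows that $\bar\delta$ commutes with this isomorphism. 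By Proposition~\ref{prop: triang}(5), every two triangulations of $\S$ are flip-equivalent, so it suffices to verify injectivity on a single convenient triangulation of $\S$, where a direct Smith-normal-form computation of $\B^\Delta$ confirms $\B(\Z^\ex)=\ker\delta$.

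The hard part is the injectivity step: the rank count immediately produces $\ker\bar\delta$ as a torsion group, but ruling out torsion amounts to showing that $\B(\Z^\ex)$ is saturated in $\Z^\Delta$---a global combinatorial property of the triangulation that does not obviously follow from the local quadrilateral relations alone, and that I handle via the flip-invariance reduction together with an explicit base-case computation.
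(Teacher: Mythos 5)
Your first two steps are fine, and your surjectivity argument is genuinely different from the paper's: the paper obtains surjectivity by noting that the $\E$-degree of \emph{every} arc of $\S$ (not only arcs of $\Delta$) lies in the image of the induced map, via the universal-grading lemma of \cite{GSV03}, and that arcs joining arbitrary pairs of marked points have degrees generating $\E$; your telescoping sums in the arc graph of $\Delta$, with a triangle inserted to switch the parity of a path, give a self-contained combinatorial substitute that never leaves the triangulation. That part I would accept.

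The genuine gap is the injectivity step, exactly where you flag it. The paper closes the argument with the rank count alone ($\mathrm{rank}\,\B^\Delta=|\ex|$ by Proposition \ref{prop: fullrank}, so source and target both have rank $|\M|$, and a surjection of lattices of equal rank is an isomorphism — which implicitly treats $\Z^\Delta/\B^\Delta(\Z^\ex)$ as free). You rightly observe that the rank count by itself only shows $\ker\bar\delta$ is torsion, so one must know that $\B^\Delta(\Z^\ex)$ is saturated in $\Z^\Delta$; but your proposed resolution does not discharge this. The ``canonical mutation isomorphism'' between the quotients for flip-related triangulations is only asserted (it is true, but needs the standard fact that mutation changes $\B$ by unimodular matrices on both sides), and, more seriously, the ``direct Smith-normal-form computation of $\B^\Delta$'' on ``a single convenient triangulation'' is never specified or carried out. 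Flip-equivalence only moves you among triangulations of the \emph{same} surface $\S$, and $\S$ is arbitrary, so there is no finite base case: the would-be SNF verification depends on the genus, the number of boundary components and $|\M|$, and it is precisely the content of the proposition. As written, the saturation claim is assumed, not proven. A concrete way to repair it within your framework: the compatibility identity $\Lambda^\Delta\B^\Delta=4\iota$ from Proposition \ref{prop: quantumseed} shows $\B^\Delta$ has full column rank modulo every odd prime, so any torsion in $\Z^\Delta/\B^\Delta(\Z^\ex)$ is $2$-primary, and it then remains to check full column rank of $\B^\Delta$ modulo $2$ (equivalently, to exhibit a splitting of $\bar\delta$), which is the one piece of content beyond the rank count.
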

\begin{proof}
Simple arcs in $\S$ are $\E$-homogeneous elements in $\Sk_q(\S)$, so $\A_q(\S)$ is generated by $\E$-homogeneous elements (cluster variables and inverses to frozen variables); it follows that $\A_q(\S)$ is compatibly $\E$-graded.

Fix a triangulation $\Delta$, and let $\delta:\Z^\Delta\rightarrow \E$ be the map which sends a monomial in $\Delta$ to its endpoint degree.  The map $\delta$ kills the image $\B^\Delta(\Z^\ex)$, and so it descends to a map $\delta':\Z^\Delta/\B^\Delta(\Z^\ex)\rightarrow \E$ which is the map which induces the $\E$-grading from the $\Z^\Delta/\B^\Delta(\Z^\ex)$-grading.

For every pair of marked points in a connected component of $\S$, there is an arc connecting them.  The degrees of these arcs generated $\E$, and so the map $\delta'$ is surjective.

The lattice $\E$ is a full-rank sublattice of $\Z^\M$, so it has rank $|\M|$.  The lattice $\Z^\Delta/\B(\Z^\ex)$ has rank equal to $|\Delta|-rank(\B^\Delta)$.  By Proposition \ref{prop: fullrank}, $rank(\B^\Delta)=|\ex|$, and $|\Delta|-|\ex|$ is the number of boundary arcs $|\M|$.  Then $\delta'$ is a surjective maps between lattices of the same rank, so it is an isomorphism.
\end{proof}

\begin{coro}
The endpoint $\E$-grading on $\Sk_q^o(\S)$ restricts to a universal compatible grading on $\A_q(\S)$.
\end{coro}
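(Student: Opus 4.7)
The plan is to simply assemble the ingredients already established. First I would observe that by Theorem \ref{thm: main1}, $\A_q(\S)$ is a $\Zq$-subalgebra of $\Sk_q^o(\S)$, and by Remark \ref{rem: main1rem} it is generated by simple arcs (cluster variables) together with inverses of boundary arcs (frozen variables). Each simple arc is $\E$-homogeneous in $\Sk_q^o(\S)$, with degree equal to the formal sum of its two endpoints, and the $\E$-grading on $\Sk_q^o(\S)$ was already defined so that inverses of boundary arcs are homogeneous. Hence the $\E$-grading on $\Sk_q^o(\S)$ restricts to a well-defined grading on $\A_q(\S)$, and this restricted grading is compatible (every cluster variable is homogeneous).

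Next, to upgrade ``compatible'' to ``universal,'' I would invoke the preceding Lemma of \cite{GSV03}, which says that the universal compatible grading of $\A_q(\S)$ is by the quotient lattice $\Z^\Delta/\B^\Delta(\Z^\ex)$, where each cluster monomial $M^\Delta(\alpha) = [\Delta^\alpha]$ has degree $\alpha + \B^\Delta(\Z^\ex)$. The preceding proposition exhibits the explicit isomorphism
\[ \delta' : \Z^\Delta/\B^\Delta(\Z^\ex) \stackrel{\sim}{\longrightarrow} \E \]
sending the class of $\alpha$ to the endpoint degree $\delta(\alpha) = \sum_i \alpha_i\,\delta(\curve_i)$, and this map is set up precisely so that it intertwines the two gradings on cluster monomials, i.e.\ $\delta'(\deg_{\Z^\Delta/\B^\Delta(\Z^\ex)}([\Delta^\alpha])) = \deg_\E([\Delta^\alpha])$.

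Putting these together: the $\E$-grading on $\A_q(\S)$ is induced from the universal grading by the isomorphism $\delta'$, hence it is itself universal. The only point that requires any care is that the map from the universal grading to the $\E$-grading really is induced by $\delta'$ (rather than by some other map to $\E$); but this is immediate since both maps are determined by their values on the generators $[\curve_i]$ for $\curve_i \in \Delta$, and they agree there by construction of $\delta$. There is no serious obstacle here — the substantive content is entirely in the preceding proposition — so the proof is essentially a one-line citation of that proposition together with the Gekhtman--Shapiro--Vainshtein characterization of the universal grading.
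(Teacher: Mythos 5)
Your proposal is correct and follows the paper's own route: the paper gives no separate argument for this corollary, since the preceding proposition already establishes that $\delta'$ is an isomorphism intertwining the $\Z^\Delta/\B^\Delta(\Z^\ex)$-grading of \cite{GSV03} with the endpoint $\E$-grading, and universality transfers across an isomorphism exactly as you say. Your only addition is spelling out the (easy) check that the two gradings agree on the generators $[\curve_i]$, which is consistent with how the proposition's proof defines $\delta'$.
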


%
%
%
%


%
%
%

\section{A general technique for $\A_q=\U_q$}\label{section: A=U}

In this section, we develop a technique for simultaneously proving $\A_q=\U_q$ for classes of cluster algebras.  Many of the ideas here are quantum analogs of commutative ideas which appeared in \cite{MulLA}.

\subsection{Exchange types}


An $n\times n$ integral skew-symmetric matrix $\mathsf{A}$ may be \textbf{mutated} at an index $i\in \{1,...,n\}$ using the exchange relation as in Section \ref{section: QCA}.  By construction, this notion of mutation is compatible with mutation of quantum seeds under the map which sends any quantum seed $(\B,\Lambda,M)$ to the matrix $\pi\B$.  \footnote{Recall that $\pi$ is the the $\ex\times N$-matrix which projects $\mathbb{Z}^N$ onto $\mathbb{Z}^\ex$, so that $\pi\B$ is the 'principal part' of $\B$ (see Section 7.1).}

An \textbf{exchange type} $\mathcal{T}$ is an equivalence class of skew-symmetric matrices, under the relation generated by mutation and conjugation by a permutation matrix.  Given a quantum seed $(\B,\Lambda,M)$, the exchange type of $\pi\B$ consists of matrices of the form $\pi\B'$ for quantum seeds $(\B',\Lambda',M')$ mutation equivalent to $(\B,\Lambda,M)$. 
We say the \emph{exchange type} of a quantum seed $(\B,\Lambda,M)$ is the exchange type of $\pi\B$, and the \emph{exchange type} of a cluster algebra $\A_q$ is the exchange type of any of its quantum seeds.

The results which follow depend only on the exchange type of a cluster algebra.

%

\begin{rem}\label{rem: quiver}
An  $n\times n$ integral skew-symmetric matrix $\mathsf{A}$ can be encoded in a \emph{quiver} $Q(\mathsf{A})$, with vertex set $\{1,...,n\}$ and $\mathsf{A}_{ij}$-many arrows from $j$ to $i$ (where negative arrows are from $i$ to $j$).  Mutation can be encoded as an operation on a quiver \cite[Section 2]{Kel11}, and exchange types correspond to mutation-equivalence classes of quivers.
%
\end{rem}


\subsection{Isolated cluster algebras}

A quantum seed or cluster algebra is called \textbf{isolated} if its exchange type is the zero matrix.
Concretely, a cluster algebra is isolated if every quantum seed $(\B,\Lambda,M)$ has $\pi\B=0$.  

\begin{prop}\label{prop: isolated}
If $\A_q$ is isolated, $\A_q=\U_q$.
\end{prop}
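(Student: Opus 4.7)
The plan is to exploit the extreme rigidity that isolatedness imposes on mutation. If $\pi\B=0$ for every seed, then in the exchange rule for $\B$ the correction term $\tfrac{1}{2}(|\B_{ji}|\B_{ik}+\B_{ji}|\B_{ik}|)$ vanishes whenever $k\in\ex$ (since $\B_{ik}=0$), so mutating at $i\in\ex$ changes only the $i$-th column of $\B$ and only the cluster variable $M(e_i)$.  Every seed mutation-equivalent to $(\B,\Lambda,M)$ is thus again isolated, mutations at distinct indices $i\neq j\in\ex$ commute, and the set of seeds is parametrized by subsets $S\subseteq\ex$ of already-mutated indices.

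With this in hand I would write down $\A_q$ explicitly.  Set $X_j:=M(e_j)$ and, for $i\in\ex$, $X'_i:=M^{(i)}(e_i)$, where $(\B^{(i)},\Lambda^{(i)},M^{(i)})$ is the mutation at $i$.  The quantum cluster relation gives
\[ X_iX'_i \;=\; q^{a_i}u_i + q^{b_i}v_i, \]
where $u_i,v_i$ are monomials in the frozen variables $\{X_j:j\notin\ex\}$ and $a_i,b_i\in\tfrac{1}{2}\Z$.  Since distinct mutations commute and affect disjoint generators, $\A_q$ is the $\Zq$-subalgebra of $\mathcal{F}$ generated by $\{X_j^{\pm1}\}_{j\notin\ex}\cup\{X_i,X'_i\}_{i\in\ex}$, subject only to their quasi-commutation and the displayed exchange relations.

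To obtain $\U_q\subseteq\A_q$, I would invoke \cite[Theorem 5.1]{BZ05} to reduce $\U_q$ to the intersection of $|\ex|+1$ quantum tori,
\[ \U_q \;=\; \Zq\cdot M(\Z^N)\;\cap\;\bigcap_{i\in\ex}\Zq\cdot M^{(i)}(\Z^N), \]
and compute it one factor at a time.  Writing $A_i$ for the quantum sub-torus of $\Zq\cdot M(\Z^N)$ generated by $\{X_j^{\pm1}:j\neq i\}$, the key local identity is
\[ \Zq\cdot M(\Z^N)\;\cap\;\Zq\cdot M^{(i)}(\Z^N) \;=\; A_i\langle X_i,X'_i\rangle. \]
Intersecting these subalgebras over $i\in\ex$ enforces that, for each mutable index, an element involves $X_i$ only through non-negative powers (its negative powers being absorbed into the $X'_i$-direction); the resulting subalgebra is exactly the $\A_q$ of the previous paragraph.

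The technical heart is the one-variable identity just displayed, a quantum version of a standard Laurent computation.  Given $f=\sum_n f_nX_i^n\in\Zq\cdot M(\Z^N)$ with $f_n\in A_i$, membership in the $\Zq\cdot M^{(i)}(\Z^N)$-torus forces each negative-exponent coefficient $f_{-n}$ to be divisible in $A_i$ by the appropriate $q$-scalar multiple of $(u_i+v_i)^n$, so the offending term can be rewritten in terms of $(X'_i)^n$ and an induction on the lowest $X_i$-exponent closes.  The only genuine subtlety over the commutative case is careful bookkeeping of the $q$-powers produced when moving $X_i$ past $u_i$ and $v_i$, but these all lie in $\Zq^\times$ and so do not obstruct the divisibility step; this is where I expect most of the actual work to go.
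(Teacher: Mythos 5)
Your first half is fine (the structure of isolated seeds, the fact that the cluster variables are exactly $\{X_j\}_{j\notin\ex}\cup\{X_i,X_i'\}_{i\in\ex}$, the reduction of $\U_q$ to the initial torus and its one-step mutations via \cite[Theorem 5.1]{BZ05}, and the one-variable computation $\Zq\cdot M(\Z^N)\cap\Zq\cdot M^{(i)}(\Z^N)=A_i\langle X_i,X_i'\rangle$, which is indeed the standard quantum Laurent argument). The gap is in the very last step, which you dispatch in one sentence: ``intersecting these subalgebras over $i\in\ex$ \dots the resulting subalgebra is exactly the $\A_q$ of the previous paragraph.'' Term by term, what you need is: if a coefficient $\lambda_\alpha$ in the quantum torus $R$ on the frozen variables is left-divisible, for each $i$ with $\alpha_i=-n_i<0$, by the appropriate $q$-twisted product of $n_i$ copies of $P_i$, then it is divisible by the full product $\prod_{i}P_i^{n_i}$ (up to twists), so that all negative powers can be absorbed \emph{simultaneously}. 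Separate divisibility does not formally imply joint divisibility; it requires a coprimality-type statement for the exchange elements $P_i$ inside the noncommutative torus $R$, which ultimately rests on the full-rank/compatibility property of quantum seeds (Proposition \ref{prop: fullrank}). That this is not a formality is shown by the rank-deficient commutative example of two exchangeable indices attached to a single frozen vertex $f$: there $P_1=P_2=x_f+1$, and $(x_f+1)x_1^{-1}x_2^{-1}$ lies in the initial torus and in both one-step tori, yet not in $\A$ (nor in $\U$, since it fails to lie in the torus of the doubly-mutated seed). Your sketch never invokes full rank and never addresses this combination step, instead locating ``the technical heart'' in the one-variable identity, which is the routine part.

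The paper's proof avoids this issue entirely, and the contrast is instructive: instead of reducing to one-step mutations, it uses the tori $\T_I$ of the seeds obtained by mutating at an arbitrary subset $I\subseteq\ex$ (these exist and commute precisely because the seed is isolated). For a Laurent term $\lambda_\alpha M(\alpha)$, taking $I=I_\alpha$ to be the set of indices where $\alpha_i<0$ and expanding $x$ in the cluster of that seed, membership $x\in\T_{I_\alpha}$ forces $\lambda_\alpha\prod_{i\in I_\alpha}P_i^{\alpha_i}\in R$ in one stroke --- i.e., exactly the simultaneous divisibility you need --- after which every term is visibly a product of elements of $\A_q$. If you want to keep your route through one-step tori only, you must prove the joint-divisibility lemma in $R$ (using linear independence of the columns of $\B$, e.g.\ via a suitable grading of $R$ by the frozen exponent lattice); otherwise the cleanest repair is to use the multiply-mutated seeds as the paper does.
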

\begin{rem}
In \cite[Theorem 7.5]{BZ05}, the authors show that $\A_q=\U_q$ whenever $\A_q$ has an \emph{acyclic} exchange type, which immediately implies this proposition.  We include a proof anyway, because a by-product of the techniques we develop will be a new proof of Berenstein and Zelevinsky's theorem (Proposition \ref{prop: acyclic}).
\end{rem}
\begin{proof}[Proof of Proposition \ref{prop: isolated}]

Let $(\B,\Lambda,M)$ be a seed for $\A_q$, with corresponding quantum torus $\T_\Lambda$.  Let $R$ denote the subring of $\A_q$ generated by the frozen variables and their inverses. The ring $R$ is naturally a quantum subtorus of $\T_\Lambda$, and so $\T_\Lambda$ is a free left $R$-module with basis $\{M(\alpha)\}$ as $\alpha$ runs over $\Z^\ex$.

Since $\B_{ij}=0$ for all $i,j\in \ex$, all mutations commute with each other.  Mutating once at each $i\in \ex$ in any order gives the quantum seed $((-1)^{|\ex|}\B,\Lambda',M')$, with\footnote{Here, and throughout, $q^\bullet$ denotes a half-power of $q$ not worth keeping careful track of.}
\[ P_i:= M'(e_i)M(e_i) = q^{\bullet}M\left(\sum_{\B_{ji}>0}\B_{ji}e_j\right)+q^{\bullet}M\left(-\sum_{\B_{ji}<0}\B_{ji}e_j\right)\]
Since the expression on the right contains no indices in $\ex$, $P_i\in R$.

Choose an element $x\in \U_q\subseteq \T_\Lambda$, write (for $\lambda_\alpha\in R$)
\begin{eqnarray*}
 x  &=& \sum_\alpha\lambda_\alpha M(\alpha)= \sum_{\alpha\in \Z^\ex}q^\bullet\lambda_\alpha M(e_1)^{\alpha_1}M(e_2)^{\alpha_2}...M(e_{|\ex|})^{\alpha_{|\ex|}}
\end{eqnarray*}
Let us rewrite $x$ by replacing some of these cluster variables with their mutation.  Choose any $I\subset \ex$.
\begin{eqnarray*}
 x &=&\sum_{\alpha\in \Z^\ex}q^\bullet\lambda_\alpha \left(\prod_{i\in I}M(e_i)^{\alpha_i}\right)\left(\prod_{i\not\in I} M(e_i)^{\alpha_i}\right)\\
 &=& \sum_{\alpha\in \Z^\ex}q^\bullet\lambda_\alpha \left(\prod_{i\in I}(M'(e_i)^{-1}P_i)^{\alpha_i}\right)\left(\prod_{i\not\in I} M(e_i)^{\alpha_i}\right)\\
&=&\sum_{\alpha\in \Z^\ex}q^\bullet\lambda_\alpha \left(\prod_{i\in I}P_i^{\alpha_i}\right)\left(\prod_{i\in I}M'(e_i)^{-\alpha_i}\right)\left(\prod_{i\not\in I} M(e_i)^{\alpha_i}\right)
\end{eqnarray*}

Let $\T_I$ be the quantum torus corresponding to the seed which is the mutation of $(\B,\Lambda,M)$ at the set $I$ in any order.  The cluster variables in this seed are $\{M'(e_i)\}_{i\in I}\cup \{M(e_i)\}_{i\not\in I}$.  Since the exponents $\alpha_i$ may be negative, it is not immediate that the coefficient $\lambda_\alpha\prod_{i\in I}P_i^{\alpha_i}$ is an element of $R$.  However, because $x\in \U_q$, it is also in $\T_I$, and so this coefficient is in $R$.

Let $I_\alpha\subseteq \ex$ be the set on which $\alpha$ is negative.  Then
\begin{eqnarray*}
 x &=& \sum_{\alpha\in \Z^\ex}q^\bullet\left(\lambda_\alpha \prod_{i\in I_\alpha}P_i^{\alpha_i}\right)\left(\prod_{i\in I_\alpha}M'(e_i)^{-\alpha_i}\right)\left(\prod_{i\not\in I_\alpha} M(e_i)^{\alpha_i}\right)
\end{eqnarray*}
This expression is in $\A_q$, so $\A_q=\U_q$.
\end{proof}
\begin{rem}
This proof is essentially the same as that of \cite[Lemma 4.1]{BFZ05}.
\end{rem}


\subsection{Freezing and cluster localization}

Let $\A_q$ be a quantum cluster algebra, with skew-field of fractions $\mathcal{F}$.  Fix a quantum seed $(\B,\Lambda,M)$ of $\A_q$, and choose a set $s\subset\ex$ of exchangeable indices.  If we let $\ex^{(s)}=\ex-s$ and $\B^{(s)}$ be the restriction of $\B$ to $\ex^{(s)}$, then $(\B^{(s)},\Lambda,M)$ defines a new quantum seed, called the \textbf{freezing} of $(\B,\Lambda,M)$ at $s$.  Let $\A_q^{(s)}$ and $\U_q^{(s)}$ be the corresponding cluster algebras of this new seed.  By construction, these new algebras are subalgebras of $\mathcal{F}$.

On the level of the principal part $\pi\mathbb{B}$, freezing a set $s$ is the square submatrix on the indices $\ex-s$.  As a slight abuse of notation, for any skew-symmetric matrix $\mathsf{A}$, we write $\mathsf{A}^{(s)}$ for the square submatrix of $\mathsf{A}$ after deleting the columns and rows in $s$ (whether or not we think of $\mathsf{A}$ as the skew-symmetric part of an exchange matrix).


Denote by
$ S:= \{M(e_i)\,|\, i\in s\}$
the initial cluster variables in $\A_q$.  Let $\A_q[S^{-1}]$ (resp. $\U_q[S^{-1}]$) denote the subalgebra of $\mathcal{F}$ generated by $\A_q$ and $S^{-1}$ (resp. $\U_q$ and $S^{-1}$).  


These four algebras can be compared by the following proposition.\footnote{This is the quantum analog of \cite[Proposition 3.1]{MulLA}.}
\begin{prop}\label{prop: inclusions}
There are inclusions in $\mathcal{F}$
\[ \A_q^{(s)}\subseteq \A_q[S^{-1}]\subseteq \U_q[S^{-1}] \subseteq \U_q^{(s)}\]
\end{prop}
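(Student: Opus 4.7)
The plan is to prove each of the three inclusions separately, exploiting the fact that freezing an index set $s \subset \ex$ restricts which mutations are allowed but leaves the underlying quantum torus data (and hence individual cluster variables) unchanged.

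First I would address $\A_q^{(s)} \subseteq \A_q[S^{-1}]$. The mutation rule for a quantum seed depends only on the exchange matrix column $\B_{\bullet,i}$ at the mutated index, which is unchanged under freezing. Thus every seed mutation-equivalent to $(\B^{(s)},\Lambda,M)$ is the freezing at $s$ of some seed $(\B',\Lambda',M')$ of $\A_q$ obtained from $(\B,\Lambda,M)$ by a sequence of mutations restricted to $\ex^{(s)} = \ex - s$. In particular, each cluster variable of $\A_q^{(s)}$ is also a cluster variable of $\A_q$, and the frozen variables of the new seed are those of $(\B,\Lambda,M)$ together with $S$. Hence $\A_q^{(s)}$ is generated inside $\mathcal{F}$ by elements already in $\A_q$ together with $S^{-1}$, proving the first inclusion.

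The middle inclusion $\A_q[S^{-1}] \subseteq \U_q[S^{-1}]$ is immediate from the Laurent phenomenon $\A_q \subseteq \U_q$, simply by localizing both sides at $S$.

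The final inclusion $\U_q[S^{-1}] \subseteq \U_q^{(s)}$ requires two observations, and is the place to be most careful. First, by the correspondence from the first paragraph, the collection of quantum tori appearing in the intersection
\[ \U_q^{(s)} = \bigcap_{(\B'^{(s)},\Lambda',M') \sim (\B^{(s)},\Lambda,M)} \Zq \cdot M'(\Z^N) \]
is obtained from the intersection defining $\U_q$ by restricting to those seeds reachable via mutations in $\ex^{(s)}$ only; since the quantum torus attached to a seed and to its freezing coincide, fewer terms appear in the intersection, so $\U_q \subseteq \U_q^{(s)}$. Second, for each $i \in s$, the element $M(e_i)$ is frozen in $(\B^{(s)},\Lambda,M)$, so $M'(e_i) = M(e_i)$ in every mutation-equivalent seed; hence $M(e_i)^{-1} = M(-e_i)$ lies in every quantum torus $\Zq \cdot M'(\Z^N)$ in the above intersection, giving $S^{-1} \subset \U_q^{(s)}$. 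Combining these two facts yields $\U_q[S^{-1}] \subseteq \U_q^{(s)}$ and completes the chain.

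The main obstacle, and the only nontrivial step, is the third inclusion: one must check carefully that the quantum tori indexed by freezings of seeds genuinely match those of the unfrozen seeds (so that the intersection defining $\U_q^{(s)}$ is a subintersection of the one defining $\U_q$), and that the elements of $S$ are preserved under all mutations from the frozen seed (so their inverses automatically belong to $\U_q^{(s)}$). Both points follow cleanly from the fact that freezing does not alter $\Lambda$, $M$, or the columns of $\B$ at unfrozen indices.
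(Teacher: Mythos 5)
Your proposal is correct and follows essentially the same route as the paper: the first inclusion comes from the fact that every seed (hence every cluster variable) of $\A_q^{(s)}$ arises from a seed of $\A_q$ so the only new generators are the elements of $S^{-1}$, the middle inclusion is just $\A_q\subseteq\U_q$ together with adjoining $S^{-1}$ inside $\mathcal{F}$, and the last follows because the intersection defining $\U_q^{(s)}$ runs over fewer quantum tori than that defining $\U_q$ while each such torus contains $M(-e_i)=M(e_i)^{-1}$ for $i\in s$. Your write-up simply makes explicit the seed-correspondence and the persistence of frozen variables that the paper leaves implicit.
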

\begin{proof}
The cluster variables of $\A_q^{(s)}$ are a subset of the cluster variables of $\A_q$.  The only new generators are the inverses of the newly-frozen variables, but those are in the localization by construction.  This gives the first inclusion.
Similarly, $\A_q^{(s)}$ has fewer clusters than $\A_q$, so the intersection defining $\U_q^{(s)}$ has strictly fewer terms than $\U_q$; so ${\U_q}\subseteq \U_q^{(s)}$.  Since $\U_q^{(s)}$ also contains the inverses of $S$, this gives the last inclusion.
The middle inclusion follows from the inclusion $\A_q\subseteq \U_q$.
\end{proof}
If $\A_q^{(s)}=\A_q[S^{-1}]$, then $\A_q^{(s)}$ is a localization of $\A_q$ which is naturally a cluster algebra; in this case, we call $\A_q^{(s)}$ a \textbf{cluster localization} of $\A_q$.  Determining which freezings give cluster localizations seems to be an interesting problem.

One nice aspect of cluster localizations is that they are Ore localizations.
\begin{prop}
If $\A_q^{(s)}=\A_q[S^{-1}]$ is a cluster localization, then it is an Ore localization of $\A_q$ at the multiplicative set generated by $S$.
\end{prop}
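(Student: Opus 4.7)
The plan is to show the multiplicative set $T \subseteq \A_q$ generated by $S$ is a two-sided Ore set in $\A_q$; then the Ore localization $\A_q[T^{-1}]$ exists, and since inverting $S$ and inverting $T$ produce the same subring of $\mathcal{F}$, it coincides with $\A_q[S^{-1}] = \A_q^{(s)}$. As a preliminary, observe that $S = \{M(e_i) \mid i \in s\}$ consists of cluster variables of the common seed $(\B,\Lambda,M)$, so its elements quasi-commute pairwise; thus every element of $T$ takes the form $q^\bullet \prod_{i \in s} M(e_i)^{n_i}$ with $n_i \in \mathbb{N}$, and the Ore condition for a general $t_0 \in T$ will follow by induction on the length of such a product from the case of a single $s_0 \in S$.

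For the right Ore condition at $s_0 \in S$ and $x \in \A_q$, the element $x s_0^{-1}$ lies in $\A_q[S^{-1}] = \A_q^{(s)}$. Applying Proposition \ref{prop: frozenOre} inside $\A_q^{(s)}$ writes it as $x s_0^{-1} = u^{-1} v$, where $u$ is a product of frozen variables of $\A_q^{(s)}$ and $v$ is a polynomial in cluster variables of $\A_q^{(s)}$. Since the cluster variables of $\A_q^{(s)}$ are cluster variables of $\A_q$, we have $v \in \A_q$. The frozen variables of $\A_q^{(s)}$ split into (original frozen variables of $\A_q$) $\cup \, S$, and all pairs quasi-commute in the seed $(\B^{(s)}, \Lambda, M)$, so we may reorder to write $u = q^\bullet f t$ with $f$ a unit of $\A_q$ and $t \in T$.

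The hard step is to pull $t^{-1}$ past $v$ while remaining in $\A_q$. Since $t$ is a product of elements of $S$, which are frozen variables of $\A_q^{(s)}$, the reasoning in the proof of Proposition \ref{prop: frozenOre} shows that $t$ quasi-commutes with every cluster variable of $\A_q^{(s)}$; hence conjugation by $t^{-1}$ carries $v$ to a $\Zq$-linear combination $v''$ of products of the same cluster variables, with $q$-rescaled coefficients, so $v'' \in \A_q$. Substituting gives $x s_0^{-1} = q^\bullet f^{-1} v'' t^{-1}$, equivalently $x \cdot t = q^\bullet s_0 \cdot (f^{-1} v'')$ with $f^{-1} v'' \in \A_q$, the desired right Ore identity. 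The left Ore condition follows from the mirror argument applied to $s_0^{-1} x$, and iteration through the factors of any $t_0 \in T$ extends both Ore conditions from $S$ to $T$, completing the proof.
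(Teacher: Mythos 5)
Your proof is correct and follows essentially the same route as the paper: the hypothesis $\A_q^{(s)}=\A_q[S^{-1}]$ lets you apply Proposition \ref{prop: frozenOre} inside $\A_q^{(s)}$, split the frozen denominator into a unit of $\A_q$ times a monomial in $S$, and use quasi-commutation to conclude. The paper states this as "every element of the localization is a left fraction'' and handles the opposite side via the bar involution, whereas you verify the two Ore conditions pairwise (with the conjugation $v''=t^{-1}vt$ and an induction from $S$ to $T$), but the key lemma and decomposition are the same.
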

\begin{proof}
%
Any $x\in \A_q$ is in $\A_q^{(s)}$, and so by Proposition \ref{prop: frozenOre}, $x=a^{-1}b$ for $a$ a product of frozen variables of $\A_q^{(s)}$ and $b$ a polynomial in the cluster variables of $\A_q^{(s)}$.  The cluster variables of $\A_q^{(s)}$ are a subset of the cluster variables of $\A_q$, so $b\in \A_q$.

Frozen variables in $\A_q'$ are either frozen in $\A_q$ or in $S$, so we can write $a=q^{\lambda}cd$, where $c$ is a product of frozen variables in $\A_q$ and $d$ is a product of elements in $S$.  Then $x=d^{-1}(q^{-\lambda}c^{-1}b)$, where $d$ is a product of elements in $S$, and $q^{-\lambda}c^{-1}b\in \A_q$.  Then $\A_q[S^{-1}]$ is a left Ore localization.  Since the elements of $S$ are fixed by the bar involution, it is also a right Ore localization.
\end{proof}

\begin{rem}
If $s=\ex$, then $\A_q^{(s)}=\A_q[S^{-1}]$ is the quantum torus $\T_\Lambda\subset \mathcal{F}$ corresponding to the quantum seed $(\B,\Lambda,M)$.  In this way, cluster localizations generalize these embeddings.
\end{rem}

%
%
%
\begin{rem}
In terms of the quiver $Q(\pi\B)$, freezing deletes the vertices in $s$.
\end{rem}

\subsection{Relatively prime elements}

We give a technique for producing localizations of a cluster algebra whose collective intersection is the original cluster algebra.  This algorithm will only depend on the skew-symmetric submatrix $\pi\B$.


Given an $n\times n$ skew-symmetric matrix $\mathsf{A}$, $i\in \{1,...,n\}$ is a \textbf{sink} if $\mathsf{A}_{ji}\geq0$ for all $j$.  Similarly, a \textbf{source} is an index $i\in \{1,...,n\}$ such that $\mathsf{A}_{ji}\leq0$ for all $j\in \ex$.  

\begin{rem}
In terms of the quiver $Q(\mathsf{A})$, a source is a vertex without outgoing arrows, and a sink is a vertex without incoming arrows.
\end{rem}

Sources and sinks are a source of pairs of cluster variables which generate $\A_q$.

\begin{lemma}\label{lemma: prime}
Let $(\B,\Lambda,M)$ be a quantum seed, with $i,j\in \ex$ such that $\B_{ij}\neq0$ and $i$ is a sink or a source in $\pi\B$.  Then the cluster variables $M(e_i)$ and $M(e_j)$ generate the trivial left ideal in $\A_q(\B,\Lambda,M)$.
\end{lemma}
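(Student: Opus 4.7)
The plan is to exploit the quantum cluster relation at the index $i$, combined with the fact that a sink/source condition on $i$ forces one of the two exchange monomials to live entirely in the frozen directions (hence be invertible in $\A_q$). Without loss of generality, I will treat the sink case, since the source case follows by an identical argument (or by applying the bar involution). So assume $\B_{ki}\ge 0$ for all $k\in\ex$; combined with $\B_{ij}\ne 0$ and skew-symmetry of $\pi\B$, this forces $\B_{ji}>0$.

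The quantum cluster relation gives $M'(e_i)=P+Q$ where
\[
P=M\!\Bigl(-e_i+\sum_{\B_{ki}>0}\B_{ki}e_k\Bigr),\qquad
Q=M\!\Bigl(-e_i-\sum_{\B_{ki}<0}\B_{ki}e_k\Bigr).
\]
Multiplying on the left by $M(e_i)$ and using the quasi-commutation $M(e_i)M(\gamma)=q^{\bullet}M(e_i+\gamma)$ collapses the $-e_i$, yielding
\[
M(e_i)\,M'(e_i) \;=\; q^{\bullet}M(\alpha) + q^{\bullet}M(\beta),
\]
where $\alpha=\sum_{\B_{ki}>0}\B_{ki}e_k$ and $\beta=-\sum_{\B_{ki}<0}\B_{ki}e_k$ have non-negative entries in every coordinate. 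In particular $M(\alpha),M(\beta)\in\A_q$, and the left-hand side lies in the left ideal $\langle M(e_i)\rangle$.

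Next I would show separately that $M(\alpha)\in\langle M(e_j)\rangle$ and that $M(\beta)$ is a unit in $\A_q$. For the first claim, since $\B_{ji}>0$, I can split $\alpha=\B_{ji}e_j+\alpha'$ with $\alpha'\in\N^N$; then $M(\alpha)=q^{\bullet}M(\alpha')M(e_j)^{\B_{ji}}$, and since $\B_{ji}\ge 1$ this lies in $\langle M(e_j)\rangle$ (as a right ideal; moving it to the left ideal uses that $M(e_j)$ quasi-commutes with every $M(\gamma)$, so $xM(e_j)=q^{\bullet}M(e_j)x$). For the second claim, the sink hypothesis says $\B_{ki}<0$ only when $k\in\{1,\dots,N\}-\ex$, i.e.\ only when $e_k$ corresponds to a frozen variable; therefore $M(\beta)$ is (up to a power of $q$) a monomial in positive powers of frozen cluster variables, which are units in $\A_q$ by definition.

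Combining these, $M(\beta)=q^{\bullet}M(e_i)M'(e_i)-q^{\bullet}M(\alpha)$ shows $M(\beta)\in \langle M(e_i),M(e_j)\rangle$, and multiplying on the left by $M(\beta)^{-1}\in\A_q$ gives $1\in\langle M(e_i),M(e_j)\rangle$, as required. There is no genuine obstacle here; the only subtlety is careful bookkeeping of $q^{\bullet}$ factors and of whether ideals are left or right, but the quasi-commutation of monomials in a quantum torus lets one pass between the two freely. The essential content is the observation that a sink (resp.\ source) in $\pi\B$ sends \emph{all} negative (resp.\ positive) contributions of the $i$-th exchange to the frozen part of the seed.
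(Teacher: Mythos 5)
Your strategy is the paper's own: expand the quantum cluster relation at $i$, observe that the sink condition pushes the ``negative'' exchange monomial entirely into the frozen directions (hence a unit of $\A_q$), and use $\B_{ji}>0$ to make the ``positive'' exchange monomial a left multiple of $M(e_j)$. The one step that does not hold up as written is the handedness of the key product. You multiply the exchange relation by $M(e_i)$ on the \emph{left} and then assert that $M(e_i)M'(e_i)$ lies in the left ideal generated by $M(e_i)$, justified by the remark that quasi-commutation lets one pass freely between left and right. That justification fails here: $M'(e_i)$ is not a monomial of the base quantum torus but a sum of two monomials $M(\alpha^+)+M(\alpha^-)$, where $\alpha^{\pm}$ both have exponent $-1$ at $i$, and these quasi-commute with $M(e_i)$ by powers of $q$ that differ by $\Lambda(e_i,\alpha^+-\alpha^-)=(\Lambda\B)_{ii}=D_{ii}>0$; so $M(e_i)M'(e_i)$ is \emph{not} a single $q$-power times $M'(e_i)M(e_i)$. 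Nor can you instead write $M(e_i)M'(e_i)=y\,M(e_i)$ with $y\in\A_q$: the element $y=q^{\bullet}M(\alpha^+)+q^{\bullet}M(\alpha^-)$ involves a negative power of the exchangeable variable $M(e_i)$ and is not known to lie in $\A_q$. As written, $M(e_i)M'(e_i)$ only visibly lies in the \emph{right} ideal $M(e_i)\A_q$, so the conclusion that $M(\beta)$ lies in the left ideal generated by $M(e_i)$ and $M(e_j)$ is not established.

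The repair is immediate, and it is exactly what the paper does: use $M'(e_i)M(e_i)$ instead. This product is manifestly in the left ideal $\A_q M(e_i)$ because $M'(e_i)\in\A_q$, and the same computation gives $M'(e_i)M(e_i)=q^{\bullet}M(\alpha)+q^{\bullet}M(\beta)$ (with different, but irrelevant, powers of $q$). Everything else in your argument is correct and matches the paper: $\B_{ji}>0$ from the sink condition plus skew-symmetry of $\pi\B$; $M(\alpha)=q^{\bullet}M(\alpha-\B_{ji}e_j)\,M(e_j)^{\B_{ji}}\in\A_q M(e_j)$ since $\alpha-\B_{ji}e_j\in\N^N$; and $M(\beta)$ invertible in $\A_q$ because every $k$ with $\B_{ki}<0$ is frozen.
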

\begin{proof}
Assume $i$ is a sink (the other case is similar); this implies $\B_{ji}>0$.
If $(\B',\Lambda',M')$ is the mutation of the original seed at $i$, then
\[ M'(e_{i})M(e_{i}) = q^\bullet M\left(\sum_{\B_{ki}>0}\B_{ki}e_k\right)+q^\bullet M\left(-\sum_{\B_{ki}<0}\B_{ki}e_k\right)\]
Since we may always factor a monomial $M(\alpha)$ into $q^\bullet M(\alpha-\beta)M(\beta)$ for any $\alpha$ and $\beta$, we move the first term on the right hand side to the left, and pull out an $M(e_j)$.
\[ q^\bullet M'(e_{i})M(e_{i})-q^\bullet M\left(-e_{j}+\sum_{\B_{ki}>0}\B_{ki}e_k\right)M(e_{j}) =  M\left(-\sum_{\B_{ki}>0}\B_{ki}e_k\right)\]
Since $\B_{ji}>0$, the left-hand side is in any left $\A_q$-ideal containing $M(e_i)$ and $M(e_j)$.  Since $i$ is a sink, $\B_{ki}<0$ implies that that $k\not\in \ex$, and so the right hand side is a monomial in non-exchangeable indices, which are invertible by construction.  It follows that the left $\A_q$-ideal generated by $M(e_i)$ and $M(e_j)$ is trivial.
%
\end{proof}

\begin{rem}\label{rem: coveringpair}
This lemma is weaker than its commutative analog, \cite[Lemma 5.3]{MulLA}, which applies to any `covering pair', which generalizes the condition on $i$ and $j$.  There is no obvious quantum analog of the argument for this more general condition.
%
\end{rem}

\begin{lemma}\label{lemma: intersection}
If $M(e_i)$ and $M(e_j)$ generate $\A_q$ as a left ideal, then
\[\A_q[M(e_{i})^{-1}]\cap \A_q[M(e_{j})^{-1}]=\A_q\]
\end{lemma}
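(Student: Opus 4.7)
Write $x = M(e_i)$ and $y = M(e_j)$. Because $\A_q \subseteq \mathcal{F}$ is a domain and $x,y$ are non-zero-divisors, the localizations $\A_q[x^{-1}]$ and $\A_q[y^{-1}]$ are Ore (the needed Ore-structure for cluster variables having been established in the preceding discussion of cluster localizations). Hence any $z$ in the intersection may be written as $z = x^{-N}a = y^{-M}b$ for some $a,b\in\A_q$ and $N,M \geq 0$, so that $x^N z = a \in \A_q$ and $y^M z = b \in \A_q$. The proof then reduces to showing that the \emph{powers} $x^N$ and $y^M$ still generate $\A_q$ as a left ideal for all $N, M \geq 1$: granting this, any identity $1 = u x^N + v y^M$ with $u,v \in \A_q$ immediately gives
\[ z \;=\; u x^N z + v y^M z \;=\; ua + vb \;\in\; \A_q. \]

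To upgrade the hypothesis $1 = \alpha x + \beta y$ to arbitrary powers, I exploit the quasi-commutativity of $x$ and $y$ inherited from the ambient quantum torus: $xy = q^{\Lambda_{ij}} yx$, equivalently $yx^N = q^{-N\Lambda_{ij}} x^N y$ and $x^N y^M = q^{NM\Lambda_{ij}} y^M x^N$. The argument is a two-stage induction. First, fix the $y$-exponent at $1$ and induct on $N$: given $1 = u_N x^N + v y$, substitute $1 = \alpha x + \beta y$ into $u_N x^N$ to get
\[ u_N x^N \;=\; u_N(\alpha x + \beta y)x^N \;=\; u_N\alpha\, x^{N+1} + u_N\beta\, q^{-N\Lambda_{ij}}\, x^N y \;\in\; \A_q x^{N+1} + \A_q y, \]
hence $\A_q = \A_q x^{N+1} + \A_q y$. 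Next, fix $N$ and induct on $M$: given $1 = u x^N + v_M y^M$ together with the first-stage identity $1 = u_1 x^N + v_1 y$, substitute into $v_M y^M$ to obtain
\[ v_M y^M \;=\; v_M(u_1 x^N + v_1 y)y^M \;=\; v_M u_1\, q^{NM\Lambda_{ij}}\, y^M x^N + v_M v_1\, y^{M+1} \;\in\; \A_q x^N + \A_q y^{M+1}. \]
Iterating yields $\A_q = \A_q x^N + \A_q y^M$ for all $N, M \geq 1$.

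The main obstacle is the failure of a clean non-commutative binomial expansion: in the commutative case one would simply expand $(\alpha x + \beta y)^{N+M-1}$ and sort terms by whether $x$ or $y$ appears at least $N$ or $M$ times, but here $\alpha$ and $\beta$ need not quasi-commute with $x, y$, so the accumulated powers cannot be freely collected across the intervening coefficients. The trick above sidesteps this by inserting $1$ at precisely the interface where the newly introduced factor of $x$ or $y$ sits adjacent to the already-accumulated power and can be absorbed using the quasi-commutation relation alone; the coefficients $u_N, v_M$, etc., simply ride along inside $\A_q$.
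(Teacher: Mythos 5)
Your core argument is correct, and it takes a genuinely different route from the paper's. You prove the stronger, reusable statement that all powers $M(e_i)^N$, $M(e_j)^M$ still generate $\A_q$ as a left ideal -- a quantum analogue of the commutative fact that $(f,g)=(1)$ implies $(f^N,g^M)=(1)$ -- via a two-stage induction in which the identity $1=\alpha M(e_i)+\beta M(e_j)$ is inserted right next to the accumulated power, so that only pure powers of $M(e_i)$ and $M(e_j)$ ever have to be moved past one another; this is legitimate because both variables lie in the same seed and hence quasi-commute, and the resulting $q$-power scalars are units in $\Zq$. Once $1=uM(e_i)^N+vM(e_j)^M$ is in hand, any $z$ with $M(e_i)^Nz,\,M(e_j)^Mz\in\A_q$ lies in $\A_q$ at once. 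The paper instead runs a descent: it attaches to each $z$ in the intersection the least $n_z$ such that $M(ae_i+be_j)z\in\A_q$ whenever $a+b\geq n_z$, takes a minimal counterexample, deduces $M(e_i)z,\,M(e_j)z\in\A_q$, and applies the generating hypothesis exactly once, to the left denominator ideal $\{y\in\A_q\mid yz\in\A_q\}$. Your version is more explicit (it exhibits the certificate $1=uM(e_i)^N+vM(e_j)^M$), at the cost of tracking quasi-commutation factors; the paper's descent needs no such bookkeeping and uses the hypothesis only through the ideal-theoretic step.

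One caveat: your opening justification -- that $\A_q[M(e_i)^{-1}]$ is Ore because $\A_q$ is a domain and $M(e_i)$ is a non-zero-divisor -- is not valid, and the paper explicitly warns that these localizations are not \emph{a priori} Ore; the Ore property was established earlier only for cluster localizations, i.e.\ under the hypothesis $\A_q^{(s)}=\A_q[S^{-1}]$, which is not assumed here. What your proof actually needs is only that every element of the intersection admits left-fraction expressions $z=M(e_i)^{-N}a=M(e_j)^{-M}b$ with $a,b\in\A_q$. The paper's own proof quietly assumes the same thing (when it writes $x=M(ce_i)^{-1}y=M(de_j)^{-1}z$), and in the one place the lemma is invoked, inside Lemma \ref{lemma: A=Uclass}, this form is available: the inductive hypothesis gives $\A_q^{(i)}=\A_q[M(e_i)^{-1}]$, in which $M(e_i)$ is frozen, so Proposition \ref{prop: frozenOre} (together with quasi-commutation of frozen variables) yields the left-fraction form. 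So this is a shared gloss rather than a fatal gap, but you should either take the left-fraction form as part of the hypothesis or cite it from the context of use, rather than deriving it from a domain condition.
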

\begin{proof}
For any $x\in \A_q[M(e_{i})^{-1}]\cap \A_q[M(e_{j})^{-1}]$, let $n_x\in \N$ be the smallest positive integer such that, $\forall a,b\in N$ such that $a+b\geq n_x$, $M(ae_i+be_j)x\in \A_q$.  Such an $n_x$ exists; to see this, write $x=M(ce_i)^{-1}y=M(de_j)^{-1}z$ for $y,z\in \A_q$ and note that $n_x\leq c+d$.  Clearly, $n_x=0$ if and only if $x\in\A_q$.

For contradiction, assume there exists $x\not\in \A_q$ with $n_x$ minimal among elements of $(\A_q[M(e_{i})^{-1}]\cap \A_q[M(e_{j})^{-1}])-\A_q$.  For any $a,b\in \N$ with $a+b\geq n_x-1$,
\[ M(ae_i+be_i)[M(e_i)x]= M((a+1)e_i+be_j)x\Rightarrow M(ae_i+be_i)[M(e_i)x]\in \A_q\]
This implies that $n_{M(e_i)x}\leq n_x-1$.  Since $n_x$ was minimal, $M(e_i)x\in \A_q$.  By a symmetric computation, $M(e_j)x\in \A_q$.

Define the \emph{left denominator ideal} $I$ of $x$ by
\[ I :=\{y\in \A_q\,|\, yx\in \A_q\}\]
This is a left $\A_q$-ideal.   As has been observed, $M(e_i)$ and $M(e_j)$ are in $I$.  By Lemma \ref{lemma: prime}, $I=\A_q$. In particular, $1\in I$ and $1\cdot x\in \A_q$.  This contradicts $x\not\in\A_q$.
\end{proof}

\subsection{A lemma for proving $\A_q=\U_q$}

These techniques can be combined to give the following criterion for showing large classes of cluster algebras have $\A_q=\U_q$.

\begin{lemma}\label{lemma: A=Uclass}
Let $\mathcal{P}$ be a set of exchange types.  Assume that, for every non-isolated exchange type $\mathcal{T}\in\mathcal{P}$, there is a skew-symmetric matrix $\mathsf{A}\in \mathcal{T}$, and indices $i,j$ such that...
\begin{enumerate}
\item $\mathsf{A}_{ij}\neq0$ and $i$ is either a source or a sink in $\mathsf{A}$, and
\item the exchange types of the freezings $\mathsf{A}^{(i)}$ and $\mathsf{A}^{(j)}$ are both in $\mathcal{P}$.
\end{enumerate}
Then $\A_q=\U_q$ for all $\A_q$ with exchange type in $\mathcal{P}$.
\end{lemma}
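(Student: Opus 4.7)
The plan is to induct on $|\ex|$, the number of exchangeable indices of a quantum seed. The base case handles every isolated exchange type in $\mathcal{P}$ (and in particular the case $|\ex|=0$): Proposition \ref{prop: isolated} directly gives $\A_q=\U_q$.

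For the inductive step, fix a non-isolated exchange type $\mathcal{T}\in\mathcal{P}$ and, using the hypotheses, pick a representative $\mathsf{A}\in\mathcal{T}$ together with indices $i,j$ satisfying conditions (1) and (2). Since $\A_q$ and $\U_q$ depend only on the mutation equivalence class of a seed, I may choose a quantum seed $(\B,\Lambda,M)$ for $\A_q$ with $\pi\B=\mathsf{A}$. Freezing at $\{i\}$ (resp.\ $\{j\}$) produces a cluster algebra whose exchange type is $\mathsf{A}^{(i)}$ (resp.\ $\mathsf{A}^{(j)}$), which by (2) lies in $\mathcal{P}$ and has one fewer exchangeable index. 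The inductive hypothesis therefore yields $\A_q^{(i)}=\U_q^{(i)}$ and $\A_q^{(j)}=\U_q^{(j)}$. Combined with Proposition \ref{prop: inclusions}, the chain
\[ \A_q^{(i)} \subseteq \A_q[M(e_i)^{-1}] \subseteq \U_q[M(e_i)^{-1}] \subseteq \U_q^{(i)} \]
collapses to equality, giving $\A_q[M(e_i)^{-1}]=\U_q[M(e_i)^{-1}]$, and similarly at $j$.

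To finish, condition (1) is precisely the hypothesis of Lemma \ref{lemma: prime}, so $M(e_i)$ and $M(e_j)$ generate $\A_q$ as a left ideal, and Lemma \ref{lemma: intersection} then yields $\A_q[M(e_i)^{-1}]\cap \A_q[M(e_j)^{-1}]=\A_q$. Stringing everything together,
\[ \A_q\subseteq \U_q \subseteq \U_q[M(e_i)^{-1}]\cap \U_q[M(e_j)^{-1}] = \A_q[M(e_i)^{-1}]\cap \A_q[M(e_j)^{-1}] = \A_q, \]
forcing $\A_q=\U_q$ and closing the induction. The argument is really an assembly: Proposition \ref{prop: isolated} supplies the base, Lemma \ref{lemma: prime} together with Lemma \ref{lemma: intersection} package the statement ``knowing $\A_q=\U_q$ after inverting \emph{either} of two well-chosen cluster variables forces $\A_q=\U_q$ itself,'' and Proposition \ref{prop: inclusions} converts the inductive hypothesis about freezings into the needed equality of localizations. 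I do not expect a substantive obstacle; the only point requiring care is the routine check that freezing $(\B,\Lambda,M)$ at $i$ genuinely has exchange type $\mathsf{A}^{(i)}$, which is immediate from the definitions since freezing removes the $i$-th column of $\B$ and the $i$-th row of $\pi$.
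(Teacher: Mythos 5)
Your proposal is correct and follows essentially the same route as the paper: induction on the number of exchangeable indices, with Proposition \ref{prop: isolated} handling isolated types, Proposition \ref{prop: inclusions} collapsing the localization chain via the inductive hypothesis on the freezings, and Lemmas \ref{lemma: prime} and \ref{lemma: intersection} giving $\A_q[M(e_i)^{-1}]\cap\A_q[M(e_j)^{-1}]=\A_q$. The only cosmetic difference is that the paper phrases the base case as ``a minimal-size type in $\mathcal{P}$ must be isolated,'' whereas you treat isolated types directly; the content is identical.
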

\begin{proof}
Assume $\mathcal{P}$ non-empty; the alternative case is immediate.

We proceed by induction on the size of $\mathcal{T}$; this is the size of any matrix in $\mathcal{T}$.  Let $\mathcal{T}\in \mathcal{P}$ have minimal size.  If it is not isolated, then there is some $\mathsf{A}\in \mathcal{T}$ with a freezing $\mathsf{A}^{(i)}$ with exchange type in $\mathcal{P}$.  Since the size of $\mathsf{A}^{(i)}$ is less than the size of $\mathsf{A}$, this contracts minimality; so $\mathcal{T}$ is isolated.  Then $\A_q=\U_q$ for any $\A_q$ of type $\mathcal{T}$ by Proposition \ref{prop: isolated}.

Assume that $\A_q=\U_q$ for every $\A_q$ of type $\mathcal{T}\in \mathcal{P}$ with size $<n$.  Let $\mathcal{T}\in\mathcal{P}$ be an exchange type of size $n$, and let $\A_q$ be a cluster algebra of type $\mathcal{T}$.  If $\A_q$ is isolated, then $\A_q=\U_q$.

Else, let $\mathsf{A}\in \mathcal{T}$ be the matrix and $i,j$ be the indices guarenteed by the hypothesis.  Since $\A_q$ has type $\mathcal{T}$, there is a quantum seed $(\B,\Lambda,M)$ of $\A_q$ such that $\pi\B=\mathsf{A}$, and we identify $i,j$ with indices in $\ex$.  Then the freezings $\A_q^{(i)}$ and $\A_q^{(j)}$ are of type $\mathsf{A}^{(i)}$ and $\mathsf{A}^{(j)}$ respectively.  These exchange types are in $\mathcal{P}$ and so by the inductive hypothesis, $\A_q^{(i)}=\U_q^{(i)}$ and $\A_q^{(j)}=U_q^{(j)}$.  Then the inclusions in Proposition \ref{prop: inclusions} are equalities; in particular,
\[ \A_q^{(i)} = \A_q[M(e_i)^{-1}],\text{ and } \A_q^{(j)} = \A_q[M(e_j)^{-1}]\]
By Lemma \ref{lemma: prime}, $M(e_i)$ and $M(e_j)$ generate $\A_q$ as a left ideal, so by Lemma \ref{lemma: intersection},
\[ \U_q\subseteq\U_q^{(i)}\cap\U_q^{(j)}=\A_q^{(i)}\cap \A_q^{(j)}=\A_q[M(e_i)^{-1}]\cap\A_q[M(e_j)^{-1}]=\A_q\]
But $\A_q\subseteq \U_q$, so $\A_q=\U_q$.  By induction, this is true for all $\mathcal{T}\in \mathcal{P}$.
%
%
%
\end{proof}

\begin{rem}\label{rem: A=Ucomm}
The above lemma is a weaker version of the Banff algorithm which appeared in \cite[Section 5]{MulLA}, reformulated as an criterion rather than an algorithm.  Specifically, if the condition $(2)$ in the lemma was replaced by the weaker condition `$(i,j)$ is a covering pair 
 in $\mathsf{A}$', then a set $\mathcal{P}$ satisfies the hypothesis of the new version of the lemma if and only if the Banff algorithm produces an acyclic cover for every commutative cluster algebra $\A_1$ with exchange type in $\mathcal{P}$.  As a consequence, if $\mathcal{P}$ satisfies the lemma as it is stated above, every commutative cluster algebra $\A_1$ with exchange type in $\mathcal{P}$ is locally acyclic (see Section \ref{section: LA}).
%
\end{rem}

\begin{rem}
The union $\overline{\mathcal{P}}$ of all sets $\mathcal{P}$ which satisfy the lemma also satisfies the lemma, so $\overline{\mathcal{P}}$ is the unique maximal set of exchange types satisfying the lemma.  Are there any cluster algebras $\A_q$ with $\A_q=\U_q$ and exchange type not in $\overline{\mathcal{P}}$?  
\end{rem}

\subsection{Digression: acyclic cluster algebras}

A $n\times n$ skew-symmetric matrix $\mathsf{A}$ is called \textbf{acyclic} if there is no sequence of indices $i_1,i_2,...,i_n=i_1$ such that $\mathsf{A}_{i_{j+1}i_{j}}>0$.  An exchange type is acyclic if any matrix in it is.

\begin{rem}
The matrix $\mathsf{A}$ is acyclic iff $Q(\mathsf{A})$ has no directed cycles.
\end{rem}

Cluster algebras of acyclic type are an important class of examples, for which many general results are known.  A byproduct of Lemma \ref{lemma: A=Uclass} is a new proof that $\A_q=\U_q$ for acyclic cluster algebras, which first appeared in \cite[Theorem 7.5]{BZ05}.
\begin{prop}\label{prop: acyclic}
If $\A_q$ has acyclic exchange type, then $\A_q=\U_q$.
\end{prop}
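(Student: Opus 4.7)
The plan is to apply Lemma \ref{lemma: A=Uclass} to the class $\mathcal{P}$ of all acyclic exchange types. Two things need to be verified: that acyclicity is preserved by the relevant freezings, and that every non-isolated acyclic exchange type admits a matrix representative with a source-or-sink index $i$ together with an adjacent index $j$.

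The first point is essentially topological. If $\mathsf{A}$ is acyclic, then for any subset $s$ of indices, the principal submatrix $\mathsf{A}^{(s)}$ is also acyclic, because its quiver $Q(\mathsf{A}^{(s)})$ is obtained from the directed acyclic quiver $Q(\mathsf{A})$ by deleting the vertices indexed by $s$, and deleting vertices cannot create a directed cycle. Hence for any $\mathsf{A} \in \mathcal{T}\in\mathcal{P}$ and any indices $i,j$, the exchange types of $\mathsf{A}^{(i)}$ and $\mathsf{A}^{(j)}$ again lie in $\mathcal{P}$.

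For the second point, pick an acyclic representative $\mathsf{A}$ of $\mathcal{T}$. Since $\mathcal{T}$ is non-isolated, $\mathsf{A}\neq 0$, so $Q(\mathsf{A})$ contains at least one arrow. The induced subquiver on the non-isolated vertices is a non-empty DAG, and therefore admits a topological order; in particular, it has a terminal vertex $i$, i.e.\ a vertex with no outgoing arrows in $Q(\mathsf{A})$. By the paper's convention (Remark following the source/sink definitions), $i$ is a source of $\mathsf{A}$. Since $i$ is not isolated, there is some arrow entering $i$, say from a vertex $j$, so that $\mathsf{A}_{ij}>0$; in particular $\mathsf{A}_{ij}\neq 0$.

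Both hypotheses of Lemma \ref{lemma: A=Uclass} are thus satisfied for $\mathcal{P}$, and the lemma yields $\A_q=\U_q$ for every cluster algebra $\A_q$ whose exchange type is acyclic. There is no real obstacle here: the proposition is simply the observation that the class of acyclic exchange types is closed under the inductive step packaged into Lemma \ref{lemma: A=Uclass}, with the existence of a source in any finite non-empty DAG supplying the source/sink index required by the hypothesis.
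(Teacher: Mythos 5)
Your proposal is correct and follows essentially the same route as the paper: apply Lemma \ref{lemma: A=Uclass} to the class of acyclic exchange types, using that principal submatrices of an acyclic matrix remain acyclic and that a finite nonzero acyclic quiver has a source-or-sink vertex with an adjacent index (the paper phrases it via a sink $i$ and $j$ with $\mathsf{A}_{ji}<0$, which is the mirror of your terminal-vertex argument). No gaps; the argument matches the paper's proof in substance.
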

\begin{proof}
If $\mathsf{A}$ is acyclic and not zero, then there is some $i$ which is a sink, and $j$ with $\mathsf{A}_{ji}<0$.  This can be shown by starting at a non-isolated vertex in $Q(\mathsf{A})$ and moving along arrows; eventually a dead-end is reached because the index set is finite and cycles are forbidden.  The freezings $\mathsf{A}^{(i)}$ and $\mathsf{A}^{(j)}$ are also acyclic.  Therefore, the class of acyclic exchange types satisfies the hypothesis of Lemma \ref{lemma: A=Uclass}.
\end{proof}

\begin{rem}
This is of limited usefulness for cluster algebras of marked surfaces, because $\A_q(\S)$ has acyclic exchange type only for certain simple surfaces (see \cite[Remark 10.11]{FST08}).
\end{rem}

\section{$\A_q(\S)=\U_q(\S)$ for (most) marked surfaces.}

The techniques of the previous section can now be applied to the class of triangulable marked surfaces with at least two marked points on each connected component.

\subsection{Marked surfaces with isolated cluster algebras}

The first step is to characterize which cluster algebras of marked surfaces have isolated exchange type.

\begin{prop}\label{prop: topiso}
If $\S$ is a union of topological discs, each with $3$ or $4$ marked points, then $\A_q(\S)$ has isolated exchange type.
\end{prop}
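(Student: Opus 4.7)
The plan is direct: compute the exchange submatrix $\pi\B^\Delta$ for a triangulation of $\S$ and verify it is the zero matrix, since this is exactly the condition for $\A_q(\S)$ to be isolated.

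First, I would reduce to the case where $\S$ is connected. Given a triangulation $\Delta$ of $\S = \S_1 \sqcup \cdots \sqcup \S_k$, every arc lies in a single component, and arcs in different components share no marked points. So the corresponding entries of the skew-adjacency matrix $\Q^\Delta$, and hence of $\B^\Delta = \Q^\Delta \circ \iota$, vanish across components. The matrix $\pi\B^\Delta$ is therefore block-diagonal with one block for each $\S_i$, and it suffices to check each block is zero.

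Next I would handle the two allowed disc cases individually using Proposition~\ref{prop: triang}(4), which gives $|\Delta| = 6g+3h+2|\M|-6 = 2|\M|-3$ for a disc (where $g=0$, $h=1$). Subtracting the $|\M|$ boundary arcs leaves $|\M|-3$ non-boundary arcs, i.e.\ $|\ex|=|\M|-3$.
\begin{itemize}
\item If $|\M|=3$, then $|\ex|=0$, so $\pi\B^\Delta$ is the empty $0\times 0$ matrix, trivially zero.
\item If $|\M|=4$, then $|\ex|=1$, so $\pi\B^\Delta$ is a $1\times 1$ skew-symmetric integer matrix, hence $[0]$.
\end{itemize}

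Combining these two observations, $\pi\B^\Delta=0$ for every triangulation $\Delta$ of $\S$, so the exchange type of $\A_q(\S)$ is the zero matrix, i.e.\ isolated by definition. There is no real obstacle here — the content is entirely the arithmetic of triangulation sizes plus the fact that a $1\times 1$ skew-symmetric matrix must vanish. The only thing to be slightly careful about is choosing \emph{any} triangulation $\Delta$ (which exists by Proposition~\ref{prop: triang}(1)) and using Corollary~\ref{coro: independent} to conclude that the exchange type is well-defined and independent of this choice.
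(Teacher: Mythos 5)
Your proof is correct and follows essentially the same route as the paper: both verify that for any triangulation $\Delta$ of such a surface the exchange submatrix $\pi\B^\Delta$ vanishes, the paper by noting the only non-boundary arcs are diagonals of the quadrilateral components (so $\Q^\Delta_{\curve[x],\curve[y]}=0$ for any two of them), and you by counting $|\ex|=|\M|-3$ per disc and invoking skew-symmetry of the resulting $0\times 0$ or $1\times 1$ blocks. The two arguments differ only in bookkeeping, not in substance.
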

\begin{proof}
Let $\Delta$ be a triangulation of $\S$.  The only non-boundary curves in $\Delta$ will be diagonals across connected components with 4 marked points.  Since any two such curves $\curve[x],\curve[y]$ are in different components, $\Q^{\Delta}_{\curve[x],\curve[y]}=0$, and so $\pi\B^{\Delta}=0$.
\end{proof}
\begin{rem}
These are the only triangulable marked surfaces whose cluster algebras have isolated exchange type.\footnote{We emphasize that this is not true in the larger generality of marked surfaces with non-boundary marked points (not considered in this paper, but cover in \cite{FST08}.}
\end{rem}

\subsection{Cutting a marked surface} Freezing a quantum seed $(\B^\Delta,\Lambda^\Delta,M^\Delta)$ can be interpreted as the topological action of `cutting', at least on the level of the skew-symmetric matrix $\pi\B^{\Delta}$.

Let $\curve$ be a simple non-boundary arc\footnote{Cutting at any simple curve may be defined, but the resulting marked surface will only be triangulable for simple non-boundary arcs, so we do not consider the more general case.} in $\S$.  The \textbf{cutting} $\chi_{\curve}(\S)$ of $\S$ along $\alpha$ is the marked surface obtained by cutting $\S$ along $\curve$, compactifying $\S$ by adding boundary along the two sides of $\curve$, and adding marked points where the endpoints of $\curve$ were.  There is a natural map
\[ \chi_{\curve}(\S)\rightarrow \S\]
which is a bijection away from $\curve\subset \S$, a 2-to-1 map over the interior of $\curve$, and such that the preimage of marked points are all marked.  The two types of cut are pictured in Figure \ref{fig: cuts}.
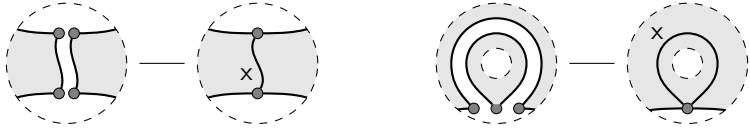
\begin{figure}[h!]
\centering
\begin{tikzpicture}[inner sep=0.5mm,auto]
\begin{scope}[xshift=0in]
\begin{scope}[xshift=-.5in,scale=.2]
	\begin{scope}
	\clip (0,0) circle (4);
	\draw[fill=black!10,thick] (-5,-3) to [in=180,out=30] (-.5,-2) to [in=240,out=60] (-.5,2) to [out=180,in=-30] (-5,3);
	\draw[fill=black!10,thick] (5,3) to [in=0,out=210] (.5,2) to [in=60,out=240] (.5,-2) to [out=0,in=150] (5,-3);
	\node (1) at (-.5,-2) [marked] {};
	\node (2) at (-.5,2) [marked] {};
	\node (3) at (.5,-2) [marked] {};
	\node (4) at (.5,2) [marked] {};
	\end{scope}
	\draw[dashed] (0,0) circle (4);
\end{scope}
	\draw (-.3,0) to [-angle 90] (.3,0);
\begin{scope}[xshift=.5in,scale=.2]
	\begin{scope}
	\clip (0,0) circle (4);
	\draw[fill=black!10,thick] (-5,-3) to [in=180,out=30] (0,-2) to [in=150,out=0] (5,-3) to [line to] (5,3) to [in=0,out=210] (0,2) to [in=-30,out=180] (-5,3);
	\node (1) at (0,-2) [marked] {};
	\node (2) at (0,2) [marked] {};
	\draw[thick] (1) to [in=240,out=60] node {$\curve$} (2);
	\end{scope}
	\draw[dashed] (0,0) circle (4);
\end{scope}
\end{scope}
\begin{scope}[xshift=2.25in]
\begin{scope}[xshift=-.5in,scale=.2]
	\begin{scope}
	\clip (0,0) circle (4);
	\draw[fill=black!10,thick] (-5,-4) to [in=180,out=30] (-1.5,-3) to [in=270,out=135] (-3,0) to [in=180,out=90] (0,3) to [in=90,out=0] (3,0) to [in=45,out=270] (1.5,-3) to [in=150,out=0] (5,-4) to [line to] (5,5) to (0,5) to (-5,5);
	\node (1) at (-1.5,-3) [marked] {};
	\node (2) at (0,-3) [marked] {};
	\node (3) at (1.5,-3) [marked] {};
	\draw[fill=black!10,thick] (2) to [out=45,in=270] (2,0) to [out=90,in=0] (0,2) to [out=180,in=90] (-2,0) to [out=270,in=135] (2);
	\draw[fill=black!0,dashed] (0,0) circle (1);
	\end{scope}
	\draw[dashed] (0,0) circle (4);
\end{scope}
	\draw (-.3,0) to [-angle 90] (.3,0);
\begin{scope}[xshift=.5in,scale=.2]
	\begin{scope}
	\clip (0,0) circle (4);
	\draw[fill=black!10,thick] (-5,-4) to [in=180,out=30] (0,-3) to [in=150,out=0] (5,-4) to [line to] (5,5) to (0,5) to (-5,5);
	\node (1) at (0,-3) [marked] {};
	\draw[thick] (1) to [out=45,in=270] (2,0) to [out=90,in=0] (0,2) to [out=180,in=90] (-2,0) to [out=270,in=135] (1);
	\draw[fill=black!0,dashed] (0,0) circle (1);
	\node (x) at (-2,2) {$\curve$};
	\end{scope}
	\draw[dashed] (0,0) circle (4);
\end{scope}
\end{scope}
\end{tikzpicture}
\caption{Types of cuts}
\label{fig: cuts}
\end{figure}

The map $\chi_{\curve}(\S)\rightarrow \S$ takes a triangulation of $\chi_{\curve}(\S)$ to a triangulation of $\S$ which contains $\curve$.  This induces a bijection between triangulations of $\chi_{\curve}(\S)$ and triangulations of $\S$ which contain $\curve$.

\begin{prop}\label{prop: topfreeze}
Let $\curve$ be a simple non-boundary arc in $\S$.  Let $\Delta$ be a triangulation of $\S$ containing $\curve$, and $\Delta'$ be the corresponding triangulation of $\chi_{\curve}(\S)$.  Then the skew-symmetric matrix $\pi\B^{\Delta'}$ is the submatrix $(\pi\B^{\Delta})^{(\curve)}$ of $\pi\B^{\Delta}$ where the row and column corresponding to $\curve$ has been removed.
%
\end{prop}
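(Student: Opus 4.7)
The plan is to reduce the proposition to a direct comparison of the entries of $\Q^{\Delta}$ and $\Q^{\Delta'}$, since $\B^{\Delta} = \Q^{\Delta}\circ \iota$ and $\pi\B^{\Delta}$ is simply the square block of $\Q^{\Delta}$ indexed by exchangeable arcs, i.e.\ non-boundary arcs.

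First I would set up the bookkeeping on index sets. The cutting map $\chi_{\curve}(\S)\to\S$ gives a bijection between arcs in $\Delta'$ and arcs in $\Delta\cup\{\curve_L,\curve_R\}$ (with $\curve$ itself replaced by the two lifts $\curve_L,\curve_R$). The two lifts of $\curve$ are homotopic into $\partial\chi_{\curve}(\S)$ by construction, hence boundary arcs, hence frozen in $\Delta'$. Every other arc in $\Delta'$ is non-boundary iff its image in $\Delta$ is. Thus the exchangeable index set of $\Delta'$ is canonically the exchangeable index set of $\Delta$ with $\curve$ removed, matching the indexing of $(\pi\B^{\Delta})^{(\curve)}$.

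Next I would compare entries. Fix two exchangeable arcs $\curve_i,\curve_j\in\Delta'$ (equivalently, in $\Delta\setminus\{\curve\}$). The entry $\Q^{\Delta}_{\curve_i,\curve_j}$ is a sum of four contributions, one for each pair of ends $(\partial_a(\curve_i),\partial_b(\curve_j))$, depending on the cyclic order of all arc-ends of $\Delta$ at the shared marked point. I would show contribution-by-contribution that this agrees with $\Q^{\Delta'}_{\curve_i,\curve_j}$. The key geometric fact is: at every marked point $m$ of $\S$ which is \emph{not} an endpoint of $\curve$, $\chi_{\curve}$ is a local diffeomorphism, so the cyclic orders of arc-ends of $\Delta$ and of $\Delta'$ at $m$ coincide and the contribution is unchanged. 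At an endpoint $m$ of $\curve$, the cutting map splits a neighborhood of $m$ along the end(s) of $\curve$ into one or two sectors (two sectors when only one end of $\curve$ arrives at $m$; possibly one or two when $\curve$ is a loop at $m$); the marked points of $\chi_{\curve}(\S)$ over $m$ correspond to these sectors, and the cyclic order of arc-ends at each such marked point is precisely the cyclic order of the sector, augmented by the adjacent ends of $\curve_L$ and/or $\curve_R$ on its boundary.

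Finally I would verify the case analysis. Two ends $\partial_a(\curve_i),\partial_b(\curve_j)$ land in the \emph{same} sector iff they share a marked point in $\chi_{\curve}(\S)$, and in that case no end of $\curve$ lies between them in the cyclic order of $\Delta$; therefore one is immediately clockwise to the other in $\Delta$ iff the same holds in $\Delta'$ (the newly inserted boundary arcs $\curve_L,\curve_R$ sit at the extremes of the sector and so do not interpose between them). If they lie in \emph{different} sectors, they have different endpoints in $\chi_{\curve}(\S)$, contributing $0$ to $\Q^{\Delta'}$; on the other hand, an end of $\curve$ must separate them in the cyclic order at $m$ in $\Delta$, so they are not immediately clockwise in $\Delta$ either, contributing $0$ to $\Q^{\Delta}$. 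In every subcase the contributions agree, giving $\Q^{\Delta'}_{\curve_i,\curve_j}=\Q^{\Delta}_{\curve_i,\curve_j}$ and hence the proposition. The main obstacle I anticipate is bookkeeping when $\curve$ is a loop based at a single marked point (so that the two endpoints of $\curve$ coincide in $\S$), where the sector description requires some care, but the underlying principle—cutting slices the cyclic order at $m$ into sectors bounded by ends of $\curve$—still settles the case cleanly.
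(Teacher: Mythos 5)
Your proposal is correct and follows essentially the same route as the paper: identify the exchangeable arcs of $\Delta'$ with $\ex-\{\curve\}$ (the two lifts of $\curve$ being frozen boundary arcs) and show $\Q^{\Delta'}_{\curve_i,\curve_j}=\Q^{\Delta}_{\curve_i,\curve_j}$ entrywise, which is exactly the equality the paper's one-line proof asserts. Your sector-by-sector analysis of the cyclic order at the endpoints of $\curve$ simply supplies the local justification that the paper leaves implicit.
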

\begin{proof}
Let $\curve[y],\curve[z]\in \Delta'$.  Then
\[(\pi\B^{\Delta'})_{\curve[y],\curve[z]}=\Q^{\Delta'}_{\curve[y],\curve[z]} = \Q^{\Delta}_{\curve[y],\curve[z]} = (\pi\B^\Delta)_{\curve[y],\curve[z]}\]
The set $\ex'\subset\Delta'$ of non-boundary arcs is $\ex-\{\curve\}$, so $\pi\B^{\Delta'}$ is the restriction of $\pi\B^{\Delta}$ away from $\curve$.
\end{proof}

\begin{coro}
Let $\curve$ be a simple non-boundary arc in $\S$, and let $\Delta$ be a triangulation of $\S$ containing $\curve$.  Then $\A_q(\chi_{\curve}(\S))$ has the same exchange type as $\A_q(\S)^{(\curve)}$, the freezing of $\curve$ in the quantum seed corresponding to $\Delta$.
\end{coro}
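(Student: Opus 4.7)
The plan is essentially a direct translation between the topological and combinatorial descriptions, with Proposition \ref{prop: topfreeze} doing the heavy lifting. First, I would unwind the definitions: the exchange type of a cluster algebra is, by definition, the mutation/permutation equivalence class of the matrix $\pi\B$ coming from any of its seeds. So it suffices to exhibit a single seed on each side whose skew-symmetric exchange submatrices literally coincide (not merely are mutation-equivalent).

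Next, I would pick the seed of $\A_q(\chi_{\curve}(\S))$ coming from the triangulation $\Delta'$ (the image of $\Delta$ under the bijection between triangulations of $\chi_{\curve}(\S)$ and triangulations of $\S$ containing $\curve$), and on the other side use the seed obtained by freezing $\curve$ in the seed $(\B^\Delta,\Lambda^\Delta,M^\Delta)$ of $\A_q(\S)$. By Proposition \ref{prop: topfreeze}, the exchange matrix for the first seed is $\pi\B^{\Delta'} = (\pi\B^\Delta)^{(\curve)}$. By the definition of freezing recalled just before Proposition \ref{prop: inclusions}, the exchange submatrix of the second seed is obtained from $\pi\B^\Delta$ by deleting the row and column indexed by $\curve$, i.e.\ it is $(\pi\B^\Delta)^{(\curve)}$ as well. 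Hence the two matrices are equal, so they represent the same exchange type.

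There is essentially no obstacle here; the only thing to be mindful of is that the ``exchange type'' depends only on $\pi\B$ (not on $\B$, $\Lambda$, or $M$), so I do not need to match the full quantum seed data on the two sides---this is fortunate, because $\chi_{\curve}(\S)$ has strictly more boundary arcs (hence more frozen indices, and a larger $\Lambda$-matrix) than the freezing of $(\B^\Delta,\Lambda^\Delta,M^\Delta)$ does, so the full seeds are genuinely different objects even though their exchange submatrices agree. Once this observation is made, the corollary is immediate from Proposition \ref{prop: topfreeze}.
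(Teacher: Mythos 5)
Your proof is correct and is exactly the paper's (implicit) argument: the corollary follows immediately from Proposition \ref{prop: topfreeze} together with the fact that exchange type depends only on $\pi\B$, and your seed-by-seed identification $\pi\B^{\Delta'}=(\pi\B^{\Delta})^{(\curve)}=\pi(\B^{(\curve)})$ is the intended one. Your side remark that the full quantum seeds differ (since $\Delta'$ has one more, frozen, index) matches the paper's own remark following the corollary.
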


\begin{rem}
It is not true that $\A_q(\chi_{\curve}(\S))=\A_q(\S)^{(\curve)}$.  The induced triangulation $\Delta'$ of $\chi_{\curve}(\S)$ has one more element than $\Delta$, and so the cluster algebras in question do not have isomorphic skew-fields of fractions.
\end{rem}

\subsection{Finding relatively prime elements}

We now topologically characterize pairs of cluster variables (ie, simple arcs) which satisfy Lemma \ref{lemma: prime}.

\begin{lemma}\label{lemma: topprime}
Let $\curve[x], \curve[y],\curve[z]$ be non-crossing, simple arcs in $(\S,\M)$ as in Figure \ref{fig: typeA}, with the endpoints of $\curve[y]$ distinct and $\curve[x],\curve[y]$ non-boundary.\footnote{Other pairs of marked points may coincide, and $\curve[x]$ and $\curve[z]$ may coincide.}  Then, for any triangulation $\{\curve[x],\curve[y],\curve[z]\}\subset\Delta$ of $\S$, $\curve[y]$ is a sink of the matrix $(\pi\B^{\Delta})$ with $\B^\Delta_{\curve[y]\curve[x]}>0$.
\end{lemma}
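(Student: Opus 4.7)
The plan is to compute the column of $\pi\B^\Delta=\Q^\Delta|_{\ex\times\ex}$ indexed by $\curve[y]$ directly from the definition of $\Q^\Delta$ in Section~\ref{section: skewmatrix}, using only the local picture supplied by Figure~\ref{fig: typeA}. An entry $\Q^\Delta_{\curve[j],\curve[y]}$ vanishes unless $\curve[j]$ has an end at one of the (distinct) endpoints $v,w$ of $\curve[y]$, and at such a point it receives a $\pm1$ only when the ends of $\curve[j]$ and $\curve[y]$ are immediate cyclic neighbors. Because $\curve[y]$ is a non-boundary arc of $\Delta$, the arcs contributing to the column are the non-$\curve[y]$ edges of the two triangles of $\Delta$ containing $\curve[y]$ (at most four, possibly with coincidences), together with boundary arcs at $v$ or $w$; no other arc of $\Delta$ contributes.

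The first step is to read off the immediate cyclic neighbors of $\curve[y]$ at $v$ and at $w$ from Figure~\ref{fig: typeA}, which places $\curve[x]$ and $\curve[z]$ as designated neighbors. Using the sign rule of $\Q^\Delta$ --- an immediately clockwise end of $\curve[j]$ relative to $\curve[y]$ contributes $-1$ and an immediately counter-clockwise end contributes $+1$ --- the figure is arranged so that every $-1$ contribution in the column is indexed either by a boundary arc (which lies outside $\ex$ and so is absent from $\pi\B^\Delta$) or by an arc receiving a cancelling $+1$ at the other endpoint of $\curve[y]$. Hence all entries of the column of $\pi\B^\Delta$ at $\curve[y]$ are non-negative, which is the sink condition. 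Moreover, the placement of $\curve[x]$ in the figure forces a $+1$ contribution in the entry paired with $\curve[x]$ with no cancelling contribution, so $\B^\Delta_{\curve[y],\curve[x]}$ is non-zero with the asserted sign --- exactly the ingredient required to apply Lemma~\ref{lemma: prime} to the pair $(\curve[y],\curve[x])$.

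The main obstacle is not any individual step but the bookkeeping of the degenerate configurations permitted by the hypotheses: $\curve[x]$ may coincide with $\curve[z]$, and the various arcs and marked points appearing near $\curve[y]$ may collide with each other in several patterns (while the distinctness of the endpoints of $\curve[y]$ is retained). Each degeneration alters which slots in the cyclic order at $v$ and $w$ are occupied by the same arc, so the entries of the column become different sums of the basic $\pm1$ contributions. I would handle all cases uniformly by isolating the only obstruction to the sink condition, namely the presence of a non-boundary arc immediately clockwise to $\curve[y]$ at some endpoint without a matching $+1$ at the other endpoint, and by observing that Figure~\ref{fig: typeA} is designed precisely to rule this out in every degeneration.
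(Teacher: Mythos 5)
Your proposal is correct and is essentially the paper's own argument: the paper likewise reads the $\curve[y]$-column of $\Q^\Delta$ off Figure \ref{fig: typeA}, observing that in any triangulation containing $\curve[x],\curve[y],\curve[z]$ the only arcs immediately clockwise or counterclockwise to $\curve[y]$ are the remaining edges of the two triangles it bounds, so that after deleting the frozen (boundary) indices the $\curve[y]$-column of $\pi\B^\Delta$ has entries of a single sign, with a nonzero entry against $\curve[x]$. The only cosmetic difference is your cancellation clause, which is never actually needed: at both endpoints of $\curve[y]$ the immediate neighbor on the would-be-negative side is one of the drawn boundary segments, hence frozen, which is exactly the paper's one-line observation.
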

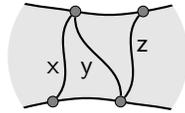
\begin{figure}[h!]
\begin{tikzpicture}[inner sep=0.5mm,auto,scale=.75]
	\useasboundingbox (-2.5,-.8) rectangle (2.5,1);
\begin{scope}[scale=.4]
\begin{scope}
	\begin{scope}
	\clip (0,0) circle (4);
	\draw[fill=black!10,thick] (-5,-3) to [in=190,out=30] (-2,-2) to [in=170,out=10] (1,-2) to [in=150,out=-10] (5,-3) to [line to] (5,3) to [in=10,out=210] (2,2) to [in=-10,out=190] (-1,2) to [in=-30,out=170] (-5,3);
	\node (1) at (1,-2) [marked] {};
	\node (2) at (-1,2) [marked] {};
	\node (3) at (-2,-2) [marked] {};
	\node (4) at (2,2) [marked] {};
	\draw[thick] (1) to [in=150,out=-30,relative] node {$\curve[y]$} (2);
	\draw[thick] (2) to [in=150,out=-30,relative] node[swap] {$\curve[x]$} (3);
	\draw[thick] (1) to [in=150,out=-30,relative] node[swap] {$\curve[z]$} (4);
	\end{scope}
\end{scope}
\end{scope}
\end{tikzpicture}
\caption{A configuration of arcs.}\label{fig: typeA}
\end{figure}

\begin{proof}

In any triangulation $\Delta$ of $(\S,\M)$ which contains $\curve[x],\curve[y]$ and $\curve[z]$, there can be no non-boundary arcs immediate clockwise or counterclockwise to $\curve[y]$ other than $\curve[x]$ and $\curve[z]$.  Therefore, $\Q^\Delta_{\curve}$  At each end of $\curve[y]$, there will be no arcs in $\Delta$ which are counter-clockwise to $\curve[y]$, and so there are no arrows out of $\curve[y]$ in $\Q_\Delta$.
\end{proof}

\subsection{Proving $\A_q=\U_q$ for most marked surfaces}

We are now in a position to prove that $\A_q(\S)=\U_q(\S)$ for a many marked surfaces.

\begin{thm}\label{thm: markedA=U}
If $\A_q$ is a cluster algebra with the same exchange type as $\A_q(\S)$ for $\S$ a triangulable marked surfaces with at least two marked points in each connected component, then $\A_q=\U_q$.
%
\end{thm}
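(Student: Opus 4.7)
The plan is to apply Lemma \ref{lemma: A=Uclass} to the class $\mathcal{P}$ of exchange types arising from the quantum seed $(\B^\Delta,\Lambda^\Delta,M^\Delta)$ of some triangulation $\Delta$ of some (possibly disconnected) triangulable marked surface with at least two marked points in each component. Since $\A_q$ has exchange type in $\mathcal{P}$ by hypothesis, it suffices to verify that $\mathcal{P}$ meets the two conditions of Lemma \ref{lemma: A=Uclass}.

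To verify closure under the freezings appearing in the lemma, I would use Proposition \ref{prop: topfreeze}, which identifies the freezing of $\pi\B^\Delta$ at a non-boundary arc $\curve$ with the exchange type of $\A_q(\chi_\curve(\S'))$. Triangulations of $\S'$ containing $\curve$ correspond bijectively to triangulations of $\chi_\curve(\S')$, so the cut surface is triangulable. Moreover, each endpoint of $\curve$ contributes a new marked point to each side of the cut, so every component of $\chi_\curve(\S')$ retains at least two marked points; hence $\chi_\curve(\S')\in \mathcal{P}$.

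To verify the source/sink condition for a non-isolated $\mathcal{T}\in \mathcal{P}$, I must exhibit some $\S'\in \mathcal{P}$ realizing $\mathcal{T}$, a triangulation $\Delta$ of $\S'$, and arcs $\curve[x],\curve[y],\curve[z]\in \Delta$ in the configuration of Figure \ref{fig: typeA}, with $\curve[x]$ and $\curve[y]$ non-boundary and $\curve[y]$ having distinct endpoints. Lemma \ref{lemma: topprime} will then certify that $\curve[y]$ is a sink in $\pi\B^\Delta$ with $\B^\Delta_{\curve[y]\curve[x]}>0$, and the first step ensures the corresponding freezings at $\curve[y]$ and $\curve[x]$ land in $\mathcal{P}$. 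Since non-isolation forces, by Proposition \ref{prop: topiso}, some component of $\S'$ to be either non-disc or a disc with at least five marked points, I would build the configuration by choosing a marked point $m$ in such a component, picking $\curve[y]$ to be the outermost non-boundary arc at $m$ (i.e.\ bordered on one side at $m$ by a boundary arc of $\S'$), and flipping arcs near the other endpoint of $\curve[y]$ as needed so that the arc immediately counter-clockwise there is also a boundary arc.

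The hard part is this last existence statement. The obstruction is the second endpoint of $\curve[y]$: the local arc structure there may \emph{a priori} force a counter-clockwise non-boundary neighbor, and one needs both that $\curve[y]$ can be chosen with two distinct endpoints (so that the local analyses at its ends are independent) and that any obstructing counter-clockwise arc there can be removed by flipping neighboring arcs into a position that keeps the arc boundary or absent on that side. Both of these rely essentially on the two-marked-points-per-component hypothesis, which gives the topological slack to choose $\curve[y]$'s second endpoint freely and to perform the local flips without disconnecting a marked point. This is precisely where the argument would break for surfaces with a single marked point in some component, consistent with the negative result of Theorem \ref{thm: missing}.
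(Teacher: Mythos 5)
Your global strategy is the same as the paper's: the class $\mathcal{P}$ of exchange types of surfaces with at least two marked points per component, Lemma \ref{lemma: A=Uclass} as the engine, Proposition \ref{prop: topfreeze} together with cutting to show the relevant freezings stay in $\mathcal{P}$, and Lemma \ref{lemma: topprime} to certify the sink. The genuine gap is exactly the step you flag as ``the hard part'': you never establish that a non-isolated type in $\mathcal{P}$ is realized by a triangulation containing the configuration of Figure \ref{fig: typeA}, and the route you sketch --- take the ``outermost'' non-boundary arc $\curve[y]$ at a chosen marked point and then flip arcs near its other endpoint until the counter-clockwise neighbour there is a boundary arc --- is not justified. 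Flips near the second endpoint can disturb the adjacencies at the first endpoint, you give no invariant showing the process terminates in the desired local picture, and ``outermost at $m$'' does not by itself pin down an arc with the properties Lemma \ref{lemma: topprime} needs. As written this is a plan, with the key existence statement left open, rather than a proof.

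What you are missing is that the Figure \ref{fig: typeA} configuration does not have to be manufactured inside a pre-chosen triangulation by flips; it can be built directly from the arc, which is how the paper argues. In a component that is not a disc with $3$ or $4$ marked points, choose any simple non-boundary arc $\curve[y]$ with distinct endpoints (this uses the two-marked-point hypothesis but nothing more). Let $\curve[x]$ be the arc obtained by concatenating $\curve[y]$ with the boundary arc at one endpoint on one side, and $\curve[z]$ the analogous arc past the other endpoint on the other side; by construction $\curve[x],\curve[y],\curve[z]$ are pairwise non-crossing and cut out the two boundary triangles of Figure \ref{fig: typeA}. They cannot both be boundary arcs, since that would force the component to be a disc with four marked points, so after relabelling $\curve[x]$ is non-boundary. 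Now extend $\{\curve[x],\curve[y],\curve[z]\}$ to a triangulation $\Delta$ (Proposition \ref{prop: triang}(2)) and apply Lemma \ref{lemma: topprime}: $\curve[y]$ is a sink of $\pi\B^\Delta$ with $\B^\Delta_{\curve[y]\curve[x]}>0$, and your cutting argument handles the freezings at $\curve[x]$ and $\curve[y]$. With this substitution for your flip argument, the rest of your proposal goes through and coincides with the paper's proof.
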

\begin{proof}
Let $\mathcal{P}$ be the set of exchange types comings from such marked surfaces.
We show $\mathcal{P}$ satisfies the hypothesis of Lemma \ref{lemma: A=Uclass}.
Let $\mathcal{T}$ be an exchange type in $\mathcal{P}$, and let $\S$ be such that $\A_q(\S)$ has exchange type $\mathcal{T}$.
If every connected component of $\S$ is a disc $3$ or $4$ marked points, then $\mathcal{T}$ is isolated (Proposition \ref{prop: topiso}).

Otherwise, choose a connected component $\S_0$ of $\S$ which is not a disc with $3$ or $4$ marked points.  
Choose a simple non-bounadry arc $\curve[y]$ with distinct endpoints (by hypothesis, $\S_0$ has at least two marked points).
There exists non-crossing simple arcs $\curve[x]$ and $\curve[z]$ (which may coincide) so that $\curve[x],\curve[y],\curve[z]$ are as in Figure \ref{fig: typeA}.
The curves $\curve[x]$ and $\curve[z]$ cannot both be boundary arcs, since that would force $\S_0$ to be a disc with $4$ marked points.  Assume $\curve$ is a non-boundary arc (the other case is identical).

Choose a triangulation $\Delta$ containing $\curve[x],\curve[y],\curve[z]$.  By Proposition \ref{prop: topfreeze}, the freezing $(\pi\B^{\Delta})^{(\curve)}=(\pi\B^{\Delta'})$
where $\Delta'$ is the induced triangulation on the cutting $\chi_{\curve}(\S)$.  Since the cutting $\chi_{\curve}(\S)$ is still a marked surface with at least two marked points in each connected component, the exchange type of $(\pi\B^{\Delta})^{(\curve)}$ is in $\mathcal{P}$.  By an identical argument, the exchange type of $(\pi\B^{\Delta})^{(\curve[y])}$ is in $\mathcal{P}$.

Then $\pi\B^\Delta\in \mathcal{T}$ is non-isolated, with indices $\curve[x],\curve[y]$ such that...
\begin{enumerate}
\item $(\pi\B^\Delta)_{\curve[x],\curve[y]}>0$ and $\curve[y]$ is a sink of $\pi\B^\Delta$ (by Lemma \ref{lemma: topprime}), and
\item $(\pi\B^\Delta)^{(\curve)}$ and $(\pi\B^\Delta)^{(\curve)}$ have exchange type in $\mathcal{P}$.
\end{enumerate}
Thus, $\mathcal{P}$ satisfies Lemma \ref{lemma: A=Uclass}.
%
%
%
%
\end{proof}

The localized skein algebra $\Sk_q^o(\S)$ is between $\A_q(\S)$ and $\U_q(\S)$, so they coincide.
\begin{thm}\label{thm: main2}
If $\S$ is triangulable and has at least two marked points in each connected component, then
\[\A_q(\S)=\Sk_q^o(\S)=\U_q(\S)\]
\end{thm}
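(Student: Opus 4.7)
The plan is to combine the two preceding results: the chain of inclusions
\[\A_q(\S)\subseteq \Sk_q^o(\S)\subseteq \U_q(\S)\]
from Theorem \ref{thm: main1}, and the equality $\A_q=\U_q$ for the relevant exchange type from Theorem \ref{thm: markedA=U}. Once these are in hand, the assertion $\A_q(\S)=\Sk_q^o(\S)=\U_q(\S)$ is forced: the outer terms of a chain of three subalgebras of $\mathcal{F}$ coincide, so the middle term must coincide with both.

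Concretely, I would first invoke Theorem \ref{thm: main1} to record the two inclusions $\A_q(\S)\subseteq \Sk_q^o(\S)$ and $\Sk_q^o(\S)\subseteq \U_q(\S)$. Then I would observe that $\A_q(\S)$ is by definition the quantum cluster algebra of the seed $(\B^\Delta,\Lambda^\Delta,M^\Delta)$ attached to any triangulation $\Delta$ of $\S$, and that $\U_q(\S)$ is the associated quantum upper cluster algebra. Since $\S$ is triangulable and has at least two marked points in each component, the exchange type of this seed lies in the class covered by Theorem \ref{thm: markedA=U}, which yields $\A_q(\S)=\U_q(\S)$. Squeezing $\Sk_q^o(\S)$ between two equal algebras completes the argument.

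There is essentially no obstacle at this stage — all of the genuine difficulty has been absorbed into the preceding two results. The serious work was, on the one hand, checking in Theorem \ref{thm: main1} that skein elements can be expanded as skew-Laurent polynomials in every triangulation (via Corollary \ref{coro: Laurent} and Theorem \ref{thm: Laurent}), and on the other hand, verifying in Theorem \ref{thm: markedA=U} that the class of exchange types coming from marked surfaces with at least two marked points per component satisfies the hypotheses of Lemma \ref{lemma: A=Uclass}. That verification combined the topological recognition of isolated types (discs with three or four marked points, Proposition \ref{prop: topiso}), the identification of freezing with cutting along an arc (Proposition \ref{prop: topfreeze}), and the production of a sink-configuration of three non-crossing arcs (Lemma \ref{lemma: topprime}). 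Thus the present theorem is the clean payoff of those inputs, and I would present its proof as a one-line synthesis.
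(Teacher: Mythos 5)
Your proposal matches the paper's proof exactly: the paper also derives Theorem \ref{thm: main2} as an immediate consequence of the inclusions in Theorem \ref{thm: main1} and the equality $\A_q=\U_q$ from Theorem \ref{thm: markedA=U}, squeezing $\Sk_q^o(\S)$ between the two. Your synthesis and attribution of where the real work lies are both accurate.
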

\begin{proof}
This is an immediate consequence of Theorems \ref{thm: main1} and \ref{thm: markedA=U}.
\end{proof}

\begin{rem}
One immediate advantage of this theorem is computational.  Computations in cluster algebras can be quite difficult, for several reasons.  Working with expressions in $\A_q$ in different seeds requires choosing an explicit sequence of mutations relating the seeds, and the complexity grows rapidly with the number of mutations.  The upper cluster algebra $\U_q$ does not come with a generating set, and so working with general elements can be daunting.

The localized skein algebra is much easier to work with.  Elements are expressed in terms of topological objects which fit on a piece of paper.  The skein relations are local, and links may be freely homotoped; both of which keep complexity low.
\end{rem}

%
%
%
%
%
%
%
%

\section{Loop elements}

\subsection{Loop elements in $\A_q(\S)$}

By definition, the subalgebra $\A_q(\S)\subset\Sk_q^o(\S)$ contains arcs and inverses to boundary arcs.  
Therefore, the equality $\A_q(\S)=\Sk_q^o(\S)$ in Theorem \ref{thm: main2} is equivalent to the following proposition.
\begin{prop}\label{prop: loop}
Let $\S$ be a triangulable marked surface with at least two marked points in each connected component.  For each simple loop $\ell\in \S$,
\[ [\ell] = [\link[Y]]^{-1}\sum_i\lambda_i[\curve_{i,1}][\curve_{i,2}]...[\curve_{i,n_i}]\]
where $\link[Y]$ is a link of boundary arcs, each $\curve_{i,j}$ is an arc, and $\lambda_i\in \Zq$.
\end{prop}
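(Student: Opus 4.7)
The plan is to derive this proposition as an immediate consequence of Theorem \ref{thm: main2}, together with the general structure of quantum cluster algebras. Since $\S$ is triangulable and has at least two marked points in each component, Theorem \ref{thm: main2} gives $\Sk_q^o(\S) = \A_q(\S)$. In particular, for any simple loop $\ell\in\S$, the element $[\ell]\in\Sk_q^o(\S)$ lies in $\A_q(\S)$, so it suffices to extract a normal form for this element from the cluster algebra side.

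The next step is to apply Proposition \ref{prop: frozenOre}, which states that every element of a quantum cluster algebra can be written as $a^{-1}b$, where $a$ is a product of frozen variables and $b$ is a polynomial in cluster variables. Under the identification in Remark \ref{rem: main1rem}, the frozen variables of $\A_q(\S)$ are exactly the boundary arcs and the cluster variables are exactly the arcs of $\S$. Hence there exist boundary arcs $\curve[b]_1,\dots,\curve[b]_k$ and a $\Zq$-linear combination $\sum_i\lambda_i[\curve_{i,1}]\cdots[\curve_{i,n_i}]$ of products of arcs such that
\[ [\ell] = \bigl([\curve[b]_1][\curve[b]_2]\cdots[\curve[b]_k]\bigr)^{-1}\sum_i\lambda_i[\curve_{i,1}][\curve_{i,2}]\cdots[\curve_{i,n_i}]. \]

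Finally, I would repackage the left-hand factor as the inverse of a single boundary link. Because any pair of boundary arcs quasi-commutes (the arcs can be isotoped to be disjoint on the boundary), Proposition \ref{prop: monomialmulti} applied to the boundary arcs gives $[\curve[b]_1]\cdots[\curve[b]_k] = q^{\mu/2}[\link[Y]]$ for a single link $\link[Y]$ whose underlying multicurve is $\curve[b]_1\cup\cdots\cup\curve[b]_k$ and some $\mu\in\Z$. Absorbing the scalar $q^{-\mu/2}$ into the coefficients $\lambda_i$ produces the desired expression. There is essentially no obstacle to overcome here: all the real work was done in establishing Theorem \ref{thm: main2}, and the present proposition is just the concrete topological reformulation of the containment $\Sk_q^o(\S)\subseteq\A_q(\S)$ on the single class of generators of $\Sk_q^o(\S)$ that is not manifestly in $\A_q(\S)$, namely the loops.
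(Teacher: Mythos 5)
Your proposal is correct and follows essentially the same route as the paper: the paper's proof likewise deduces the statement from Theorem \ref{thm: main2} together with the fact that any element of $\A_q(\S)$ is a product of inverted frozen variables (boundary arcs) times a polynomial in cluster variables (arcs), i.e.\ Proposition \ref{prop: frozenOre}. Your write-up is just a more explicit version of the paper's two-line argument, including the harmless repackaging of the product of boundary arcs as a single link class.
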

\begin{proof}
Cluster variables in $\A_q(\S)$ correspond to arcs, and so products of cluster variables correspond to general links.  Frozen variables correspond to boundary arcs, and so a general element of $\A_q(\S)$ can be written in the above form.  By Theorem \ref{thm: main2}, this is equally true of all elements of $\Sk_q^o(\S)$.
\end{proof}
\noindent 
These expressions are distinct from the skew-Laurent expressions from Corollary \ref{coro: Laurent}; the arcs $\curve_{i,j}$ are allowed to cross each other, but there are no negative powers of non-boundary arcs.

Finding such an expression for a simple loop is typically very different from writing it as a skew-Laurent polynomial of the arcs in a triangulation.  First, while every curve has a unique expression as a skew-Laurent polynomial in the arcs of a triangulation, there will be many ways to write a simple loop as a polynomial in arcs divided by a monomial in boundary arcs.  Second, the author does not know of any direct algorithm to produce any such expression, analogous to Remark \ref{rem: Laurent} or the \emph{band graph} techniques found in \cite{MSW11}.


From a cluster algebraic perspective, these loop elements are compelling.  They are not an ingredient in the cluster structure on $\A_q(\S)$, but they are a useful tool in computations.  For example, a product of two simple arcs can have many crossings, and applying the Kauffman skein relation to each crossing may produce loops.

%


\subsection{The $\Zq$-basis of weighted simple multicurves}

The localized skein algebra has a natural $\Z_q$-basis, given by the set $\Multi^o$ of weighted simple multicurves with positive weights on non-boundary curves (Proposition \ref{prop: basiso}).   Theorem \ref{thm: main2} implies this is also a basis for $\A_q(\S)$ and $\U_q(\S)$.
\begin{prop}
Let $\S$ be a triangulable marked surface with at least two marked points in each connected component.  Then $\Multi^o$ maps to a $\Zq$-basis of $\A_q(\S)$ and $\U_q(\S)$ under the map $\multi\rightarrow [\multi]$.
\end{prop}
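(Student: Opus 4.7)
The plan is essentially to chain together two results that have already been established in the paper. By Theorem \ref{thm: main2}, the hypothesis on $\S$ (triangulable, at least two marked points in each component) gives the chain of equalities
\[ \A_q(\S) = \Sk_q^o(\S) = \U_q(\S) \]
as subalgebras of the skew-field $\mathcal{F}$. Meanwhile, Proposition \ref{prop: basiso} already asserts that the map $\multi \mapsto [\multi]$ sends $\Multi^o$ to a $\Zq$-basis of $\Sk_q^o(\S)$.

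So the proposal is simply: invoke Theorem \ref{thm: main2} to replace $\A_q(\S)$ and $\U_q(\S)$ by $\Sk_q^o(\S)$ on the left-hand sides, and then invoke Proposition \ref{prop: basiso} to conclude that $\Multi^o$ maps to a $\Zq$-basis. Since $\A_q(\S)$, $\Sk_q^o(\S)$ and $\U_q(\S)$ coincide as subsets of $\mathcal{F}$ (not merely as abstract algebras), the images of the elements $[\multi]$ are literally the same, and a $\Zq$-basis for one is a $\Zq$-basis for all three.

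There is no real obstacle here. The only thing worth remarking on is that the statement is a genuine consequence of $\A_q(\S) = \U_q(\S)$ and not merely of the inclusion $\A_q(\S) \subseteq \Sk_q^o(\S) \subseteq \U_q(\S)$ from Theorem \ref{thm: main1}: without Theorem \ref{thm: main2}, the basis of $\Sk_q^o(\S)$ would only be known \emph{a priori} to be a spanning set for $\U_q(\S)$ and to contain $\A_q(\S)$. Thus the proof should be written as a single short paragraph citing the two results.
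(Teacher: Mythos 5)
Your proposal is correct and is exactly the paper's argument: the paper deduces this proposition from Proposition \ref{prop: basiso} (the $\Multi^o$-basis of $\Sk_q^o(\S)$) combined with the equalities $\A_q(\S)=\Sk_q^o(\S)=\U_q(\S)$ of Theorem \ref{thm: main2}, viewed inside $\mathcal{F}$. Nothing further is needed.
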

The problem of finding natural bases for cluster algebras goes back to the origins of their study.  Commutative cluster algebras were discovered in the study of Lusztig's dual canonical basis for $\mathbb{C}[G]$ of a reductive group, as a conjectural method of explicitly producing classical limits of elements of the dual canonical basis \cite[Introduction]{FZ02}.  A good history and bibliography of recent work on bases of cluster algebras can be found in the Introduction to \cite{MSW11}.


Some of this basis comes directly from the cluster structure.  If $\multi$ is a weighted simple multicurve without loops, then there is some triangulation $\Delta$ which contains every arc in $\multi$.  Then $\multi$ is a monomial in $\Delta$; in the language of cluster algebras, this is called a \emph{cluster monomial}\footnote{Some references regard the Laurent ring of frozen variables as coefficients, rather than as cluster variables (as we are).  In the former case, a cluster monomial would be a weighted simple multicurve without loops or boundary arcs, but the coefficient ring would be much larger.} in the seed corresponding to $\Delta$.


The remaining basis elements contain loop elements.  As has been mentioned, loop elements are difficult to express as explicit elements of $\A_q(\S)$, and so these basis elements of $\A_q(\S)$ do not follow naively from the cluster structure.

\begin{rem}
In the specialization $\rq=1$, this basis automatically goes to a $\Z$-basis of the commutative cluster algebras $\A_1(\S)$ and $\U_1(\S)$.  However, this basis is \emph{not} a `canonically positive' (or `atomic') basis.  That is, an element $x\in \A_1(\S)$ can have a positive Laurent expression for each seed $\Delta$, without being a positive combination of the basis elements $\Multi^o$.

An alternative basis for $\A_1(\S)$ which may be canonically positive has been put forward in \cite{FG06}, \cite{Dup10} and \cite{MSW11}.  This basis is related to the basis of weighted simple multicurves, by replacing simple loops with multiplicity by a single loop with self-crossings.  Proofs of canonically positivity for some $S$ can be found in \cite{SZ04} and \cite{DT11}.
%
\end{rem}


%
%
%
%
%
%
%
%
%
%

\section{The commutative specialization $q^{\frac{1}{2}}=1$}\label{section: LA}

In the specialization $\rq=1$, Theorem \ref{thm: main2} becomes equalities
\[ \A_1(\S)=\Sk_1^o(\S)=\U_1(\S)\]
This endows $\Sk_1^o(\S)$ with the structure of a commutative cluster algebra.


%
%


\subsection{Geometry of commutative cluster algebras}

The equality $\A_1(\S)=\U_1(\S)$ was already shown in a previous work by the author \cite[Theorem 10.6]{MulLA}, using the idea of `\emph{local acyclicity}'.  This is a geometric notion which does not directly generalize to the quantum setting.\footnote{However, the techniques of Section \ref{section: A=U} are based on this geometric approach (Remark \ref{rem: A=Ucomm}).}

Given a cluster algebra $\A_q$, the specialization $\A_1$ is commutative and so it can be studied geometrically, by considering the scheme $Spec(\A_1)$.  If $\A_q^{(s)}$ is a cluster localization of $\A_q$, then $\A_1^{(s)}$ is localization of $\A_1$, and so
\[ Spec\left(\A_1^{(s)}\right)\subseteq Spec(\A_1)\]
is an open subscheme.

A collection $\{\A_1^{(s_i)}\}$ of cluster localizations  of $\A_1$ is a \emph{cover} if the corresponding open subschemes cover $Spec(\A_1)$.  If $\{\A_1^{(s_i)}\}$ is a cover of $\A_1$, then
\[ \A_1 = \bigcap_i \A_1^{(s_i)}\]
though the converse is not true in general.


\subsection{Local acyclicity}


Recall that an exchange type $\mathcal{T}$ is \emph{acyclic} if there is a skew-symmetric matrix $\mathsf{A}\in \mathcal{T}$ with no cycles.\footnote{A cycle is a list of indices $i_1,i_2,...,i_{n-1},i_n=i_1\in \ex$ such that $\B_{i_ji_{j+1}}>0$ for all $j$.}  If $\A$ has acyclic exchange type, then $\A=\U$ (\cite[Corollary 1.19]{BFZ05} or Proposition \ref{prop: acyclic} and Remark \ref{rem: A=Ucomm}).

This can be generalized, by checking acyclicity locally.
\begin{defn}\cite[Definition 3.9]{MulLA}
A commutative cluster algebra $\A$ is \textbf{locally acyclic} if it has a cover $\{\A^{(s_i)}\}$ by acyclic cluster localizations.
\end{defn}

%
%
%
%


Marked surfaces $\S$ such that $\A_1(\S)$ is locally acyclic have been characterized.
%
\begin{thm}\cite[Theorems 10.6, 10.10]{MulLA}
The cluster algebra $\A_1(\S)$ is locally acyclic if and only if $\S$ has at least two marked points in each connected component of $\S$.
\end{thm}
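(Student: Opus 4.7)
The plan is to prove both implications separately, with the harder work on the only-if direction.

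For the \textbf{if direction} (two marked points per component implies local acyclicity), I would mirror the inductive argument of Theorem \ref{thm: markedA=U} in the commutative setting, where local acyclicity is the natural geometric incarnation. Induct on the number of exchangeable arcs in a triangulation $\Delta$ of $\S$. The base case is when every component of $\S$ is a disc with $3$ or $4$ marked points: by Proposition \ref{prop: topiso}, $\A_1(\S)$ has isolated exchange type, so it is acyclic and the trivial cover $\{\A_1(\S)\}$ witnesses local acyclicity. For the inductive step, pick a component $\S_0$ that is not such a disc; the two-marked-point hypothesis lets me find arcs $\curve[x], \curve[y], \curve[z]$ arranged as in Figure \ref{fig: typeA} with both $\curve[x]$ and $\curve[y]$ non-boundary. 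By Lemma \ref{lemma: topprime}, $\curve[y]$ is a sink of $\pi\B^\Delta$ with $\B^\Delta_{\curve[y], \curve[x]} > 0$, so by the commutative analogue of Lemma \ref{lemma: prime} the cluster variables $[\curve[x]]$ and $[\curve[y]]$ generate the unit ideal of $\A_1(\S)$. Hence the freezings $\A_1(\S)^{(\curve[x])}$ and $\A_1(\S)^{(\curve[y])}$ are cluster localizations whose spectra cover $\mathrm{Spec}(\A_1(\S))$. By Proposition \ref{prop: topfreeze}, each freezing has the same exchange type as $\A_1$ of a cutting $\chi_{\curve[x]}(\S)$ or $\chi_{\curve[y]}(\S)$, each still satisfying the inductive hypothesis; inductively each has an acyclic cover, and refining these yields an acyclic cover of $\A_1(\S)$.

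For the \textbf{only-if direction}, I would argue contrapositively. Local acyclicity forces $\A_1(\S) = \U_1(\S)$ (the commutative counterpart of Lemma \ref{lemma: A=Uclass}: inside a cover by acyclic cluster localizations, $\A = \U$ is known, and both $\A$ and $\U$ are determined by intersection with the covering quantum tori). So it suffices, whenever some component $\S_0$ has a single marked point $m$, to exhibit an element of $\U_1(\S) \setminus \A_1(\S)$. Any essential simple loop $\ell \subset \S_0$ yields $[\ell] \in \Sk_1^o(\S) \subseteq \U_1(\S)$ with trivial $\E$-degree. The obstruction is that every simple arc meeting $\S_0$ must have \emph{both} endpoints at $m$, contributing $+2$ to the $m$-coordinate of the endpoint grading, while boundary-arc inverses contribute $-2$; a careful bookkeeping shows that no polynomial combination of arcs and inverse boundary arcs can produce an element of $\S_0$-degree zero which topologically represents a nontrivial loop. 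This is the content of Theorem \ref{thm: missing}, and it contradicts $\A_1(\S) = \U_1(\S)$.

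\textbf{The main obstacle} I expect is the only-if direction. The if direction is routine once the inductive apparatus of Section \ref{section: A=U} is in place and its commutative refinement (the Banff algorithm of \cite{MLA} alluded to in Remark \ref{rem: A=Ucomm}) is invoked; Lemma \ref{lemma: topprime} does the combinatorial heavy lifting. But the contrapositive requires a concrete obstruction showing $[\ell] \notin \A_1(\S)$, and while the endpoint $\E$-grading points in the right direction, ruling out subtle cancellations — one can in principle multiply and divide by boundary arcs to shift $\E$-degree — is delicate. That analysis is naturally extracted as Theorem \ref{thm: missing} and treated on its own terms.
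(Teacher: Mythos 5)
Note first that the paper does not prove this statement: it is imported wholesale from \cite[Theorems 10.6, 10.10]{MLA}, so the only in-paper material to measure your sketch against is the quantum analogue of the ``if'' direction (Lemma \ref{lemma: A=Uclass}, Theorem \ref{thm: markedA=U}) and the negative results of Section 13 (Theorem \ref{thm: missing}, Corollary \ref{coro: notfingen}). Your ``if'' direction is essentially the Banff-algorithm argument of \cite{MLA}, i.e.\ the commutative shadow of Theorem \ref{thm: markedA=U}, and its outline is sound, with two points needing care: identifying the freezing $\A_1(\S)^{(\curve)}$ with the localization of $\A_1(\S)$ at $[\curve]$ is not automatic --- a priori one only has the chain of Proposition \ref{prop: inclusions}, and collapsing it is exactly where the inductive hypothesis (or \cite[Lemma 5.3]{MLA}) must be invoked, so the order of your steps should be rearranged; and the freezing is \emph{not} the cluster algebra of the cutting (the paper warns of this explicitly), only their exchange types agree, so the induction has to be run on exchange types, using that the local-acyclicity criterion depends only on the exchange type.

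The genuine gap is in the ``only if'' direction. The obstruction you propose --- endpoint-degree bookkeeping showing that no combination of arcs and inverse boundary arcs of $\S_0$-degree zero can represent a loop --- cannot work as stated: once boundary arcs are inverted, $\A_1(\S)$ contains many $\E$-degree-zero elements even when a component has a single marked point (every arc, including the boundary arc $\curve[b]$, has degree $2$ at the unique marked point, so e.g.\ $[\curve][\curve[b]]^{-1}$ has degree zero), and the annulus computation in Section 13.2 shows a loop \emph{can} equal such a combination when there are two marked points. So the grading detects nothing by itself, and nothing in your sketch uses the one-marked-point hypothesis in a way that rules out such cancellations. Moreover this is not ``the content of Theorem \ref{thm: missing}'': that theorem is proved by showing the ideal $P$ of mutable variables in the coefficient-free specialization satisfies $P^2=P$, whence by Nakayama $\A_1(\S_g)$ is not finitely generated (Corollary \ref{coro: notfingen}), while $\Sk_1^o(\S_g)$ is (Theorem \ref{thm: fingen}). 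If you quote those results, the contrapositive does close, and more cheaply than via $\A=\U$: local acyclicity implies finite generation (\cite[Theorem 4.2]{MLA}, quoted in Section 12), directly contradicting Corollary \ref{coro: notfingen}; alternatively the finite-generation comparison gives $\A_1(\S)\subsetneq\Sk_1^o(\S)\subseteq\U_1(\S)$, contradicting $\A_1=\U_1$. Either way the substance of the ``only if'' half is the non-finite-generation argument, not the $\E$-grading.
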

\begin{rem}
Marked surfaces in \cite{MulLA} are allowed to have interior marked points, so the statements there are more general.
\end{rem}

\subsection{Consequences}

Local acyclicity has several consequences.
\begin{prop} Let $\A$ be a locally acyclic commutative cluster algebra. Then
\begin{enumerate}
\item \cite[Theorem 4.1]{MulLA} $\A=\U$,
\item \cite[Theorem 4.2]{MulLA} $\A$ is finitely generated, integrally closed and locally a complete intersection, and
\item \cite[Theorem 7.7]{MulLA} $\mathbb{Q}\otimes \A$ is a regular domain.
\end{enumerate}
\end{prop}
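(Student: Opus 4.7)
The plan is to prove all three properties by reducing to the corresponding facts for acyclic cluster algebras and then descending them through the cover of $\mathrm{Spec}(\A)$ guaranteed by local acyclicity. Fix throughout a finite cover $\{\A^{(s_i)}\}_{i\in I}$ of $\A$ by acyclic cluster localizations, and identify the open subschemes $U_i := \mathrm{Spec}(\A^{(s_i)}) \subseteq \mathrm{Spec}(\A)$, whose union is $\mathrm{Spec}(\A)$.

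For part (1), $\A = \U$, I would use the commutative analogue of Proposition \ref{prop: inclusions}, which gives $\A \subseteq \U \subseteq \U^{(s_i)}$ for each $i$. By the acyclic case (Proposition \ref{prop: acyclic}, applied over $\Z$) each $\U^{(s_i)} = \A^{(s_i)}$, hence $\U$ is contained in every $\A^{(s_i)}$. The cover property then enters: since the $U_i$ cover $\mathrm{Spec}(\A)$, descent for the structure sheaf of the affine scheme $\mathrm{Spec}(\A)$ gives $\A = \bigcap_i \A^{(s_i)}$ inside the common field of fractions, forcing $\U \subseteq \A$ and hence equality.

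For part (2), the properties ``integrally closed'' and ``locally complete intersection'' are Zariski-local on $\mathrm{Spec}(\A)$, so it suffices to verify them for each $\A^{(s_i)}$, where they follow from the standard presentation of an acyclic cluster algebra by its initial exchange relations. Finite generation is the subtle point, as it is not a Zariski-local property. I would argue it as follows: each inclusion $\A \hookrightarrow \A^{(s_i)}$ is a localization by a single element $f_i$ (the product of the cluster variables indexed by $s_i$), and each $\A^{(s_i)}$ is finitely generated as a $\Z$-algebra. Since the $U_i$ cover $\mathrm{Spec}(\A)$, the elements $\{f_i\}$ generate the unit ideal of $\A$; writing $1 = \sum_i a_i f_i^{N}$ for sufficiently large $N$ lets one glue the finite generating sets of the $\A^{(s_i)}$ into a finite generating set of $\A$ by clearing denominators, a standard partition-of-unity argument on an affine scheme.

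For part (3), regularity is again Zariski-local on $\mathrm{Spec}(\mathbb{Q}\otimes\A)$, so it suffices to check that $\mathbb{Q}\otimes \A^{(s_i)}$ is regular for each $i$. This is where the rational coefficients are needed: over $\mathbb{Q}$, an acyclic cluster algebra admits a presentation via the initial exchange relations whose Jacobian has full rank at every closed point, so the Jacobian criterion yields regularity. The main obstacle is the finite generation assertion in (2), which cannot be checked locally on $\mathrm{Spec}(\A)$ and therefore requires a genuinely global argument using both the finiteness of the cover and the explicit form of the localizing elements; the other properties descend essentially formally from their acyclic counterparts once the cover is in hand.
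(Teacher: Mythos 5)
You should first note that the paper does not actually prove this proposition: all three parts are imported wholesale from \cite{MLA} (Theorems 4.1, 4.2 and 7.7), so what you are really doing is reconstructing the proofs of that earlier paper. For parts (1) and (2) your route is essentially the one taken there and is sound: pass to a finite cover by acyclic cluster localizations $\A^{(s_i)}=\A[f_i^{-1}]$, use the acyclic case of $\A=\U$ as the base input, use that a cover forces $\A=\bigcap_i\A^{(s_i)}$ inside the fraction field, check normality and the complete intersection property locally, and glue finite generation by the standard argument using that the $f_i$ generate the unit ideal. One correction of attribution: integral closedness does not follow ``from the standard presentation'' of an acyclic cluster algebra; a complete intersection need not be normal. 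In the acyclic case normality is deduced from $\A=\U$ together with the fact that $\U$ is an intersection of Laurent polynomial rings (normal domains with a common fraction field), and that is the argument you should quote.

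Part (3) has a genuine gap: the key assertion that, over $\mathbb{Q}$, an acyclic cluster algebra is presented by its initial exchange relations with a Jacobian of full rank at every closed point is false, and it is exactly where the content of the cited theorem lies. Take the coefficient-free acyclic seed of type $A_3$ with quiver $1\to 3\leftarrow 2$. By \cite{BFZ05} an acyclic cluster algebra equals its lower bound, and a dimension count (the displayed complete intersection is a three-dimensional domain surjecting onto the three-dimensional cluster algebra) gives
\[ \mathbb{Q}\otimes\A \;\cong\; \mathbb{Q}[x_1,x_1',x_2,x_2',x_3,x_3']\,/\,\bigl(x_1x_1'-x_3-1,\;\; x_2x_2'-x_3-1,\;\; x_3x_3'-x_1x_2-1\bigr).\]
At the point $x_1=x_1'=x_2=x_2'=0$, $x_3=x_3'=-1$ the Jacobian of these three exchange relations has rank $2$: after eliminating $x_3+1$ and $x_3'+1$ locally, the ring is the hypersurface $x_1x_1'=x_2x_2'$, a three-dimensional ordinary double point, hence not regular. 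So the Jacobian-criterion route cannot prove (3), and regularity is not a formal consequence of local acyclicity plus characteristic zero in the naive way you suggest; whatever the precise hypotheses and argument behind \cite[Theorem 7.7]{MLA}, they go beyond this. (In the surface application the extended exchange matrix has full rank, forced by the existence of the compatible form $\Lambda^\Delta$, and it is exactly this kind of rank condition that rules out the degeneration above.) Parts (1) and (2) of your proposal stand; part (3) needs a genuinely different argument.
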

These results can then be applied to commutative cluster algebras of marked surfaces, and the $\rq=1$ localized skein algebra.
\begin{coro}Let $\S$ be a triangulable marked surface with at least two marked points in each connected component.
\begin{enumerate}
\item $\A_1(\S) = \Sk_1^o(\S)=\U_1(\S)$,\footnote{This is to say; locally acyclic provides an alternative (though fundamentally the same) proof.}
\item $\Sk_1^o$ is finitely generated, integrally closed, and locally a complete intersection, and
\item $\mathbb{Q}\otimes\Sk_1^o(\S)$ is a regular domain.
\end{enumerate}
\end{coro}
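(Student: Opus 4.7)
The plan is to deduce all three claims by combining Theorem \ref{thm: main2} (at the specialization $\rq = 1$) with the characterization of local acyclicity for $\A_1(\S)$ just cited, and then invoking the preceding proposition on consequences of local acyclicity.

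First I would handle (1). The specialization functor $-\otimes_{\Zq}\Zq/\langle \rq-1\rangle$ applied to the chain of equalities $\A_q(\S)=\Sk_q^o(\S)=\U_q(\S)$ from Theorem \ref{thm: main2} produces $\A_1(\S)=\Sk_1^o(\S)=\U_1(\S)$, by the very definitions $\A_1:=\A_q/\langle \rq-1\rangle$ and $\U_1:=\U_q/\langle\rq-1\rangle$, together with the analogous definition for $\Sk_1^o(\S)$. The main thing to check here is that specialization behaves well --- in particular, that the bases of weighted simple multicurves (Proposition \ref{prop: basiso}) remain $\Z$-bases after base change, so none of the three algebras collapses under $\rq\mapsto 1$. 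This is immediate since each is a free $\Zq$-module.

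Next, I would combine the cited theorem --- that $\A_1(\S)$ is locally acyclic exactly when each component of $\S$ has at least two marked points --- with the proposition that locally acyclic commutative cluster algebras are (a) equal to their upper cluster algebras, (b) finitely generated, integrally closed, and locally a complete intersection, and (c) become regular domains after tensoring with $\mathbb{Q}$. Under the hypothesis on $\S$, $\A_1(\S)$ is locally acyclic, so all three bullets of the preceding proposition apply to $\A_1(\S)$. Using part (1) to rewrite $\A_1(\S)$ as $\Sk_1^o(\S)$ yields the statements in (2) and (3) for the localized skein algebra directly.

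I do not expect a genuine obstacle here: once Theorem \ref{thm: main2} is in hand and once the local acyclicity theorem is invoked, the corollary is a formal transfer of the general properties of locally acyclic cluster algebras across the identification $\A_1(\S)=\Sk_1^o(\S)$. The only mildly subtle point to address is the parenthetical footnote in (1): although local acyclicity gives an independent proof that $\A_1(\S)=\U_1(\S)$, one should remark that this proof is essentially the specialization of the one given in Section \ref{section: A=U}, since Lemma \ref{lemma: A=Uclass} is the quantum analog of the Banff algorithm (compare Remark \ref{rem: A=Ucomm}).
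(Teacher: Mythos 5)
Your proposal is correct and follows the same route as the paper: the corollary is stated there as an immediate transfer of the locally acyclic consequences (together with the $q=1$ specialization of Theorem \ref{thm: main2}, i.e.\ $\A_1(\S)=\Sk_1^o(\S)=\U_1(\S)$) across the identification of $\A_1(\S)$ with $\Sk_1^o(\S)$, exactly as you do. Your added remark that specialization is harmless because the algebras are free $\Zq$-modules is a reasonable (and correct) elaboration of a point the paper leaves implicit.
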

As a consequence of the last fact (see \cite[Corollary 7.9]{MulLA}),
\begin{itemize}
\item $Spec(\mathbb{Q}\otimes\Sk_1^o(\S))$ is a smooth scheme,
\item $Hom(\Sk_1^o(\S),\mathbb{C})$ is a smooth complex manifold, and
\item $Hom(\Sk_1^o(\S),\mathbb{R})$ is a smooth real manifold.
\end{itemize}
Here, both $Hom$s are as rings.

\begin{rem}There is an open inclusion
\[Spec(\Sk_1^o(\S))\subset Spec(\Sk_1(\S))\]  By analogy with the cluster structure on double Bruhat cells in semisimple Lie groups, it seems possible that $Spec(\Sk_1^o(\S))$ is the `big cell' in some natural stratification of $Spec(\Sk_1(\S))$.  Ideally, this is a finite stratification by smooth affine schemes, whose coordinate rings are commutative cluster algebras.
%
%
\end{rem}

\section{Examples and non-examples}

\subsection{Marked discs}\label{section: disc}

Let $\S_n$ be the disc with $n$ marked points on the boundary.  A simple curve in $\S_n$ will always be homotopic to a chord $\curve_{a,b}$ connecting distinct marked points $a,b$, and so $\Sk_q(\S)$ is generated by the $\binom{n}{2}$-elements of the form $[\curve_{a,b}]$ (Corollary \ref{coro: gens}).  The relations are
\[ [\curve_{a,b}][\curve_{b,c}] = q[\curve_{b,c}][\curve_{a,b}],\;\;\; [\curve_{a,b}][\curve_{c,d}] = [\curve_{c,d}][\curve_{a,b}]\]
\[ [\curve_{a,c}][\curve_{b,d}] = q[\curve_{a,b}][\curve_{c,d}] +q^{-1} [\curve_{a,d}][\curve_{b,c}]\]
as $a,b,c,d$ run over distinct marked points in clockwise order around $\partial \S_n$.  The boundary arcs are the elements $[\curve_{a,b}]$ for $a,b$ adjacent on the boundary, and $\Sk_q^o(\S_n)$ is the Ore localization at this set.

The surface is triangulable when $n\geq3$, and so $\A_q(\S_n)=\Sk_q^o(\S_n)=\U_q(\S_n)$ (Theorem \ref{thm: main2}).  The cluster variables coincide with the set of chords $[\curve_{a,b}]$, with clusters corresponding to triangulations.

The commutative cluster algebra $\A_1(\S)$ is a basic example in cluster algebras; thorough investigations can be found in \cite[Section 2.1]{GSV10} and \cite[Section 3]{FZ03b}.  In our language, the main observation is that $\Sk_1(\S_n)$ coincides with the homogeneous coordinate ring $\mathcal{O}[Gr_\mathbb{C}(2,n)]$ of the Grassmannian $Gr_\mathbb{C}(2,n)$.  This isomorphism depends on an identification of the marked points with a basis of $\mathbb{C}^n$; a cluster variable $[\curve_{a,b}]$ then corresponds to the Pl\"ucker coordinate $p_{a,b}$.

In \cite{GL11}, Grabowski and Launois exhibit a quantum cluster algebra structure on the \emph{quantum Grassmannian} $\mathcal{O}_q[Gr(2,n)]$, a specific quantization of $\mathcal{O}[Gr_{\mathbb{C}}(2,n)]$. 
One might hope that the quantum Grassmannian would coincide with $Sk_q(\S_n)$.  However, this is impossible; the quantum Grassmannian depends on an identification of the basis elements with the set $\{1,2,..,n\}$; a cyclic permutation does not induce an automorphism of $\mathcal{O}_q[Gr(2,n)]$ \cite{LL11} (cf. \cite{Yak10}).
The skein algebra $\Sk_q(\S_n)$ has no such dependency.  Inspecting the quantum seeds in \cite[Section 3.1]{GL11} confirms that these are different quantizations of the same commutative cluster algebra.

\subsection{A marked annulus}

Let $\S$ be the annulus with a single marked point on each boundary component.  Let $\curve[a]$ and $\curve[b]$ denote the two boundary arcs, and let $\ell$ denote the unique simple loop (Figure \ref{fig: annulus1}).  The remaining simple curves are arcs connecting the two marked points; they may be parametrized by $\Z$ as follows.  Choose such an arc to be $\curve_0$, and define the rest by the conditions that $\curve_i$ and $\curve_{i+1}$ do not intersect, and both ends of $\curve_{i+1}$ are clockwise to both ends of $\curve_i$.  

\begin{figure}
\begin{tikzpicture}
\begin{scope}[scale=.5]
	\draw[fill=black!10] (-2,-2) to (-2,2) to (2,2) to (2,-2) to (-2,-2);
	\draw[thick, dashed,draw=black!0](-2,-2) to (-2,2);
	\draw[thick, dashed,draw=black!0](2,-2) to (2,2);
	\draw[thick] (-2,0) to node[above] (l) {$\ell$} (2,0);
	\draw[thick] (-2,-1) to  [out=0,in=135] node[above] (a) {$\mathsf{a}$} (0,-2) to [out=45,in=180] (2,-1);
	\draw[thick] (-2,1) to [out=0,in=225] (0,2) to [out=315,in=180] node[below right] (b) {$\mathsf{b}$} (2,1);
	\node[inner sep=0.5mm,circle,draw,fill=black!50] (1) at (0,-2) {};
	\node[inner sep=0.5mm,circle,draw,fill=black!50] (2) at (0,2) {};
\end{scope}
\begin{scope}[xshift=1.5in,scale=.5]
	\draw[fill=black!10] (-2,-2) to (-2,2) to (2,2) to (2,-2) to (-2,-2);
	\draw[thick, dashed,draw=black!0](-2,-2) to (-2,2);
	\draw[thick, dashed,draw=black!0](2,-2) to (2,2);
	\draw[thick] (0,-2) to node[above right] (x0) {$\mathsf{x}_0$} (0,2);
	\draw[thick] (0,-2) to [out=60,in=210] (2,0);
	\draw[thick] (-2,0) to [out=30,in=240] (0,2);
	\node (x1) at (1.2,-1.2) {$\mathsf{x}_1$};
	\node[inner sep=0.5mm,circle,draw,fill=black!50] (1) at (0,-2) {};
	\node[inner sep=0.5mm,circle,draw,fill=black!50] (2) at (0,2) {};
\end{scope}
\end{tikzpicture}
\caption{Simple curves in $\S$ (The two dashed edges are identified).}
\label{fig: annulus1}
\end{figure}
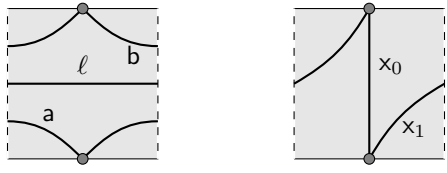

The simple curves $\curve[a],\curve[b],\ell$ and $\{\curve_i\}_{i\in \Z}$ generate $\Sk_q(\S)$ as a $\Zq$-algebra.  The elements $[\curve[a]]$ and $[\curve[b]]$ are central.  Some relations among these generators are
\[ [\ell][\curve_i]  = q[\curve_{i+1}]+q^{-1}[\curve_{i-1}]\]
\[ [\curve_i][\curve_{i+1}] = q^{-1}[\curve_{i+1}][\curve_i]\]
\[ [\curve_i][\curve_{i+2}] = [\curve[a]][\curve[b]] + 	q^{-2}[\curve_{i+1}]^2\]
\[ [\curve_i][\curve_{i+3}] = q[\ell] [\curve[a]][\curve[b]] + q^{-2} [\curve_{i+1}][\curve_{i+2}] \]
Since $[\curve_{i+1}] = q[\ell][\curve_{i}]-q^2[\curve_{i-1}]$, the five elements $\curve[a],\curve[b],\ell,\curve_0,\curve_1$ generate $\Sk_q(\S)$.  

The triangulations of $\S$ are the sets $\{\curve[a],\curve[b],\curve_i,\curve_{i+1}\}$ for some $i$.  Since $\S$ has two marked points, $\A_q(\S)=\Sk_q(\S)=\U_q(\S)$ (Theorem \ref{thm: main2}).

The loop element $[\ell]$ can be written as a skew-Laurent polynomial in any triangulation (Theorem \ref{thm: Laurent}),
\[ [\ell] = ([\curve_i][\curve_{i+1}])^{-1}\left(q[\curve_i]^2 + q^{-1}[\curve[a]][\curve[b]] + q^{-3}[\curve_{i+1}]^2\right)\]
and as a product of cluster variables divided by frozen variables (Proposition \ref{prop: loop}),
\[ [\ell] = ([\curve[a]][\curve[b]])^{-1}(q^{-1}[\curve_i][\curve_{i+3}]-q^{-3}[\curve_{i+1}][\curve_{i+2}])\]

\appendix

\section{Finite generation of $\Sk_q(\S)$.}

It has been shown by Bullock that $\Sk_q(\S)$ is finitely generated when $\S$ is unmarked \cite[Theorem 1]{Bul99}.  The idea of his proof still works in the marked case, with the necessary modifications.

\begin{rem}
What follows is a simplified version of Bullock's proof, since we will not explicitly bound the number of generators.
\end{rem}



We assume $\partial\S\neq \emptyset$ (otherwise, Bullock's result applies directly).  The marked surface $\S$ has a handle decomposition (Figure \ref{fig: handle}); observe that every marked point can be placed on the boundary of the 0-handle.  
\begin{figure}
\begin{tikzpicture}[scale=.3]
    \draw[thick,fill=black!10] (2,4) to (11,4) arc (90:0:1) to (12,-1) arc (360:270:1) to (2,-2);
    \draw[thick,fill=black!10] (-2,-2) to (-11,-2) arc (270:180:1) to (-12,3) arc (180:90:1) to (-2,4);
    \draw[thick,dashed] (-2,4) to (2,4);
    \draw[thick,dashed] (-2,-2) to (2,-2);
    \draw[draw=black!10,fill=black!10] (-5.5,1.5) arc (270:-90:2.5) to (-5.5,2.5) arc (-90:270:1.5);
    \draw[thick] (-8,4) arc (180:0:2.5);
    \draw[thick] (-7,4) arc (180:0:1.5);
    \draw[draw=black!10,fill=black!10] (-7.5,1.5) arc (270:-90:2.5) to (-7.5,2.5) arc (-90:270:1.5);
    \draw[thick] (-10,4) arc (180:0:2.5);
    \draw[thick] (-9,4) arc (180:0:1.5);
    \draw[draw=black!10,fill=black!10] (7.5,1.5) arc (270:-90:2.5) to (7.5,2.5) arc (-90:270:1.5);
    \draw[thick] (5,4) arc (180:0:2.5);
    \draw[thick] (6,4) arc (180:0:1.5);
    \draw[draw=black!10,fill=black!10] (5.5,1.5) arc (270:-90:2.5) to (5.5,2.5) arc (-90:270:1.5);
    \draw[thick] (3,4) arc (180:0:2.5);
    \draw[thick] (4,4) arc (180:0:1.5);
    \draw[draw=black!10,fill=black!10] (-6.5,-4.5) arc (270:-90:2.5) to (-6.5,-3.5) arc (-90:270:1.5);
    \draw[thick] (-9,-2) arc (180:360:2.5);
    \draw[thick] (-8,-2) arc (180:360:1.5);
    \draw[draw=black!10,fill=black!10] (6.5,-4.5) arc (270:-90:2.5) to (6.5,-3.5) arc (-90:270:1.5);
    \draw[thick] (4,-2) arc (180:360:2.5);
    \draw[thick] (5,-2) arc (180:360:1.5);
    \node[marked] (1) at (-12,1) {};
    \node[marked] (2) at (-6.5,-2) {};
    \node[marked] (3) at (6.5,-2) {};
\end{tikzpicture}
\caption{The handle decomposition of $\S$.  There are $g$-many pairs of 1-handles along the top, $h$-many 1-handles along the bottom, and any marked point may denote multiple close marked points (or none).}
\label{fig: handle}
\end{figure}
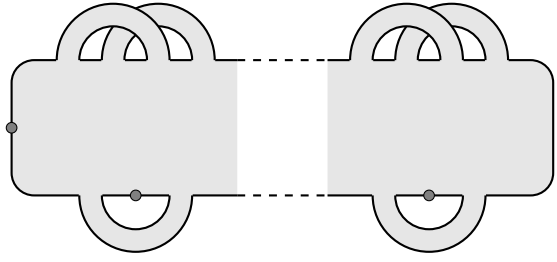

A link is in \emph{standard position} (with respect to the handle decomposition) if its intersection with any 1-handle is a union of strands homotopic to the core, and the number of strands is minimal with respect to homotopy. Every link is homotopic to one in standard position; for the remainder of the section we assume all links are in standard position.

The \emph{complexity} of a link is the total number of strands in the intersection with the 1-handles, minus the number of 1-handles it intersects.  So, a link has complexity zero if its intersection with any 1-handle contains at most one strand.

\begin{prop}
The set of simple curves of complexity zero is finite.
\end{prop}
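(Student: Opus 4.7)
The plan is to use the handle decomposition of $\S$ to reduce the problem to a finite combinatorial one. The 0-handle $H_0$ is a disc containing every marked point on its boundary, and the 1-handles $H_1,H_2,\ldots,H_k$ are finitely many rectangles attached along pairs of disjoint arcs in $\partial H_0$. The core of each $H_i$ meets $\partial H_0$ in two distinguished points, which I will call the \emph{ports} of $H_i$. A simple curve $\curve$ in standard position of complexity zero intersects each $H_i$ in either zero strands or exactly one strand homotopic to the core; in the latter case, $\curve$ enters $H_i$ precisely at its two ports.

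First I would cut $\curve$ along the 1-handles that it meets, producing a disjoint collection of simple arcs inside $H_0$. Each such arc has its two endpoints either at a marked point of $\S$ or at a port of some $H_i$, and no two arcs share an endpoint (since $\curve$ is simple and each port is used at most once by the complexity-zero hypothesis). Since $H_0$ is a disc, any simple arc between two specified boundary points is determined up to endpoint-fixed homotopy; moreover, there are only finitely many arcs up to homotopy whose endpoints lie in the finite set consisting of $\M$ together with the $2k$ ports. Hence the configuration of $\curve \cap H_0$ is specified by choosing a subset $S\subseteq\{1,\ldots,k\}$ (the 1-handles crossed by $\curve$) and a matching/partition on the finite set of marked points and ports of $S$, with at most finitely many options.

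Next I would observe that once such a configuration of disc-arcs is chosen, together with the data of which strands inside the 1-handles to reattach them to, the curve $\curve$ is determined up to homotopy: the arc inside each $H_i$ that is crossed has a unique homotopy class, and reconnecting the disc-arcs through these core strands reconstructs $\curve$ uniquely. Since each of these choices (subset $S$, endpoint matching in $H_0$) ranges over a finite set, the set of homotopy classes of simple curves of complexity zero is finite.

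The only subtlety, and the place where some care is needed, is ruling out an accidental infinite family arising from curves that spiral inside a single 1-handle or wrap repeatedly around the 0-handle. But the former is forbidden by the standard-position condition (minimality of strands forces each intersection with $H_i$ to be a single core-parallel arc), and the latter is forbidden by simplicity together with the fact that a simple arc in a disc is unique up to homotopy relative to its endpoints. With these two observations in hand, the enumeration above is exhaustive and yields finiteness.
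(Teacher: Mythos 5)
Your argument is correct and is essentially the paper's own proof: both fix the subset of 1-handles crossed and observe that, since the 0-handle is a disc, the restriction of the curve to it is determined up to homotopy by a (non-crossing) matching of the attaching points of those handles with at most two marked points, giving finitely many possibilities. (The parenthetical claim that no two disc-arcs share an endpoint can fail at a marked point hit twice by the curve, but this does not affect the finiteness count.)
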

\begin{proof}
Fix a subset $S$ of the 1-handles.   If $\curve$ is a simple curve of complexity zero which intersects exactly the 1-handles in $S$, then $\curve$ is determined by its intersection with the 0-handle.  The intersection of $\curve$ with the 0-handle is a non-crossing matching between the attaching points of the 1-handles in $S$, and either 2 or 0 marked points.  There are finitely many such non-crossing matchings, and finitely many subsets $S$ of the 1-handles, so the set of zero complexity simple curves is finite.
\end{proof}

\begin{lemma}
The set of simple curves of complexity zero generates $\Sk_q(\S)$.
\end{lemma}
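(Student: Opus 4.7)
The plan is to argue by strong induction on complexity, using the Kauffman skein relation as the reduction mechanism. By Lemma \ref{lemma: basis} the classes $[\multi]$ of simple multicurves form a $\Zq$-basis of $\Sk_q(\S)$, and by Corollary \ref{coro: gens} each $[\multi]$ is (up to a half-power of $q$) a product of its constituent simple curves; so it suffices to show every simple curve $\curve$ in $\S$ is a $\Zq$-polynomial in simple curves of complexity zero.

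Put $\curve$ in standard position and induct on $c(\curve)$. The base case $c(\curve)=0$ is trivial. If $c(\curve)>0$, fix a 1-handle $H$ containing at least two strands of $\curve$ and pick two adjacent strands $s_1,s_2$ there. Introduce a single crossing between $s_1$ and $s_2$ inside $H$ to obtain a link $\link$. Applying the Kauffman skein relation at this crossing yields
\[
[\link] \;=\; q\,[\curve] \;+\; q^{-1}[\curve'],
\]
where $\curve'$ is obtained from $\curve$ by the alternative local resolution: the four endpoints of $s_1\cup s_2$ on $\partial H$ are reconnected so that the resulting arcs make U-turns and no longer traverse $H$. After removing any unknot or contractible-arc components via the last two relations of Figure \ref{fig: skein}, $\curve'$ is a simple multicurve with $c(\curve')\le c(\curve)-2$. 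Rearranging gives
\[
[\curve] \;=\; q^{-1}[\link] \;-\; q^{-2}[\curve'].
\]

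To complete the inductive step I must realize $[\link]$ itself as a $\Zq$-combination of products of simple multicurves of complexity strictly less than $c(\curve)$. The idea is to homotope the crossing out of $H$ along the parallel strands into the 0-handle, and recognize the result as a superposition $\curve_0\cdot \curve_\sharp$, where $\curve_0$ is a simple curve with exactly one strand through $H$ and $\curve_\sharp$ is the simple multicurve obtained from $\curve$ by deleting that strand and closing up its endpoints through $H$. Both $c(\curve_0)$ and $c(\curve_\sharp)$ are at most $c(\curve)-1$, so by the inductive hypothesis each is a polynomial in complexity-zero simple curves. The boundary skein relation (Figure \ref{fig: skein}) may be needed to convert the superposition $\curve_0\cdot\curve_\sharp$ into the simultaneous class $[\link]$, but this only introduces a $q^{\pm 1/2}$ scalar.

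The main obstacle is the bookkeeping in this split-and-regroup identity, because the effect of the U-turn resolution depends on whether $s_1$ and $s_2$ lie on the same component of $\curve$ (in which case splitting can produce either two curves or one curve plus a contractible loop) and on how the strands are attached in the 0-handle (in particular, whether the endpoints are marked, which invokes the boundary skein relation rather than just the Kauffman relation). Each case, however, is a local modification at $H$ and reduces, after the scalar $q^\bullet$ adjustments, to the same statement as in Bullock's original unmarked argument \cite{Bul99}. Once all cases are verified, the induction closes and $\Sk_q(\S)$ is generated by the finite set of complexity-zero simple curves.
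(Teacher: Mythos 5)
Your reduction to simple curves (via Lemma \ref{lemma: basis} and Corollary \ref{coro: gens}) and your first move are fine, and they agree with the start of the paper's argument: introduce a crossing between the two chosen strands to get a link $\link$ with $[\curve]=q^{\pm 1}[\link]-q^{\pm 2}[\curve']$, where the turnback resolution $\curve'$ drops the complexity by at least $2$. The gap is the next claim, that $\link$ can be homotoped and then ``recognized'' as a superposition $\curve_0\cdot\curve_\sharp$ of two simple multicurves of smaller complexity. That is a statement about the underlying multicurve of $\link$, and it fails in general. Replacing the two parallel strands by a crossing rewires the four strand-ends on the boundary of the handle $H$, and the rewired diagram is a \emph{single} immersed curve with one self-crossing exactly when $\curve$ traverses the two strands of $H$ in opposite directions (equivalently, exactly when the turnback resolution $\curve'$ is the resolution that disconnects $\curve$); the simplest instance is a simple closed curve that enters $H$, returns through $H$ the other way, and closes up elsewhere. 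A connected one-crossing link is not a superposition of two nonempty links, because the underlying multicurve of a superposition is the disjoint union of the underlying multicurves of its factors, and homotopy of links cannot change the number of components or of crossings. So your inductive step closes only in the case where the rewiring disconnects $\curve$ (both strands traversed in the same direction); in the opposite case $[\link]$ is not a product of lower-complexity simple curves in any visible way, and resolving its unique crossing merely returns $q^{\pm1}[\curve]+q^{\mp1}[\curve']$, which is circular.

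Your closing paragraph does gesture at a case analysis, but the dichotomy you name (whether $s_1$ and $s_2$ lie on the same component of $\curve$) is vacuous---$\curve$ is a single simple curve, so they always do---and the genuine problem case is not $q^{\bullet}$ bookkeeping: it requires a different resolution scheme, which is precisely the extra work in the paper's appendix. There the computation does not stop after one crossing: a second crossing is introduced on the other side, in the $0$-handle, and the resulting self-crossing is resolved as well, yielding a four-term identity whose summands include the two-crossing diagram (which, in the opposite-direction case that defeats your argument, does split as one curve meeting $H$ once superposed over another such curve) and a term of the form (small loop encircling $H$) times (a curve with one fewer strand through $H$), all of complexity strictly less than that of $\curve$. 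To repair your proposal you must either carry out this longer resolution, or split explicitly into the two connectivity cases and use the one-crossing identity only in the case where it actually produces a two-component link; as written, the ``split-and-regroup'' step is a genuine gap rather than a routine verification.
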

\begin{proof}
We claim every simple curve $\curve$ in $\S$ is in the $\Zq$-subalgebra of $\Sk_q(\S)$ generated by simple curves of complexity zero.  The proof is by induction on complexity $\kappa$.  The case $\kappa=0$ is trivial.

Assume $\kappa\geq1$.  Then there is some 1-handle which $\curve$ intersects in multiple strands.  Choose the two innermost strands, and consider the following picture, where there may be additional components in the 1-handle.
\begin{center}
\begin{tikzpicture}[scale=.3]
	\draw[thick,fill=black!10] (-4,0) to (-3.5,0) arc (180:0:3.5) to (4,0) to (4,-3) to (1,-3) arc (0:180:1) to (-4,-3) to (-4,0);
	\draw[thick,fill=black!0] (-1,0) to (-1.5,0) arc (180:0:1.5) to (1,0) arc (360:180:1);
	\draw[thick,dashed,draw=black!10] (4,0) to (4,-3) to (1,-3) arc (0:180:1) to (-4,-3) to (-4,0);
	\draw[thick,dashed,draw=black!10] (1,0) arc (360:180:1);
	\draw[thick] (-2.16,-3) to (-2.16,0) arc (180:0:2.16) to (2.16,-3);
	\draw[thick] (-2.83,-3) to (-2.83,0) arc (180:0:2.83) to (2.83,-3);
\end{tikzpicture}
\end{center}

By repeated application of the Kauffman skein relation,
\begin{center}
\begin{tikzpicture}
\begin{scope}[scale=.2]
	\draw[thick,fill=black!10] (-4,0) to (-3.5,0) arc (180:0:3.5) to (4,0) to (4,-4) to (1,-4) arc (0:180:1) to (-4,-4) to (-4,0);
	\draw[thick,fill=black!0] (-1,0) to (-1.5,0) arc (180:0:1.5) to (1,0) arc (360:180:1);
	\draw[thick,dashed,draw=black!10] (4,0) to (4,-4) to (1,-4) arc (0:180:1) to (-4,-4) to (-4,0);
	\draw[thick,dashed,draw=black!10] (1,0) arc (360:180:1);
	\draw[thick] (-2.16,-4) to (-2.16,0) arc (180:0:2.16) to (2.16,-4);
	\draw[thick] (-2.83,-4) to (-2.83,0) arc (180:0:2.83) to (2.83,-4);
\end{scope}
	\node (=1) at (.45in,0) {$=$};
	\node (q11) at (.65in,.02in) {$-q^2$};
\begin{scope}[xshift =1.15in,scale=.2]
	\draw[thick,fill=black!10] (-4,0) to (-3.5,0) arc (180:0:3.5) to (4,0) to (4,-4) to (1,-4) arc (0:180:1) to (-4,-4) to (-4,0);
	\draw[thick,fill=black!0] (-1,0) to (-1.5,0) arc (180:0:1.5) to (1,0) arc (360:180:1);
	\draw[thick,dashed,draw=black!10] (4,0) to (4,-4) to (1,-4) arc (0:180:1) to (-4,-4) to (-4,0);
	\draw[thick,dashed,draw=black!10] (1,0) arc (360:180:1);
	\draw[thick] (-2.83,-4) to (-2.83,0) arc (180:105:2.83) arc (105:-75:.33) arc (105:180:2.16) to (-2.16,-4);
	\draw[thick] (2.83,-4) to (2.83,0) arc (0:75:2.83) arc (75:255:.33) arc (75:0:2.16) to (2.16,-4);
\end{scope}
	\node (+2) at (1.6in,0) {$+$};
	\node (q12) at (1.71in,0) {$q$};
\begin{scope}[xshift=2.15in,scale=.2]
	\draw[thick,fill=black!10] (-4,0) to (-3.5,0) arc (180:0:3.5) to (4,0) to (4,-4) to (1,-4) arc (0:180:1) to (-4,-4) to (-4,0);
	\draw[thick,fill=black!0] (-1,0) to (-1.5,0) arc (180:0:1.5) to (1,0) arc (360:180:1);
	\draw[thick,dashed,draw=black!10] (4,0) to (4,-4) to (1,-4) arc (0:180:1) to (-4,-4) to (-4,0);
	\draw[thick,dashed,draw=black!10] (1,0) arc (360:180:1);
	\draw[thick] (-2.83,-4) to (-2.83,0) arc (180:105:2.83) to[out=15,in=150] (0,2.5);
	\draw[thick] (2.16,-4) to (2.16,0) arc (0:75:2.16) to[out=165,in=-30] (0,2.5);
	\draw[thick,line width=1.5mm,draw=black!10] (-2.16,-4) to (-2.16,0) arc (180:105:2.16) to[out=15,in=210] (0,2.5);
	\draw[thick,line width=1.5mm,draw=black!10] (2.83,-4) to (2.83,0) arc (0:75:2.83) to[out=165,in=30] (0,2.5);
	\draw[thick] (-2.16,-4) to (-2.16,0) arc (180:105:2.16) to[out=15,in=210] (0,2.5);
	\draw[thick] (2.83,-4) to (2.83,0) arc (0:75:2.83) to[out=165,in=30] (0,2.5);
\end{scope}
\begin{scope}[yshift=-.8in]
		\node (=2) at (.45in,0) {$=$};
		\node (q21) at (.65in,.02in) {$-q^2$};
	\begin{scope}[xshift =1.15in,scale=.2]
		\draw[thick,fill=black!10] (-4,0) to (-3.5,0) arc (180:0:3.5) to (4,0) to (4,-4) to (1,-4) arc (0:180:1) to (-4,-4) to (-4,0);
		\draw[thick,fill=black!0] (-1,0) to (-1.5,0) arc (180:0:1.5) to (1,0) arc (360:180:1);
		\draw[thick,dashed,draw=black!10] (4,0) to (4,-4) to (1,-4) arc (0:180:1) to (-4,-4) to (-4,0);
		\draw[thick,dashed,draw=black!10] (1,0) arc (360:180:1);
		\draw[thick] (-2.83,-4) to (-2.83,0) arc (180:105:2.83) arc (105:-75:.33) arc (105:180:2.16) to (-2.16,-4);
		\draw[thick] (2.83,-4) to (2.83,0) arc (0:75:2.83) arc (75:255:.33) arc (75:0:2.16) to (2.16,-4);
	\end{scope}
		\node (+2) at (1.59in,0) {$+$};
		\node (q22) at (1.72in,.02in) {$q^2$};
	\begin{scope}[xshift=2.15in,scale=.2]
		\draw[thick,fill=black!10] (-4,0) to (-3.5,0) arc (180:0:3.5) to (4,0) to (4,-4) to (1,-4) arc (0:180:1) to (-4,-4) to (-4,0);
		\draw[thick,fill=black!0] (-1,0) to (-1.5,0) arc (180:0:1.5) to (1,0) arc (360:180:1);
		\draw[thick,dashed,draw=black!10] (4,0) to (4,-4) to (1,-4) arc (0:180:1) to (-4,-4) to (-4,0);
		\draw[thick,dashed,draw=black!10] (1,0) arc (360:180:1);
		\draw[thick] (-2.83,-4) to (-2.83,0) arc (180:105:2.83) to[out=15,in=150] (0,2.5);
		\draw[thick] (0,-2) to [out=30,in=180] (1,-1.66) to [line to] (1.16,-1.66) arc (270:360:1) to (2.16,0) arc (0:75:2.16) to[out=165,in=-30] (0,2.5);
		\draw[thick] (-2.16,-4) to (-2.16,-3.33) arc (180:90:1) to (-1,-2.33) to [out=0,in=210] (0,-2);
		\draw[thick,line width=1.5mm,draw=black!10] (0,-2) to [out=30,in=180] (-1,-1.66) to [line to] (-1.16,-1.66) arc (270:180:1) to (-2.16,0) arc (180:105:2.16) to[out=15,in=210] (0,2.5);
		\draw[thick,line width=1.5mm,draw=black!10] (2.83,-4) to (2.83,0) arc (0:75:2.83) to[out=165,in=30] (0,2.5);
		\draw[thick,line width=1.5mm,draw=black!10] (2.16,-4) to (2.16,-3.33) arc (0:90:1) to (1,-2.33) to [out=180,in=-30](0,-2);
		\draw[thick] (0,-2) to [out=30,in=180] (-1,-1.66) to [line to] (-1.16,-1.66) arc (270:180:1) to (-2.16,0) arc (180:105:2.16) to[out=15,in=210] (0,2.5);
		\draw[thick] (2.83,-4) to (2.83,0) arc (0:75:2.83) to[out=165,in=30] (0,2.5);
		\draw[thick] (2.16,-4) to (2.16,-3.33) arc (0:90:1) to (1,-2.33) to [out=180,in=-30](0,-2);
	\end{scope}
		\node (+3) at (2.59in,0) {$+$};
		\node (q23) at (2.72in,.02in) {$q^3$};
	\begin{scope}[xshift=3.15in,scale=.2]
		\draw[thick,fill=black!10] (-4,0) to (-3.5,0) arc (180:0:3.5) to (4,0) to (4,-4) to (1,-4) arc (0:180:1) to (-4,-4) to (-4,0);
		\draw[thick,fill=black!0] (-1,0) to (-1.5,0) arc (180:0:1.5) to (1,0) arc (360:180:1);
		\draw[thick,dashed,draw=black!10] (4,0) to (4,-4) to (1,-4) arc (0:180:1) to (-4,-4) to (-4,0);
		\draw[thick,dashed,draw=black!10] (1,0) arc (360:180:1);
		\draw[thick] (-2.83,-4) to (-2.83,0) arc (180:105:2.83) to[out=15,in=150] (0,2.5);
		\draw[thick] (0,-1.66) to (1.16,-1.66) arc (270:360:1) to (2.16,0) arc (0:75:2.16) to[out=165,in=-30] (0,2.5);
		\draw[thick,line width=1.5mm,draw=black!10] (-2.16,0) arc (180:105:2.16) to[out=15,in=210] (0,2.5);
		\draw[thick,line width=1.5mm,draw=black!10] (2.83,0) arc (0:75:2.83) to[out=165,in=30] (0,2.5);
		\draw[thick] (0,-1.66) to (-1.16,-1.66) arc (270:180:1) to (-2.16,0) arc (180:105:2.16) to[out=15,in=210] (0,2.5);
		\draw[thick] (2.83,-4) to (2.83,0) arc (0:75:2.83) to[out=165,in=30] (0,2.5);
		\draw[thick] (-2.16,-4) to (-2.16,-3.33) arc (180:90:1) to (0,-2.33);
		\draw[thick] (2.16,-4) to (2.16,-3.33) arc (0:90:1) to (0,-2.33);
	\end{scope}
\end{scope}
\begin{scope}[yshift=-1.6in]
		\node (=2) at (.45in,0) {$=$};
		\node (q21) at (.65in,.02in) {$-q^2$};
	\begin{scope}[xshift =1.15in,scale=.2]
		\draw[thick,fill=black!10] (-4,0) to (-3.5,0) arc (180:0:3.5) to (4,0) to (4,-4) to (1,-4) arc (0:180:1) to (-4,-4) to (-4,0);
		\draw[thick,fill=black!0] (-1,0) to (-1.5,0) arc (180:0:1.5) to (1,0) arc (360:180:1);
		\draw[thick,dashed,draw=black!10] (4,0) to (4,-4) to (1,-4) arc (0:180:1) to (-4,-4) to (-4,0);
		\draw[thick,dashed,draw=black!10] (1,0) arc (360:180:1);
		\draw[thick] (-2.83,-4) to (-2.83,0) arc (180:105:2.83) arc (105:-75:.33) arc (105:180:2.16) to (-2.16,-4);
		\draw[thick] (2.83,-4) to (2.83,0) arc (0:75:2.83) arc (75:255:.33) arc (75:0:2.16) to (2.16,-4);
	\end{scope}
		\node (+2) at (1.59in,0) {$+$};
		\node (q22) at (1.72in,.02in) {$q^2$};
	\begin{scope}[xshift=2.15in,scale=.2]
		\draw[thick,fill=black!10] (-4,0) to (-3.5,0) arc (180:0:3.5) to (4,0) to (4,-4) to (1,-4) arc (0:180:1) to (-4,-4) to (-4,0);
		\draw[thick,fill=black!0] (-1,0) to (-1.5,0) arc (180:0:1.5) to (1,0) arc (360:180:1);
		\draw[thick,dashed,draw=black!10] (4,0) to (4,-4) to (1,-4) arc (0:180:1) to (-4,-4) to (-4,0);
		\draw[thick,dashed,draw=black!10] (1,0) arc (360:180:1);
		\draw[thick] (-2.83,-4) to (-2.83,0) arc (180:105:2.83) to[out=15,in=150] (0,2.5);
		\draw[thick] (0,-2) to [out=30,in=180] (1,-1.66) to [line to] (1.16,-1.66) arc (270:360:1) to (2.16,0) arc (0:75:2.16) to[out=165,in=-30] (0,2.5);
		\draw[thick] (-2.16,-4) to (-2.16,-3.33) arc (180:90:1) to (-1,-2.33) to [out=0,in=210] (0,-2);
		\draw[thick,line width=1.5mm,draw=black!10] (0,-2) to [out=30,in=180] (-1,-1.66) to [line to] (-1.16,-1.66) arc (270:180:1) to (-2.16,0) arc (180:105:2.16) to[out=15,in=210] (0,2.5);
		\draw[thick,line width=1.5mm,draw=black!10] (2.83,-4) to (2.83,0) arc (0:75:2.83) to[out=165,in=30] (0,2.5);
		\draw[thick,line width=1.5mm,draw=black!10] (2.16,-4) to (2.16,-3.33) arc (0:90:1) to (1,-2.33) to [out=180,in=-30](0,-2);
		\draw[thick] (0,-2) to [out=30,in=180] (-1,-1.66) to [line to] (-1.16,-1.66) arc (270:180:1) to (-2.16,0) arc (180:105:2.16) to[out=15,in=210] (0,2.5);
		\draw[thick] (2.83,-4) to (2.83,0) arc (0:75:2.83) to[out=165,in=30] (0,2.5);
		\draw[thick] (2.16,-4) to (2.16,-3.33) arc (0:90:1) to (1,-2.33) to [out=180,in=-30](0,-2);
	\end{scope}
		\node (+3) at (2.59in,0) {$+$};
		\node (q23) at (2.72in,.02in) {$q^2$};
	\begin{scope}[xshift=3.15in,scale=.2]
		\draw[thick,fill=black!10] (-4,0) to (-3.5,0) arc (180:0:3.5) to (4,0) to (4,-4) to (1,-4) arc (0:180:1) to (-4,-4) to (-4,0);
		\draw[thick,fill=black!0] (-1,0) to (-1.5,0) arc (180:0:1.5) to (1,0) arc (360:180:1);
		\draw[thick,dashed,draw=black!10] (4,0) to (4,-4) to (1,-4) arc (0:180:1) to (-4,-4) to (-4,0);
		\draw[thick,dashed,draw=black!10] (1,0) arc (360:180:1);
		\draw[thick] (-2.83,-4) to (-2.83,0) arc (180:90:2.83) ;
		\draw[thick] (0,-1.66) to (1.16,-1.66) arc (270:360:1) to (2.16,0) arc (0:90:2.16);
		\draw[thick] (0,-1.66) to (-1.16,-1.66) arc (270:180:1) to (-2.16,0) arc (180:90:2.16);
		\draw[thick] (2.83,-4) to (2.83,0) arc (0:90:2.83);
		\draw[thick] (-2.16,-4) to (-2.16,-3.33) arc (180:90:1) to (0,-2.33);
		\draw[thick] (2.16,-4) to (2.16,-3.33) arc (0:90:1) to (0,-2.33);
	\end{scope}
		\node (+4) at (3.59in,0) {$+$};
		\node (q7) at (3.72in,.02in) {$q^4$};
	\begin{scope}[xshift=4.15in,scale=.2]
		\draw[thick,fill=black!10] (-4,0) to (-3.5,0) arc (180:0:3.5) to (4,0) to (4,-4) to (1,-4) arc (0:180:1) to (-4,-4) to (-4,0);
		\draw[thick,fill=black!0] (-1,0) to (-1.5,0) arc (180:0:1.5) to (1,0) arc (360:180:1);
		\draw[thick,dashed,draw=black!10] (4,0) to (4,-4) to (1,-4) arc (0:180:1) to (-4,-4) to (-4,0);
		\draw[thick,dashed,draw=black!10] (1,0) arc (360:180:1);
		\draw[thick] (-2.83,-4) to (-2.83,0) arc (180:105:2.83) arc (105:-75:.33) arc (105:180:2.16);
		\draw[thick] (2.83,-4) to (2.83,0) arc (0:75:2.83) arc (75:255:.33) arc (75:0:2.16);
		\draw[thick] (0,-1.66) to (1.16,-1.66) arc (270:360:1) to (2.16,0);
		\draw[thick] (0,-1.66) to (-1.16,-1.66) arc (270:180:1) to (-2.16,0);
		\draw[thick] (-2.16,-4) to (-2.16,-3.33) arc (180:90:1) to (0,-2.33);
		\draw[thick] (2.16,-4) to (2.16,-3.33) arc (0:90:1) to (0,-2.33);
	\end{scope}
\end{scope}
\end{tikzpicture}
\end{center}

The four links on the right-hand side are products of simple curves with complexity $<\kappa$.  By induction, $[\curve]$ is in the subalgebra generated by the simple curves of complexity zero, and so every simple curve is.  By Corollary \ref{coro: gens}, this set generates all of $\Sk_q(\S)$.
\end{proof}

Finite generation follows immediately.

\begin{thm}\label{thm: fingen}
$\Sk_q(\S)$ and $\Sk_q^o(\S)$ are finitely generated.
\end{thm}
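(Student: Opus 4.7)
The plan is to simply combine the two preceding lemmas in this appendix with basic facts about the localization $\Sk_q^o(\S)$.

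For $\Sk_q(\S)$: by the first lemma, the set $G$ of simple curves of complexity zero (with respect to the fixed handle decomposition) is finite. By the second lemma, $G$ generates $\Sk_q(\S)$ as a $\Zq$-algebra. So $\Sk_q(\S)$ is finitely generated on the nose; there is nothing further to do here.

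For $\Sk_q^o(\S)$: recall that a boundary curve is either an arc connecting adjacent marked points on the same boundary component or a loop homotopic to an unmarked boundary component, and that the set $B$ of boundary curves is finite (its size is the number of marked points plus the number of unmarked boundary components). By definition, $\Sk_q^o(\S)$ is the (Ore) localization of $\Sk_q(\S)$ at $B$. Therefore $\Sk_q^o(\S)$ is generated as a $\Zq$-algebra by the finite set $G\cup\{[\curve]^{-1}:\curve\in B\}$, which is again finite.

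There is no real obstacle; the substantive work has already been done in the two preceding lemmas (and those, in turn, rest on the adaptation of Bullock's argument to the marked setting, where the only nontrivial input is the inductive reduction of complexity via repeated application of the Kauffman skein relation inside a 1-handle). Both finite-generation statements then follow immediately.
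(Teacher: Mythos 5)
Your proposal is correct and follows the paper's own proof exactly: both deduce finite generation of $\Sk_q(\S)$ from the two appendix lemmas (finiteness of the complexity-zero simple curves and the fact that they generate), and both obtain $\Sk_q^o(\S)$ by adjoining the inverses of the finitely many boundary curves to that generating set. There is nothing to add or correct.
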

\begin{proof}
$\Sk_q(\S)$ is generated by the simple curves of complexity zero, which is finite.  The localized skein algebra $\Sk_q^o(\S)$ is generated by the simple curves of complexity zero and the inverses to boundary curves, which is again finite.
\end{proof}

\section{Relation with Teichm\"uller space and quantum Teichm\"uller space}

Here, we briefly describe the relation of the skein algebra of a marked surface to certain geometric and algebraic objects in Teichm\"uller theory.

\subsection{Teichm\"uller spaces and moduli space of local systems}

As before, let $\S$ be a marked surface; that is, a compact, oriented surface with a finite set of marked points $\M$ on the boundary.  Let $\S^o$ be the corresponding \emph{opened surface}, where a small ball around each point of $\M$ has been removed.  Let $\partial\M \subset \S^o$ be the boundary of these removed neighborhoods.  Note that the boundary of $\S^o$ alternates between restrictions of components of $\partial\S$ and components of $\partial \M$.
\begin{itemize}
	\item The \emph{Teichm\"uller space} $\mathcal{T}(\S)$ is the moduli space of hyperbolic metrics (up to isotopy) on $\S-\M$  such that each component of$\partial\S-\M$ is a geodesic, and each point in $\M$ is a `cusp'.
	\item The \emph{decorated Teichm\"uller space} $\widehat{\mathcal{T}}(\S)$ is  the moduli space of hyperbolic metrics (up to isotopy) on $\S^o$  such that each component of the restriction of $\partial\S$ is a geodesic, and each component of $\partial \M$ is a horocycle (that is, an arc of constant curvature).
\end{itemize}
There is a natural projection
\[ \widehat{\mathcal{T}}(\S,\M) \rightarrow \mathcal{T}(\S,\M)\]
which extends the metric to the small balls around points in $\M$.  Both of these are real manifolds with a canonical \emph{Weil-Petersson} Poisson structure \cite{Pen12}.

This projection map was realized in \cite{FG06} as the \emph{positive part}\footnote{Here, `positive part' means the subset on which a certain system of distinguished coordinates has positive real values.} of a projection of complex varieties which parametrize certain local systems.
\[ \begin{tikzpicture}
	\node (DTS) at (0,0) {$\widehat{\mathcal{T}}(\S)$};
	\node (TS) at (3,0) {$\mathcal{T}(\S)$};
	\node (A) at (0,-1.5) {$A(\S)$};
	\node (X) at (3,-1.5) {$X(\S)$};
	\draw[->>] (DTS) to (TS);
	\draw[->>] (A) to (X);
	\draw[right hook->] (DTS) to (A);
	\draw[right hook->] (TS) to (X);
\end{tikzpicture}\]
Here, $A(\S)$ is the moduli space of \emph{decorated $SL_2$-local systems on $(\S,\M)$},\footnote{In fact, these are local systems twisted by a `spin structure'; specifically, they are $SL_2$-local systems on the unit tangent bundle to $\S$ with monodromy $-Id_2$ around any fiber.} and $X(\S)$ is the moduli space of \emph{framed $PGL_2$-local systems on $(\S,\M)$}.

Via Fock and Goncharov's theory of \emph{cluster ensembles}, the moduli space $X(\S)$ has a canonical Poisson structure, which extends the \emph{Weil-Petersson form} on $\mathcal{T}(\S)$.  They also describe a \emph{quantization} of $X(\S)$: each triangulation $\Delta$ of $\S$ defines a quantum torus $QTS_{q,\Delta}(\S)$ inside a common skew-field $QTS_q(\S)$.  This skew-field $QTS_q(\S)$ had been previously introduced by Chekhov and Fock, and called the \emph{quantum Teichm\"uller space} of $\S$ \cite{CF99}. The $q=1$ specialization produces a field $QTS_1(\S)$ which is canonically isomorphic to the field of rational functions on the variety $X(\Sigma)$, and every element of $QTS_1(\S)$ restricts to a well-defined function on $\mathcal{T}(X)$.


\begin{rem}
Unlike the quantum tori defining a quantum cluster algebra, the intersection of the quantum tori $QTS_{q,\Delta}(\S)$ may be too small to generate $QTS_q(\S)$ as a skew-field.
\end{rem}

\subsection{Relation to skein algebra}

Fock and Goncharov also define a (commutative) cluster structure on the variety $A(\S)$.  This cluster structure has a cluster variable for each arc in $\S$, and the clusters correspond to triangulations.\footnote{This cluster algebra associated to $\S$ was independently introduced in \cite{GSV03}, who also highlighted its realization as functions on $\mathcal{T}(\S)$.}  This can be extended to a canonical isomorphism 
\[ Sk_1(\S)\stackrel{\sim}{\longrightarrow} \mathcal{O}(A(\S))\]
Under this isomorphism, elements in $Sk_1(\S)$ restrict to functions on $\widehat{\mathcal{T}}(\S)$ which take positive real values.  Specifically, arcs and loops in $Sk_1(\S)$ restrict to the corresponding \emph{Penner coordinates} on $\widehat{\mathcal{T}}(\S)$.\footnote{An excellent reference for the connection between $Sk_1(\S)$ and Penner coordinates is \cite{FST08}.}

This story may be quantized as follows.  For each triangulation $\Delta$ of $\S$, there is a map
\[ \rho_\Delta: QTS_{q,\Delta}(\S) \rightarrow Sk_{q}(\S)[\Delta^{-1}]\simeq \mathbb{T}_\Delta\]
A non-boundary arc $\mathsf{x}\in \Delta$ defines an element $X_\mathsf{x}\in QTS(\S)_{q,\Delta}$.  Then $\rho_\Delta$ is defined by
\[ \rho_{\Delta}(X_\mathsf{x}) = [\Delta^{\mathsf{Q}^\Delta \mathsf{x}} ] \]
That is, $\rho_{\Delta}(X_{\mathsf{x}})$ is the `cross-ratio' of the four arcs in $\Sk_q(\S)$ immediately adjacent to $\mathsf{x}$ (normalized by a power of $q$ so that $\rho_{\Delta}(X_{\mathsf{x}})$ is invariant under the bar involution).  The map $\rho_\Delta$ extends to an inclusion of fraction fields
\[ \rho:QTS_q(\S)\rightarrow \mathcal{F}(Sk_q(\S))\]
which does not depend on a choice of triangulation $\Delta$.  Hence, Chekhov and Fock's quantum Teichm\"uller space can be realized as a sub-skew-field of the fraction field of $Sk_q(\S)$.\footnote{Note that $\rho$ lands in the sub-skew-field of degree $0$ for the endpoint grading of $Sk_1(\S)$.} Under the $q=1$ specialization, the map 
\[\rho: QTS_1(\S) \rightarrow \mathcal{F}(Sk_1(\S)) \]
is the same as the map induced on fraction fields by the cluster ensemble map
\[ A(\S)\rightarrow X(\S) \]

\begin{rem}
In many ways, the algebra $Sk_q(\S)$ is the `decorated' analog of the quantum Teichm\"uller space $QTS_q(\S)$.  In each case, each triangulation determines a quantum torus inside a fixed skew-field.  The main difference is that the intersection of the quantum tori in $QTS_q(\S)$ is too small, and so one must keep track of the whole skew-field $QTS_q(\S)$ to have a reasonable invariant.  By contrast, the intersection of quantum tori containing $Sk_q(\S)$ is $Sk_q(\S)$, which is large enough for every quantum torus to be recoverable as an Ore localization.
\end{rem}

\section{Proof of Lemma \ref{lemma: ugh}}\label{app: lemma}

\subsection{The initial multicurve has positive smoothings}


Let $\curve$ be a simple arc in $\S$. For a given $\multi[Y]$, choose a homotopy representative of $\multi[Y]$ so that $\curve \cdot \multi[Y]$ is transverse with minimal crossings.  

Let $\curve\cap \multi[Y]$ denote the set of crossings (that is, non-boundary intersections) in the superposition $\curve\cdot \multi[Y]$.  For any function $\sigma:\curve\cap \multi[Y]\rightarrow \{-,+\}$, let $\multi[R]_\sigma$ be the multicurve obtained by applying the local relation (called a \emph{positive smoothing})
\begin{center}
\begin{tikzpicture}[scale=.8]
\begin{scope}[xshift=-.5in,scale=.15]
    \draw[fill=black!10,dashed] (0,0) circle (4);
    \draw[thick] (-2.83,-2.83) to (2.83,2.83);
    \draw[thick] (-2.83,2.83) to (-.71,.71);
    \draw[thick] (.71,-.71) to (2.83,-2.83);
\end{scope}
\node (=) at (0,0) {$\mapsto$};
\begin{scope}[xshift=.5in,scale=.15]
    \draw[fill=black!10,dashed] (0,0) circle (4);
    \draw[thick] (-2.83,-2.83) to [out=45,in=-45] (-2.83,2.83);
    \draw[thick] (2.83,-2.83) to [out=135,in=-135] (2.83,2.83);
\end{scope}
\end{tikzpicture}
\end{center}
to each crossing sent to $+$ by $\sigma$, and by applying the local relation (called a \emph{negative smoothing})
\begin{center}
\begin{tikzpicture}[scale=.8]
\begin{scope}[xshift=-.5in,scale=.15]
    \draw[fill=black!10,dashed] (0,0) circle (4);
    \draw[thick] (-2.83,-2.83) to (2.83,2.83);
    \draw[thick] (-2.83,2.83) to (-.71,.71);
    \draw[thick] (.71,-.71) to (2.83,-2.83);
\end{scope}
\node (=) at (0,0) {$\mapsto$};
\begin{scope}[xshift=.5in,scale=.15]
    \draw[fill=black!10,dashed] (0,0) circle (4);
    \draw[thick] (-2.83,-2.83) to [out=45,in=135] (2.83,-2.83);
    \draw[thick] (-2.83,2.83) to [out=-45,in=-135] (2.83,2.83);
\end{scope}
\end{tikzpicture}
\end{center}
to each crossing sent to $-$ by $\sigma$.  The purpose if this is that
\[ [\curve][\multi[Y]] = q^a\sum_{\sigma:\curve\cap \multi[Y]\rightarrow \{-,+\}} q^{|\sigma^{-1}(+)|-|\sigma^{-1}(-)|}[\multi[R]_\sigma] \]
where $a\in \mathbb{Z}/2$ is the exponent produced by making the endpoints in $\curve\cdot \multi[Y]$ simultaneous.

\begin{lemma}\label{lemma: sublemma}
In any $\multi[R]_\sigma$,
\[ |\{\text{contractible loops}\}| + \frac{1}{2}|\{\text{contractible arcs} \}| \leq |\{\text{negative smoothings} \}|\]
If equality holds, then each strand in each negative smoothing is in a contractible curve.
\end{lemma}
\begin{proof}
Choose a tubular neighborhood $T$ of $\curve$ small enough that $\multi[Y]$ intersects $T$ a minimal number of times, but large enough to contain the chosen neighborhoods of each crossing in $\curve\cdot \multi[Y]$.

Let $\curve[z]$ be a contractable curve in $\multi[R]_\sigma$, and let $D\subset \S$ denote the disc with boundary $\curve[z]$.  Construct a graph $\Gamma$ whose vertices are connected components of $D\smallsetminus \partial T$, and with an edge between two components if they have common boundary in $D\cap \partial T$.  The graph $\Gamma$ is then a retract of the disc $D$, which implies that $\Gamma$ is a tree. Since $\curve[z]$ intersects $T$ but is not contained in $T$, there must be at least one component of $D\cap \partial T$ contained in $T$, and one component of $D\cap \partial T$ disjoint from $T$. It follows that $T$ is a tree with at least two vertices, and so it has at least two vertices of degree $1$.

A degree $1$ vertex of $\Gamma$ corresponds to a component of $D\smallsetminus \partial T$ with a single boundary component in $D\cap \partial T$. Let $D_0$ be such a component of $D\smallsetminus \partial T$; we split into three cases.
\begin{itemize}
	\item $\partial D_0$ contains a marked point.  This implies that $\curve[z]$ is a contractible arc; and so the marked point in $\partial D_0$ is the unique marked point in $\curve[z]$.  It follows that there can be at most one component of $D\smallsetminus \partial T$ of this type.
	\item $D_0\subset D\cap T$ and $\partial D_0$ contains no marked points.  The boundary of $D_0$ must contain a component of $\curve\smallsetminus \multi[Y]$, whose endpoints are crossings in $\curve\cap \multi[Y]$.  In $\multi[R]_\sigma$, one must be a negative smoothing and one must be a positive smoothing in $\multi[R]_\sigma$; otherwise, $D_0$ could cross over $\curve$ and have at least two boundary components in $D\cap \partial T$.    
	\item $D_0\subset D\smallsetminus T$ and $\partial D_0$ contains no marked points.  This implies that the boundary of $D_0$ in $\curve[z]$ can be deformed to the interior $T$, contracting the assumption that $T$ and $\multi[Y]$ intersect a minimum number of times.  There are no components of this type.
\end{itemize}
Hence, if $\curve[z]$ is a contractible arc, then it must pass through a negative smoothing at least once, and if $\curve[z]$ is a contractible loop, then it must pass through a negative smoothing at least twice.  Since each negative smoothing has two strands, this implies the stated lemma.
%
\end{proof}


Using the skein relations, we may write
\[ [\multi[R]_{\sigma}] = q^{|\sigma^{-1}(+)|-|\sigma^{-1}(-)|}(-q^2-q^{-2})^{L_\sigma}\delta_\sigma[\R_\sigma]\] 
where $\R_\sigma$ be the simple multicurve obtained by deleting contractible curves in $\multi[R]_\sigma$, $L_\sigma$ is the number of contractible loops in $\multi[R]_\sigma$, and $\delta_\sigma$ is $0$ if $\multi[R]_\sigma$ has a contractible arc and $1$ if it does not.  
\begin{align*}
[\curve][\multi[Y]] &= q^a\sum_{\sigma:\curve\cap \multi[Y]} q^{|\sigma^{-1}(+)|-|\sigma^{-1}(-)|}(-q^2-q^{-2})^{L_\sigma}\delta_\sigma[\R_\sigma] \\
&= q^a\sum_{\sigma:\curve\cap \multi[Y]} \sum_{i=0}^{L_\sigma}\binom{L_\sigma}{i}(-1)^iq^{|\sigma^{-1}(+)|-|\sigma^{-1}(-)|+2L_\sigma-4i}\delta_\sigma[\R_\sigma] 
\end{align*}
Using the fact that $|\curve\cap \multi[Y]|=\sigma^{-1}(+)+\sigma^{-1}(-)$, we deduce that
\begin{align*}
[\curve][\multi[Y]] &= q^{a+|\curve\cap\multi[Y]|}\sum_{\sigma:\curve\cap \multi[Y]} \delta_\sigma[\R_\sigma]\sum_{i=0}^{L_\sigma}\binom{L_\sigma}{i}(-1)^iq^{2(L_\sigma-|\sigma^{-1}(-)|-2i)}
\end{align*}
Since $|\sigma^{-1}(-)|$ is the number of negative smoothings, $L_\sigma-|\sigma^{-1}(-)|\leq0$ and equality only holds when $\multi[R]_\sigma$ has an equal number of contractible loops and negative smoothings (which implies that $\delta_\sigma=1$).

\begin{lemma}\label{lemma: sublemma2}
Let $\sigma_+$ denote the function which sends every crossing to $+$.  If $\sigma$ is another function such that $\multi[R]_\sigma$ has an equal number of contractible loops and negative smoothings, then $\R_\sigma\prec \R_{\sigma_+}$.  Consequently,
\[  \init([\curve][\multi[Y]]) = [\R_{\sigma_+}] \]
\end{lemma}
\begin{proof}
By Lemma \ref{lemma: sublemma}, neither $\multi[R]_{\sigma_+}$ nor $\multi[R]_{\sigma}$ have contractible arcs. Consequently, both $[\multi[R]_{\sigma_+}]$ and $[\multi[R]_{\sigma}]$ are non-zero.

Choose any negative smoothing in $\multi[R]_\sigma$, and let $\multi[R]_{\sigma'}$ denote the multicurve in which it has been replaced by a positive smoothing.  This alteration involves at most two curves in $\multi[R]_{\sigma}$ which must become at least one curve in $\multi[R]_{\sigma'}$, and so the total number of curves can decrease by at most $1$.  However, since the number of negative smoothings has decreased by $1$, the number of contractible loops must have decreased by exactly $1$, and so $\multi[R]_{\sigma'}$ has an equal number of contractible loops and negative smoothings.

The two strands in the chosen negative smoothing in $\multi[R]_\sigma$ must both be in contractible loops, by the preceding lemma.  Since the number of contractible loops decreases by $1$, there are two possibilities.
\begin{itemize}
	\item The two strands were in distinct contractible loops in $\multi[R]_\sigma$. They become one contractible loop in $\multi[R]_{\sigma'}$.
	\item The two strands are in the same contractible loop in $\multi[R]_\sigma$. They become two loops in $\multi[R]_{\sigma'}$, which must be non-contractible.
\end{itemize}
In the first case, $\R_\sigma \preceq\R_{\sigma'}$, and in the second, $\R_\sigma \prec \R_{\sigma'}$.  

By switching negative smoothings to positive smoothings one at a time, we may construct a sequence 
\[ \multi[R]_\sigma = \multi[R]_{\sigma_0},\multi[R]_{\sigma_1}, ... , \multi[R]_{\sigma_n}=\multi[R]_{\sigma^+}\]
such that at each step, $\R_{\sigma_i} \preceq \R_{\sigma_{i+1}}$.  Furthermore, since $\multi[R]_{\sigma^+}$ has no contractible loops, the last step in this sequence must be of the second type above; that is, $\R_{\sigma_{n-1}}\prec \R_{\sigma_+}$.  By transitivity, $\R_{\sigma}\prec \R_{\sigma^+}$.
\end{proof}

\subsection{The map is an injection}

Choose a tubular neighborhood $T$ of $\curve$.  For a given $\multi[Y]$, choose a homotopy representative of $\multi[Y]$ so that $\multi[Y]$ intersects both $\curve$ and $T$ a minimal number of times.

Let $\gamma_{\curve}$ denote the map $\multi[Y]\mapsto \init([\curve][\multi[Y]])=[\R_{\sigma^+}]$.  The multicurve $\gamma_{\curve}(\multi[Y])$ has the following concrete construction inside the tubular neighborhood $T$: cut $\multi[Y]$ along each crossing in $\curve\cdot \multi[Y]$ and reconnect the strands by shifting to the right along $\curve$.  Any spare ends on either side are attached to the endpoints of $\curve$.  The two cases (distinct versus identical endpoints of $\curve$) are illustrated in the Figure \ref{fig: gamma}. 
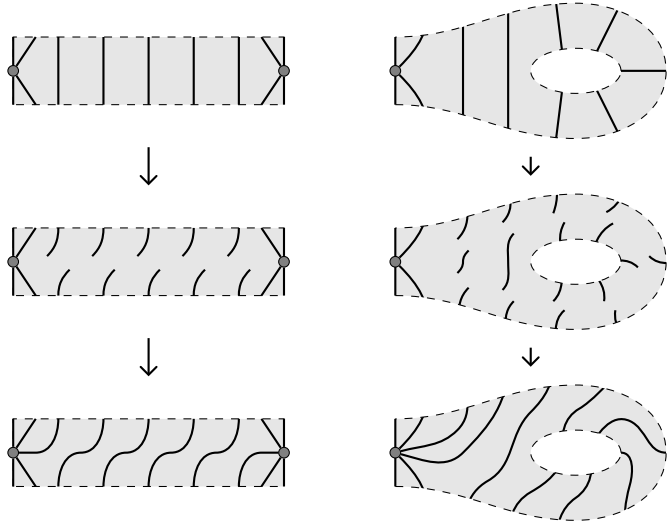
\begin{figure}[h!t]
\begin{tikzpicture}
\begin{scope}[scale=.3]
	\draw[fill=black!10,dashed] (-6,-1.5) rectangle (6,1.5);
	\node[marked] (1) at (-6,0) {};
	\node[marked] (2) at (6,0) {};
	\draw[thick] (-6,-1.5) to (1) to (-6,1.5);
	\draw[thick] (6,-1.5) to (2) to (6,1.5);
	
	\draw[thick] (1) to (-5,1.5);
	\draw[thick] (1) to (-5,-1.5);
	\draw[thick] (2) to (5,1.5);
	\draw[thick] (2) to (5,-1.5);
		
	\draw[thick] (-4,-1.5) to (-4,1.5);
	\draw[thick] (-2,-1.5) to (-2,1.5);
	\draw[thick] (0,-1.5) to (0,1.5);
	\draw[thick] (2,-1.5) to (2,1.5);
	\draw[thick] (4,-1.5) to (4,1.5);
\end{scope}
\draw[thick, -angle 90] (0,-.4in) to (0,-.6in);
\begin{scope}[yshift=-1in,xshift=0in,scale=.3]
	\draw[fill=black!10,dashed] (-6,-1.5) rectangle (6,1.5);
	\node[marked] (1) at (-6,0) {};
	\node[marked] (2) at (6,0) {};
	\draw[thick] (-6,-1.5) to (1) to (-6,1.5);
	\draw[thick] (6,-1.5) to (2) to (6,1.5);
		
	\draw[thick] (1) to (-5,1.5);
	\draw[thick] (1) to (-5,-1.5);
	\draw[thick] (2) to (5,1.5);
	\draw[thick] (2) to (5,-1.5);
	
	\draw[thick] (-4.5,0.25) to [out=45,in=270]  (-4,1.5);
	\draw[thick] (-2.5,0.25) to [out=45,in=270]  (-2,1.5);
	\draw[thick] (-0.5,0.25) to [out=45,in=270]  (0,1.5);
	\draw[thick] (1.5,0.25) to [out=45,in=270]  (2,1.5);
	\draw[thick] (3.5,0.25) to [out=45,in=270]  (4,1.5);
	\draw[thick] (-4,-1.5) to [out=90,in=225] (-3.5,-.35);
	\draw[thick] (-2,-1.5) to [out=90,in=225] (-1.5,-.35);
	\draw[thick] (0,-1.5) to [out=90,in=225] (0.5,-.35);
	\draw[thick] (2,-1.5) to [out=90,in=225] (2.5,-.35);
	\draw[thick] (4,-1.5) to [out=90,in=225] (4.5,-.35);
\end{scope}
\draw[thick, -angle 90] (0,-1.4in) to (0,-1.6in);
\begin{scope}[yshift=-2in,xshift=0in,scale=.3]
	\draw[fill=black!10,dashed] (-6,-1.5) rectangle (6,1.5);
	\node[marked] (1) at (-6,0) {};
	\node[marked] (2) at (6,0) {};
	\draw[thick] (-6,-1.5) to (1) to (-6,1.5);
	\draw[thick] (6,-1.5) to (2) to (6,1.5);
		
	\draw[thick] (1) to (-5,1.5);
	\draw[thick] (1) to (-5,-1.5);
	\draw[thick] (2) to (5,1.5);
	\draw[thick] (2) to (5,-1.5);
	
	\draw[thick] (1) to [out=0,in=180] (-5,0) to [out=0,in=270]  (-4,1.5);
	\draw[thick] (-4,-1.5) to [out=90,in=180] (-3,0) to [out=0,in=270] (-2,1.5);
	\draw[thick] (-2,-1.5) to [out=90,in=180] (-1,0) to [out=0,in=270] (0,1.5);
	\draw[thick] (0,-1.5) to [out=90,in=180] (1,0) to [out=0,in=270] (2,1.5);
	\draw[thick] (2,-1.5) to [out=90,in=180] (3,0) to [out=0,in=270] (4,1.5);
	\draw[thick] (4,-1.5) to [out=90,in=180] (5,0) to [out=0,in=180] (2);	
\end{scope}

\begin{scope}[xshift=2in,scale=.3]
	\draw[fill=black!10,dashed] (-6,-1.5) to [out=0,in=180] (2,-3) to [out=0,in=270] (6,0) to [out=90,in=0] (2,3) to [out=180,in=0] (-6,1.5);
	\draw[thick] (-6,1.5) to [out=270,in=90] (-6,-1.5);
	\node[marked] (1) at (-6,0) {};
	\clip (-6,-1.5) to [out=0,in=180] (2,-3) to [out=0,in=270] (6,0) to [out=90,in=0] (2,3) to [out=180,in=0] (-6,1.5);
	
	\draw[thick] (1) to [out=45,in=270] (-4.5,3);
	\draw[thick] (-3,-4) to (-3,4);
	\draw[thick] (-1,-4) to (-1,4);
	\draw[thick] (1,-4) to (1.5,0) to (1,4);
	\draw[thick] (4.5,-4) to (2.5,0) to (4.5,4);
	\draw[thick] (3,0) to (7,0);
	\draw[thick] (1) to [out=-45,in=-270] (-4.5,-3);
	
	\draw[fill=white,dashed] (2,0) ellipse (2 and 1);
	
\end{scope}
\draw[thick, -angle 90] (2in,-.45in) to (2in,-.55in);
\begin{scope}[yshift=-1in,xshift=2in,scale=.3]
	\draw[fill=black!10,dashed] (-6,-1.5) to [out=0,in=180] (2,-3) to [out=0,in=270] (6,0) to [out=90,in=0] (2,3) to [out=180,in=0] (-6,1.5);
	\draw[thick] (-6,1.5) to [out=270,in=90] (-6,-1.5);
	\node[marked] (1) at (-6,0) {};
	\clip (-6,-1.5) to [out=0,in=180] (2,-3) to [out=0,in=270] (6,0) to [out=90,in=0] (2,3) to [out=180,in=0] (-6,1.5);
	
	\draw[thick] (1) to [out=45,in=270] (-4.5,3);
	\draw[thick] (1) to [out=-45,in=-270] (-4.5,-3);
	\draw[thick] (-3,-4) to [out=90,in=225](-2.75,-1.15);
	\draw[thick] (-3.25,-.5) to [out=45,in=270] (-3,0) to [out=90,in=225] (-2.75,.5);
	\draw[thick] (-3.25,1) to [out=45,in=270] (-3,4);
	\draw[thick] (-1,-4) to [out=90,in=225](-.75,-1.75);
	\draw[thick] (-1.25,-1.25) to [out=45,in=270] (-1,0) to [out=90,in=225] (-.75,1.25);
	\draw[thick] (-1.25,1.75) to [out=45,in=270] (-1,4);
	\draw[thick] (1,-4) to [out=75,in=220] (1.5,-2.15);
	\draw[thick] (1,-1.75) to [out=45,in=255] (1.5,0);
	\draw[thick] (1.5,0) to [out=105,in=240] (1.5,1.75);
	\draw[thick] (1,2.15) to [out=70,in=285] (1,4);
	\draw[thick] (4.5,-4) to [out=117,in=-98] (3.75,-2.15);
	\draw[thick] (3.25,-1.75) to [out=82,in=-63] (2.5,0);
	\draw[thick] (4.5,4) to [out=-117,in=28] (3.35,2.2);
	\draw[thick] (3.65,1.7) to [out=-152,in=63] (2.5,0);
	\draw[thick] (3,0) to [out=0,in=135] (4.75,-0.25);
	\draw[thick] (5.25,0.25) to [out=-45,in=180] (7,0);
	
	\draw[fill=white,dashed] (2,0) ellipse (2 and 1);

\end{scope}
\draw[thick, -angle 90] (2in,-1.45in) to (2in,-1.55in);
\begin{scope}[yshift=-2in,xshift=2in,scale=.3]
	\draw[fill=black!10,dashed] (-6,-1.5) to [out=0,in=180] (2,-3) to [out=0,in=270] (6,0) to [out=90,in=0] (2,3) to [out=180,in=0] (-6,1.5);
	\draw[thick] (-6,1.5) to [out=270,in=90] (-6,-1.5);
	\node[marked] (1) at (-6,0) {};
	\clip (-6,-1.5) to [out=0,in=180] (2,-3) to [out=0,in=270] (6,0) to [out=90,in=0] (2,3) to [out=180,in=0] (-6,1.5);
	
	\draw[thick] (1) to [out=45,in=270] (-4.5,3);
	\draw[thick] (1) to [out=-45,in=-270] (-4.5,-3);
	\draw[thick] (1) to [out=15,in=225] (-3.25,1) to [out=45,in=270] (-3,4);
	\draw[thick] (1) to [out=-15,in=225] (-3,0) to [out=45,in=270] (-1,4);
	\draw[thick] (-3,-4) to [out=90,in=210] (-1.9,-1.25) to [out=30,in=225] (0,1.8) to [out=45,in=270] (1,4);
	\draw[thick] (-1,-4) to [out=90,in=210] (0,-1.75) to [out=30,in=255] (1.5,0);
	\draw[thick] (1,-4) to [out=75,in=215] (2,-2) to [out=35,in=-63] (2.5,0);
	\draw[thick] (4.5,-4) to [out=117,in=-105] (4.4,-1.4) to [out=75,in=0] (4,0);
	\draw[thick] (6,0) to [out=180,in=-25] (4.4,1.4) to [out=155,in=63] (2.5,0);
	\draw[thick] (1.5,0) to [out=105,in=210] (2.15,1.95) to [out=30,in=63] (4.5,4);
	
	\draw[fill=white,dashed] (2,0) ellipse (2 and 1);

\end{scope}
\end{tikzpicture}
\caption{Explicit construction of $\gamma_{\curve}(\multi[Y])$.}
\label{fig: gamma}
\end{figure}

%

\begin{lemma}\label{lemma: gammainj}
For any simple curve, the map $\gamma_{\curve}:\Multi\rightarrow \Multi$ is injective.
\end{lemma}
\begin{proof}
Let $\multi[Y]$ be a simple multicurve transverse to $\curve$ with minimal crossings.  Define a new multicurve $\nu_{\curve}(\multi[Y])$ as follows.
\begin{itemize}
\item If $\curve$ has one end at a marked point $p$, and there are no strands of $\multi[Y]$ counter-clockwise to $\curve$ at $p$, then $\nu_{\curve}(\multi[Y])$ is the empty multicurve $\emptyset$.
\item If $\curve$ has both ends at a marked point $p$, and there are fewer than two strands of $\multi[Y]$ counter-clockwise to $\curve$, then $\nu_{\curve}(\multi[Y])$ is the empty multicurve $\emptyset$.
%
\item Otherwise, construct $\nu_{\curve}(\multi[Y])$ as follows. Cut $\multi[Y]$ along $\curve$, and at each end of $\curve$, disconnect the first strand of $\multi[Y]$ counter-clockwise to $\curve$.
Reconnect these ends by shifting to the left along $\curve$.

\begin{figure}[h!t]
\begin{tikzpicture}
\begin{scope}[yshift=-2in,scale=.3]
	\draw[fill=black!10,dashed] (-6,-1.5) rectangle (6,1.5);
	\node[marked] (1) at (-6,0) {};
	\node[marked] (2) at (6,0) {};
	\draw[thick] (-6,-1.5) to (1) to (-6,1.5);
	\draw[thick] (6,-1.5) to (2) to (6,1.5);
	
	\draw[thick] (1) to (-5,1.5);
	\draw[thick] (1) to (-5,-1.5);
	\draw[thick] (2) to (5,1.5);
	\draw[thick] (2) to (5,-1.5);
		
	\draw[thick] (-4,-1.5) to (-4,1.5);
	\draw[thick] (-2,-1.5) to (-2,1.5);
	\draw[thick] (0,-1.5) to (0,1.5);
	\draw[thick] (2,-1.5) to (2,1.5);
	\draw[thick] (4,-1.5) to (4,1.5);
\end{scope}
\draw[thick, -angle 90] (0,-.4in) to (0,-.6in);
\begin{scope}[yshift=-1in,xshift=0in,scale=.3]
	\draw[fill=black!10,dashed] (-6,-1.5) rectangle (6,1.5);
	\node[marked] (1) at (-6,0) {};
	\node[marked] (2) at (6,0) {};
	\draw[thick] (-6,-1.5) to (1) to (-6,1.5);
	\draw[thick] (6,-1.5) to (2) to (6,1.5);
		
	\draw[thick] (1) to (-5,1.5);
	\draw[thick] (1) to (-5,-1.5);
	\draw[thick] (2) to (5,1.5);
	\draw[thick] (2) to (5,-1.5);
	
	\draw[thick] (-4.5,0.25) to [out=45,in=270]  (-4,1.5);
	\draw[thick] (-2.5,0.25) to [out=45,in=270]  (-2,1.5);
	\draw[thick] (-0.5,0.25) to [out=45,in=270]  (0,1.5);
	\draw[thick] (1.5,0.25) to [out=45,in=270]  (2,1.5);
	\draw[thick] (3.5,0.25) to [out=45,in=270]  (4,1.5);
	\draw[thick] (-4,-1.5) to [out=90,in=225] (-3.5,-.35);
	\draw[thick] (-2,-1.5) to [out=90,in=225] (-1.5,-.35);
	\draw[thick] (0,-1.5) to [out=90,in=225] (0.5,-.35);
	\draw[thick] (2,-1.5) to [out=90,in=225] (2.5,-.35);
	\draw[thick] (4,-1.5) to [out=90,in=225] (4.5,-.35);
\end{scope}
\draw[thick, -angle 90] (0,-1.4in) to (0,-1.6in);
\begin{scope}[yshift=-0in,xshift=0in,scale=.3]
	\draw[fill=black!10,dashed] (-6,-1.5) rectangle (6,1.5);
	\node[marked] (1) at (-6,0) {};
	\node[marked] (2) at (6,0) {};
	\draw[thick] (-6,-1.5) to (1) to (-6,1.5);
	\draw[thick] (6,-1.5) to (2) to (6,1.5);
		
	\draw[thick] (1) to (-5,1.5);
	\draw[thick] (1) to (-5,-1.5);
	\draw[thick] (2) to (5,1.5);
	\draw[thick] (2) to (5,-1.5);
	
	\draw[thick] (1) to [out=0,in=180] (-5,0) to [out=0,in=270]  (-4,1.5);
	\draw[thick] (-4,-1.5) to [out=90,in=180] (-3,0) to [out=0,in=270] (-2,1.5);
	\draw[thick] (-2,-1.5) to [out=90,in=180] (-1,0) to [out=0,in=270] (0,1.5);
	\draw[thick] (0,-1.5) to [out=90,in=180] (1,0) to [out=0,in=270] (2,1.5);
	\draw[thick] (2,-1.5) to [out=90,in=180] (3,0) to [out=0,in=270] (4,1.5);
	\draw[thick] (4,-1.5) to [out=90,in=180] (5,0) to [out=0,in=180] (2);	
\end{scope}

\begin{scope}[yshift=-2in,xshift=2in,scale=.3]
	\draw[fill=black!10,dashed] (-6,-1.5) to [out=0,in=180] (2,-3) to [out=0,in=270] (6,0) to [out=90,in=0] (2,3) to [out=180,in=0] (-6,1.5);
	\draw[thick] (-6,1.5) to [out=270,in=90] (-6,-1.5);
	\node[marked] (1) at (-6,0) {};
	\clip (-6,-1.5) to [out=0,in=180] (2,-3) to [out=0,in=270] (6,0) to [out=90,in=0] (2,3) to [out=180,in=0] (-6,1.5);
	
	\draw[thick] (1) to [out=45,in=270] (-4.5,3);
	\draw[thick] (-3,-4) to (-3,4);
	\draw[thick] (-1,-4) to (-1,4);
	\draw[thick] (1,-4) to (1.5,0) to (1,4);
	\draw[thick] (4.5,-4) to (2.5,0) to (4.5,4);
	\draw[thick] (3,0) to (7,0);
	\draw[thick] (1) to [out=-45,in=-270] (-4.5,-3);
	
	\draw[fill=white,dashed] (2,0) ellipse (2 and 1);
	
\end{scope}
\draw[thick, -angle 90] (2in,-.45in) to (2in,-.55in);
\begin{scope}[yshift=-1in,xshift=2in,scale=.3]
	\draw[fill=black!10,dashed] (-6,-1.5) to [out=0,in=180] (2,-3) to [out=0,in=270] (6,0) to [out=90,in=0] (2,3) to [out=180,in=0] (-6,1.5);
	\draw[thick] (-6,1.5) to [out=270,in=90] (-6,-1.5);
	\node[marked] (1) at (-6,0) {};
	\clip (-6,-1.5) to [out=0,in=180] (2,-3) to [out=0,in=270] (6,0) to [out=90,in=0] (2,3) to [out=180,in=0] (-6,1.5);
	
	\draw[thick] (1) to [out=45,in=270] (-4.5,3);
	\draw[thick] (1) to [out=-45,in=-270] (-4.5,-3);
	\draw[thick] (-3,-4) to [out=90,in=225](-2.75,-1.15);
	\draw[thick] (-3.25,-.5) to [out=45,in=270] (-3,0) to [out=90,in=225] (-2.75,.5);
	\draw[thick] (-3.25,1) to [out=45,in=270] (-3,4);
	\draw[thick] (-1,-4) to [out=90,in=225](-.75,-1.75);
	\draw[thick] (-1.25,-1.25) to [out=45,in=270] (-1,0) to [out=90,in=225] (-.75,1.25);
	\draw[thick] (-1.25,1.75) to [out=45,in=270] (-1,4);
	\draw[thick] (1,-4) to [out=75,in=220] (1.5,-2.15);
	\draw[thick] (1,-1.75) to [out=45,in=255] (1.5,0);
	\draw[thick] (1.5,0) to [out=105,in=240] (1.5,1.75);
	\draw[thick] (1,2.15) to [out=70,in=285] (1,4);
	\draw[thick] (4.5,-4) to [out=117,in=-98] (3.75,-2.15);
	\draw[thick] (3.25,-1.75) to [out=82,in=-63] (2.5,0);
	\draw[thick] (4.5,4) to [out=-117,in=28] (3.35,2.2);
	\draw[thick] (3.65,1.7) to [out=-152,in=63] (2.5,0);
	\draw[thick] (3,0) to [out=0,in=135] (4.75,-0.25);
	\draw[thick] (5.25,0.25) to [out=-45,in=180] (7,0);
	
	\draw[fill=white,dashed] (2,0) ellipse (2 and 1);

\end{scope}
\draw[thick, -angle 90] (2in,-1.45in) to (2in,-1.55in);
\begin{scope}[yshift=0in,xshift=2in,scale=.3]
	\draw[fill=black!10,dashed] (-6,-1.5) to [out=0,in=180] (2,-3) to [out=0,in=270] (6,0) to [out=90,in=0] (2,3) to [out=180,in=0] (-6,1.5);
	\draw[thick] (-6,1.5) to [out=270,in=90] (-6,-1.5);
	\node[marked] (1) at (-6,0) {};
	\clip (-6,-1.5) to [out=0,in=180] (2,-3) to [out=0,in=270] (6,0) to [out=90,in=0] (2,3) to [out=180,in=0] (-6,1.5);
	
	\draw[thick] (1) to [out=45,in=270] (-4.5,3);
	\draw[thick] (1) to [out=-45,in=-270] (-4.5,-3);
	\draw[thick] (1) to [out=15,in=225] (-3.25,1) to [out=45,in=270] (-3,4);
	\draw[thick] (1) to [out=-15,in=225] (-3,0) to [out=45,in=270] (-1,4);
	\draw[thick] (-3,-4) to [out=90,in=210] (-1.9,-1.25) to [out=30,in=225] (0,1.8) to [out=45,in=270] (1,4);
	\draw[thick] (-1,-4) to [out=90,in=210] (0,-1.75) to [out=30,in=255] (1.5,0);
	\draw[thick] (1,-4) to [out=75,in=215] (2,-2) to [out=35,in=-63] (2.5,0);
	\draw[thick] (4.5,-4) to [out=117,in=-105] (4.4,-1.4) to [out=75,in=0] (4,0);
	\draw[thick] (6,0) to [out=180,in=-25] (4.4,1.4) to [out=155,in=63] (2.5,0);
	\draw[thick] (1.5,0) to [out=105,in=210] (2.15,1.95) to [out=30,in=63] (4.5,4);
	
	\draw[fill=white,dashed] (2,0) ellipse (2 and 1);

\end{scope}
\end{tikzpicture}
\caption{Explicit construction of $\nu_{\curve}(\multi[Y])$.}
\end{figure}

%
%
%
%
%

\end{itemize}
The composition $\nu_{\curve}(\gamma_{\curve}(\multi[Y]))=\multi[Y]$, therefore $\gamma_{\curve}$ is injective.
\end{proof}

Lemma \ref{lemma: ugh} is an immediate consequence of Lemmas \ref{lemma: sublemma2} and \ref{lemma: gammainj}.

\bibliography{MyNewBib}{}
\bibliographystyle{amsalpha}

%

\end{document}